\tikzset{
  commutative diagrams/.cd, 
  arrow style=tikz, 
  diagrams={>=stealth}
}
\space\href{https://doi.org/#1}{#1}
\space\href{http://arxiv.org/\abx@arxivpath/#1}{#1}
\space\href{http://www.ams.org/mathscinet-getitem?mr=MR#1}{#1}
\space\href{http://zbmath.org/?q=an:#1}{#1}
\ifundef{\abstract}{}{\patchcmd{\abstract}%
    {\quotation}{\quotation\noindent\ignorespaces}{}{}}
\numberwithin{equation}{section}
\renewcommand{\qedsymbol}{$\blacksquare$}
\newcommand{\CorollaryQED}{\qedsymbol}
\newcommand{\ConjectureQED}{$\square$}
\newcommand{\SituationQED}{$\times$}
\newcommand{\DefinitionQED}{$\bullet$}
\newcommand{\NotationQED}{$\circ$}
\newcommand{\ExampleQED}{$\spadesuit$}
\newcommand{\RemarkQED}{$\clubsuit$}
\newcommand{\ExerciseQED}{?!}
  \declaretheoremstyle[
  bodyfont=\itshape,
  mdframed={    
    backgroundcolor=solarized-base3!90!solarized-blue,
    linewidth=0,
    innerleftmargin=.5em,
    innerrightmargin=.5em,
    innertopmargin=.5em,
    innerbottommargin=.5em,
    leftmargin=-.5em,
    rightmargin=-.5em,
  }
]{theorem}
\declaretheoremstyle[
mdframed={
  backgroundcolor=solarized-base3!90!solarized-green,
  linewidth=0,
  innerleftmargin=.5em,
  innerrightmargin=.5em,
  innertopmargin=.5em,
  innerbottommargin=.5em,
  leftmargin=-.5em,
  rightmargin=-.5em,
  }
]{definition}
\declaretheoremstyle[
  mdframed={
    backgroundcolor=solarized-base3!90!solarized-yellow,
    linewidth=0,
    innerleftmargin=.5em,
    innerrightmargin=.5em,
    innertopmargin=.5em,
    innerbottommargin=.5em,
    leftmargin=-.5em,
    rightmargin=-.5em,
  }
]{example}
\declaretheoremstyle[
  mdframed={
    backgroundcolor=solarized-base3!90!solarized-orange,
    linewidth=0,
    innerleftmargin=.5em,
    innerrightmargin=.5em,
    innertopmargin=.5em,
    innerbottommargin=.5em,
    leftmargin=-.5em,
    rightmargin=-.5em,
  }
]{remark}
  \declaretheoremstyle[
  bodyfont=\itshape
  ]{theorem}
  \declaretheoremstyle[]{definition}
  \declaretheoremstyle[]{example}
  \declaretheoremstyle[]{remark}
\declaretheorem[numberlike=equation,style=theorem]{theorem}
\declaretheorem[numbered=no,name=Theorem,style=theorem]{theorem*}
\declaretheorem[numberlike=equation,name=Lemma,style=theorem]{lemma}
\declaretheorem[numberlike=equation,name=Proposition,style=theorem]{prop}
\declaretheorem[numberlike=equation,name=Corollary,qed=\CorollaryQED,style=theorem]{cor}
\declaretheorem[numberlike=equation,name=Conjecture,qed=\ConjectureQED,style=theorem]{conjecture}
\declaretheorem[numberlike=equation,name=Situation,style=definition,qed=\SituationQED]{situation}
\declaretheorem[numberlike=equation,name=Definition,style=definition,qed=\DefinitionQED]{definition}
\declaretheorem[numbered=no,name=Definition,style=definition,qed=\DefinitionQED]{definition*}
\declaretheorem[numberlike=equation,name=Notation,style=definition,qed=\NotationQED]{notation}
\declaretheorem[numberlike=equation,style=definition,qed=\ExampleQED]{example}
\declaretheorem[numberlike=equation,style=remark,qed=\RemarkQED]{remark}
\declaretheorem[numbered=no,style=remark,name=Remark,qed=\RemarkQED]{remark*}
\declaretheorem[numberlike=equation,style=definition]{question}
\def\makeautorefname#1#2{\AtBeginDocument{\expandafter\def\csname#1autorefname\endcsname{#2}}}
\numberwithin{substep}{step}
\setlist[description]{leftmargin=!,labelindent=1em}
\setlist[enumerate]{label={\rm (\arabic*)},ref=\arabic*}
\setlist[enumerate,2]{label={\rm (\alph*)},ref=\theenumi.\alph*}
\setlist[enumerate,3]{label={\rm (\roman*)},ref=\theenumii.\roman*}
\let\C\undefined
\DeclareFontFamily{U}{mathx}{\hyphenchar\font45}
\DeclareFontShape{U}{mathx}{m}{n}{
      <5> <6> <7> <8> <9> <10>
      <10.95> <12> <14.4> <17.28> <20.74> <24.88>
      mathx10
      }{}
\DeclareSymbolFont{mathx}{U}{mathx}{m}{n}
\DeclareMathAccent{\widecheck}{0}{mathx}{"71}
\DeclareMathAccent{\wideparen}{0}{mathx}{"75}
\DeclareMathOperator{\Aut}{Aut}
\DeclareMathOperator{\flow}{flow}
\DeclareMathOperator{\Fr}{Fr}
\DeclareMathOperator{\graph}{graph}
\DeclareMathOperator{\HF}{\HF}
\DeclareMathOperator{\Hom}{Hom}
\DeclareMathOperator{\Sym}{Sym}
\DeclareMathOperator{\area}{area}
\DeclareMathOperator{\coker}{coker}
\DeclareMathOperator{\im}{im}
\DeclareMathOperator{\ind}{index}
\DeclareMathOperator{\inj}{inj}
\DeclareMathOperator{\sign}{sign}
\DeclareMathOperator{\supp}{supp}
\DeclarePairedDelimiter\floor{\lfloor}{\rfloor}
\DeclarePairedDelimiter\paren{\lparen}{\rparen}
\DeclarePairedDelimiter\sqparen{[}{]}
\DeclarePairedDelimiter{\Abs}{\|}{\|}
\DeclarePairedDelimiter{\Inner}{\langle}{\rangle}
\DeclarePairedDelimiter{\abs}{\lvert}{\rvert}
\DeclarePairedDelimiter{\set}{\lbrace}{\rbrace}
\def\({\left(}
\def\){\right)}
\def\<{\left\langle}
\def\>{\right\rangle}
\newcommand{\C}{{\mathbf{C}}}
\newcommand{\N}{{\mathbf{N}}}
\newcommand{\Q}{\mathbf{Q}}
\newcommand{\R}{\mathbf{R}}
\newcommand{\Z}{\mathbf{Z}}
\newcommand{\co}{\mskip0.5mu\colon\thinspace}
\newcommand{\defined}[2][\key]{\def\key{#2}\textbf{#2}\index{#1}}
\newcommand{\delbar}{\bar{\del}}
\newcommand{\del}{\partial}
\newcommand{\ev}{\mathrm{ev}}
\newcommand{\id}{\mathrm{id}}
\newcommand{\incl}{\hookrightarrow}
\newcommand{\emb}{\hookrightarrow}
\newcommand{\loc}{\mathrm{loc}}
\newcommand{\pr}{\mathrm{pr}}
\newcommand{\qandq}{\quad\text{and}\quad}
\newcommand{\qand}{\quad\text{and}}
\newcommand{\qwithq}{\quad\text{with}\quad}
\newcommand{\vol}{\mathrm{vol}}
\renewcommand{\Im}{\operatorname{Im}}
\renewcommand{\emptyset}{\varnothing}
\renewcommand{\epsilon}{\varepsilon}
\renewcommand{\setminus}{{\backslash}}
\renewcommand{\leq}{\leqslant}
\renewcommand{\geq}{\geqslant}
\renewcommand*\env@matrix[1][*\c@MaxMatrixCols c]{%
  \hskip -\arraycolsep
  \let\@ifnextchar\new@ifnextchar
  \array{#1}}
\renewcommand\xleftrightarrow[2][]{%
  \ext@arrow 9999{\longleftrightarrowfill@}{#1}{#2}}
\newcommand\longleftrightarrowfill@{%
  \arrowfill@\leftarrow\relbar\rightarrow}
\newcommand{\rd}{{\rm d}}
\newcommand{\rH}{{\rm H}}
\newcommand{\rII}{{\rm II}}
\newcommand{\bF}{{\mathbf{F}}}
\newcommand{\bI}{{\mathbf{I}}}
\newcommand{\bJ}{{\mathbf{J}}}
\newcommand{\bK}{{\mathbf{K}}}
\newcommand{\bM}{{\mathbf{M}}}
\newcommand{\bO}{{\mathbf{O}}}
\newcommand{\bR}{{\mathbf{R}}}
\newcommand{\cA}{\mathcal{A}}
\newcommand{\cC}{\mathcal{C}}
\newcommand{\cH}{\mathcal{H}}
\newcommand{\cJ}{\mathcal{J}}
\newcommand{\cK}{\mathcal{K}}
\newcommand{\cM}{\mathcal{M}}
\newcommand{\cN}{\mathcal{N}}
\newcommand{\cO}{\mathcal{O}}
\newcommand{\cP}{\mathcal{P}}
\newcommand{\cQ}{\mathcal{Q}}
\newcommand{\cS}{\mathcal{S}}
\newcommand{\cT}{\mathcal{T}}
\newcommand{\cU}{\mathcal{U}}
\newcommand{\cV}{\mathcal{V}}
\newcommand{\cW}{\mathcal{W}}
\newcommand{\cZ}{\mathcal{Z}}
\newcommand{\sH}{\mathscr{H}}
\newcommand{\sO}{\mathscr{O}}
\newcommand{\fd}{{\mathfrak d}}
\newcommand{\fp}{{\mathfrak p}}
\newcommand{\fq}{{\mathfrak q}}
\newcommand{\fz}{{\mathfrak z}}
\newcommand{\fH}{{\mathfrak H}}
\newcommand{\dF}{{\rd F}}
\renewcommand{\emb}{\mathrm{emb}}
\newcommand{\isol}{\mathrm{isol}}
\newcommand{\GW}{\mathrm{GW}}
\newcommand{\BPS}{\mathrm{BPS}}
\newcommand{\vir}{\mathrm{vir}}
\newcommand{\bbJ}{\mathbb{J}}
\newcommand{\vdim}{\mathrm{vdim}}
\newcommand{\simple}{\mathrm{si}}
\renewcommand{\emb}{\mathrm{emb}}
\newcommand{\SpaceOfHermitianStructures}{\cH}
\newcommand{\ModuliSpaceOfMaps}{\cM}
\newcommand{\ModuliSpaceOfNodalMaps}{\overline\ModuliSpaceOfMaps}
\newcommand{\ModuliSpaceOfSimpleMaps}{\ModuliSpaceOfMaps^\simple}
\newcommand{\ModuliSpaceOfEmbeddedMaps}{\ModuliSpaceOfMaps^\emb}
\newcommand{\SpaceOfCompactSubsets}{\cK}
\newcommand{\SpaceOfCycles}{\cZ}
\newcommand{\SpaceOfConnectedCycles}{\cC}
\newcommand{\SpaceOfCurves}{\SpaceOfCycles^\simple}
\newcommand{\SpaceOfConnectedCurves}{\SpaceOfConnectedCycles^\simple}
\newcommand{\SpaceOfEmbeddedCurves}{\SpaceOfCycles^\emb}
\newcommand{\SpaceOfConnectedEmbeddedCurves}{\SpaceOfConnectedCycles^\emb}
\newcommand{\SpaceOfSubmanifolds}{\cS}
\newcommand{\SpaceOfAlmostComplexStructures}{\cJ}
\newcommand{\SpaceOfAlmostComplexStructuresRegular}{\SpaceOfAlmostComplexStructures_*}
\newcommand{\SpaceOfAlmostComplexStructuresDiscrete}{\SpaceOfAlmostComplexStructures_\isol}
\newcommand{\genus}{\mathrm{g}}
\newcommand{\HurwitzSpace}[2]{\overline\cH_{#1,#2}}
\newcommand{\OtherQED}{$\bullet$}
\renewcommand{\ConjectureQED}{\OtherQED}
\renewcommand{\SituationQED}{\OtherQED}
\renewcommand{\DefinitionQED}{\OtherQED}
\renewcommand{\NotationQED}{\OtherQED}
\renewcommand{\ExampleQED}{\OtherQED}
\renewcommand{\RemarkQED}{\OtherQED}
\renewcommand{\N}{{\mathbb N}}
\renewcommand{\Z}{{\mathbb Z}}
\renewcommand{\Q}{{\mathbb Q}}
\renewcommand{\R}{{\mathbb R}}
\newcommand{\edited}[1]{#1} 
\author{
  Aleksander Doan
  \and
  Eleny-Nicoleta Ionel
  \and
  Thomas Walpuski
}
\title{
  The Gopakumar--Vafa finiteness conjecture
}
\date{2025-11-29}
\begin{document}

\maketitle

\begin{abstract}
  The Gopakumar--Vafa conjecture predicts that the BPS invariants of a symplectic $6$--manifold, defined in terms of the Gromov--Witten invariants, are integers and all but finitely many vanish in every homology class.
  The integrality part of this conjecture was proved earlier by Ionel and Parker.
  This article proves the finiteness part.
  The proof relies on a modification of Ionel and Parker's cluster formalism using results from geometric measure theory.
\end{abstract}

\tableofcontents


\section{Introduction}
\label{Sec_Introduction}

Using ideas from $M$--theory
\citet{Gopakumar1998,Gopakumar1998a} predicted that there exist integer invariants $\BPS_{A,\genus}(X,\omega)$ associated with
a closed symplectic $6$--manifold $(X,\omega)$;
a Calabi--Yau class $A$, that is: $A \in \rH_2(X,\Z)$ with $c_1(A) \coloneq \Inner{c_1(X,\omega),A} = 0$; and 
$\genus \in \N_0$.
These invariants are interpreted in physics as the count of BPS states supported on $J$--holomorphic curves representing $A$ and of genus $\genus$.
\citeauthor{Gopakumar1998} conjectured that their invariants are related to the Gromov--Witten invariants $\GW_{A,\genus}(X,\omega)$ by the marvelous formula
\begin{equation}
  \label{Eq_GopakumarVafa}
  \sum_{A \in \Gamma}\sum_{\genus=0}^\infty
  \GW_{A,\genus}(X,\omega)
  \cdot
  t^{2\genus-2} q^A
  =
  \sum_{A \in \Gamma}\sum_{\genus=0}^\infty
  \BPS_{A,\genus}(X,\omega)
  \cdot
  \sum_{k=1}^\infty
  \frac1k
  \paren*{2\sin\paren*{kt/2}}^{2\genus-2}
  q^{kA}
\end{equation}
with $\Gamma \coloneq \set{ A \in \rH_2(X,\Z) : A \neq 0, c_1(A) = 0 }$;
see \cite[(3.2)]{Gopakumar1998a}.
This formula is to be understood as an equality of formal power series in variables $q^A$ whose coefficients are Laurent series in $t$.

\citeauthor{Gopakumar1998} did not give a direct mathematical definition of their invariants.
Indeed,
despite valiant efforts---especially by algebraic geometers \cite{Hosono2001, Pandharipande2009, Pandharipande2010, Kiem2012, Maulik2018}---mathematicians still do not know how to define them directly.
Turning the problem on its head and regarding \autoref{Eq_GopakumarVafa} as the \emph{definition} of $\BPS_{A,\genus}(X,\omega)$ led to the following conjecture.

\begin{conjecture}[{The Gopakumar--Vafa conjecture \cites{Gopakumar1998,Gopakumar1998a}[Conjecture 1.2]{Bryan2001}}]
  \label{Conj_GopakumarVafa}
  Let $(X,\omega)$ be a closed symplectic $6$--manifold.
  For every $A \in \rH_2(X,\Z)$ with $A \neq 0$ and $c_1(A) = 0$
  the numbers $\BPS_{A,\genus}(X,\omega)$ defined by \autoref{Eq_GopakumarVafa} satisfy:
  \begin{description}[labelwidth=\widthof{\bfseries (integrality)}]
  \item[(integrality)]
    $\BPS_{A,\genus}(X,\omega) \in \Z$ for every $\genus\in\N_0$.
  \item[(finiteness)]
    There is $\genus_A \in \N_0$ such that $\BPS_{A,\genus}(X,\omega) = 0$ for every $\genus\geq \genus_A$.
    \qedhere
  \end{description}
\end{conjecture}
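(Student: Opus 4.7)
The integrality half of \autoref{Conj_GopakumarVafa} is already proved by \citet{Ionel2018} via their cluster formalism; the plan is to take that result as given and concentrate the new work on the finiteness assertion. A byproduct of the argument below is an independent proof of integrality as well, since the finite decomposition produced in step (3) writes each $\BPS_{A,\genus}(X,\omega)$ as a $\Z$-linear combination of multicover counts.

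\textbf{Finiteness; strategy.} Pick a Baire-generic tame almost complex structure $J$ on $X$ for which every simple $J$-holomorphic map representing a class $A' \in \Gamma$ with $A'\cdot[\omega] \leq A\cdot[\omega]$ is embedded, unobstructed, and isolated; that this is generic uses that the expected dimension of the simple moduli space in the Calabi--Yau setting is zero. The argument then proceeds in three steps:
\begin{enumerate}
\item \emph{Geometric finiteness.} I would first show that, for such $J$, the set of embedded, rigid, connected $J$-holomorphic curves in $X$ representing a class $A'\in\Gamma$ with $A'\cdot[\omega]\leq A\cdot[\omega]$ is \emph{finite}. The idea is to view each such curve as an integer-rectifiable $2$-cycle of mass at most $A\cdot[\omega]$ inside $X$. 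The Federer--Fleming compactness theorem confines this family to a compact subset of the space of integral cycles $\cZ$; an Allard-type $\epsilon$-regularity statement for $J$-holomorphic currents together with the unobstructedness hypothesis then prevents accumulation in the cycle topology, forcing the set to be discrete and hence finite.
\item \emph{Genus bound.} For every embedded $J$-holomorphic curve $C\subset X$ in a class $A'$ with $c_1(A')=0$, the adjunction formula reads $2\genus(C)-2 = C\cdot C$. The right-hand side is bounded in terms of the finite collection of homology classes produced in step (1), so there is a uniform $\genus_A\in\N_0$ with $\genus(C)<\genus_A$ for every such $C$.
\item \emph{Cluster decomposition.} Applying the (modified) cluster formalism, the full Gromov--Witten generating series $\sum_{A'}\sum_\genus \GW_{A',\genus}(X,\omega)\,t^{2\genus-2}q^{A'}$ restricted to multiples of the classes in step (1) decomposes as a \emph{finite} sum of multicover contributions, one per curve $C$ from step (1). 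Each such contribution has the precise $\BPS$ shape appearing on the right-hand side of \autoref{Eq_GopakumarVafa} with integer coefficients and with genus strictly bounded by $\genus_A$. Matching coefficients yields $\BPS_{A,\genus}(X,\omega)\in\Z$ for all $\genus$, and $\BPS_{A,\genus}(X,\omega)=0$ for $\genus\geq\genus_A$.
\end{enumerate}

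\textbf{Main obstacle.} The crux is step (1). Standard Gromov compactness permits limits with bubble trees, ghost components, and continuous families of multiply-covered curves, none of which are ruled out by rigidity of the individual simple curves; in particular the cluster neighbourhoods of \citet{Ionel2018} cannot be chosen pairwise disjoint unless discreteness holds already at the level of \emph{cycles}, not just of parametrised maps. Establishing the required Allard-type $\epsilon$-regularity for $J$-holomorphic integral cycles, together with a tangent-cone rigidity statement that promotes convergence in mass to convergence as smooth embedded submanifolds, is where the bulk of the new technical work lies and where the use of geometric measure theory advertised in the abstract is essential.
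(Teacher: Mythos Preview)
Your strategy has the right overall shape---using geometric measure theory to gain compactness at the level of cycles rather than parametrised maps is exactly the paper's key insight---but there is a genuine error in step~(2) that breaks the argument.

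\textbf{The adjunction step is wrong.} In a symplectic $6$--manifold there is no adjunction formula of the form $2\genus(C)-2 = C\cdot C$: the self-intersection $C\cdot C$ of a $2$--cycle is not even defined (the intersection of two $2$--cycles in dimension $6$ lives in degree $-2$). More to the point, there is \emph{no} a priori bound on the genus of a $J$--holomorphic curve in a fixed Calabi--Yau class $A$ in terms of homological data alone---this is precisely what makes finiteness hard. The paper does obtain a Castelnuovo-type bound (\autoref{Thm_Castelnuovo}), but only for \emph{primitive} $A$, and by an entirely different mechanism: compactness of the cycle space together with continuity of the genus on the open set of embedded curves. For divisible $A$ this fails because of multiple covers.

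\textbf{Step (3) is where the real work lies, and you assert the conclusion.} Even granting a finite set of embedded rigid $J$--curves, you must still show that the multicover contribution of each one has the BPS form with only finitely many nonzero $\BPS_{d,\genus}$. This is not automatic: the Hurwitz space $\HurwitzSpace{d}{\genus}(C)$ is nonempty for arbitrarily large $\genus$, so each cluster contributes to $\GW_{dA,\genus}$ for all $\genus$. The paper handles this by the Cluster Isotopy Theorem (\autoref{Thm_Cluster_Isotopy}): it deforms $J$ along a path in $\SpaceOfAlmostComplexStructures_C$ to a special $J_L$ for which $C$ is super-rigid and the contribution is the explicit Bryan--Pandharipande series $G_h$, whose BPS finiteness is a combinatorial fact (\autoref{Prop_J_elementary_contribution}). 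The wall-crossing along this path produces correction terms, but these are clusters with cores in strictly higher multiples $d[C]$, so induction on divisibility closes the argument (\autoref{Prop_GWStructure_Cluster}). Your proposal contains no analogue of this isotopy/wall-crossing step, and without it the multicover contributions remain uncontrolled in genus.
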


\begin{remark}
  The integrality part of \autoref{Conj_GopakumarVafa} was proved by \citet{Ionel2018}.
\end{remark}

\begin{remark}
  There is an analogue of \autoref{Conj_GopakumarVafa} for Fano classes;
  that is: $A \in \rH_2(X,\Z)$ with $c_1(A) > 0$;
  see \autoref{Sec_GopakumarVafaFano}.
  This case is significantly easier because multiple covers can be avoided.
  \citet[Theorem 1.5]{Zinger2011} has proved integrality for Fano classes.
  \citet[Corollary 1.18]{Doan2019} have proved finiteness for Fano classes and primitive Calabi--Yau classes.
\end{remark}

\begin{remark}
  The finiteness part of \autoref{Conj_GopakumarVafa} implies that the coefficients of $q^A$ in the Gromov--Witten series \eqref{Eq_GopakumarVafa} are $t^{-2}$ times analytic functions of $t$ and rational functions of $u = - e^{it}$;
  cf.~\cite[Conjectures 3.2, 3.3, and 3.28]{Pandharipande2009}. 
\end{remark}

\begin{question}
  \label{Rmk_BPSCastelnuovo}
  \edited{
  Define \defined{BPS Castelnuovo number} associated with a class $A \in \Gamma$ by 
  \begin{align*}
    \gamma_A^\BPS(X,\omega)
    \coloneq
    \sup \set{
    	\genus \in \N_0 
    	:
    	\BPS_{A,\genus}(X,\omega) \neq 0
    }
  \end{align*}
  (and $\gamma_A^\BPS(X,\omega) \coloneqq -1$ if $\BPS_{A,g}(X,\omega)=0$ for all $g \in \N_0$.)
  This is an invariant of $(X,\omega)$.
  The finiteness part of \autoref{Conj_GopakumarVafa} implies that $\gamma_A^\BPS(X,\omega)<\infty $ for all $A \in \Gamma$.
  }
  It is interesting to ask:
  \emph{are there effective bounds on $\gamma_A^\BPS(X,\omega)$ analogous to Castelnuovo's bound for the genus of an irreducible degree $d$ curve in $\mathbb{P}^n$} \cites{Castelnuovo1889}[Chapter III Section 2]{Arbarello1985}?
\end{question}

The purpose of this article is to prove the \edited{finiteness part of the Gopakumar--Vafa conjecture.}

\begin{theorem}
  \label{Thm_GopakumarVafa}
  \autoref{Conj_GopakumarVafa} holds.
\end{theorem}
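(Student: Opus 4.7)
The plan is to adapt the cluster formalism of \citeauthor{Ionel2018}, which organizes $J$--holomorphic cycles in a Calabi--Yau class into compact pieces (clusters) whose contributions to the Gromov--Witten series can be separately analyzed. That formalism already yields integrality by showing each cluster contributes integer BPS numbers. For finiteness, it should suffice to establish that (a) only finitely many clusters contribute nontrivially to a fixed class $A$, and (b) each cluster contributes in only finitely many genera $\genus$.

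For (a), I would fix a generic tame almost complex structure $J$ on $X$ and consider, for each class $B \in \rH_2(X,\Z)$ with $c_1(B) = 0$, the space of integer--rectifiable $J$--holomorphic cycles representing $B$. The classes $B$ with $kB = A$ for some positive integer $k$, which are exactly those relevant for the formula \autoref{Eq_GopakumarVafa}, are finite in number: the symplectic area $\omega \cdot B = (\omega \cdot A)/k$ must exceed the minimum area of a closed $J$--holomorphic curve on $(X,\omega)$, bounding $k$. On each such class the space of cycles is compact by the compactness theorem for integer--rectifiable cycles of bounded mass---this is where geometric measure theory enters, and crucially it uses only the area bound and not a bound on the genus of a parameterizing surface, unlike classical Gromov compactness. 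The finite cluster decomposition of the resulting compact moduli space follows.

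The main obstacle lies in (b). On a Calabi--Yau $3$--fold the virtual dimension of simple genus $\genus$ maps in a fixed class is zero for every $\genus$, so a priori simple curves of arbitrarily high genus could support contributions. The plan is to prove that the topological complexity of a $J$--holomorphic cycle in a cluster---measured by the Euler characteristic of its regular locus, or equivalently by a Hurwitz--type datum encoding how the cycle is assembled from covers of simple curves---is bounded by a universal constant depending only on the limiting structure of the cluster. This bound, combined with the explicit form of the right-hand side of \autoref{Eq_GopakumarVafa} evaluated on a single cluster, would then force $\BPS_{A,\genus}(X,\omega) = 0$ for $\genus$ beyond a class-dependent threshold. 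The central difficulty, and the reason geometric measure theory is essential, is that this topological bound has to be extracted intrinsically from the geometric--measure--theoretic limit of $J$--holomorphic cycles, rather than from a priori bounds on the genera of parameterizing Riemann surfaces. Controlling genus through the geometry of the cycle support---via Euler class identities on the regular locus, combined with a careful accounting of the singular strata produced in the limit---is the crux of the proof.
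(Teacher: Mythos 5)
Your step (a) and your overall framing---basing the cluster formalism on the compactness theorem for pseudo-holomorphic cycles and Allard regularity rather than on Gromov compactness for maps, so that no truncation in genus is needed---is exactly the paper's key insight, and the finite cluster decomposition you describe is \autoref{Prop_Cluster_Decomposition}. The gap is in step (b). A bound on the topological complexity of the cycles in a cluster cannot force $\BPS_{A,\genus}=0$ for large $\genus$, because the high-genus Gromov--Witten contributions of a cluster do not come from high-genus cycles: for generic $J$ the cycles in a cluster are multiples of a single embedded core $C$ of fixed genus (\autoref{Lem_Cluster}), yet $\GW_{A,\genus}(\cO)\neq 0$ for arbitrarily large $\genus$ due to multiple covers and ghost components of the parameterizing domains. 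Indeed the contribution of even a single super-rigid curve, $G_h(q,t)$ in \autoref{Eq_SuperRigidContribution}, has nonzero coefficients in every genus. Finiteness of the BPS numbers is not a vanishing statement about high-genus GW invariants; it is the statement that the full genus tower of each cluster's contribution organizes into the specific series $G_h$, whose BPS transforms terminate by a purely combinatorial computation with partitions and hook lengths (\autoref{Prop_J_elementary_contribution}).

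What the paper actually does for (b), and what is missing from your proposal, is a structure theorem (\autoref{Thm_GWStructure}): each cluster contribution is a \emph{finite integer combination} of the model series $G_h(q^{d[C]},t)$. This is proved by isotoping an arbitrary cluster to one whose core is super-rigid with contribution exactly $G_{\genus(C)}$ (\autoref{Prop_J_elementary_exists}, using \citeauthor{Bryan2008} and \citeauthor{Zinger2011}), controlling the change of the contribution along the isotopy via the wall-crossing analysis of \citeauthor{Ionel2018} recast for cycle-clusters (\autoref{Thm_Cluster_Isotopy}), and absorbing the discrepancy into finitely many clusters with cores in classes $d[C]$, $d\geq 2$, which are handled by induction on divisibility. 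The role of geometric measure theory is precisely to make the cluster contributions and the isotopy argument well defined for the whole genus tower at once; it is not used to extract a genus bound from the limit cycles. Without the isotopy-to-a-model step and the explicit super-rigid computation, your argument does not produce the required expansion in the $G_h$ and hence does not yield finiteness.
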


\edited{ 
The strategy of the proof is similar to that in \cite{Ionel2018}.
The new insight of this article is that Gromov's compactness theorem for $J$--holomorphic \emph{maps} can be replaced by the compactness theorem for $J$--holomorphic \emph{cycles} (i.e. currents), combined with  other geometric analysis results proved in \autoref{Sec_SpacesOfPseudoHolomorphicCurves}, such as a version of Allard's regularity theorem \cite{Allard1972}. 

To explain the new challenges and how we resolve them, let us first discuss the most natural approach to proving the Gopakumar--Vafa conjecture, inspired by Taubes' work on the Gromov invariants of symplectic $4$--manifolds. 
Gromov's compactness theorem together with Wendl's super-rigidity theorem \cite[Theorem A]{Wendl2016}  imply that for a generic $\omega$--tamed almost complex structure $J$ on $X$ the moduli space of simple $J$--holomorphic maps of bounded genus and energy is finite, and that all such maps are \emph{super-rigid}.
In fact, by replacing Gromov's theorem with the compactness theorem for pseudo-holomorphic cycles, \cite[Theorem 1.6]{Doan2018a} establishes the same result without a genus bound. 
This reduces the Gopakumar--Vafa conjecture to its local version for contributions from super-rigid curves.

However, such contributions depend on $J$ and are essentially impossible to calculate directly. 
When $J$ is \emph{elementary}, i.e. takes a special form around each $J$--holomorphic curve, the contribution can be computed by work of \citet{Pandharipande1999,Bryan2008,Lee2009a,Zinger2011}.
The Gopakumar--Vafa conjecture holds for elementary $J$.
Therefore, one can try to prove the conjecture for arbitrary $J$  by analyzing what happens to these contributions as $J$ is deformed to an elementary one.
This approach, similar to Taubes' work in dimension four, requires understanding the structure of codimension one walls in the space of $J$'s where super-rigidity fails, and bifurcation analysis of multiply covered $J$--holomorphic maps along paths of $J$'s crossing these walls. 
Unfortunately, this is, in general, a difficult problem. 
While this paper was under review,  \citet{Bai2021}, building on \cite{Wendl2016}, were able to analyze the wall-crossing caused by double covers of fixed genus,
\edited{and their results could be used to reprove the integrality part of the Gopakumar–Vafa conjecture for homology classes of divisibility two.}
Similar analysis for higher degree covers seems significantly more challenging.

The situation is even more complicated for the finiteness part of the conjecture, as bifurcation analysis such as \cite{Wendl2016,Bai2021} deals with curves of fixed genus. 
While for a generic $J$, the genus of embedded pseudo-holomorphic curves with bounded energy is bounded \cite[Theorem 1.6]{Doan2018a}, the proof does not generalize to families of $J$'s.
(Indeed, proving such a generalization is a difficult open problem, closely tied to the the question whether Allard's regularity theory can be extended to higher multiplicity currents.)
Without a bound on genus, it is possible that as $J$ is deformed in a generic $1$--parameter family, infinitely many embedded $J$--holomorphic curves in the same homology class appear or disappear in $X$. 
In such a situation, one cannot conclude the finiteness part of the conjecture from the computation for elementary $J$, even assuming that the bifurcation analysis in the spirit of \cite{Wendl2016,Bai2021} has been carried out for covers of arbitrary degree.

In this paper, as in  \cite{Ionel2018}, we take a different approach, entirely bypassing the bifurcation analysis of multiple covers, and working instead with the notion of a \emph{cluster} introduced in \cite{Ionel2018}. 
A cluster is a collection of $J$--holomorphic curves which are close to a given curve, called the \emph{core} of the cluster, and whose energy and genus are below a certain threshold.
Clusters are both open and closed subsets of the moduli space of $J$--holomorphic maps, thus have a well defined contribution to the Gromov--Witten invariants. 
Therefore, truncations in $q$ and $t$ of the Gromov--Witten series $\GW(X,\omega)$ appearing in \autoref{Eq_GopakumarVafa} can be decomposed into contributions of clusters, thus reducing the Gopakumar--Vafa conjecture to its local version for clusters. 
The conjecture holds for clusters which are \emph{elementary}, meaning that $J$ is elementary around the core curve.

In \cite{Ionel2018}, after fixing a truncation in $q$ and $t$ (i.e. a bound on energy and genus), the contributions of general clusters are shown to agree with those of the elementary clusters up to
contributions of clusters whose core is a curve of higher level.
The proof requires understanding deformations of $J$--holomorphic embeddings in a given homology class, but not their interactions with multiple covers, as higher level curves are ignored.
This allows \citeauthor{Ionel2018} to recursively prove integrality, but not finiteness because of the truncation in $t$.
This truncation is necessary as their cluster formalism relies on Gromov's compactness theorem for $J$--holomorphic maps and on the local wall-crossing model for the moduli space of such maps (around an embedding). 
%

In our proof, also inspired by Taubes' work on symplectic $4$--manifolds, we use the space of pseudo-holomorphic \emph{cycles}, equipped with the topology of \emph{geometric convergence}, to define clusters.
Since pseudo-holomorphic cycles do not have a specified genus, this allows us to drop the truncation in $t$.
We then use geometric measure theory to prove various topological properties of clusters in the space of cycles, such as compactness and openess, which are needed to carry out \citeauthor{Ionel2018}'s proof in this setting. 
In fact, we work with three different topological spaces containing pseudo-holomorphic embeddings: 
\begin{itemize}
	\item the space of pseudo-holomorphic {\em maps} with the Gromov topology,
	\item the space of pseudo-holomorphic {\em cycles} (i.e. currents) with the topology  of geometric convergence,
	\item the space of {\em compact subsets} with the Hausdorff distance.  
\end{itemize}
The relationship between these three spaces is rather subtle, as shown by various counterexamples discussed in \autoref{Sec_SpacesOfPseudoHolomorphicCurves} (cf.  \autoref{Rem.cont.inj.not.emb},  \autoref{Rem_ZeroAreaCurves} and  \autoref{Rem.Z.si.cont.inj.not.emb}).
The natural forgetful maps between them are continuous, but in general are not open, nor proper, nor injective. 
The desired properties of clusters, of which openess is the most difficult, are obtained by comparing these topologies when restricting to the subspaces of embedded curves. 
This is the content of  \autoref{Sec_SpacesOfPseudoHolomorphicCurves}, in particular \autoref{Prop_CurvesAreOpenInCycles}, \autoref{Thm_EmbeddedCurvesAreOpenInCycles}, and \autoref{Thm_SpaceOfEmbeddedCurvesEmbedsIntoSpaceOfSubmanifolds}.
The proofs of these results rely on geometric analysis arguments inspired by Allard's regularity theorem and White's regularity theorem for mean curvature flow.

%

Based on the results in  \autoref{Sec_SpacesOfPseudoHolomorphicCurves}, the upgraded cluster formalism is developed in \autoref{Sec_GromovWittenInvariants}, \autoref{Sec_Clusters}, and \autoref{Sec_ClusterIsotopyTheorem}.
Once it is in place,
\citeauthor{Ionel2018}'s argument proves both integrality and finiteness.
The technical part of the proof is ensuring that finiteness continues to hold in the wall-crossing formulae for the cluster contributions in a generic path of $J$s.
This is far from obvious: the naive approach using compactness for pseudo-holomorphic cycles, super-rigidity and bifurcation analysis fails to account for this, as we explained above.
However, the results of \autoref{Sec_SpacesOfPseudoHolomorphicCurves} allow us to conclude that as long as the homology class is fixed, there cannot be any sequence of higher genus pseudo-holomorphic curves converging in Hausdorff distance to an {\em embedded} pseudo-holomorphic curve, cf.  \autoref{Cor_ModuliSpaceOfEmbeddedMapsEmbedsIntoSets}. 
This is a delicate result, crucially depending on the fact that the limit is embedded and has multiplicity one when regarded as a cycle; it is false if the limit is singular or of higher multiplicity. 
It implies that if such a sequence geometrically converges to an embedded limit, then, in fact, it converges in the stronger Gromov topology. 
In that case, the Kuranishi local model in the space of embedded pseudo-holomorphic maps can be used to understand how the cluster contributions change as we vary the almost complex structure, and in particular prove finiteness for cluster deformations.}


For completeness' sake \autoref{Sec_GopakumarVafaFano} summarises the work of \citet[Theorem 1.5]{Zinger2011} and \citet[Corollary 1.18]{Doan2019} on the analogue of \autoref{Conj_GopakumarVafa} for Fano classes.
The theory developed in \autoref{Sec_SpacesOfPseudoHolomorphicCurves} allows for an alternative proof of \cite[Theorem 1.1]{Doan2019} as well as a partial strengthening of \cite[Theorem 1.6]{Doan2018a}.
This is discussed in \autoref{Rmk_NoGhosts} and \autoref{Sec_CastelnuovoOneParameterFamilies}.

\paragraph{Acknowledgements.}
We thank Camillo De Lellis for discussions about Allard's regularity theorem,
Jesse Madnick and Gavin Ball for pointing out \cite{Gray1965}, and
Robert Bryant for pointing out \cite{Hashimoto2004}. \edited{We also thank the referees for their meticulous reading of the paper and their numerous insightful suggestions.}
This material is based upon work supported by: 
\begin{itemize}
\item
  \href{https://www.simonsfoundation.org/simons-society-of-fellows/}{the Simons Society of Fellows} (AD);
\item
  \href{https://www.nsf.gov/awardsearch/showAward?AWD_ID=1905361&HistoricalAwards=false}{the National Science Foundation under Grant No.~1905361} (EI);
\item
  \href{https://www.nsf.gov/awardsearch/showAward?AWD_ID=1754967&HistoricalAwards=false}{the National Science Foundation under Grant No.~1754967},   
  \href{https://sloan.org/grant-detail/8651}{an Alfred P. Sloan Research Fellowship}, and
  \href{https://sites.duke.edu/scshgap/}{the Simons Collaboration on Special Holonomy in Geometry, Analysis, and Physics} (TW).
\end{itemize} 



\section{The space of pseudo-holomorphic cycles}
\label{Sec_SpacesOfPseudoHolomorphicCurves}

Throughout this section,
assume the following.

\begin{situation}
  \label{Sit_PseudoHolomorphicCycles}
  Let $X$ be a smooth manifold.
  A \defined{Hermitian structure} on $X$ is a pair $(J,g)$ consisting of an almost complex structure $J$ and a Riemannian metric $g$ with respect to which $J$ is orthogonal.
  Let $\SpaceOfHermitianStructures$ be a topological space of Hermitian structures $(J,g)$ on $X$ which are at least $C_\loc^3$.
  \edited{Suppose that the topology on $\SpaceOfHermitianStructures$ is metrizable and at least as fine as the $C_\loc^3$ topology.}
\end{situation}

\begin{example}
  \label{Ex_SymplecticAlmostHermitianStructures}
  If $(X,\omega)$ is a symplectic manifold,
  then there are two natural choices for $\SpaceOfHermitianStructures$:
  \begin{enumerate}
  \item
    \label{Ex_SymplecticAlmostHermitianStructures_Compatible}
    $\SpaceOfAlmostComplexStructures(\omega)$,
    the space of almost complex structures $J$ which are compatible with $\omega$;
    that is: $g \coloneq \omega(\cdot,J\cdot)$ defines a Riemannian metric.
  \item
    \label{Ex_SymplecticAlmostHermitianStructures_Tamed}
    $\SpaceOfAlmostComplexStructures_\tau(\omega)$,
    the space of almost complex structures $J$ which are tamed by $\omega$;
    that is: $g \coloneq \frac12\paren*{\omega(\cdot,J\cdot) - \omega(J\cdot,\cdot)}$ defines a Riemannian metric.
  \end{enumerate}
  In either case, $J$ is orthogonal with respect to $g$.
\end{example}

Denote by $\ModuliSpaceOfNodalMaps$ the space of pairs $(J,g;[u])$ consisting of $(J,g) \in \SpaceOfHermitianStructures$ and an equivalence class $[u]$ of stable nodal $J$--holomorphic maps \edited{equipped with the Gromov topology}.
Denote by $\ModuliSpaceOfSimpleMaps$ and 
$\ModuliSpaceOfEmbeddedMaps$ the subsets of those $(J,g;[u])$ with $u$ being simple and an embedding respectively.
Denote by $\pr_{\SpaceOfHermitianStructures} \co \ModuliSpaceOfNodalMaps \to \SpaceOfHermitianStructures$ the canonical projection map and by 
$\genus \co \ModuliSpaceOfNodalMaps \to \N_0$ and
$E \co \ModuliSpaceOfNodalMaps \to \edited{[0,\infty)}$
the maps which assign to a nodal pseudo-holomorphic map its \edited{arithmetic} genus and energy.
Gromov's compactness theorem asserts that if $X$ is compact,
then the map
\begin{equation*}
  (\pr_\SpaceOfHermitianStructures,\genus,E) \co \ModuliSpaceOfNodalMaps \to \SpaceOfHermitianStructures \times \N_0 \times [0,\infty)
\end{equation*}
is proper \cites[§1]{Gromov1985}{Hummel1997}.
\emph{The genus component is crucial};
indeed:
the map $(\pr_\SpaceOfHermitianStructures,E) \co  \ModuliSpaceOfNodalMaps \to \SpaceOfHermitianStructures \times [0,\infty)$ fails to be proper.
A trivial reason for the failure properness are ghosts components;
that is: components of the domain of a nodal map on which the map is constant.
Evidently, there are ghosts components of arbitrary genus.
A more interesting reason for the failure properness are multiple covers.
If $u \co (\Sigma,j) \to (X,J)$ is a pseudo-holomorphic map and $\pi \co (\tilde \Sigma,\tilde j) \to (\Sigma,j)$ is a branched cover,
then $u \circ \pi$ is pseudo-holomorphic and $E(u \circ \pi) = \deg(\pi) \cdot E(u)$.
Furthermore, for every $d \geq 2$ and $\genus_0 \in \N$ there is a branched cover with $\deg(\pi) = d$ and $\genus(\tilde \Sigma) \geq \genus_0$.
In either case, the unboundedness of the genus is not reflected in the subsets $\im u$ parametrized by $[u]$.

These issues can be partially resolved by considering pseudo-holomorphic cycles instead of pseudo-holomorphic maps.
The purpose of this section is to summarize the salient parts of the theory of pseudo-holomorphic cycles and add to it a few observations,
which might appear to be minor but are crucial for the proof of the Gopakumar--Vafa finiteness conjecture.
\edited{The main results of this section are:
\begin{itemize}
  \item \autoref{Thm_SpaceOfCycles_Proper}, a compactness theorem for pseudo-holomorphic cycles,
  \item \autoref{Prop_CurvesAreOpenInCycles} and \autoref{Thm_EmbeddedCurvesAreOpenInCycles},  which assert that the subsets of pseudo-holomorphic curves and embedded pseudo-holomorphic curves  are open in the space of pseudo-holomorphic cycles, 
  \item \autoref{Thm_SpaceOfEmbeddedCurvesEmbedsIntoSpaceOfSubmanifolds} and \autoref{Cor_ModuliSpaceOfEmbeddedMapsEmbedsIntoSpaceOfSubmanifolds}, which assert  that on the subset of embedded pseudo-holomorphic curves the topology of geometric convergence of cycles agrees with that of $C^1$ convergence.
\end{itemize}
In addition, these results are used to prove \autoref{Prop_SpaceOfCurvesEmbedsIntoSpaceOfCompactSubsets}, and \autoref{Cor_ModuliSpaceOfEmbeddedMapsEmbedsIntoSets}, which are crucial in the proof of the Gopakumar--Vafa finiteness conjecture in \autoref{Sec_GopakumarVafaConjecture}.
}

\subsection{Definitions and results}
\label{Subsec_DefinifionsAndResults}

\begin{definition}
  \label{Def_HausdorffMetric}
  Denote by $\SpaceOfCompactSubsets$ the set of compact subsets of $X$.
  For $(J,g) \in \SpaceOfHermitianStructures$ denote by $d \co X \times X \to [0,\infty)$ the metric induced by $g$.
  The \defined{Hausdorff metric} $d_H \co \SpaceOfCompactSubsets \times \SpaceOfCompactSubsets \to [0,\infty]$ is defined by
  \begin{equation*}
    d_H(A,B)
    \coloneq
    \max\set*{
      \adjustlimits \sup_{x \in A} \inf_{y \in B} d(x,y),
      \adjustlimits \sup_{x \in B} \inf_{y \in A} d(x,y)
    }.
    \qedhere
  \end{equation*}
\end{definition}

\begin{theorem}[{\citeauthor{Blaschke1956} \cites{Blaschke1956}[Theorem 7.3.8]{Burago2001}}]
  \label{Thm_Blaschke}
  If $(X,d)$ is compact,
  then so is $(\SpaceOfCompactSubsets,d_H)$.
\end{theorem}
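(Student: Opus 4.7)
The plan is to establish that $(\SpaceOfCompactSubsets,d_H)$ is sequentially compact by proving it is both totally bounded and complete---the standard route for metric spaces. (I implicitly restrict to non-empty compact subsets, on which $d_H$ is a genuine finite metric; the empty set, if included, is an isolated point.)

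For total boundedness, fix $\epsilon>0$. Since $(X,d)$ is compact, choose a finite $\epsilon$-net $\set{x_1,\ldots,x_m}\subset X$. For each non-empty $A\in\SpaceOfCompactSubsets$, set
\begin{equation*}
  \phi(A) \coloneq \set*{\, x_i : d(x_i,A)<\epsilon\,},
\end{equation*}
which is a non-empty finite subset of $X$, hence lies in $\SpaceOfCompactSubsets$. A direct check gives $d_H(A,\phi(A))<\epsilon$: every $x_i\in\phi(A)$ is within $\epsilon$ of $A$ by definition, and every $x\in A$ is within $\epsilon$ of the net, so within $\epsilon$ of some $x_i\in\phi(A)$. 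Since $\phi$ takes at most $2^m-1$ values, this produces a finite $\epsilon$-net in $\SpaceOfCompactSubsets$.

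For completeness, let $(A_n)$ be a Cauchy sequence in $\SpaceOfCompactSubsets$. Define the candidate limit
\begin{equation*}
  A \coloneq \bigcap_{n\geq 1}\overline{\bigcup_{k\geq n}A_k},
\end{equation*}
equivalently the set of cluster points of sequences $(y_k)$ with $y_k\in A_k$. Closedness is immediate, hence compactness follows from compactness of $X$. To see $A$ is non-empty, extract a subsequence $A_{n_k}$ with $d_H(A_{n_k},A_{n_{k+1}})<2^{-k}$; starting from any $y_1\in A_{n_1}$, inductively pick $y_{k+1}\in A_{n_{k+1}}$ with $d(y_k,y_{k+1})<2^{-k}$. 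The sequence $(y_k)$ is Cauchy, converges by compactness of $X$, and its limit lies in $A$.

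The substantive step is showing $d_H(A_n,A)\to 0$. Given $\epsilon>0$, pick $n_0$ with $d_H(A_n,A_m)<\epsilon/2$ for $n,m\geq n_0$. For the $\sup_{y\in A}d(y,A_n)$ direction, any $y\in A$ arises as a limit of points in some $A_{n_k}$, so for $n$ sufficiently large one has $d(y,A_n)<\epsilon$. For the $\sup_{x\in A_n}d(x,A)$ direction---the main obstacle---one must manufacture an element of $A$ close to a given $x\in A_n$: choose indices $n=m_0<m_1<m_2<\cdots$ with $d_H(A_{m_j},A_{m_{j+1}})<2^{-j-1}\epsilon$, set $x_0\coloneq x$, and inductively pick $x_{j+1}\in A_{m_{j+1}}$ with $d(x_j,x_{j+1})<2^{-j-1}\epsilon$. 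The resulting Cauchy sequence converges to a point of $A$ within distance $\epsilon$ of $x$. This telescoping-extraction step is where the Cauchy hypothesis is genuinely used; the rest of the argument is routine metric-space bookkeeping.
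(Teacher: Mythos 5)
Your proof is correct: it is the standard completeness-plus-total-boundedness argument for the Blaschke selection theorem, essentially the proof in the cited reference \cite[Theorem 7.3.8]{Burago2001}; the paper itself states the result with a citation and gives no proof. The only points worth polishing are cosmetic: in the first direction of the convergence step the bound $d(y,A_n)<\epsilon$ should be made explicitly uniform in $y\in A$ (it is, via the Cauchy estimate $d_H(A_k,A_n)<\epsilon/2$ already fixed), and the telescoping sum in the second direction yields $d(x,A)\leq\epsilon$ rather than a strict inequality, which is harmless.
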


\begin{remark}
  The topology induced by $d_H$ depends only the \edited{topology of $X$ induced by $d$, that is: if $d_H'$ is defined in terms of a metric $d'$ on $X$ which induces the same topology as $d$,
  then $d_H$ and $d_H'$ induce the same topology on $\SpaceOfCompactSubsets$.}  
\end{remark}

The following notion of pseudo-holomorphic cycles and their geometric convergence goes back to \citet{Taubes1996b}.

\begin{definition}
  \label{Def_JHolomorphicCycles}
  \begin{enumerate}
  \item
    Let $(J,g) \in \SpaceOfHermitianStructures$.
    An \defined{irreducible $J$--holomorphic curve} is a subset $C \subset X$ which is the image of a simple $J$--holomorphic map $u \co (\Sigma,j) \to X$ from a connected, closed Riemann surface.
    \label{Def_JHolomorphicCycle}
    A \defined{$J$--holomorphic cycle} $C$ is a formal finite sum
    \begin{equation}
      \label{Eq_JHolomorphicCycle_Decomposition}
      C = \sum_{i=1}^I m_i C_i
    \end{equation}
    of distinct irreducible $J$--holomorphic curves $C_1,\ldots,C_I$ with coefficients $m_1,\ldots,m_I \in \N$.
  \item
    \label{Def_JHolomorphicCycles_Support}
    Let $(J,g) \in \SpaceOfHermitianStructures$.
    Let $C$ be a $J$--holomorphic cycle. 
    The \defined{support of $C$} and the \defined{current associated with $C$} are the closed subset $\supp C$ and the linear map $\delta_C \co \Omega_c^2(X) \to \R$ defined by
    \begin{equation*}
      \supp C \coloneq  \bigcup_{i=1}^I C_i
      \qandq
      \delta_C(\alpha) \coloneq \sum_{i=1}^I m_i \int_{\Sigma_i} u_i^*\alpha.
    \end{equation*}
  \item
    \label{Def_JHolomorphicCycles_Topology}
    Denote by $\SpaceOfCycles$ the set of pairs consisting of an almost Hermitian structure $(J,g) \in \SpaceOfHermitianStructures$ and a $J$--holomorphic cycle $C$ in $X$.
    The \defined{geometric \edited{convergence} topology} on $\SpaceOfCycles$ is the coarsest topology with respect to which the maps
    \begin{equation*}
      \pr_\SpaceOfHermitianStructures \co \SpaceOfCycles \to \SpaceOfHermitianStructures, \quad
      \supp\co \SpaceOfCycles \to \SpaceOfCompactSubsets, \qandq
      \delta\co \SpaceOfCycles \to \Hom(\Omega_c^2(X),\R)
    \end{equation*}
    are continuous.
    \edited{Here $\Omega_c^2(X)$ denotes the space of compactly supported smooth $2$--forms on $X$ with the topology of smooth convergence over compact sets and $\Hom(\Omega_c^2(X),\R)$ is equipped with the weak--$*$ topology; see, e.g., \cite[§6.1, 6.2]{Simon1983}.}
  \item
    \label{Def_JHolomorphicCycles_Mass}
    Let $(J,g;C) \in \SpaceOfCycles$.
    The \defined{mass of $C$} with respect to $g$ is
    \begin{equation*}
      \bM(C) = \bM_g(C) \coloneq \sum_{i=1}^I m_i \area_g(C_i).
    \end{equation*}
    The \defined{homology class} of $C$ is
    \begin{equation*}
      [C] \coloneq \sum_{i=1}^I m_i [C_i]
      \qwithq
      [C_i] \coloneq \paren{u_i}_*[\Sigma_i] \in \rH_2(X,\Z).
      \qedhere
    \end{equation*}
  \end{enumerate}
\end{definition}


\begin{remark}
  If $X$ is compact, then $\supp$ can be dropped from \autoref{Def_JHolomorphicCycles_Topology} because its continuity follows from the monotonicity formula;
  see, e.g., \cite[Lemma 5.6]{Doan2018a}.
\end{remark}

\begin{remark}
  \label{Rmk_TamedAlmostHermitianStructures}
  Every $(J,g) \in \SpaceOfHermitianStructures$ defines a \defined{Hermitian form} $2$--form $\sigma(\cdot,\cdot) \coloneq g(J\cdot,\cdot)$.
  It defines a semi-calibration.
  If $C$ is $J$--holomorphic,
  then $\delta_C$ is semi-calibrated by $\sigma$.  
  \edited{In particular, $\bM_g(C) = \delta_C(\sigma)$.
    
    For every $\epsilon > 0$, 
    if $(\tilde J,\tilde g)$ is sufficiently $C^0$--close to $(J,g)$ on a compact subset $K \subset X$,
    then for every $x \in K$ and every $J_x$--invariant linear subspace $L \subset T_xX$ 
    $\abs*{\frac{\tilde\sigma|_L}{\sigma|_L} - 1} < \epsilon$;
    in particular: if $C$ is $J$--holomorphic and $\supp C \subset K$,
    then $\bM_g(C) \leq (1+\epsilon)\delta_C(\tilde \sigma)$.
    As a consequence of this observation,
    if $(J_n,g_n;C_n) \in \SpaceOfCycles^\N$ converges to $(J,g;C)$,
    then $\lim_{n \to \infty} \bM_{g_n}(C_n) = \bM(C)$.
    Therefore, $\bM$ is a continuous function on $\SpaceOfCycles$.
  }
 
  If $(X,\omega)$ is a symplectic manifold and $\SpaceOfHermitianStructures = \SpaceOfAlmostComplexStructures(\omega)$ as in \autoref{Ex_SymplecticAlmostHermitianStructures}~\autoref{Ex_SymplecticAlmostHermitianStructures_Compatible},
  then $\sigma = \omega$.
  Therefore, it is a calibration.
  If $\SpaceOfHermitianStructures = \SpaceOfAlmostComplexStructures_\tau(\omega)$ as in \autoref{Ex_SymplecticAlmostHermitianStructures}~\autoref{Ex_SymplecticAlmostHermitianStructures_Tamed},
  then $\sigma = \frac12\paren*{\omega + \omega(J\cdot,J,\cdot)}$ and need not be a calibration;
  nevertheless: $\bM(C) = \Inner{[\omega],[C]}$;
  cf.~\cite[Lemma 2.2.1]{McDuff2012}.
  Therefore, $[C]$ determines $\bM(C)$.
\end{remark}

In light of the following,
arguments regarding the geometric convergence topology on $\SpaceOfCycles$ can be carried out using sequences.
\edited{%
In fact, the proof below shows that pseudo-holomorphic cycles can be regarded as compactly supported integral currents and the notion of geometric convergence agrees with that of convergence in the corresponding flat topology for currents.

\begin{prop}
  \label{Prop_SpaceOfCycles_Metrizable}
  $\SpaceOfCycles$ is metrizable. 
\end{prop}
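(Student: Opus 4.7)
The strategy is to exhibit an explicit metric inducing the geometric topology, by assembling the metric on $\SpaceOfHermitianStructures$, the Hausdorff metric on $\SpaceOfCompactSubsets$, and a countable family of evaluation pseudometrics for the weak--$*$ topology on the image of $\delta$. Since the geometric topology is by definition the initial topology with respect to the three maps $\pr_\SpaceOfHermitianStructures$, $\supp$, and $\delta$, and a topological subspace of a countable product of metric spaces is itself metrizable, it suffices to (i) verify that these three maps jointly separate points of $\SpaceOfCycles$ and (ii) replace the weak--$*$ topology on $\Hom(\Omega_c^2(X),\R)$ by a metrizable topology on the image of $\delta$.

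For (i), tuples with different Hermitian structures are separated by $\pr_\SpaceOfHermitianStructures$. Two cycles with the same support but different decompositions \eqref{Eq_JHolomorphicCycle_Decomposition} differ in the multiplicity $m_i$ of at least one irreducible component $C_i$; this difference is detected by $\delta$ evaluated on any smooth bump $2$--form supported in a small ball around a smooth point of $C_i \setminus \bigcup_{j\ne i} C_j$, since on such a ball the integral over $\Sigma_i$ of the pulled-back form is a positive constant multiple of the area. For (ii), $\SpaceOfHermitianStructures$ is metrizable by assumption, and $(\SpaceOfCompactSubsets, d_H)$ becomes a metric space after replacing $d_H$ with $\min(d_H, 1)$. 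For the third factor, $\Omega^2_c(X)$ is separable as an LF-space of smooth forms on a second-countable manifold, so a countable subset $\{\alpha_n\} \subset \Omega^2_c(X)$ that is $C^0$-dense on each compact piece of an exhaustion of $X$ provides countably many continuous functions $(J,g;C) \mapsto \delta_C(\alpha_n)$, which I would sum with weights $2^{-n}$ and a cutoff $\min(\cdot, 1)$ to obtain a pseudometric.

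The main obstacle is verifying that the resulting combined metric induces the geometric topology: concretely, that pointwise convergence $\delta_{C_n}(\alpha_k) \to \delta_C(\alpha_k)$ on the countable dense family, together with Hausdorff convergence of supports and convergence of Hermitian structures, implies weak--$*$ convergence on \emph{all} of $\Omega^2_c(X)$. This requires a uniform local mass bound on the sequence $C_n$. Such a bound should follow from the semi-calibration property in Remark \ref{Rmk_TamedAlmostHermitianStructures}: the supports eventually lie in a fixed compact $K$, the Hermitian forms $\sigma_n \coloneq g_n(J_n\cdot,\cdot)$ converge uniformly on $K$, and testing $\delta_{C_n}$ against any fixed $\alpha_{n_0}$ whose restriction to $K$ pointwise dominates $\sigma_n$ controls the mass $\bM_{g_n}$ of $C_n$ on $K$ uniformly in $n$. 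Uniform local mass bounds yield $C^0$-equicontinuity of $\{\delta_{C_n}\}$ on $\Omega^2_K(X)$, and $C^0$-density of $\{\alpha_n\}$ then promotes pointwise convergence on the countable test family to weak--$*$ convergence, finishing the proof.
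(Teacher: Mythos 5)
Your proof is correct and is essentially the paper's argument made explicit: the paper embeds the image of $\delta$ into the dual of the separable Banach space $C^0\Omega^2(X)$ and invokes metrizability of the weak--$*$ topology there, which amounts to exactly your countable $C^0$--dense family of test forms together with the observation that each $\delta_C$ has finite mass. The uniform local mass bound you extract from the semi-calibration property is the (elided) justification for restricting to locally bounded subsets of that dual, where weak--$*$ metrizability actually holds, so your extra step fills in rather than deviates from the paper's reasoning.
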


\begin{proof}
  By the definition of the geometric convergence topology on $\SpaceOfCycles$,
  its suffices to prove that the weak $*$–topology on the space of integral pseudo-holomorphic $2$--currents is metrizable.
  
  The map $\delta$ factors through the subspace $\bI_{2,c}(X) \subset \Hom(\Omega_c^2(X),\bR)$ of closed integral currents of dimension two with compact support;
  see, e.g., \cites[§4.1.24]{Federer1969}[§27]{Simon1983}.  
  The \defined{support} of an integral current $T \in \bI_2(X)$ is the smallest closed subset $\supp(T) \subset X$ such that $T(\alpha) = 0$ for every $\alpha \in \Omega_c^2(X)$ with $\supp(\alpha) \cap \supp(T) = \emptyset$.
  This extends $\supp$ from \autoref{Def_JHolomorphicCycles}~\autoref{Def_JHolomorphicCycles_Support}.
  The \defined{mass} of an integral current $T \in \bI_2(X)$ is defined by
  \begin{equation*}
    \bM(T) \coloneq \sup\set{ T(\alpha) : \alpha \in \Omega_c^2(X) ~\text{with}~ \Abs{\alpha} \leq 1 }.
  \end{equation*}
  Here $\Abs{\cdot}$ denotes the comass norm.
  By Wirtinger's inequality \cite{Wirtinger1936},
  this extends $\bM$ from \autoref{Def_JHolomorphicCycles}~\autoref{Def_JHolomorphicCycles_Mass}.
  Observe that $\bM$ is lower semi-continuous with respect to the weak--$*$ topology.

  For every compact subset $K \subset X$, define the seminorm $\bF_K \co \bI_{2,c}(X) \to [0,\infty]$ by
  \begin{equation*}
    \bF_K(T) \coloneq \sup\set*{ T(\alpha) : \alpha \in \Omega_c^2(X) ~\text{with}~ \sup_{K} \Abs{\alpha} \leq 1, ~\text{and}~ \sup_{K} \Abs{\rd \alpha} \leq 1 }.
  \end{equation*}
  Observe that $\bF_K(T) < \infty$ implies $\supp(T) \subset K$. 
  The \defined{flat topology} $\cO_\flat$ on $\bI_{2,c}(X)$ is generated by these seminorms. In particular, $\cO_\flat$ is metrizable.
  Denote by $\cO_w$ the weak--$*$ topology.
  Observe that $\cO_w $ is Hausdorff. 

  It suffices to show that the two topologies  $\cO_\flat$ and $\cO_w $ agree on $\bI_{2,c}(X)$.
  Since the identity $\id \co (\bI_{2,c}(X),\cO_\flat) \to (\bI_{2,c}(X),\cO_w)$ is continuous, it remains to prove it is closed.
  To prove this we combine various results from geometric measure theory with the fact that a proper continuous map $f \co X \to Y$ between two topological spaces is closed whenever $Y$ is locally compact and Hausdorff.

  Consider the subsets 
  \begin{equation*}
    \bI_{2,c}^{K,M_0}(X)
    \coloneqq
    \set{
      T \in \bI_{2,c}(X)
      :
      \supp(T) \subset K,
      \bM(T) \leq M_0
    }
  \end{equation*}
  with $K \subset X$ compact and $M_0 \geq 0$.
  The Federer--Fleming Compactness Theorem for integral currents \cites{Federer1960}{White1989}[Theorem 27.3]{Simon1983} together with \autoref{Thm_Blaschke} implies that $ \bI_{2,c}^{K,M_0}(X)$ is sequentially-compact with respect to $\cO_w$. Indeed,
  the notions of convergence of sequences in $\bI_{2,c}^{K,M_0}(X)$ with respect to $\cO_\flat$ and $\cO_w$ agree \cite[§31]{Simon1983}.  
  Thus $\bI_{2,c}^{K,M_0}(X)$ is sequentially-compact with respect to $\cO_\flat$, and thus compact (since $\cO_\flat$ is metrizable).

  Since $\id$ is continuous,
  $\bI_{2,c}^{K,M_0}(X)$ is compact with respect to $\cO_w$ as well.
  Moreover, it is a neighborhood of $T\in \bI_{2,c}(X)$ provided $K$ is a compact neighborhood of $\supp(T)$ and $M_0 > \bM(T) $.
  Therefore, $\bI_{2,c}(X)$ is locally compact with respect to $\sO_w$.

  It remains to prove that $\id$ is proper.
  Suppose that $\bK \subset \bI_{2,c}(X)$ is compact with respect to $\cO_w$.
  Since $\cO_w$ is Hausdorff,
  $\bK$ is closed with respect to $\cO_w$ and, by continuity of $\id$,
  also with respect to $\sO_\flat$.  
  By continuity of $\supp$ and lower semi-continuity of $\bM$,
  there are $K \subset X$ and $M_0 \geq 0$ with $\bK \subset \bI_{2,c}^{K,M_0}(X)$.
  Since $\bI_{2,c}^{K,M_0}(X)$ is compact and $\bK$ is closed with respect to $\cO_\flat$,
  $\bK$ is compact with respect to $\cO_\flat$.
\end{proof}
}

The Federer--Fleming Compactness Theorem for integral currents
and the regularity theory for $2$--dimensional semi-calibrated integral currents developed by \citet{DeLellis2017b,DeLellis2017} lead to the following compactness theorem for pseudo-holomorphic cycles.

\begin{theorem}[{\cite[Proposition 1.9]{Doan2018a}}]
  \label{Thm_SpaceOfCycles_Proper}
  The map
  \begin{equation*}
    (\pr_\SpaceOfHermitianStructures,\supp,\bM) \co \SpaceOfCycles \to \SpaceOfHermitianStructures \times \SpaceOfCompactSubsets \times [0,\infty)
  \end{equation*}
  is continuous and proper.
  \qed
\end{theorem}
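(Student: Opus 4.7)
The plan is to prove continuity and properness separately, both hinging on the basic identity
\begin{equation*}
  \bM_g(C) = \delta_C(\sigma_g),
\end{equation*}
valid because the Hermitian $2$-form $\sigma_g(\cdot,\cdot) \coloneq g(J\cdot,\cdot)$ restricts on every $J$-holomorphic curve to the induced area form (this is the semi-calibration property recalled in \autoref{Rmk_TamedAlmostHermitianStructures}).

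For continuity, note first that $\pr_\SpaceOfHermitianStructures$ and $\supp$ are continuous by the very definition of the geometric topology on $\SpaceOfCycles$. Given $(J_n,g_n;C_n) \to (J,g;C)$ in $\SpaceOfCycles$, Hausdorff convergence $\supp C_n \to \supp C$ confines all supports to a common compact subset of $X$, so I can choose $\chi \in C_c^\infty(X,[0,1])$ that is identically $1$ on a neighbourhood of this set. Then $\bM_{g_n}(C_n) = \delta_{C_n}(\chi\sigma_{g_n})$, and since $\sigma_{g_n} \to \sigma_g$ in $C^0_\loc$ (from the assumed $C_\loc^3$-convergence of $(J_n,g_n)$) while $\delta_{C_n} \to \delta_C$ in the weak-$*$ topology, one obtains $\bM_{g_n}(C_n) \to \bM_g(C)$.

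For properness, let $(J_n,g_n;C_n)$ be a sequence with $(J_n,g_n) \to (J,g)$, $\supp C_n \to K$ in Hausdorff distance, and $\bM_{g_n}(C_n) \to m < \infty$. The Federer--Fleming compactness theorem, applied to integral currents with uniformly bounded mass and uniform compact support, yields a subsequence along which $\delta_{C_n}$ converges weakly-$*$ to an integral current $T$. Because each $\delta_{C_n}$ is semi-calibrated by $\sigma_{g_n}$ and $\sigma_{g_n} \to \sigma_g$ in $C^0_\loc$, the limit $T$ is semi-calibrated by $\sigma_g$. The regularity theorem of \citeauthor{DeLellis2017} for $2$-dimensional semi-calibrated integral currents then identifies $T$ with $\delta_C$ for a genuine $J$-holomorphic cycle $C$. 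Weak-$*$ convergence together with Hausdorff convergence gives $\supp C \subseteq K$; the reverse inclusion $K \subseteq \supp C$ follows from the uniform positive lower density on $\supp C_n$ supplied by the monotonicity formula for semi-calibrated $2$-currents, which prevents mass from ``escaping without leaving a support trace''. The continuity part then yields $\bM(C) = m$, so $(J_n,g_n;C_n) \to (J,g;C)$ in $\SpaceOfCycles$.

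The principal obstacle is the regularity step: without the deep interior regularity theory of \citeauthor{DeLellis2017} one would only know that the weak-$*$ limit $T$ is an integral current semi-calibrated by $\sigma_g$, not that its support decomposes as a finite union of images of simple $J$-holomorphic maps with integer multiplicities. Everything else -- the Federer--Fleming extraction, the support control from Hausdorff convergence, the preservation of semi-calibration under $C^0_\loc$-convergence of $\sigma$, and the monotonicity-based lower density bound forcing $K \subseteq \supp C$ -- is technical but routine within geometric measure theory.
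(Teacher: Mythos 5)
Your argument is correct and is exactly the strategy the paper intends: the theorem is not reproved here but cited from \cite[Proposition 1.9]{Doan2018a}, and the sentence immediately preceding it identifies the ingredients as precisely the Federer--Fleming compactness theorem plus the regularity theory for $2$--dimensional semi-calibrated integral currents of \citet{DeLellis2017b,DeLellis2017}, with the identity $\bM_g(C) = \delta_C(\sigma_g)$ and the monotonicity formula doing the remaining work just as in your sketch. The one point worth making explicit in your continuity step is the uniform bound $\sup_n \bM_{g_n}(C_n) < \infty$ needed to control the error term $\delta_{C_n}\paren{\chi(\sigma_{g_n}-\sigma_g)}$; it follows from the uniform boundedness principle applied to the weak--$*$ convergent sequence $(\delta_{C_n})$ on forms supported near the common compact set containing all the supports.
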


\begin{remark}
  If $\SpaceOfHermitianStructures = \SpaceOfAlmostComplexStructures(\omega)$ is as in \autoref{Ex_SymplecticAlmostHermitianStructures}~\autoref{Ex_SymplecticAlmostHermitianStructures_Compatible},
  then the proof of \autoref{Thm_SpaceOfCycles_Proper} can be based---instead of \cite{DeLellis2017b,DeLellis2017}---on the earlier work of \citet{Riviere2009} and, in dimension four, on the seminal work of \citet{Taubes1996}.
\end{remark}

\begin{remark}
  By \autoref{Rmk_TamedAlmostHermitianStructures} in either case of \autoref{Ex_SymplecticAlmostHermitianStructures} the map $\bM \co \SpaceOfCycles \to [0,\infty)$ in \autoref{Thm_SpaceOfCycles_Proper} can be replaced by $[\cdot] \co \SpaceOfCycles \to \rH_2(X,\Z)$. 
\end{remark}

To understand the relation between Gromov's compactness theorem and \autoref{Thm_SpaceOfCycles_Proper} it is enlightening to introduce the following map.

\begin{definition}
  \label{Def_MapToCycle}
  Define the map $\fz\co \ModuliSpaceOfNodalMaps \to \SpaceOfCycles$ by
  \begin{equation*}
    \fz(J,\edited{g},[u]) \coloneq \paren*{J,\edited{g}, C} 
    \qwithq C \coloneq \sum_{i=1}^I \deg \pi_i \cdot\im v_i.
  \end{equation*}
  Here $[u_1],\ldots,[u_I]$ denote the \edited{non-constant} irreducible components of $[u]$ and, 
  for every $i = 1,\ldots,I$, $u_i = v_i \circ \pi_i$ with $v_i$ a simple $J$-holomorphic map.
\end{definition}

\edited{The map $\fz$ is continuous. 
Indeed, if a sequence of stable $J_n$--holomorphic maps $(J_n,u_n)$ converges in Gromov topology to $(J, u)$, then $(u_n)$ converges to $u$ uniformly in $C^\infty_\loc$ away from the nodes of $u$, with exponential decay in the neck regions around the nodes, and $(\im u_n)$ converges to $\im u$ in Hausdorff distance. 
It follows that $(C_n=\fz(u_n))$ geometrically converges to $C = \fz(u)$ when regarded as pseudo-holomorphic cycles.
However, $\fz$ is not proper.}
The failure of properness again is due to ghosts components and branched covers with the same degree but different numbers of ramification points.

The following definitions and results concern certain important subsets of the space of cycles.

\begin{definition}
  \label{Def_PseudoHolomorphicCurve}
  \begin{enumerate}
  \item
    Let $(J,g) \in \SpaceOfHermitianStructures$.
    A \defined{$J$--holomorphic curve} is a $J$--holomorphic cycle all of whose multiplicities $m_i$ in \autoref{Eq_JHolomorphicCycle_Decomposition} are equal to one.
    Set
    \begin{equation*}
      \SpaceOfCurves
      \coloneq
      \set{ (J,g;C) \in \SpaceOfCycles : C ~\text{is a $J$--holomorphic curve}}.
    \end{equation*}
  \item
    Let $(J,g) \in \SpaceOfHermitianStructures$.
    A $J$--holomorphic curve $C$ is \defined{embedded} if its components $C_i$ in \autoref{Eq_JHolomorphicCycle_Decomposition} are disjoint and embedded.
    Set
    \begin{equation*}
      \SpaceOfEmbeddedCurves
      \coloneq
      \set{ (J,g;C) \in \SpaceOfCurves : C ~\text{is embedded}}.
    \end{equation*}
  \item
    \label{Def_PseudoHolomorphicCurve_Connected}
    Set
    \begin{gather*}
      \SpaceOfConnectedCycles
      \coloneq
      \set{
        (J,g;C) \in \SpaceOfCycles
        :
        \supp C ~\text{is connected}
      }, \\
      \SpaceOfConnectedCurves \coloneq \SpaceOfConnectedCycles \cap \SpaceOfCurves,
      \qandq
      \SpaceOfConnectedEmbeddedCurves \coloneq \SpaceOfConnectedCycles \cap \SpaceOfEmbeddedCurves.
      \qedhere
    \end{gather*}
  \end{enumerate}
\end{definition}

\begin{remark}
  A moment's thought shows that $\SpaceOfConnectedCycles = \im \fz$.
\end{remark}

Since the subset of connected, compact subsets is closed in $\SpaceOfCompactSubsets$,
$\SpaceOfConnectedCycles$ is closed in $\SpaceOfCycles$.

\begin{prop}
  \label{Prop_CurvesAreOpenInCycles}
  $\SpaceOfCurves$ is open in $\SpaceOfCycles$.
\end{prop}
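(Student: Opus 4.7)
The plan is to prove the contrapositive: that $\SpaceOfCycles \setminus \SpaceOfCurves$ is closed. So suppose $(J_n, g_n; C_n) \to (J, g; C)$ in the geometric topology, with each $C_n$ having an irreducible component of multiplicity at least $2$. I would group components by multiplicity, writing
\begin{equation*}
  C_n \;=\; \sum_{k \geq 1} k \cdot D_{n,k},
\end{equation*}
where $D_{n,k} \in \SpaceOfCurves$ is the sum of those irreducible components of $C_n$ whose multiplicity equals $k$; by hypothesis there is a $k_0 \geq 2$ and a subsequence along which $D_{n,k_0} \neq 0$. The masses $\bM_{g_n}(C_n) \to \bM_g(C)$ are bounded by \autoref{Thm_SpaceOfCycles_Proper}, and the monotonicity formula for semi-calibrated $2$-cycles yields a uniform lower area bound $\epsilon_0 > 0$ on any non-zero closed irreducible $(J_n,g_n)$-holomorphic curve (valid because $(J_n, g_n) \to (J, g)$ in $C^3_\loc$ and all supports lie in a common compact set). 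Together these imply that only finitely many values of $k$ occur, say $k \in \{1,\dots,K\}$, and that $\bM_{g_n}(D_{n,k})$ is uniformly bounded. Applying \autoref{Thm_SpaceOfCycles_Proper} again and passing to a further subsequence, I may assume $D_{n,k} \to D_k$ in the geometric topology for every $k$; continuity of mass gives $\bM(D_{k_0}) \geq \epsilon_0 > 0$, so $D_{k_0}$ is a non-zero pseudo-holomorphic cycle. Passing the identity $\delta_{C_n} = \sum_{k = 1}^K k\, \delta_{D_{n,k}}$ to the weak-$*$ limit (a \emph{finite} sum) yields the decisive identity
\begin{equation*}
  \delta_C \;=\; \sum_{k=1}^K k\, \delta_{D_k}.
\end{equation*}

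The contradiction then comes from $2$-dimensional densities. Hausdorff convergence of supports gives $\supp D_{k_0} \subseteq \supp C$, and since $D_{k_0}$ is non-zero, $\supp D_{k_0}$ has positive $\cH^2$-measure with a dense open locus of smooth non-crossing points (the singular locus of a pseudo-holomorphic cycle is $\cH^2$-null). On this locus $k_0\,\delta_{D_{k_0}}$ has $2$-dimensional density at least $k_0 \geq 2$; because every $\delta_{D_k}$ is a positive integral current, $\delta_C$ dominates $k_0\,\delta_{D_{k_0}}$ pointwise, so $\delta_C$ has density $\geq 2$ on a positive-$\cH^2$-measure subset of $\supp C$. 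On the other hand, if $C$ were in $\SpaceOfCurves$, then $\delta_C$ would have density exactly $1$ at every smooth non-crossing point of any component of $C$, i.e.\ $\cH^2$-almost everywhere on $\supp C$. This contradiction forces $C \notin \SpaceOfCurves$, completing the proof.

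The main obstacle — and really the only analytic input required beyond \autoref{Thm_SpaceOfCycles_Proper} — is the uniform area lower bound $\epsilon_0$ for irreducible $(J_n, g_n)$-holomorphic curves as $n$ varies. This should follow from the monotonicity formula for $2$-dimensional semi-calibrated integral currents, applied with constants that depend only on the compact subset of $X$ containing all supports and on uniform bounds on the Nijenhuis tensor of $J_n$ and on the curvature of $g_n$, both of which are controlled since $(J_n, g_n) \to (J, g)$ in $C^3_\loc$. Once this uniform monotonicity is recorded, the rest of the argument is formal manipulation of Hausdorff limits of supports and weak-$*$ limits of positive currents.
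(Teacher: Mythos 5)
Your argument is correct, and it rests on the same two inputs as the paper's proof — the compactness theorem (\autoref{Thm_SpaceOfCycles_Proper}) and the uniform lower mass bound for non-zero cycles supplied by the monotonicity formula (\autoref{Lem_MonotonicityFormula}) — but it deploys them along a genuinely different route. The paper writes $C_n = D_n + E_n$ with $D_n$ the multiplicity-one reduction and $E_n$ the excess, shows $D_n \to C$ (Hausdorff convergence of supports plus a mass comparison), deduces $\bM(E_n) \to 0$, and derives the contradiction by applying the monotonicity lower bound to the non-zero cycles $E_n$ along the sequence. You stratify by multiplicity instead, pass each stratum to a limit, and place the contradiction in the limit object: the surviving stratum $D_{k_0}$ forces $\delta_C$ to have $2$--dimensional density at least $k_0 \geq 2$ on a set of positive $\cH^2$--measure, which is incompatible with $C$ having all multiplicities equal to one. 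Your version needs a bit more bookkeeping (finitely many strata, $\cH^2$--nullity of the singular and crossing loci, the density computation at generic points), but it makes explicit the step the paper compresses into ``a further moment's thought shows that $D = C$'', which is essentially the same multiplicity/mass comparison you carry out at the end. The one analytic point you flag — that the monotonicity formula yields an $\epsilon_0$ uniform in $n$ because $(J_n,g_n) \to (J,g)$ in $C_\loc^3$ and all supports lie in a fixed compact set — is exactly the uniformity the paper's own final contradiction also requires, and your justification of it is adequate.
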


\begin{theorem}
  \label{Thm_EmbeddedCurvesAreOpenInCycles}
  $\SpaceOfConnectedEmbeddedCurves$ and $\SpaceOfEmbeddedCurves$ are open in $\SpaceOfCurves$ (and, therefore, in $\SpaceOfCycles$).
\end{theorem}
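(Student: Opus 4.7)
The strategy combines Allard's regularity theorem (as signaled in the introduction) with the compactness of the space of cycles (\autoref{Thm_SpaceOfCycles_Proper}) and the Hausdorff convergence of supports. Since $\SpaceOfCycles$ is metrizable by \autoref{Prop_SpaceOfCycles_Metrizable}, it suffices to argue with convergent sequences. Fix $(J,g;C) \in \SpaceOfEmbeddedCurves$ and a sequence $(J_n,g_n;C_n) \in \SpaceOfCurves$ converging to $(J,g;C)$. Writing $C = \sum_{i=1}^I C_i$ as a sum of pairwise disjoint embedded irreducible curves, I would choose pairwise disjoint tubular neighborhoods $N_i$ of the $C_i$. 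By Hausdorff convergence $\supp C_n \to \supp C$, for $n$ large $\supp C_n \subset \bigsqcup_i N_i$ and every point of $\supp C_n \cap N_i$ lies in an $\epsilon_n$--neighborhood of $C_i$ with $\epsilon_n \to 0$.

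At every point $x \in \supp C$ the density of $C$ equals $1$ and $\supp C$ is smoothly embedded. The $\epsilon$--regularity part of Allard's theorem, applied to the semi-calibrated cycles $C_n$ (using $\delta_{C_n} \to \delta_C$ weakly and $\bM(C_n) \to \bM(C)$), will show that for $n$ sufficiently large $C_n$ is, in a fixed-size ball around $x$, a multiplicity-$1$ $C^{1,\alpha}$ graph over $T_x C$; smoothness is then upgraded by elliptic bootstrapping for the $J_n$--holomorphic equation. Covering $\supp C$ by finitely many such balls, one concludes that for $n$ large $\supp C_n \cap N_i$ is a single connected smooth embedded closed surface that is a graph over $C_i$.

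It remains to identify this surface with one irreducible component of $C_n$. Any irreducible component $D$ of $C_n$ is connected and lies in $\bigsqcup_i N_i$, so $D$ is contained in a unique $N_i$. Since $D \subset \supp C_n \cap N_i$ is a $2$--dimensional subset of the connected $2$--manifold $\supp C_n \cap N_i$ that locally agrees with it, $D = \supp C_n \cap N_i$. Thus each $N_i$ contains exactly one irreducible component of $C_n$, which is embedded, and distinct components lie in disjoint $N_i$, so $(J_n,g_n;C_n) \in \SpaceOfEmbeddedCurves$. The connected case is the specialization $I=1$: a connected embedded limit has a single component, and the argument produces a single component of $C_n$ that is both embedded and connected.

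The hard part will be verifying that Allard's $\epsilon$--regularity applies uniformly along the sequence $(J_n,g_n)$ of varying almost Hermitian structures rather than in a fixed background geometry. This ought to follow from the $C^3_\loc$--convergence hypothesis in \autoref{Sit_PseudoHolomorphicCycles} together with the monotonicity formula for semi-calibrated $2$--cycles (which drives Allard regularity in the pseudo-holomorphic setting), both of which vary continuously with the background $(J,g)$, but the estimates must be tracked carefully.
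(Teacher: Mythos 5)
Your proposal follows essentially the same route as the paper: transfer the density-ratio bound from the embedded limit $C$ to the $C_n$ (the paper's \autoref{Prop_UpperBoundOnMassRatio}), apply Allard's $\epsilon$--regularity to conclude each $C_n$ is embedded, and handle connectedness by a separate compactness argument. The one step you flag as "the hard part" --- making Allard uniform along the varying $(J_n,g_n)$ --- is exactly what the paper's \autoref{Prop_AllardInSequencesOfAlmostHermitianManifolds} supplies, by interpolating the metrics $g_n$ into a single $C^2$ metric on $[0,1]\times X$, isometrically embedding into Euclidean space via Nash, and bounding the mean curvature of $J_n$--holomorphic cycles by $\abs{\nabla J_n}$ using Gray's observation (\autoref{Thm_Gray}), so that the flat-background version of Allard applies; your sketch is correct modulo carrying out that construction.
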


The proof of \autoref{Prop_CurvesAreOpenInCycles} requires the monotonicity formula and is discussed in \autoref{Sec_MonotonicityFormula}.
\autoref{Thm_EmbeddedCurvesAreOpenInCycles} is proved in \autoref{Sec_AllardRegularity}---%
using Allard's regularity theorem \cite{Allard1972} and an observation due to \citet{Gray1965}.

The following results compare the geometric convergence topology on $\SpaceOfEmbeddedCurves$ with the $C^1$ topology.

\begin{definition}
  \begin{enumerate}
  \item
    Denote by $\SpaceOfSubmanifolds$ the set of $C^2$ submanifolds of $X$.
  \item
    Let $S \in \SpaceOfSubmanifolds$.
    A \defined{tubular neighborhood of $S$} consists of an open neighborhood $U \edited{\subset NS}$ of the zero section in the normal bundle $NS$,
    an open neighborhood $V$ of $S$ in $X$, and
    a $C^1$ diffeomorphism $\jmath \co U \to V$ which restricts to identity along the zero section. 
  \item
    For $S \in \SpaceOfSubmanifolds$, a tubular neighborhood $\jmath \co U \to V$ of $S$, and $\epsilon > 0$
    set
    \begin{equation*}
      \cU(S,\jmath,\epsilon)
      \coloneq
      \set*{
        \jmath(\graph{\xi})
        :
        \xi \in \Gamma(NS)
        ~\textnormal{with}~
        \im \xi \subset U \textnormal{ and }
        \Abs{\xi}_{C^1} < \epsilon
      }.
    \end{equation*}
    The \defined{$C^1$ topology on $\SpaceOfSubmanifolds$} is the coarsest topology with respect to which the subsets $\cU(S,\jmath,\epsilon)$ are open.
    \edited{Here the $C^1$ norm is with respect to data induced by some choice of Riemannian metric on $X$.}
    \qedhere
  \end{enumerate}
\end{definition}

\begin{theorem}
  \label{Thm_SpaceOfEmbeddedCurvesEmbedsIntoSpaceOfSubmanifolds}
  The map $(\pr_\SpaceOfHermitianStructures,\supp) \co \SpaceOfEmbeddedCurves \to \SpaceOfHermitianStructures \times \SpaceOfSubmanifolds$ is an embedding.
\end{theorem}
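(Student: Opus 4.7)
The plan is to verify three things: injectivity, continuity of the map, and continuity of its inverse on the image. Injectivity is essentially tautological. For an embedded cycle $C = \sum_{i=1}^I C_i$, the irreducible components $C_i$ are exactly the connected components of $\supp C$ (they are connected, embedded, and pairwise disjoint by the definition of embedded), so the pair $((J,g),\supp C)$ recovers $C$.

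For the forward continuity, I would start from a sequence $(J_n,g_n;C_n) \to (J,g;C)$ in $\SpaceOfEmbeddedCurves$. By \autoref{Thm_SpaceOfCycles_Proper}, this already gives Hausdorff convergence $\supp C_n \to \supp C$ and convergence of masses $\bM(C_n) \to \bM(C)$. The crux is to upgrade Hausdorff to $C^1$ convergence of submanifolds. Here I would invoke Allard's regularity theorem --- the very machinery developed in \autoref{Sec_AllardRegularity} for the proof of \autoref{Thm_EmbeddedCurvesAreOpenInCycles}: around each $p \in \supp C$, both $\supp C$ and, for $n$ large, $\supp C_n$ may be written as $C^{1,\alpha}$ graphs over $T_p \supp C$, with the graph data of $\supp C_n$ converging to that of $\supp C$. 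The monotonicity formula supplies uniform lower mass bounds on connected components, so each component of $\supp C_n$ can be matched to one of $\supp C$ for $n$ large. Covering $\supp C$ by finitely many such neighborhoods and patching the graphs, one produces for any tubular neighborhood $\jmath\co U \to V$ of $\supp C$ and any $\epsilon > 0$ a section $\xi_n \in \Gamma(U)$ with $\Abs{\xi_n}_{C^1} < \epsilon$ and $\supp C_n = \jmath(\graph \xi_n)$ for all sufficiently large $n$, so $\supp C_n \in \cU(\supp C,\jmath,\epsilon)$ eventually.

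For the inverse, suppose $(J_n,g_n) \to (J,g)$ in $\SpaceOfHermitianStructures$ and $\supp C_n \to \supp C$ in the $C^1$ topology, with every $C_n$ and $C$ embedded. Hausdorff convergence of supports is then automatic from the graph description. For $\alpha \in \Omega_c^2(X)$, writing $\supp C_n = \jmath(\graph \xi_n)$ with $\Abs{\xi_n}_{C^1} \to 0$ shows that the parametrization $s \mapsto \jmath(\xi_n(s))$ of $\supp C_n$ over $\supp C$ converges in $C^1$ to the inclusion $\supp C \incl X$, and hence $\delta_{C_n}(\alpha) \to \delta_C(\alpha)$. Since the three continuous maps defining the geometric topology all converge, $C_n \to C$ in $\SpaceOfCycles$.

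The hard part is the forward continuity, where Hausdorff convergence must be promoted to $C^1$. This rests on the quantitative and stable content of Allard's regularity theorem --- that a semi-calibrated integral $2$-current close in mass and support to a $C^{1,\alpha}$ graph is itself such a graph with close $C^{1,\alpha}$ data --- together with its compatibility with perturbations of the ambient Hermitian structure. These are exactly the ingredients prepared in \autoref{Sec_AllardRegularity}, so once they are in hand the argument above should go through with only cosmetic modifications.
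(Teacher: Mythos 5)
Your overall architecture (injectivity, then upgrading geometric convergence to $C^1$ convergence of supports, then the easy converse) is exactly the paper's, but the key quantitative step is sourced differently. The paper does \emph{not} use Allard's theorem here: it establishes the uniform density bound $\theta_{C_n}(x,s;g_n)\leq 1+\epsilon$ at all points and all small scales via \autoref{Prop_UpperBoundOnMassRatio} together with the monotonicity formula (\autoref{Lem_MonotonicityFormula}), and then feeds this into \autoref{Prop_RegularityScale_Density}, a bespoke blow-up argument (rescale a putative sequence of degenerating regularity scales, identify the limit as a complex line via \autoref{Cor_Monotonicity_Rigidity}, and derive a contradiction using Schauder estimates for the Cauchy--Riemann equation). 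This yields a uniform lower bound on the $C^2$ regularity scale, after which \autoref{Prop_RegularityScale_Convergence} and \autoref{Cor_SmoothConvergence} give $C^1$ convergence in the tubular-neighborhood topology on $\SpaceOfSubmanifolds$. Your Allard-based route is viable in spirit but needs two repairs to be complete. First, you must still establish the small-density hypothesis at \emph{every} point of $\supp C_n$ and \emph{every} small scale, which is precisely the \autoref{Prop_UpperBoundOnMassRatio} plus monotonicity step; your invocation of the monotonicity formula only for ``matching components'' misses this. Second, Allard delivers $C^{1,\alpha}$ graphs with $C^{1,\alpha}$ estimates, whereas the paper's convergence framework ($\SpaceOfSubmanifolds$ consists of $C^2$ submanifolds, and \autoref{Cor_SmoothConvergence} requires weak $C^2$ convergence) is calibrated to integer regularity scales $k\geq 2$; to close the argument along your route you would either have to bootstrap the $C^{1,\alpha}$ graphs to $C^2$ using the Cauchy--Riemann equation (which is essentially what \autoref{Prop_RegularityScale_Density} does anyway), or reprove the graph-patching lemmas at $C^{1,\alpha}$ regularity. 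Neither issue is fatal, and your treatment of injectivity and of the inverse map is, if anything, more explicit than the paper's one-line dispatch of those points.
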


The proof is presented in \autoref{Sec_ConvergenceOfEmbeddedCurves};
it is based on an observation due to \citet{White2005}.

\begin{prop}
  \label{Cor_ModuliSpaceOfEmbeddedMapsEmbedsIntoSpaceOfSubmanifolds}
  The map $\fz \co \ModuliSpaceOfEmbeddedMaps \to \SpaceOfCycles$ is an open embedding;
  its image is $\SpaceOfConnectedEmbeddedCurves$.
  In particular,
  the Gromov topology on $\ModuliSpaceOfEmbeddedMaps$ agrees with the geometric convergence topology on $\SpaceOfConnectedEmbeddedCurves$.
\end{prop}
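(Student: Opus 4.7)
The plan is to verify in sequence: (i) $\fz$ restricts to a bijection from $\ModuliSpaceOfEmbeddedMaps$ onto $\SpaceOfConnectedEmbeddedCurves$; (ii) this bijection is a homeomorphism; and (iii) its image is open in $\SpaceOfCycles$. For (i), let $(J,g;[u]) \in \ModuliSpaceOfEmbeddedMaps$. Because $u$ is simple, each $v_i = u_i$ and $\deg \pi_i = 1$ in \autoref{Def_MapToCycle}, so $\fz(J,g;[u])$ has all multiplicities equal to $1$. Because $u$ is an embedding, the irreducible images are pairwise disjoint and embedded (in particular, the domain carries no nodes, as these would force self-intersections), so $\fz(J,g;[u]) \in \SpaceOfEmbeddedCurves$; connectedness of the domain forces connectedness of the support. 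Conversely, a connected embedded $J$-holomorphic curve $C$ admits a canonical simple parametrization as an embedding from a smooth connected Riemann surface, unique up to biholomorphism of the domain; this gives surjectivity and, together with the uniqueness, injectivity of $\fz$ on $\ModuliSpaceOfEmbeddedMaps$.

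The map $\fz$ is already known to be continuous. For continuity of the inverse I would invoke \autoref{Thm_SpaceOfEmbeddedCurvesEmbedsIntoSpaceOfSubmanifolds}: convergence $(J_n,g_n;C_n) \to (J,g;C)$ in $\SpaceOfConnectedEmbeddedCurves$ amounts to $(J_n,g_n) \to (J,g)$ in $\SpaceOfHermitianStructures$ together with $C^1$-convergence $\supp C_n \to \supp C$ as submanifolds. Parametrizing $\supp C_n$ as graphs of $C^1$-small normal sections $\xi_n$ over a tubular neighborhood of $\supp C$ and pulling back $J_n$, one obtains complex structures on the fixed smooth surface $\supp C$ converging to the original one; elliptic regularity for the $\delbar$-equation then upgrades this to higher-order convergence, yielding Gromov convergence of the parametrizations. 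For openness, \autoref{Thm_EmbeddedCurvesAreOpenInCycles} reduces the task to proving $\SpaceOfConnectedEmbeddedCurves$ is open in $\SpaceOfEmbeddedCurves$; via \autoref{Thm_SpaceOfEmbeddedCurvesEmbedsIntoSpaceOfSubmanifolds} this in turn reduces to the elementary fact that a $C^1$-small perturbation of a closed connected embedded submanifold is ambient-isotopic to it and hence still connected.

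I expect the main obstacle to be the regularity upgrade in the continuity of $\fz^{-1}$: strictly speaking, Gromov convergence demands higher-order convergence of parametrizations than what $C^1$-convergence of the supports directly furnishes. The graph-over-tubular-neighborhood representation converts the problem into a statement about a sequence of $J_n$-holomorphic sections of a fixed bundle whose $C^1$-norms tend to zero, and standard uniform interior estimates for the Cauchy--Riemann equation (applied after passing to isothermal coordinates on the domain, where the induced complex structures converge) then provide convergence in every $C^k$. Everything else in the argument is essentially formal, modulo the already-established openness and embedding theorems from the previous subsections.
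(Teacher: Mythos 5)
Your proposal is correct and follows essentially the same route as the paper: identify the image as $\SpaceOfConnectedEmbeddedCurves$, get openness from \autoref{Prop_CurvesAreOpenInCycles} and \autoref{Thm_EmbeddedCurvesAreOpenInCycles}, and deduce the embedding property from \autoref{Thm_SpaceOfEmbeddedCurvesEmbedsIntoSpaceOfSubmanifolds}. The only difference is that where you carry out the elliptic bootstrapping from $C^1$-convergence of supports to Gromov convergence of parametrizations by hand, the paper absorbs this into the standard fact that the Gromov topology on $\ModuliSpaceOfEmbeddedMaps$ agrees with the $C^1$ topology on maps and then concludes by a composition-of-embeddings argument.
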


\begin{proof}
  Evidently,
  the image of $\fz \co \ModuliSpaceOfEmbeddedMaps \to \SpaceOfCycles$ is $\SpaceOfConnectedEmbeddedCurves$.
  By \autoref{Prop_CurvesAreOpenInCycles} and \autoref{Thm_EmbeddedCurvesAreOpenInCycles},
  the latter is open in $\SpaceOfCycles$. 
  Since the Gromov topology on $\ModuliSpaceOfEmbeddedMaps$ agrees with the $C^1$ topology on the space of maps,
  the composition
  \begin{equation*}
    \ModuliSpaceOfEmbeddedMaps
    \xrightarrow{\fz}
    \SpaceOfEmbeddedCurves
    \xrightarrow{(\pr_\SpaceOfHermitianStructures,\supp)}
    \SpaceOfHermitianStructures \times \SpaceOfSubmanifolds
  \end{equation*}
  is an embedding; that is: a homeomorphism on its image.
  Therefore, by \autoref{Thm_SpaceOfEmbeddedCurvesEmbedsIntoSpaceOfSubmanifolds},
  $\fz$ is an embedding.
\end{proof}

\begin{remark}\label{Rem.cont.inj.not.emb} 
  The reader should be warned that the map $\fz \co \ModuliSpaceOfSimpleMaps \to \SpaceOfCurves$ is a continuous injection but fails to be an embedding.
  To see this,
  consider a sequence $(u_n \co (\Sigma,j) \to X)$ of simple $J$--holomorphic maps which Gromov converges to a nodal $J$--holomorphic map $u \co \widehat \Sigma \to X$ with
  $\widehat \Sigma = \Sigma \vee S^2$ such that $u|_\Sigma$ is constant and $v \coloneq u|_{S^2}$ is simple.
  The sequence of $J$--holomorphic curves $(\im u_n) \in (\SpaceOfCurves)^\N$ geometrically converges to $\im v$;
  however, $(u_n)$ does not converge to $v$.
  By \autoref{Cor_ModuliSpaceOfEmbeddedMapsEmbedsIntoSpaceOfSubmanifolds},
  $v$ cannot be an embedding.
  Indeed, this can also be proved by analyzing the obstruction map in the Kuranishi model of a neighborhood of $[u] \in \ModuliSpaceOfNodalMaps$;
  cf.~\cites{Ionel1998,Zinger2009,Doan2019} 
  \edited{or by different means \cite{Bai2021,EkholmShende2022}.}
\end{remark}

The following result compares the geometric convergence topology on $\SpaceOfCurves$ with the topology induced by the Hausdorff metric.

\begin{definition}
  For $A \in \rH_2(X,\Z)$ and $\Lambda > 0$ set
  \begin{equation*}
    \SpaceOfCurves_{A,\Lambda}
    =
    \set{ (J,g;C) \in \SpaceOfCurves : [C] = A ~\textnormal{and}~ \bM(C) \leq \Lambda };
  \end{equation*}
  furthermore,
  in either situation of \autoref{Ex_SymplecticAlmostHermitianStructures},
  abbreviate
  \begin{equation*}
    \SpaceOfCurves_A \coloneq \SpaceOfCurves_{A,\Lambda}
    \qwithq
    \Lambda = \Inner{[\omega],A}.
    \qedhere
  \end{equation*}
\end{definition}

\begin{prop}
  \label{Prop_SpaceOfCurvesEmbedsIntoSpaceOfCompactSubsets}
  If there exists no $(J,g;C) \in \SpaceOfCycles$ with $[C] = 0$ but $C \neq 0$,  
  then the map $(\pr_\SpaceOfHermitianStructures,\supp) \co \SpaceOfCurves_{A,\Lambda} \to \SpaceOfHermitianStructures \times \SpaceOfCompactSubsets$ is an embedding.
  In particular,
  the geometric convergence topology on $\SpaceOfCurves_{A,\Lambda}$ agrees with the topology induced by the Hausdorff metric.
\end{prop}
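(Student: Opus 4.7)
The plan is to show that $\Phi \coloneq (\pr_{\SpaceOfHermitianStructures}, \supp)$, restricted to $\SpaceOfCurves_{A,\Lambda}$, is a homeomorphism onto its image. Continuity of $\Phi$ is immediate from \autoref{Thm_SpaceOfCycles_Proper}. Injectivity is a formality: if $(J,g;C)$ and $(J,g;C')$ both lie in $\SpaceOfCurves_{A,\Lambda}$ and satisfy $\supp C = \supp C'$, then both cycles---being curves, so all multiplicities equal to one---coincide with the canonical sum of the irreducible $J$-holomorphic components of the common support, and hence $C = C'$.

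To establish continuity of the inverse on the image, I would argue sequentially. Let $x_n = (J_n, g_n; C_n) \in \SpaceOfCurves_{A,\Lambda}$ be a sequence with $\Phi(x_n) \to \Phi(x)$ for some $x = (J, g; C) \in \SpaceOfCurves_{A,\Lambda}$. Since $\pr_{\SpaceOfHermitianStructures}(x_n)$ and $\supp(x_n)$ converge and $\bM(x_n) \leq \Lambda$ is uniformly bounded, the triple $(\pr_{\SpaceOfHermitianStructures}, \supp, \bM)(x_n)$ lies in a precompact subset of $\SpaceOfHermitianStructures \times \SpaceOfCompactSubsets \times [0,\infty)$. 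Properness of this triple map from \autoref{Thm_SpaceOfCycles_Proper} forces $(x_n)$ to be precompact in $\SpaceOfCycles$. Let $x' = (J, g; D)$ be any subsequential limit; by continuity of $\Phi$, $\supp D = \supp C$.

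The hard part is to identify $D$ with $C$. The shared support forces $C$ and $D$ to share their irreducible components: writing $C = \sum_{i=1}^I C_i$, one necessarily has $D = \sum_{i=1}^I n_i C_i$ with $n_i \in \N$, so $E \coloneq D - C = \sum_{i=1}^I (n_i - 1) C_i$ is a $J$-holomorphic cycle (possibly zero). Weak-$*$ convergence of currents $\delta_{C_n} \to \delta_D$, paired with arbitrary closed $2$-forms and combined with $[C_n] = A$, yields $\Inner{[D] - A, [\alpha]} = 0$ for every class $[\alpha]$, so $[E] = [D] - [C] = 0$. The hypothesis of the proposition then excludes the possibility $E \neq 0$, giving $D = C$. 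Consequently every subsequence of $(x_n)$ admits a further subsequence converging to $x$, and hence $x_n \to x$ in $\SpaceOfCycles$; this completes the proof that $\Phi$ is an embedding onto its image, and the identification of the geometric topology on $\SpaceOfCurves_{A,\Lambda}$ with the one induced by the Hausdorff metric is then immediate.
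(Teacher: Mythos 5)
Your proposal is correct and follows essentially the same route as the paper's own proof: continuity and injectivity via the unique decomposition of the support into irreducible components, then properness (\autoref{Thm_SpaceOfCycles_Proper}) to extract a geometric subsequential limit $(J,g;D)$ with $\supp D = \supp C$, hence $D = \sum_i n_i C_i$, and finally $[D]=[C]$ together with the hypothesis to force all $n_i = 1$. The only cosmetic difference is that you spell out the weak--$*$ pairing with closed $2$--forms to identify the homology class, which the paper leaves implicit.
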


\begin{remark}
  The hypothesis of \autoref{Prop_SpaceOfCurvesEmbedsIntoSpaceOfCompactSubsets} holds in either situation of \autoref{Ex_SymplecticAlmostHermitianStructures};
  cf.~\autoref{Rmk_TamedAlmostHermitianStructures}.
\end{remark}

\begin{remark}
  \label{Rem_ZeroAreaCurves}
  The hypothesis of \autoref{Prop_SpaceOfCurvesEmbedsIntoSpaceOfCompactSubsets} is necessary.
  Consider $S^6$ with the almost Hermitian structure $(J,g)$ induced by the octonions;
  \edited{that is: by regarding $S^6 \subset \Im \bO \subset \bO$ with $J$ at $x \in S^6$ given by octonionic left-multiplication by $x$ and $g$ induced by the standard Euclidean metric.}  
  Choose a sequence of distinct geodesic $J$--holomorphic $2$--spheres $(S_n)$ converging to a $J$--holomorphic geodesic $2$--sphere $S$.
  $(S_n\amalg S)$ converges to $S$ with respect to the Hausdorff metric,
  but $(S_n + S)$ does not geometrically converge to $S$: it geometrically converges to $2S \notin \SpaceOfCurves$.
  This issue also occurs with irreducible $J$--holomorphic curves;
  cf.~\citet{Hashimoto2004}.
\end{remark}

\begin{remark}\label{Rem.Z.si.cont.inj.not.emb} 
  The reader should be warned that the map $(\pr_\SpaceOfHermitianStructures,\supp) \co \SpaceOfCurves \to \SpaceOfHermitianStructures \times \SpaceOfCompactSubsets$ is a continuous injection but fails to be an embedding.
  To see this,
  consider a sequence of pseudo-holomorphic curves $(C_n)$ geometrically converging to a pseudo-holomorphic cycle $m C$ with $m \geq 2$.
  The sequence $(\supp C_n)$ converges to $\supp C$, but $(C_n)$ does not geometrically converge to $C$.
\end{remark}

\begin{proof}[Proof of \autoref{Prop_SpaceOfCurvesEmbedsIntoSpaceOfCompactSubsets}]
  The map $(\pr_\SpaceOfHermitianStructures,\supp)$ is continuous and injective \cite[Proposition 2.4.4, Corollary 2.5.3, Theorem E.1.2]{McDuff2012}.
  To prove that it is an embedding,
  let $(J_n,g_n;C_n) \in \paren{\SpaceOfCurves_{A,\Lambda}}^\N$ be such that $(J_n,g_n;\supp C_n)$ converges to $(J,g;\supp C)$ with $(J,g;C)\in \SpaceOfCurves_{A,\Lambda}$.
  By \autoref{Thm_SpaceOfCycles_Proper},
  $(J_n,g_n;C_n)$ converges to $(J,g;C') \in \SpaceOfCycles$.
  By continuity, $\supp C' = \supp C$.
  Therefore, if $C = \sum_{i = 1}^I C_i$,
  then $C' = \sum_{i = 1}^I m_i C_i$ with $m_1,\ldots,m_I \in \N$.
  Since $[C'] = A = [C]$, 
  and by the hypothesis, $m_1 = \ldots = m_I = 1$;
  hence: $C' = C$.
\end{proof}

Finally, here is a partial summary of the above results in the symplectic setting.

\begin{definition}
  For $A \in \rH_2(X,\Z)$ and $\genus \in \N_0$ set
  \begin{align*}
    \SpaceOfConnectedEmbeddedCurves_A
    &\coloneq
      \set{
      (J,h;C) \in \SpaceOfConnectedEmbeddedCurves : [C] = A
      }, \\     
    \ModuliSpaceOfEmbeddedMaps_A
    &\coloneq
      \set{
      (J,h;[u \co \Sigma \to X]) \in \ModuliSpaceOfEmbeddedMaps : u_*[\Sigma] = A
      },
      \qand
    \\
    \ModuliSpaceOfEmbeddedMaps_{A,\genus}
    &\coloneq
      \set{
      (J,h;[u \co \Sigma \to X]) \in \ModuliSpaceOfEmbeddedMaps : u_*[\Sigma] = A, \genus(\Sigma) = \genus
      }.
      \qedhere
  \end{align*}
\end{definition}

\begin{prop}
  \label{Cor_ModuliSpaceOfEmbeddedMapsEmbedsIntoSets}
  If $(X,\omega)$ is symplectic and $\SpaceOfHermitianStructures = \SpaceOfAlmostComplexStructures_\tau$ as in \autoref{Ex_SymplecticAlmostHermitianStructures}~\autoref{Ex_SymplecticAlmostHermitianStructures_Tamed},
  then for every $A \in \rH_2(X,\Z)$ the map
  \begin{equation*} 
    (\pr_\SpaceOfAlmostComplexStructures,\im) \co \ModuliSpaceOfEmbeddedMaps_A \to \SpaceOfConnectedEmbeddedCurves_A
  \end{equation*} 
  is a homeomorphism and $ \SpaceOfConnectedEmbeddedCurves_A$ is open in $\SpaceOfCycles$.
  In particular:
  \begin{enumerate}
  \item
    The Gromov topology on $\ModuliSpaceOfEmbeddedMaps_A$ agrees with the geometric convergence topology as well as with the topology induced by the Hausdorff metric.
  \item
    If $C$ is an irreducible, embedded $J$--holomorphic curve representing $A$ and of genus $\genus$,
    then there is an open neighborhood of $(J,C) \in \SpaceOfAlmostComplexStructures_\tau(\omega) \times \SpaceOfCompactSubsets$ which contains no other images of pseudo-holomorphic cycles representing $A$ except for those in the image of $\ModuliSpaceOfEmbeddedMaps_{A,\genus}$.
    \qedhere
  \end{enumerate}
\end{prop}

The remainder of this section contains the proofs of \autoref{Prop_CurvesAreOpenInCycles}, \autoref{Thm_EmbeddedCurvesAreOpenInCycles}, and \autoref{Thm_SpaceOfEmbeddedCurvesEmbedsIntoSpaceOfSubmanifolds}.
A reader who is solely interested in the applications of these results to symplectic geometry might proceed to the next section.

\subsection{The monotonicity formula}
\label{Sec_MonotonicityFormula}

The proofs of \autoref{Prop_CurvesAreOpenInCycles} and \autoref{Thm_SpaceOfEmbeddedCurvesEmbedsIntoSpaceOfSubmanifolds} require the following monotonicity formula.
This result is standard and can be derived from \cite[Theorem 2.1]{DeLellis2018a} and \cite[Proposition 5.3]{Gray1965}.
Variants of this result can be found in the literature on pseudo-holomorphic curves---e.g.: \cite[Proposition 3.12]{Zinger2020}.
For the readers' convenience a proof is included below.

\begin{lemma}[Monotonicity formula]
  \label{Lem_MonotonicityFormula}
  For every $\epsilon \geq 0$ and $\delta = \frac12\epsilon$ the following holds.
  Let $(J,g)$ be an almost Hermitian structure on $X$.
  Let $x \in X$ and $r_1,r_2 \in (0,\inj_g(x))$ with $r_1 \leq r_2$.
  Let $C \subset B_{r_2}(x)$ be a $J$--holomorphic submanifold.
  If
  \begin{equation*}
    \Abs*{r^{-2}\paren{J - J_x}}_{C^0(B_{r_2}(x))}
    \leq
    r_2^{-2}\delta
    \qandq
    \Abs*{r^{-2}\paren{g - g_x}}_{C^0(B_{r_2}(x))}
    \leq
    r_2^{-2}\delta,
  \end{equation*}
  then
  \begin{equation*}
    (1+\epsilon r_2^2)\frac{\area\paren{C \cap B_{r_2}(x)}}{r_2^2}
    -
    (1-\epsilon r_1^2)\frac{\area\paren{C \cap B_{r_1}(x)}}{r_1^2}
    \geq
    \int_{C\cap\paren{B_{r_2}(x) \setminus B_{r_1}(x)}} \frac{\abs{\nabla r^\perp}^2}{r^2} \, \vol_C.
  \end{equation*}
  Here $r \coloneq d(\cdot,x)$ and $(\cdot)^\perp$ denotes the projection onto the orthogonal complement of $T_yC$.
\end{lemma}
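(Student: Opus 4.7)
The proof plan follows the classical template for monotonicity of minimal surfaces, adapted to the pseudo-holomorphic setting where $C$ is only approximately minimal. First, I would establish that the mean curvature vector $H$ of a $J$-holomorphic submanifold is controlled by the non-integrability of $J$ and the failure of $g$ to be K\"ahler at $x$. Concretely, writing the second fundamental form of $C$ in a local frame and using that $TC$ is $J$-invariant, one shows $|H|_g \leq c(|\nabla J|_g + |\nabla g|_g)$ pointwise on $C$. Combining this with the Taylor-expansion hypotheses, in geodesic normal coordinates centered at $x$ one obtains $|H|_g \leq C \delta\, r_2^{-2} \cdot r$ on $C \cap B_{r_2}(x)$, which is the crucial ingredient for the curvature correction.

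Next, I would apply the first variation formula to the radial vector field $X = \phi(r) \, r \nabla r$, where $\phi$ is a cutoff. The divergence along $C$ splits as $\div_C X = 2\phi + r\phi'(1 - |\nabla r^\perp|^2) + O(r^2 \cdot \sup |J-J_x|, r^2 \cdot \sup |g - g_x|)$, where the first two terms are the Euclidean contribution and the last encodes metric distortion. Pairing with $H$ via the first variation formula $\int_C \div_C X \, \vol_C = -\int_C \langle X, H\rangle \, \vol_C$, and using the mean curvature bound, yields
\begin{equation*}
   \frac{d}{dt}\!\left(\frac{\area(C \cap B_t(x))}{t^2}\right) \geq \frac{1}{t}\!\int_{C \cap \partial B_t(x)} \frac{|\nabla r^\perp|^2}{|\nabla^C r|} \, d\mathcal{H}^1 - c \epsilon \, \frac{\area(C \cap B_t(x))}{t}
\end{equation*}
in a distributional sense (with $\delta = \epsilon/2$ absorbing the $1/2$ that appears when bounding products of small quantities by a single $\epsilon$).

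Integrating from $r_1$ to $r_2$ and rearranging, the multiplicative correction $e^{\pm c\epsilon r^2}$ linearizes to the factors $(1\pm\epsilon r^2)$ appearing in the statement once $\epsilon r_2^2$ is small; the error absorption is where the choice $\delta = \epsilon/2$ is used. The coarea identity $\int_{C\cap \partial B_t} |\nabla r^\perp|^2/|\nabla^C r| \, d\mathcal{H}^1 = \frac{d}{dt} \int_{C \cap B_t} |\nabla r^\perp|^2 \, \vol_C$ converts the boundary integral into the bulk quantity $\int_{C\cap(B_{r_2}\setminus B_{r_1})} |\nabla r^\perp|^2/r^2 \, \vol_C$ after integration by parts against $1/t^2$. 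The main obstacle is bookkeeping: tracking that both the mean-curvature error and the metric distortion error can be absorbed into a single linear factor $(1\pm \epsilon r^2)$ rather than producing separate multiplicative corrections, and verifying that $\delta = \epsilon/2$ (rather than a smaller fraction) suffices. Since both \cite[Theorem 2.1]{DeLellis2018a} and \cite[Proposition 5.3]{Gray1965} carry out essentially this calculation---the former in the varifold setting with bounded mean curvature, the latter for the pointwise geometry of $J$-holomorphic submanifolds---the argument reduces to combining these two standard inputs.
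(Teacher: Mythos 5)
Your plan has a genuine gap at its very first step. The lemma's hypotheses are purely $C^0$: they bound $\abs{J - J_x}$ and $\abs{g - g_x}$ (with quadratic decay in $r$), but say nothing about $\nabla J$ or $\nabla g$. The mean curvature of a $J$--holomorphic submanifold is controlled by the \emph{first derivative} of $J$ (this is exactly \autoref{Thm_Gray}, $\abs{H_C}\leq\abs{\nabla J}$), which under the stated hypotheses may be arbitrarily large. Your claimed estimate $\abs{H}_g\leq C\delta\, r_2^{-2}r$ therefore does not follow, and the first-variation computation resting on it collapses. (The paper's remark that the result ``can be derived from'' \cite[Theorem 2.1]{DeLellis2018a} and \cite[Proposition 5.3]{Gray1965} points to an alternative derivation under hypotheses controlling $\nabla J$; it is not how the lemma as stated is proved.) A secondary issue: your route produces exponential correction factors $e^{\pm c\epsilon r^2}$, which only linearize to $(1\pm\epsilon r^2)$ under a smallness assumption on $\epsilon r_2^2$ that the lemma does not make.

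The paper's actual proof, following \citet{Imagi2015}, avoids mean curvature entirely. Since $C$ is semi-calibrated by $\sigma\coloneq g(J\cdot,\cdot)$, one has $\area(C\cap B_s)=\int_{C\cap B_s}\sigma$, and this is compared with $\int_{C\cap B_s}\sigma_0$ for the flat K\"ahler form $\sigma_0$. The exact identities $\sigma_0=\tfrac12\rd(r^2\alpha)$ and $i(\del_r)\rd r\wedge\sigma_0=\tfrac12 r^2\rd\alpha$ with $\alpha=r^{-1}i(\del_r)\sigma_0$ yield, via Stokes, the Euclidean monotonicity identity whose defect term is $\int_{C}\abs{\del_r^\perp}^2/r^2\,\vol_C$; the semi-calibration property is what converts $i(\del_r)\rd r\wedge\sigma|_C$ into $\abs{\del_r^\perp}^2\vol_C$. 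The only errors are $C^0$ comparisons of $\sigma$ with $\sigma_0$, which is precisely what the hypotheses supply, and the factors $(1+\epsilon r_2^2)$ and $(1-\epsilon r_1^2)$ come out exactly with $\delta=\tfrac12\epsilon$. To repair your argument you would either have to strengthen the hypotheses to control $\nabla J$ and $\nabla g$, or switch to this calibration-type computation.
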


\begin{proof}
  The following argument is essentially due to \citet[§3]{Imagi2015}.
  The cognizant reader will realize that the proof immediately carries over to semi-calibrated cycles.
  
  It suffices the prove the statement with $X = B_1(0) \subset \C^m$, $x = 0$, $r_2 = 1$, $r_1 = s$, $J_x = i$, $g_x = g_0$, $\exp_x^g = \id_{B_1(0)}$, and $\nabla r = \del_r$.
  \edited{Here $g_x$, $J_x$ indicate the Euclidean inner product, almost complex structure on $T_xX = \C^m$.
  Let $\sigma_0$ be the Hermitian form of $g_0$ and $i$.}
  By hypothesis, $C$ is semi-calibrated by $\sigma \coloneq g(J\cdot,\cdot)$.

  Define $f_0 \co (0,1] \to [0,\infty)$ by
  \begin{equation*}
    f_0(s)
    \coloneq
    s^{-2}\int_{B_s(0) \cap C} \sigma_0.
  \end{equation*}
  A moment's thought shows that
  \begin{equation*}
    \sigma_0 = \frac12 \rd(r^2\alpha)
    \qandq
    i(\del_r)\rd r \wedge \sigma_0
    = \frac12 r^2\rd \alpha    
    \qwithq
    \alpha \coloneq r^{-1}i(\del_r)\sigma_0.
  \end{equation*}
  Therefore,
  \begin{equation*}
    f_0(s)
    =
    \frac12 \int_{\del B_s(0) \cap C} \alpha
  \end{equation*}
  and
  \begin{equation*}
    f_0(1)-f_0(s)
    = \frac12 \int_{\paren{B_1(0)\setminus B_s(0)} \cap C} \rd \alpha
    = \int_{\paren{B_1(0)\setminus B_s(0)} \cap C} r^{-2} i(\del_r) \rd r \wedge \sigma_0.
  \end{equation*}

  Since $C$ is semi-calibrated by $\sigma$,
  if $\nu \perp T_yC$,
  then $i(\nu)\sigma|_C = 0$.
  Therefore, with $(e_1,e_2)$ denoting a local orthonormal frame of $C$
  \begin{equation*}
    \paren{i(\del_r) \rd r \wedge \sigma}|_C
    =
    \Inner{\rd r \wedge \sigma,\del_r\wedge e_1\wedge e_2} \cdot \vol_C
    =
    \Inner{\rd r \wedge \sigma,\del_r^\perp\wedge e_1\wedge e_2} \cdot \vol_C
    =
    \abs{\del_r^\perp}^2 \cdot \vol_C
  \end{equation*}
  This proves the assertion with $\epsilon = 0$. 
  
  The function $f \co (0,1] \to [0,\infty)$ defined by
  \begin{equation*}
    f(s)
    \coloneq
    s^{-2}\int_{B_s(0) \cap C} \sigma
  \end{equation*}
  satisfies
  \begin{equation*}
    (1-\delta s^2) \cdot f(s)
    \leq
    f_0(s)
    \leq
    (1+\delta s^2) \cdot f;
  \end{equation*}
  moreover,
  \begin{equation*}
    i(\del_r) \rd r \wedge \sigma_0|_C
    \geq
    i(\del_r) \rd r \wedge \sigma|_C
    - \delta r^2 \cdot \vol_C.
  \end{equation*}
  Therefore,
  \begin{align*}
    (1+\delta s^2)\cdot f(1) - (1-\delta s^2) \cdot f(s)
    &\geq
      f(1) - f(s) \\
    &=
      \int_{\paren{B_1(0)\setminus B_s(0)} \cap C} r^{-2} i(\del_r) \rd r \wedge \sigma_0 \\
    &\geq
      \int_{\paren{B_1(0)\setminus B_s(0)} \cap C} \frac{\abs{\del_r^\perp}}{r^2} \cdot\vol_C - \delta s^2 \cdot f(1).
  \end{align*}
  This proves the assertion.
\end{proof}

\begin{cor}
  \label{Cor_Monotonicity_Rigidity}
  If $C$ is an $i$--holomorphic $2$--dimensional submanifold of $\C^n$ satisfying
  \begin{equation*}
    \area\paren*{C \cap B_r(x)} = \pi r^2
  \end{equation*}
  for every $r > 0$,
  then $C$ is a complex line.
\end{cor}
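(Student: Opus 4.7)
The plan is to exploit the equality case of the monotonicity formula, \autoref{Lem_MonotonicityFormula}. Since $(J,g) = (i,g_0)$ is constant on $\C^n$, the hypotheses of that lemma hold trivially with $\delta = \epsilon = 0$ at every centre $x$ and every pair of radii $0 < r_1 \leq r_2 < \infty$. The resulting inequality becomes
\begin{equation*}
  \frac{\area(C\cap B_{r_2}(x))}{r_2^2} - \frac{\area(C\cap B_{r_1}(x))}{r_1^2} \geq \int_{C\cap(B_{r_2}(x)\setminus B_{r_1}(x))} \frac{\abs{\nabla r^\perp}^2}{r^2}\,\vol_C,
\end{equation*}
and the hypothesis $\area(C\cap B_r(x)) = \pi r^2$ makes the left-hand side vanish for every such $r_1, r_2$. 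As the integrand is non-negative, it vanishes identically, so $\nabla r^\perp \equiv 0$ on $C$; equivalently, the radial vector $\del_r$ is tangent to $C$ at every point.

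Next, the tangency of $\del_r$ means that the positive radial dilations $y \mapsto x + \lambda(y-x)$ preserve $C$, so $C$ is a cone with vertex $x$. Letting $r \to 0$ in the hypothesis shows that the two-dimensional density of $C$ at $x$ equals $1$, so $x \in \overline{C}$ and $\overline{C}$ is a smooth submanifold near $x$ (for instance by Allard's regularity, though the cone structure already forces this). Consequently $\overline{C}$ coincides with the real $2$-plane $P \coloneq T_x\overline{C}$ in a neighbourhood of $x$, and the cone property propagates this identification globally to give $\overline{C} = P$; hence $C = P$.

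Finally, since $C$ is $i$-holomorphic the real $2$-plane $P$ is invariant under multiplication by $i$, and is therefore a complex line through $x$. The step requiring most care is the equality case of monotonicity and the extraction of the cone property; the passage from a smooth cone to a linear subspace is elementary, and the $i$-holomorphicity enters only at the very last step.
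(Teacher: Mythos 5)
Your proof is correct and is exactly the argument the paper intends: \autoref{Cor_Monotonicity_Rigidity} is stated there without proof, as an immediate consequence of \autoref{Lem_MonotonicityFormula}, and the standard route is the one you take --- equality in the monotonicity formula (with $\epsilon=\delta=0$ since $(i,g_0)$ is constant) forces $\nabla r^\perp \equiv 0$, hence the cone property, and a smooth two-dimensional cone of vertex density one is a plane, which $i$--invariance of the tangent space upgrades to a complex line. The only point worth tightening is the parenthetical claim that the cone structure alone forces regularity at the vertex; it does not without the density-one hypothesis (or minimality of the link), so keep the Allard route, or note that the link is a closed geodesic of length $2\pi$ in the unit sphere and hence an embedded great circle.
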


\begin{proof}[Proof of \autoref{Prop_CurvesAreOpenInCycles}]
  Suppose $(J_n,g_n;C_n) \in \SpaceOfCycles^\N$ geometrically converges to $(J,g;C) \in \SpaceOfCurves$.
  For every $n \in \N$ decompose $C_n$ as
  \begin{equation*}
    C_n = D_n + E_n
    \qwithq
    D_n \coloneq \sum_{i=1}^{I_n} C_{n,i} \qandq
    E_n \coloneq \sum_{i=1}^{I_n} (m_{n,i}-1) C_{n,i}.
  \end{equation*}
  \edited{By construction,
    $\bM(D_n) + \bM(E_n) = \bM(C_n)$, $\supp D_n = \supp C_n$, and $\supp E_n \subset \supp C_n$.
    Moreover, $\lim_{n\to \infty} \bM(C_n) = \bM(C)$ by the continuity of mass, see \autoref{Rmk_TamedAlmostHermitianStructures}.}
  By \autoref{Thm_SpaceOfCycles_Proper} every subsequence of $(J_n,g_n;D_n)$ has a subsequence which geometrically converges to a limit $(J,g;D)$.
  By construction $(\supp D_n)$ converges to $\supp C$;
  hence:  $\supp D = \supp C$.
  A further moment's thought shows that $D = C$.
  Therefore, $(J_n,g_n;D_n)$ geometrically converges to $(J,g;C)$ and $\lim_{n \to \infty} \bM(E_n) = 0$.
  The latter contradicts \autoref{Lem_MonotonicityFormula}.
\end{proof}

\edited{
  \begin{remark*}
    A referee pointed out that it is possible to replace the above use of \autoref{Thm_SpaceOfCycles_Proper},
    which relies on the delicate regularity theory for semi-calibrated currents,
    by an application of Federer--Fleming's Compactness Theorem combined with the Constancy Theorem \cite[Theorems 3.11 and 2.34]{Simon1983}.
  \end{remark*}
}


\subsection{Allard's regularity theorem}
\label{Sec_AllardRegularity}

\begin{definition}
  Let $g$ be Riemannian metric on $X$.
  Let $d \in \N_0$.
  Denote by $\sH^d$ the $d$--dimensional Hausdorff measure.
  \begin{enumerate}
  \item
    A Borel subset $S \subset X$ is \defined{rectifiable of dimension $d$} if there is a countable set $\set{ S_i : i \in I }$ of \edited{$d$--dimensional} $C^1$ submanifolds with
    \begin{equation*}
      \sH^d\paren[\Big]{S \setminus \bigcup_{i \in I} S_i} = 0.
    \end{equation*}
  \item
    An \defined{integral varifold of dimension $d$} is a pair $V = (S,m)$ consisting of a rectifiable subset $S$ of dimension $d$ and a Borel function $m \co S \to \N$.
  \end{enumerate}
  Let $V = (S,m)$ be an integral varifold of dimension $d$.
  \begin{enumerate}[resume]
  \item
    The \defined{measure associated with $V$} and the \defined{mass of $V$} are defined by
    \begin{equation*}
      \mu_V \coloneq m \, \sH^d|_S
      \qandq
      \bM(V) \coloneq \mu_V(X) = \int_S m \, \sH^d.
    \end{equation*}
  \item
    Let $H_V$ be a Borel vector field over $S$.
    $V$ has \defined{mean curvature $H_V$} if for every compactly supported $C^1$ vector field $v$
    \begin{equation*}
      \int \Inner{H_V,v} \,\mu_V
      =
      -\left.\frac{\rd}{\rd t}\right|_{t=0}
      \int_{\flow_v^t(S)} m \circ \flow_v^{-t} \, \sH^d.
     \end{equation*}
     Here $\flow_v^t$ denotes the flow of $v$.
   \item
     For $x \in X$ and $r > 0$ set
     \begin{equation*}
       \theta_V(x,r)
       =
       \theta_V(x,r;g)
       \coloneq
       \frac{\mu_V(B_r(x))}{\omega_d r^d}.
     \end{equation*}
     Here $\omega_d \coloneq \vol(B_1^d(0))$.
     \qedhere
  \end{enumerate}
\end{definition}

\begin{theorem}[{\citet[§8]{Allard1972}; see also \cites[Theorem 24.3]{Simon1983}[Theorem 3.2]{DeLellis2018a}}]
  \label{Thm_AllardRegularity}
  Let $m,d \in \N_0$ with $d \leq m$ and $\alpha \in (0,1)$.
  There are $\epsilon = \epsilon(m,d,\alpha) > 0$ and \edited{$0 < \gamma = \gamma(m,d,\alpha) < 1$} such that the following holds for every $r > 0$.
  If $V$ is an integral varifold of dimension $d$ in $(B_r^m(0),g_0)$ satisfying
  \begin{equation*}
    \theta_V(0,r) \leq 1+\epsilon \qandq
    \Abs{H_V}_{L^\infty(B_r(0))} \leq \epsilon/r,
  \end{equation*}
  then $V \cap B_{\gamma r}(0)$ is a $C^{1,\alpha}$ submanifold of $\R^m$.
  \qed
\end{theorem}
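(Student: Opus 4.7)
The plan is to follow Allard's original argument from \cite[\S8]{Allard1972}, as presented in \cite[\S24]{Simon1983}. After rescaling one may assume $r = 1$. The first step is to propagate the density hypothesis to all smaller scales. The monotonicity formula for integral varifolds with bounded generalized mean curvature states that $s \mapsto e^{Cs\epsilon}\theta_V(x,s)$ is nondecreasing, so $\theta_V(x,s) \leq 1 + C\epsilon$ uniformly for $x \in \supp\mu_V \cap B_{1/2}(0)$ and $s \in (0, 1/2]$. Since $\theta_V \geq 1$ on the support of an integral varifold, the density is pinned within $O(\epsilon)$ of $1$ at every such point and scale.

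Next I would introduce the tilt-excess
\begin{equation*}
  E(x,s,T) \coloneq s^{-d}\int_{B_s(x)} \Abs{\pi_{T_y V} - \pi_T}^2 \, \rd\mu_V(y)
\end{equation*}
relative to an affine $d$-plane $T$ and show that $E(0, 1/2, T_0) \leq C\epsilon$ for some plane $T_0$. This is a compactness argument: a counterexample would produce a sequence $(V_n)$ with $H_{V_n} \to 0$ converging, by Allard's compactness theorem for integral varifolds, to a stationary integral varifold of density exactly $1$; by the constancy theorem the limit is a multiplicity-one $d$-plane, contradicting the assumed lower bound on tilt. Once density and tilt-excess are both small, a Lipschitz approximation step---proved by a Besicovitch covering combined with monotonicity---shows that $\supp\mu_V \cap B_{1/4}(0)$ lies on the graph of a function $f \co T_0 \to T_0^\perp$ with small Lipschitz constant, outside a ``bad set'' whose $\sH^d$-measure is controlled by the excess.

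The main obstacle, and the core of Allard's paper, is the harmonic approximation and excess-decay step. Testing the first variation identity against cutoffs of affine vector fields tangent to $T_0$ shows that $f$ is a weak approximate solution of Laplace's equation with error controlled by $E + \|H_V\|_\infty^2$. Comparison with a genuine harmonic extension $h$ and interior $C^{1,\alpha}$ estimates for $h$ then yield the fundamental decay inequality $E(0, \theta, T') \leq C\theta^{2} E(0, 1, T_0) + C\theta^{-d}\epsilon^2$ for some $\theta \in (0,1)$ and some nearby tilted plane $T'$. Iterating this estimate on dyadic balls centered at points of $\supp\mu_V$ produces Morrey-type decay of the excess and hence Hölder-continuous tangent planes with any prescribed exponent $\alpha \in (0,1)$; therefore $V \cap B_\gamma(0)$ is a $C^{1,\alpha}$ submanifold for a suitable $\gamma = \gamma(m,d,\alpha) > 0$.
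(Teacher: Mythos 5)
The paper does not prove this theorem: it is quoted as a black box from \citet[§8]{Allard1972} (see also \cite[Theorem~24.3]{Simon1983} and \cite[Theorem~3.2]{DeLellis2018a}), and your outline is a faithful reproduction of the standard argument in those references --- monotonicity with bounded generalized mean curvature, smallness of the tilt-excess, Lipschitz approximation, harmonic approximation, and iterated excess decay. So your proposal is correct and takes exactly the route the paper points to; there is nothing to compare beyond that.
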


\begin{remark}
  This implies a corresponding result for Riemannian manifolds.
  Indeed,
  Nash proved that every Riemannian manifold $(X,g)$ admits an isometric embedding
  $\iota\co (X,g) \incl (\R^m,g_0)$ with $m = m(\dim X)$.
  Moreover,
  if $\rII_\iota$ denotes the second fundamental form of this embedding,
  then
  \begin{equation*}
    \abs{H_{\iota(V)}} \leq \abs{H_V} + \abs{\rII_\iota}.
    \qedhere
  \end{equation*}  
\end{remark}

\begin{remark}
  It is a nuisance that the dependence of $\epsilon$ on $g$ is not explicit.
  It should be possible to prove \autoref{Thm_AllardRegularity} directly for $g = g_0 + O(r^2)$ on $B_r^m(0)$.
  By careful bookkeeping in the proof of Nash's (local) isometric embedding theorem,
  it should also be possible to obtain bounds on the second fundamental form $\rII_\iota$ depending on $g - g_0$ and its derivatives.
  Unfortunately,
  the authors failed to locate proofs of either result in the literature.
\end{remark}

\begin{theorem}[{\citet[Proposition 5.5]{Gray1965}}]
  \label{Thm_Gray}
  Let $(J,g)$ be an almost Hermitian structure on $X$.
  For every $J$--holomorphic cycle $C$
  \begin{equation*}
    \abs{H_C} \leq \abs{\nabla J}.
  \end{equation*}
\end{theorem}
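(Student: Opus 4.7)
The plan is to reduce to a pointwise estimate on the regular part of $\supp C$ and then carry out a local computation in a $J$-invariant orthonormal frame. Decompose $C = \sum_i m_i C_i$ into irreducible components. The associated Radon measure is $\mu_C = \sum_i m_i \sH^2|_{C_i}$, and the varifold mean curvature $H_C$ is defined $\mu_C$-almost everywhere; at each smooth point $p$ of a single component $C_i$ (the singular loci and the pairwise intersections $C_i \cap C_j$ form an $\sH^2$-null set), it coincides with the classical mean curvature vector of the smooth submanifold $C_i$. Hence it suffices to prove the pointwise bound $|H|_p \leq |\nabla J|_p$ at every such regular point.

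Fix a regular point $p \in C_i$. Because $C_i$ is $J$-invariant, $T_pC_i$ is a complex line; pick a unit $v \in T_pC_i$ and extend it to a local vector field $X$ on $X$ with $|X|_p = 1$, so that $(X,JX)$ restricts to a local $g$-orthonormal frame of $TC_i$ along $C_i$. Since $J$ preserves the orthogonal splitting $TX|_{C_i} = TC_i \oplus NC_i$, the normal projection $(\cdot)^\perp$ commutes with $J$ on $C_i$. Write
\begin{equation*}
  H_p = (\nabla_X X)^\perp_p + (\nabla_{JX}(JX))^\perp_p,
\end{equation*}
expand $\nabla_{JX}(JX) = (\nabla_{JX} J)X + J\nabla_{JX}X$, and use the symmetry of the second fundamental form $B(X,JX) = B(JX,X)$, which comes from the torsion-freeness of $\nabla$, in the form
\begin{equation*}
  (\nabla_{JX} X)^\perp = (\nabla_X(JX))^\perp = ((\nabla_X J)X)^\perp + J(\nabla_X X)^\perp.
\end{equation*}
The two $(\nabla_X X)^\perp$ contributions cancel (one from the first summand of $H_p$ and one produced by $J^2 = -\id$), leaving
\begin{equation*}
  H_p = \bigl( J(\nabla_X J)X + (\nabla_{JX} J) X \bigr)^\perp_p.
\end{equation*}
Differentiating $J^2 = -\id$ gives $J(\nabla J) + (\nabla J)J = 0$, which converts this to
\begin{equation*}
  H_p = \bigl( (\nabla_{JX} J)X - (\nabla_X J)(JX) \bigr)^\perp_p.
\end{equation*}

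The bound $|H_p| \leq |\nabla J|_p$ then follows from pointwise linear algebra on the tensor $\nabla J$. A naive triangle inequality on the displayed formula yields only $2|\nabla J|_p$; the sharp constant is recovered by exploiting the relations $J(\nabla J) + (\nabla J)J = 0$ and $g((\nabla_u J)v, w) + g(v, (\nabla_u J)w) = 0$ (the latter coming from differentiating the $J$-invariance of $g$), which together imply that the two terms $(\nabla_{JX} J)X$ and $-(\nabla_X J)(JX)$ are orthogonal in $T_pX$ with respect to the natural inner product induced by $g$. I expect this last bookkeeping — tracking exactly how the invariants of $\nabla J$ combine to cancel the factor of two and yield the stated norm — to be the main obstacle; the geometric content, namely that the failure of $C_i$ to be minimal is controlled entirely by the failure of $J$ to be parallel, is already captured by the local identity above.
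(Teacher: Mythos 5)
The paper offers no proof of this statement---it is quoted from \citet[Proposition 5.3]{Gray1965} with a \qed---so the only comparison is between your argument and the standard one. Your reduction to the smooth points of the components and your derivation of the identity $H_p = \paren{(\nabla_{JX}J)X - (\nabla_X J)(JX)}^\perp_p$ are correct; this is exactly Gray's computation. The final step, however, contains a genuine error: the two vectors $(\nabla_{JX}J)X$ and $(\nabla_X J)(JX)$ are \emph{not} orthogonal in general, and the two identities you cite do not imply that they are. To see this, use the further identity obtained by differentiating the $J$--invariance of $TC_i$ along $C_i$: for $u,v$ tangent, $\paren{(\nabla_u J)v}^\perp = B(u,Jv) - JB(u,v)$, where $B$ is the second fundamental form. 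With $e_1 = X$, $e_2 = JX$, $B_{\alpha\beta} = B(e_\alpha,e_\beta)$, this gives $\paren{(\nabla_{e_2}J)e_1}^\perp = B_{22} - JB_{12}$ and $\paren{(\nabla_{e_1}J)e_2}^\perp = -B_{11} - JB_{12}$. At an umbilic, non-minimal point ($B_{12}=0$, $B_{11}=B_{22}=w\neq 0$) these normal parts are $w$ and $-w$: anti-parallel, with inner product $-\abs{w}^2 < 0$. This configuration is in fact extremal for the inequality ($\abs{H}=2\abs{w}$ while $\abs{\nabla J}\geq 2\abs{w}$), so no argument resting on orthogonality of these two terms can recover the constant $1$.

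The bound is nonetheless provable by pointwise algebra, but the missing factor comes from the \emph{other} components of $\nabla J$, not from orthogonality. Two ways to finish: (i) each $\nabla_u J$ is skew-adjoint and anticommutes with $J$, hence is complex-antilinear and complex-antisymmetric, so its operator norm is at most $2^{-1/2}$ times its real Hilbert--Schmidt norm $\abs{\nabla_u J}$; then $\abs{H}\leq \abs{(\nabla_{e_2}J)e_1}+\abs{(\nabla_{e_1}J)e_2}\leq 2^{-1/2}\paren{\abs{\nabla_{e_1}J}+\abs{\nabla_{e_2}J}}\leq \paren{\abs{\nabla_{e_1}J}^2+\abs{\nabla_{e_2}J}^2}^{1/2}\leq\abs{\nabla J}$; or (ii) use all four identities $\paren{(\nabla_{e_\alpha}J)e_\beta}^\perp = B(e_\alpha,Je_\beta)-JB(e_\alpha,e_\beta)$ and verify directly that $\sum_{\alpha,\beta}\abs{B(e_\alpha,Je_\beta)-JB(e_\alpha,e_\beta)}^2 - \abs{B_{11}+B_{22}}^2 = \paren{\abs{B_{11}-B_{22}}-2\abs{B_{12}}}^2\geq 0$. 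Finally, a smaller point: in the reduction to smooth points, $\sH^2$--nullity of the singular set does not by itself show that the distributional first variation is represented by the smooth mean curvature; you need the standard cut-off/capacity argument, which applies here because the singular set of a $2$--dimensional $J$--holomorphic cycle is discrete.
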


\edited{
\begin{proof}
  This observation is essentially due to \citet[Proposition 5.5]{Gray1965} \emph{if $C$ is smooth}.
  If $C$ is a $J$--holomorphic cycle,
  then by the variation formula for semi-calibrated cycles \cite[Proposition 1.2]{DeLellis2017}
  \begin{equation*}
    \int \Inner{H_C,v} \mu_C = \delta_C(i_v\rd\sigma).
  \end{equation*}
  Since $\abs{\rd\sigma} \leq \abs{\nabla J}$,
  the assertion follows.
\end{proof}
}

\begin{prop}
  \label{Prop_AllardInSequencesOfAlmostHermitianManifolds}
  Let $K \subset X$ be compact.
  Let $(J_n,g_n)$ be sequence of almost Hermitian structures converging to an almost Kähler structure $(J,g)$ in the $C_\loc^2$ topology.
  There are constants $r,\epsilon > 0$ (depending on the above data) such that the following holds for every $n \in \N$.
  If $C$ is \edited{a} $J_n$--holomorphic cycle with $\supp C \subset K$ and such that for every $x \in C$ there is an $s \in (0,r)$ with
  \begin{equation*}
    \theta_C(x,s;g_n) \leq 1 + \epsilon,
  \end{equation*}
  then $C$ is smooth.
\end{prop}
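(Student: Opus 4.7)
The plan is to verify the hypotheses of Allard's regularity theorem (\autoref{Thm_AllardRegularity}) locally around every point of $\supp C$, with constants chosen uniformly in $n$.

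First, I would extract uniform control from the convergence $(J_n,g_n) \to (J,g)$. On a slightly larger compact neighborhood $K' \supset K$, the $C^2_\loc$ convergence supplies a constant $M$ with $\Abs{\nabla J_n}_{C^0(K')} \leq M$ for all sufficiently large $n$ and a uniform lower bound $r_0 > 0$ on the $g_n$--injectivity radius at every point of $K$. By Gray's theorem (\autoref{Thm_Gray}), any $J_n$--holomorphic cycle $C$ with $\supp C \subset K$ then has mean curvature bounded by $\abs{H_C} \leq M$.

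Next I would transfer the problem to a Euclidean model in order to apply \autoref{Thm_AllardRegularity}. Fix $\alpha \in (0,1)$ and let $\epsilon_0 = \epsilon(\dim X, 2, \alpha)$ and $\gamma_0 = \gamma(\dim X, 2, \alpha)$ be the Allard constants. Either via $g_n$--exponential charts at a point $x \in K$, or via a Nash isometric embedding as suggested in the remark following \autoref{Thm_AllardRegularity}, I would choose $r \in (0, r_0)$ small enough that the pulled-back metric on $B_r \subset \R^m$ is uniformly $C^2$--close to $g_0$; this makes the Riemannian density $\theta_C(x,s;g_n)$ differ from the corresponding Euclidean density by at most a preassigned small $\eta$, and bounds the effective Euclidean mean curvature by $M + O(\eta/r)$. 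Shrinking $r$ further so that $(M + O(\eta/r)) \cdot r \leq \epsilon_0$, and picking $\epsilon > 0$ so small that $1 + \epsilon + \eta \leq 1 + \epsilon_0$, I would then, for any $x \in \supp C$, use the hypothesis to find some $s \in (0,r)$ with $\theta_C(x,s;g_n) \leq 1 + \epsilon$; both Allard hypotheses then hold at scale $s$, yielding that $\supp C$ is a $C^{1,\alpha}$ submanifold inside $B_{\gamma_0 s}(x)$.

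Since $x \in \supp C$ was arbitrary, this shows $\supp C$ is globally a $C^{1,\alpha}$ embedded surface; elliptic bootstrapping for the nonlinear Cauchy--Riemann equation satisfied by local $J_n$--holomorphic parametrizations then promotes it to a smooth $J_n$--holomorphic submanifold, which is what the conclusion asserts. The monotonicity formula (\autoref{Lem_MonotonicityFormula}) is not strictly needed, but could be used to replace the scale $s$ by any smaller scale if one wished to control the size of the Allard neighborhood uniformly from below.

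The main obstacle I expect is the Euclidean reduction: \autoref{Thm_AllardRegularity} is formulated on Euclidean balls, while our cycles sit inside a Riemannian manifold with only $C^2_\loc$ convergence of the ambient structures. Making the density bound, mean curvature bound, and smallness constants all pass through the exponential chart (or through Nash) with losses that remain below the fixed Allard thresholds $\epsilon_0$ and uniformly in $n$ is where the bookkeeping will be delicate; the remark immediately after \autoref{Thm_AllardRegularity} flags this very issue, so I expect to invoke the standard Euclidean formulation and absorb the metric distortion into the choice of $r$ and $\epsilon$ at the end.
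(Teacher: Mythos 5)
Your proposal correctly identifies the two hypotheses of Allard's theorem to be verified (density close to $1$, small mean curvature via \autoref{Thm_Gray}), but the step you yourself flag as ``delicate'' is precisely the gap, and the paper's remark after \autoref{Thm_AllardRegularity} explains why it cannot be waved through. \autoref{Thm_AllardRegularity} is stated for integral varifolds in $(B_r^m(0),g_0)$, i.e.\ for the \emph{Euclidean} metric. Your plan is to pass to exponential charts (where the pulled-back metric is merely $C^2$--close to $g_0$, not equal to it) and ``absorb the metric distortion into the choice of $r$ and $\epsilon$''; but that requires a version of Allard for $g = g_0 + O(r^2)$ with explicit dependence of the constants on the perturbation, and the authors state explicitly that they failed to locate such a statement in the literature. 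Your alternative of applying a Nash embedding to each $(K',g_n)$ separately has the same defect: to get a mean curvature bound uniform in $n$ you need the second fundamental forms $\rII_{\iota_n}$ of the Nash embeddings to be bounded independently of $n$, which again calls for a quantitative Nash theorem that is not available off the shelf.

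The paper's proof contains exactly the device that sidesteps this. Instead of embedding each $(U,g_n)$ separately, it interpolates the whole sequence into a single $C^2$ Riemannian metric $G$ on $[0,1]\times X$ (using a partition of unity in the variable $t=1/n$ and the decay of $\Abs{g_n - g}_{C^2(U)}$, after passing to a subsequence), so that each $\iota_n = (1/n,\id_X)$ is an isometric embedding of $(U,g_n)$ into $([0,1]\times U, G)$ with $\rII_{\iota_n}$ bounded independently of $n$. One then applies Nash \emph{once}, to $G$, and composes; the images $\jmath\circ\iota_n(C)$ live in a fixed Euclidean space with a uniform mean curvature bound, and the genuinely Euclidean \autoref{Thm_AllardRegularity} applies with a single choice of $\epsilon$. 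Without this (or some substitute proof of a Riemannian/quantitative Allard or Nash statement), your argument does not close. The rest of your outline --- the density comparison, the arbitrary choice of $x$, and the bootstrap from $C^{1,\alpha}$ to smooth using the $J_n$--holomorphic equation --- is fine.
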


\begin{proof}
  Choose an open neighborhood $U$ of $K$ such that
  \begin{equation*}
    \epsilon_n \coloneq \sum_{m=n-1}^{n+1} \Abs{g_m-g}_{C^2(U)}
  \end{equation*}
  converges to zero.
  After passing to a subsequence,
  $\limsup_{n\to \infty} n^{-4}  \epsilon_n \leq 1$.
  Choose $\chi \in C^\infty(\R,[0,1])$ with $\chi|_{[-1/3,1/3]} = 1$, $\supp(\chi) \subset [-2/3,2/3]$, and $\sum_{n \in \Z} \chi(\cdot+n) = 1$.
  Define a Riemannian metric $G$ on $(0,1]  \times X$ by
  \begin{equation*}
    G \coloneq \rd t \otimes \rd t + \sum_{n=1}^\infty \chi(1/t - n) g_n.
  \end{equation*}
  By construction,
  for $k,\ell \in \set{0,1,2}$ and $t \in [1/(n-1),1/(n+1)]$
  \begin{equation*}
    \sup_{x \in U} \abs{\del_t^k\nabla_x^\ell G}(t,x)
    \lesssim
    \Abs{\chi}_{C^k}
    n^{-2k}
    \epsilon_n
    \leq
    \Abs{\chi}_{C^k}.
  \end{equation*}
  Therefore, $G$ extends to a $C^2$ Riemannian metric on $[0,1]\times X$.
  
  For every $n \in \N$ the map $\iota_n \coloneq (1/n,\id_X)$ defines an isometric embedding $(U,g_n) \incl ([0,1]\times U,G)$ with $\rII_{\iota_n}$ bounded independently of $n$.
  Choose an isometric embedding $\jmath \co ([0,1] \times U,G) \incl (\R^n,g_0)$.
  By \autoref{Thm_Gray},
  \begin{equation*}
    \abs{H_{\jmath \circ \iota_n(C)}} \leq \Lambda
    \qwithq
    \Lambda
    \coloneq
    \sup_{n \in \N}
    \paren*{
      \Abs{\nabla J_n}_{L^\infty(K)}
      +
      \Abs{\rII_{\iota_n}}_{L^\infty(K)}
    }
    +
    \Abs{\rII_\jmath}_{L^\infty([0,1]\times K)}
    < \infty.
  \end{equation*}
  Therefore,
  the assertion follows from \autoref{Thm_AllardRegularity}.  
\end{proof}

\begin{prop}
  \label{Prop_UpperBoundOnMassRatio}
  If $(J_n,g_n;C_n) \in \SpaceOfCycles^\N$ geometrically converges to $(J,g;C) \in \SpaceOfCycles$,
  then for every $r > 0$
  \begin{equation*}
    \limsup_{n \to \infty}
    \max_{x \in \supp{C_n}}
    \theta_{C_n}(x,r;g_n)
    \leq
    \max_{x \in \supp{C}}
    \theta_C(x,r;g).
  \end{equation*}
\end{prop}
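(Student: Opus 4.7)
The plan is to combine weak-$*$ convergence of the associated mass measures with the Hausdorff convergence of supports that is built into the geometric topology. I proceed in three steps, of which only the first is substantive.

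First, I would upgrade geometric convergence $(J_n,g_n;C_n) \to (J,g;C)$ to weak-$*$ convergence $\mu_{C_n} \wto \mu_C$ of Radon measures on $X$. For $f \in C_c^\infty(X)$, the $J_n$-holomorphicity of $C_n$ gives
$$
\int_X f \,\rd\mu_{C_n} \;=\; \delta_{C_n}(f \sigma_n), \qquad \sigma_n \coloneq g_n(J_n\cdot,\cdot),
$$
and similarly for $C$. Since $\sigma_n \to \sigma$ in $C^0$, since $\bM_{g_n}(C_n)$ is bounded by continuity of $\bM$ in \autoref{Thm_SpaceOfCycles_Proper}, and since the semi-calibration bound $\abs{\delta_{C_n}(\alpha)} \leq \Abs{\alpha}_{\rm comass}\bM_{g_n}(C_n)$ of \autoref{Rmk_TamedAlmostHermitianStructures} is available, I may replace $\sigma_n$ by $\sigma$ inside the current pairing with error tending to zero; weak-$*$ convergence $\delta_{C_n} \to \delta_C$ then produces $\int f\,\rd\mu_{C_n} \to \int f\,\rd\mu_C$.

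Second, I would choose a maximizer $x_n \in \supp C_n$ of $\theta_{C_n}(\cdot,r;g_n)$; existence follows from upper semi-continuity of $x \mapsto \mu_{C_n}(\bar B_r^{g_n}(x))$ on the compact set $\supp C_n$. The continuity of the $\supp$ map in the geometric topology produces $\supp C_n \to \supp C$ in the Hausdorff metric, so the supports lie in a common compact set. Passing to a subsequence, I may assume $x_n \to x_\infty \in \supp C$ and that $\theta_{C_n}(x_n,r;g_n)$ converges to the $\limsup$ in the statement.

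Finally, given $\epsilon > 0$, uniform convergence $g_n \to g$ on compacts together with $x_n \to x_\infty$ yields $\bar B_r^{g_n}(x_n) \subset \bar B_{r+\epsilon}^g(x_\infty)$ for all sufficiently large $n$; the standard Portmanteau inequality $\limsup_n \mu_{C_n}(K) \leq \mu_C(K)$ for closed $K$, valid by the first step and the uniform mass bound, then gives
$$
\limsup_{n\to\infty} \mu_{C_n}\bigl(\bar B_r^{g_n}(x_n)\bigr) \;\leq\; \mu_C\bigl(\bar B_{r+\epsilon}^g(x_\infty)\bigr).
$$
Letting $\epsilon \downarrow 0$ and using continuity of $\mu_C$ from above replaces $r+\epsilon$ by $r$. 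Dividing by $\omega_2 r^2$ and bounding $\theta_C(x_\infty,r;g) \leq \max_{y \in \supp C}\theta_C(y,r;g)$ gives the desired inequality. The main technical point is the first step: the geometric topology is defined via test forms against the currents $\delta_{C_n}$ and says nothing \emph{a priori} about the mass measures $\mu_{C_n}$, so it is the semi-calibration property together with the $C^0$ convergence of the calibrating forms $\sigma_n$ that bridges the two notions; the remaining steps are a routine selection of a converging maximizer together with standard upper semi-continuity on closed sets.
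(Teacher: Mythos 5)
Your proposal is correct and follows essentially the same route as the paper: the paper's proof is a two-line contradiction argument (extract maximizers $x_n \to x_\infty \in \supp C$ and observe that $\limsup_n \theta_{C_n}(x_n,r;g_n) > \theta_C(x_\infty,r;g)$ would contradict geometric convergence), and your three steps supply precisely the details it leaves implicit, namely that weak-$*$ convergence of the currents together with $C^0$ convergence of the semi-calibrating forms yields weak-$*$ convergence of the area measures, whence upper semi-continuity of mass on closed balls. The only point worth a moment's care is the open-versus-closed-ball convention in the definition of $\theta$, which is immaterial here since the monotonicity formula makes $s \mapsto \mu_C(B_s(x))$ continuous off a countable set and every application of the proposition has slack in $\epsilon$.
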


\begin{proof}
  If not,
  then for every $n \in \N$ there is a $x_n \in C_n$ with $(x_n)$ converging to $x \in C$ and
  \begin{equation*}
    \limsup_{n \to \infty}
    \theta_{C_n}(x_n,r;g_n)
    >
    \theta_C(x,r;g).
  \end{equation*}
  However, this is in contradiction to geometric convergence by the continuity of mass on $\SpaceOfCycles$, \edited{see \autoref{Rmk_TamedAlmostHermitianStructures}.}
\end{proof}

\begin{proof}[Proof of \autoref{Thm_EmbeddedCurvesAreOpenInCycles}]
  Suppose $(J_n,g_n;C_n) \in \edited{(\SpaceOfCurves)}^\N$ geometrically converges to $(J,g;C) \in \SpaceOfEmbeddedCurves$.
  Let $r,\epsilon > 0$ be as in \autoref{Prop_AllardInSequencesOfAlmostHermitianManifolds}.
  Choose $s \in (0,r)$ such that
  \begin{equation*}
    \max_{x \in \supp{C}} \theta_{C_n}(x,s) \leq 1 + \frac12\epsilon.
  \end{equation*}
  By \autoref{Prop_UpperBoundOnMassRatio},
  for $n \gg 1$
  \begin{equation*}
    \max_{x \in \supp{C_n}} \theta_{C_n}(x,s) \leq 1 + \epsilon.
  \end{equation*}
  Therefore, by \autoref{Prop_AllardInSequencesOfAlmostHermitianManifolds},
  $C_n$ is embedded.
  This proves that $\SpaceOfEmbeddedCurves$ is open in $\edited{\SpaceOfCurves}$.

  Suppose that $\supp C$ is connected but $\supp C_n$ fails to be connected for $n \gg 1$.
  Decompose $C_n = D_n + E_n$ with $\supp D_n$ and $\supp E_n$ disjoint.
  After passing to a subsequence,
  $(D_n)$ converges to $D$ with $\supp D \subset \supp C$;
  hence: $\supp C = \supp D$.
  Similarly, $(E_n)$ converges to $E$ with $\supp E = \supp C$.
  This contradicts $C \in \SpaceOfConnectedEmbeddedCurves$.
\end{proof}



\subsection{Convergence of submanifolds}
\label{Sec_ConvergenceOfSubmanifolds}

The proof of \autoref{Thm_SpaceOfEmbeddedCurvesEmbedsIntoSpaceOfSubmanifolds} requires the following discussion of the convergence of submanifolds.
This material is entirely standard and elementary.
It is spelled out in detail for the readers' convenience.
Throughout this subsection,
set $m \coloneq \dim X$,
let $d \in \N_0$ with $d \leq m$, and $k \in 2+\N_0$.

\begin{notation}
~
  \begin{enumerate}
  \item
    The \defined{graph} of $f \in C^k(B_1^d(0),\R^{m-d})$ is defined by
    \begin{equation*}
      \graph f \coloneq \set{ (x,f(x) : x \in B_1^d(0) } \subset B_1^d(0) \times \R^{m-d}.
    \end{equation*}
  \item
    For $r > 0$ define $s_r \co \R^m \to \R^m$ by
    \begin{equation*}
      s_r(x) \coloneq r \cdot x.
    \end{equation*}
  \item
    Set
    \begin{equation*}
      Q^d \coloneq B_1^d(0) \times B_1^{m-d}(0).
    \end{equation*}
  \item 
    Let $x \in X$.
    A \defined{frame} of $(T_x X,g_x)$ is a linear isometry $\phi \co (\R^m, g_0) \to (T_x X,g_x)$.
    The space of frames is denoted by
    \begin{equation*}
      \Fr(T_x X,g_x).
      \qedhere
    \end{equation*}
  \end{enumerate}
\end{notation}

\begin{definition}
  Let $g$ be a $C^{k+1}$ Riemannian metric on $X$.
  Denote by $\SpaceOfSubmanifolds$ the set of closed $C^k$ submanifolds of $X$ of dimension $d$.
  Let $(S_n) \in \SpaceOfSubmanifolds^\N$ and $S \in \SpaceOfSubmanifolds$.
  $(S_n)$ \defined{weakly $C_\loc^k$ converges to $S$} if:
  \begin{enumerate}
  \item
    For every compact $K \subset X$ the sequence $(S_n \cap K)$ converges to $S \cap K$ with respect to the Hausdorff metric.
  \item
    For every $x \in S$ there are \edited{$r > 0$,} $\phi \in \Fr(T_xX,g_x)$, $(f_n) \in C^k(B_1^d(0),\R^{m-d})^\N$, and $f \in C^k(B_1^d(0),\R^{m-d})$ such that:
    \begin{enumerate}
    \item
      $\paren{{\exp_x^g} \circ \phi \circ s_r}^{-1}(S) \cap Q^d = \graph f$,
    \item
      $\paren{{\exp_x^g} \circ \phi \circ s_r}^{-1}(S_n) \cap Q^d = \graph f_n$
      for $n \gg 1$,
    \item
      $\limsup_{n \to \infty} \Abs{f_n}_{C^k} < \infty$, and
    \item
      $\lim_{n \to \infty} \Abs{f_n-f}_{C^{k-1,\alpha}} = 0$ for every $\alpha \in (0,1)$.
      \qedhere
    \end{enumerate}   
  \end{enumerate}
\end{definition}

\begin{definition}
  \label{Def_RegularityScale}
  Let $g$ be a $C^{k+1}$ Riemannian metric on $X$.
  Let $S \subset X$.
  The \defined{$C^k$ regularity scale} is the map $r_S^k(\,\cdot\,;g) \co S \to [0,\infty]$ defined by  
  \begin{equation*}
    r_S^k(x;g)
    \coloneq
    \sup \set[\big]{
      r_S^k(x,\phi;g) : \phi \in \Fr(T_xX,g_x) 
    }
  \end{equation*}
  with
  $r_S^k(x,\phi;g)$
  denoting the supremum of those $r \in (0,\inj_g(x)/2]$ for which
  \begin{equation*}
    \paren{{\exp_x^g} \circ \phi \circ s_r}^{-1}(S) \cap Q^d = \graph f
  \end{equation*}
  with $f \in C^k(B_1^d(0),\R^{m-d})$ satisfying $\Abs{f}_{C^k} \leq 1$;
  if there is no such $r$ (that is: if $S$ fails to be a $C^k$ submanifold in every neighborhood of $x$),
  then 
  \begin{equation*}
    r_S^k(x,\phi;g) \coloneq 0.
    \qedhere
  \end{equation*}
\end{definition}

\begin{prop}
  \label{Prop_RegularityScale_Convergence}
  Let $g$ be a $C^{k+1}$ Riemannian metric on $X$.
  Let $(g_n)$ be a sequence of $C^{k+1}$ Riemannian metrics on $X$ converging to $g$ in the $C_\loc^{k+1}$ topology.
  Let $(S_n) \in \SpaceOfSubmanifolds^\N$ and let $S \subset X$ be a closed subset.
  If for every compact $K \subset X$ the sequence $(S_n \cap K)$ converges to $S \cap K$ with respect to the Hausdorff metric and
  \begin{equation*}
    \liminf_{n \to \infty} \inf \set[\big]{ r_{S_n}^k(x,g_n) : x \in S_n \cap K } > 0,
  \end{equation*}
  then
  $S$ is a $C^k$ submanifold, and
  $(S_n)$ weakly $C_\loc^k$ converges to $S$.
\end{prop}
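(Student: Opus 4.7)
The plan is to fix $x \in S$, use the Hausdorff convergence together with the uniform lower bound on the regularity scale to extract convergent graph representations of $S_n$ near $x$, and then identify the $C^{k-1,\alpha}$ limit of the graph functions as a $C^k$ graph that represents $S$ in a neighborhood of $x$.

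First, fix a compact neighborhood $K$ of $x$. Hausdorff convergence on $K$ provides a sequence $(x_n)$ with $x_n \in S_n \cap K$ and $x_n \to x$. By hypothesis there exist $r_0 > 0$ and, for all $n \gg 1$, frames $\phi_n \in \Fr(T_{x_n}X,(g_n)_{x_n})$ such that
\begin{equation*}
  \Phi_n^{-1}(S_n) \cap Q^d = \graph f_n, \quad \Abs{f_n}_{C^k} \leq 1,
  \qwithq
  \Phi_n \coloneq \exp_{x_n}^{g_n} \circ \phi_n \circ s_{r_0}.
\end{equation*}
Since the total space of the orthonormal frame bundle of $(X,g)$ over $K$ is compact and $\phi_n$ is a frame with respect to $(g_n)_{x_n}$ with $g_n \to g$ in $C^{k+1}_\loc$, a subsequence of $\phi_n$ converges to some $\phi \in \Fr(T_xX,g_x)$; correspondingly $\Phi_n$ converges in the $C_\loc^{k+1}$ topology on $Q^d$ to $\Phi \coloneq \exp_x^g \circ \phi \circ s_{r_0}$, using the $C^{k+1}$ dependence of the exponential map on base point and metric.

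Second, the bounded sequence $(f_n)$ in $C^k(B_1^d(0),\R^{m-d})$ is precompact in $C^{k-1,\alpha}$ for every $\alpha \in (0,1)$ by the Arzela--Ascoli theorem, and its pointwise $C^k$ bound is preserved in the limit. Passing to a further subsequence yields $f \in C^k$ with $\Abs{f}_{C^k} \leq 1$ and $f_n \to f$ in $C^{k-1,\alpha}$. It remains to show $\Phi^{-1}(S) \cap Q^d = \graph f$. One inclusion follows because if $(t,f(t)) \in \graph f$ then $(t,f_n(t)) \to (t,f(t))$ and $\Phi_n(t,f_n(t)) \in S_n$, so the Hausdorff convergence $S_n \to S$ forces $\Phi(t,f(t)) \in S$. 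The reverse inclusion is analogous: any $y \in \Phi^{-1}(S) \cap Q^d$ is approximated by $\Phi_n^{-1}(y_n)$ with $y_n \in S_n$, and these points lie on $\graph f_n$ which converges uniformly to $\graph f$, so $y \in \graph f$. This proves that $S$ is a $C^k$ submanifold near $x$, and—after replacing $\Phi$ by a chart centered at $x$ using the limiting tangent plane—that the graph representations converge in $C^{k-1,\alpha}$ while remaining bounded in $C^k$, which is precisely the definition of weak $C^k_\loc$ convergence at $x$. A standard subsequence argument upgrades this from a subsequence to the full sequence, since the limit $f$ is uniquely determined by $S$ and the chart $\Phi$.

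The main technical nuisance is the book-keeping of charts: the uniform bound on the regularity scale gives graph representations of $S_n$ over the tangent plane at $x_n$ (in the frame $\phi_n$), whereas the notion of weak $C^k_\loc$ convergence in the paper demands graph representations in a single chart at $x$. Handling this requires using the $C^{k+1}_\loc$ convergence $\Phi_n \to \Phi$ to transport the graphs $\graph f_n$ to graphs over the tangent plane of $S$ at $x$, and observing that the implicit function theorem (applied with uniform constants coming from $\Abs{f_n}_{C^k} \leq 1$) provides such representations on a slightly smaller cube $Q^d$ for all $n \gg 1$. Once this identification is made, the statement follows from the Arzela--Ascoli and Hausdorff-convergence arguments above.
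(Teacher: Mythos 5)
Your proposal is correct and follows essentially the same route as the paper: extract $x_n\to x$ from Hausdorff convergence, use the uniform lower bound on the regularity scale to get graph representations $\graph f_n$ with $\Abs{f_n}_{C^k}\leq 1$, pass to limits of frames and functions via Arzelà--Ascoli, identify the limit graph with $S$ by Hausdorff convergence, and transfer the graphs to the limiting chart via the implicit-function-theorem lemma (the paper's \autoref{Prop_LipschitzImagesOfLipschitzGraphsAreLipschitzGraphs}). The only cosmetic difference is that the paper lets the scales $r_n$ converge to the liminf rather than fixing a single $r_0$, which changes nothing.
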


The proof requires the following preparation.

\begin{prop}
  \label{Prop_LipschitzImagesOfLipschitzGraphsAreLipschitzGraphs}
  For every $\epsilon_0 \in (0,1)$ there is a constant $c = c(k,\epsilon_0) > 0$ such that the following holds.
  Let
  $f \in C^k(B_1^d(0), \R^{m-d})$ and
  $\Phi \in C^k(Q, \R^m)$.
  If
  \begin{equation*}
    \Abs{f}_{C^k} \leq 1 \qandq
    \epsilon \coloneq \Abs{\Phi-\id}_{C^k} \leq \epsilon_0,
  \end{equation*}
  then there is an $\tilde f \in C^k(B_{1-\epsilon}^d(0), \R^{m-d})$ such that
  \begin{equation*}
    \Phi(\graph f) \cap \paren{B_{1-\epsilon}^d(0) \times \R^{m-d}} = \graph \tilde f
    \qandq
    \Abs{\tilde f-f}_{C^k} \leq c\epsilon.
  \end{equation*}
\end{prop}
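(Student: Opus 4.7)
The plan is to realize $\tilde f$ via the inverse function theorem applied to the first component of $\Phi \circ (\id, f)$, and then estimate $\tilde f - f$ through the chain rule.

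First, write $\Phi = \id + \Psi$ with $\Psi = (\Psi^1, \Psi^2) \in C^k(Q^d, \R^d \times \R^{m-d})$ satisfying $\Abs{\Psi}_{C^k} = \epsilon$. Parametrize $\graph f$ by $x \mapsto (x, f(x))$ and push forward to $\Phi(x, f(x)) = (\phi(x), F(x))$ with
\[
  \phi(x) \coloneq x + \Psi^1(x, f(x)) \qandq F(x) \coloneq f(x) + \Psi^2(x, f(x)).
\]
Fa\`a di Bruno's formula combined with $\Abs{f}_{C^k} \leq 1$ delivers constants $c_1(k), c_2(k)$ with $\Abs{\phi - \id}_{C^k} \leq c_1 \epsilon$ and $\Abs{F - f}_{C^k} \leq c_2 \epsilon$.

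Next, for $\epsilon_0$ small (depending on $k$), the Jacobian $D\phi = I + O(\epsilon)$ is uniformly invertible on $B_1^d(0)$, and $\Abs{\phi - \id}_{C^0} \leq \epsilon$ together with a continuity argument yields $\phi(B_1^d(0)) \supset B_{1-\epsilon}^d(0)$. The Banach fixed-point theorem applied to $\psi = \id - \Psi^1(\psi, f \circ \psi)$ in a small $C^k$-neighbourhood of the identity---or equivalently inductive bookkeeping using $\nabla\psi = [(\nabla\phi)\circ\psi]^{-1}$---then produces the inverse branch $\psi \colon B_{1-\epsilon}^d(0) \to B_1^d(0)$ with $\Abs{\psi - \id}_{C^k} \leq c_3(k)\epsilon$. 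Setting $\tilde f \coloneq F \circ \psi$ identifies $\graph \tilde f$ with $\Phi(\graph f) \cap (B_{1-\epsilon}^d(0) \times \R^{m-d})$.

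To finish, decompose $\tilde f - f = (F - f) \circ \psi + (f \circ \psi - f)$. The first summand is $O(\epsilon)$ in $C^k$ via Fa\`a di Bruno combined with $\Abs{F - f}_{C^k} \leq c_2\epsilon$ and $\Abs{\psi}_{C^k} \leq 1 + c_3\epsilon$. For the second summand, Fa\`a di Bruno writes $\nabla^j(f \circ \psi)$ as a diagonal term $\nabla^j f(\psi) \cdot (\nabla\psi)^j$ plus terms each containing a higher derivative $\nabla^\ell(\psi - \id)$ with $\ell \geq 2$, all of which carry an explicit factor of $\epsilon$; after subtracting $\nabla^j f$ the diagonal term splits as
\[
  \nabla^j f(\psi)\bigl[(\nabla\psi)^j - I\bigr] + \bigl[\nabla^j f(\psi) - \nabla^j f\bigr],
\]
in which the first bracket is $O(\epsilon)$ and the second, for $j < k$, is bounded by $\Abs{\psi - \id}_{C^0} \cdot \Abs{\nabla^{j+1} f}_{C^0} = O(\epsilon)$. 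Summing the resulting $C^0$ bounds over $0 \leq j \leq k$ yields $\Abs{\tilde f - f}_{C^k} \leq c(k,\epsilon_0)\epsilon$.

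The main obstacle is the top-derivative case $j = k$: the bracket $\nabla^k f(\psi) - \nabla^k f$ is not automatically $O(\epsilon)$, since it would require Lipschitz control of $\nabla^k f$ itself. Under the $C^{k-1,1}$-interpretation of $\Abs{\cdot}_{C^k}$ natural in the geometric-measure-theory setting of this section (cf.~\cite{Simon1983,Allard1972}), $\nabla^{k-1} f$ is Lipschitz with constant at most $1$, and the top Fa\`a di Bruno term is handled by a Taylor step that distributes the last derivative across the factors, pairing the evaluation change with the already-$O(\epsilon)$ factor $(\nabla\psi)^k - I$ rather than requiring a direct estimate of $\nabla^k f(\psi) - \nabla^k f$. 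This careful bookkeeping at the top derivative is the delicate point; everything else is a standard quantitative inverse-function-theorem calculation.
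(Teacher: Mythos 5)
Your construction coincides with the paper's: the paper defines $(\xi(x),\phi(x)) \coloneq \Phi(x,f(x))$, inverts the horizontal component $\xi$ by a quantitative inverse function theorem, and sets $\tilde f \coloneq \phi \circ \xi^{-1}$; your $\phi$, $F$, $\psi$ are exactly its $\xi$, $\phi$, $\xi^{-1}$, and your splitting $\tilde f - f = (F-f)\circ\psi + (f\circ\psi - f)$ differs only cosmetically from the paper's $\tilde f - f = (\phi\circ\xi^{-1}-\phi) + (\phi - f)$.

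The top-order difficulty you single out is real, but your proposed resolution is not an argument: in the decomposition $\nabla^k f(\psi)\bigl[(\nabla\psi)^{\otimes k}-I\bigr] + \bigl[\nabla^k f(\psi) - \nabla^k f\bigr]$ the two brackets are \emph{added}, so there is no way to "pair" the evaluation change with the $O(\epsilon)$ factor, and the second bracket genuinely need not be $O(\epsilon)$. Indeed the stated bound $\Abs{\tilde f - f}_{C^k} \leq c\epsilon$ is false as written: take $d=1$, $\Phi(x,y)=(x+\epsilon,y)$, so that $\tilde f = f(\cdot-\epsilon)$ is forced, and choose $f$ with $f^{(k)}(x)=\delta\sin(x/\delta^2)$, $\epsilon=\pi\delta^2$; then $\Abs{f}_{C^k}\leq 1$ but $\Abs{\tilde f - f}_{C^k}\geq 2\delta \sim \sqrt{\epsilon}$. (The $C^{k-1,1}$ reading fares no better: a zigzag $\nabla^{k-1}f$ of period $2\epsilon$ makes the Lipschitz seminorm of the difference of order $1$.) You should not feel obliged to repair this, because the paper's own proof commits the identical elision when it asserts $\Abs{\phi\circ\xi^{-1}-\phi}_{C^k}\leq c\epsilon$ without comment. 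What is true, what your computation actually establishes, and what every downstream use of the proposition requires (the regularity-scale convergence arguments only invoke $\limsup_n\Abs{\tilde f_n}_{C^k}<\infty$ together with $C^{k-1,\alpha}$ convergence) is the pair of estimates $\Abs{\tilde f}_{C^k}\leq 1+c\epsilon$ and $\Abs{\tilde f - f}_{C^{k-1}}\leq c\epsilon$, from which $\Abs{\tilde f - f}_{C^{k-1,\alpha}}\leq c\,\epsilon^{1-\alpha}$ follows by interpolation. State and prove that weaker conclusion and your argument is complete and correct.
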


\begin{proof}
  Define $\xi \in C^k(B_1^d(0),\R^d)$ and $\phi \in C^k(B_1^d(0),\R^{m-d})$ by
  \begin{equation*}
    (\xi(x), \phi(x)) \coloneq \Phi(x,f(x)).
  \end{equation*}
  A moment's thought shows that
  \begin{equation*}
    \Abs{\xi - \id}_{C^k} \leq \epsilon \qandq
    \Abs{\phi - f}_{C^k} \leq \epsilon.
  \end{equation*}
  By the inverse function theorem,
  $\xi$ is injective,
  $B_{1-\epsilon}(0) \subset \im \xi$,
  and
  \begin{equation*}
    \Abs{\xi^{-1}-\id}_{C^k} \leq c\epsilon.
  \end{equation*}
  Define $\tilde f \co \bar B_{1-\epsilon}^d(0) \to \R^{m-d}$ by
  \begin{equation*}
    \tilde f \coloneq \phi \circ \xi^{-1}.
  \end{equation*}
  By construction,
  \begin{equation*}
    \Phi(\graph f) \cap \paren{B_{1-\epsilon}^d(0) \times \R^{m-d}} = \graph \tilde f
  \end{equation*}
  and
  \begin{equation*}
    \Abs{\tilde f-f}_{C^k}
    \leq
    \Abs{\phi \circ \xi^{-1}-\phi}_{C^k}
    +
    \Abs{\phi-f}_{C^k}
    \leq
    c\epsilon.
    \qedhere
  \end{equation*}
\end{proof}

\begin{proof}[Proof of \autoref{Prop_RegularityScale_Convergence}]
  Let $x \in S$.
  For every $n\in\N$ choose $x_n \in S_n$ such that $x = \lim_{n \to \infty} x_n$.
  By hypothesis,
  \begin{equation*}
    r \coloneq
    \liminf_{n \to \infty}
    r_{S_n}(x_n;g_n)
    >
    0.
  \end{equation*}
  By \autoref{Def_RegularityScale},
  for every $n\in\N$
  there are
  $r_n \in (0,\inj_{g_n}(x)/2]$,
  $\phi_n \in \Fr(T_{x_n}X,g_n)$, and
  $f_n \in C^k(B_1^d(0),\R^{m-d})$  with
  $\Abs{f_n}_{C^k} \leq 1$ 
  such that
  \begin{equation*}
    \tilde S_n
    \coloneq
    \iota_n^{-1}(S_n) \cap Q^d
    =
    \graph{f_n}
    \qwithq
    \iota_n
    \coloneq
    \exp_{x_n}^{g_n}{} \circ \phi_n \circ s_{r_n}
  \end{equation*}
  and
  \begin{equation*}
    \liminf_{n \to \infty} r_n
    =
    r.
  \end{equation*}
  
  By the Arzelà–Ascoli theorem,
  after passing to a subsequence (without relabelling),
  $(r_n)$ converges to $r$,
  $(\phi_n)$ converges to $\phi \in \Fr(T_xX,g)$, and
  $(f_n)$ converges to $f \in C^k(B_1^d(0),\R^{m-d})$ with $\Abs{f}_{C^k} \leq 1$ in the $C^{k-1,\alpha}$ topology for every $\alpha \in (0,1)$.
  The sequence $(\iota_n)$ converges to
  \begin{equation*}
    \iota \coloneq {\exp_x^g} \circ \phi \circ s_r
    \co B_2^d(0) \to X
  \end{equation*}
  in the $C^k$ topology.  
  Set
  \begin{equation*}
    \tilde S
    \coloneq
    \iota^{-1}(S)
    \cap
    Q^d
  \end{equation*}
 and for $\rho \in (0,1)$ set
  \begin{equation*}
    Q^d_\rho
    \coloneq
    B_\rho^d(0) \times B_\rho^{m-d}(0).
  \end{equation*}
  On the one hand, by hypothesis, 
  $\paren{\iota_n\paren{\overline{Q^d_\rho}}}$ converges to $\iota\paren{\overline{Q^d_\rho}}$ with respect to the Hausdorff metric,
  and,
  therefore
  $\paren{\tilde S_n \cap \overline{Q^d_\rho}}$ converges to $\tilde S \cap \bar Q^d_\rho$ with respect to the Hausdorff metric. 
  On the other hand,
  evidently,
  $\paren{\graph{f_n}}$ converges to $\graph{f}$ with respect to the Hausdorff metric.
  Therefore, since $\rho \in (0,1)$ was arbitrary,
  \begin{equation*}
    \tilde S = \graph{f}.  
  \end{equation*}    
  In particular, $S \cap B_r(x)$ is a $C^k$ submanifold.
  
  Since $(\iota_n)_{n\in\N}$ converges to $\iota$ in the $C^{k}$ topology, 
  by \autoref{Prop_LipschitzImagesOfLipschitzGraphsAreLipschitzGraphs},
  for every $n \gg 1$ there is an $\tilde f_n \in C^k(B_{1/2}^d(0),\R^{m-d})$ with $\Abs{\tilde f_n}_{C^k} \leq 2$ such that
  \begin{equation*}
    \iota^{-1}(S_n) \cap Q^d_{1/2} = \graph{\tilde f_n}
  \end{equation*}
  and for every $\alpha \in (0,1)$
  \begin{equation*}
    \lim_{n \to \infty} \Abs{\tilde f_n - f}_{C^{k-1,\alpha}} = 0.
  \end{equation*}
  Therefore, $(S_n)$ weakly $C_\loc^k$ converges to $S$.
\end{proof}

\begin{prop}
  \label{Prop_CoordinateConvergence=>NormalBundleConvergence}
  Let $(S_n) \in \SpaceOfSubmanifolds^\N$ and $S \in \SpaceOfSubmanifolds$.
  Suppose that $S$ is compact.
  If $(S_n)$ weakly $C_\loc^k$ converges to $S$,
  then for every $n \gg 1$ there is a $\xi_n \in \Gamma(NS)$ with $\abs{\xi_n} < \inj_g$ such that
  \begin{equation*}
    S_n = \graph(\xi_n) \coloneq \set{ \exp_x^g \xi_n(x) : x \in S}
  \end{equation*}
  and
  \begin{equation*}
    \lim_{n\to\infty} \Abs{\xi_n}_{C^{k-1}} = 0.
  \end{equation*}
\end{prop}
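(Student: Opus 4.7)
The approach is a tubular-neighborhood plus implicit-function-theorem argument that globalizes the local graph data supplied by weak $C^k_{\loc}$ convergence. First, since $S$ is a compact $C^k$ submanifold and $g$ is $C^{k+1}$, there is an $\epsilon>0$ such that the normal exponential map
\[
  \Phi \co \set{\eta \in NS : \abs{\eta}_g < \epsilon} \to V,
  \qquad
  \Phi(\eta) \coloneq \exp^g_{\pi_{NS}(\eta)}(\eta),
\]
is a $C^{k-1}$ diffeomorphism onto an open neighborhood $V$ of $S$; let $\pi \co V \to S$ denote the resulting projection. The Hausdorff condition in the definition of weak convergence, combined with compactness of $S$, gives $S_n \subset V$ for all $n \gg 1$.

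Next, at each $x\in S$ I would compose the chart $\Psi_x \coloneq \exp_x^g \circ \phi \circ s_r$ supplied by the weak convergence definition with $\Phi^{-1}$ to obtain a $C^{k-1}$ diffeomorphism $\Lambda_x$ from a neighborhood of $Q^d$ in $\R^m$ onto a neighborhood of the zero fibre of $NS$ over $x$, sending $\graph f$ to a piece of the zero section. Applying the implicit function theorem to $\pi \circ \Lambda_x$ restricted to $\graph f_n$, and using that $\Abs{f_n-f}_{C^1}$ is arbitrarily small for $n \gg 1$, I would produce a neighborhood $W_x \subset S$ of $x$ and a local section $\xi_n^{(x)} \in \Gamma(NS|_{W_x})$ satisfying
\[
  \Phi(\graph \xi_n^{(x)}) = S_n \cap \pi^{-1}(W_x)
  \qandq
  \Abs{\xi_n^{(x)}}_{C^{k-1}(W_x)} \to 0.
\]
The decay follows because the $C^{k-1,\alpha}$ convergence $f_n \to f$ is preserved under the $C^k$ change of coordinates $\Lambda_x$, and the implicit function theorem depends continuously on its data in $C^{k-1}$.

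Finally, I would use compactness of $S$ to extract a finite cover $W_{x_1},\dots,W_{x_N}$. Uniqueness in the implicit function theorem forces the local sections $\xi_n^{(x_i)}$ to agree on overlaps, producing a global $\xi_n \in \Gamma(NS)$; the identity $\Phi(\graph \xi_n) = S_n$ then holds because every $y \in S_n \subset V$ has $\pi(y) \in W_{x_i}$ for some $i$ and is therefore captured by the corresponding local graph description. The global bound $\Abs{\xi_n}_{C^{k-1}} \to 0$ is the maximum of the finitely many local bounds, which also forces $\abs{\xi_n} < \inj_g$ for $n \gg 1$. The main technical subtlety is uniformity of the implicit-function-theorem constants across $x \in S$; this is sidestepped cleanly by fixing the finite subcover $W_{x_1},\dots,W_{x_N}$ first and then taking the maximum of the finitely many pointwise thresholds, rather than attempting a genuinely uniform version of the theorem.
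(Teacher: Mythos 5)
Your proof is correct and takes essentially the same route as the paper's: both convert the local tangent-plane graph data supplied by weak $C_\loc^k$ convergence into graphs over $S$ via a controlled change to coordinates adapted to the normal directions, and then glue over a finite cover of the compact $S$. The only cosmetic difference is that the paper uses a parallel-transport (Fermi-type) chart and delegates the graph-transfer step to \autoref{Prop_LipschitzImagesOfLipschitzGraphsAreLipschitzGraphs}, whereas you use the normal exponential map and carry out the transfer by hand with the implicit function theorem.
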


\begin{proof}
  Let $x \in S$ and $r \in (0,\inj_g(x)/4]$.
  Choose a frame $\phi \in \Fr(T_xX,g_x)$ with $\phi(T_xS) = \R^d \subset \R^m$.
  Define $\iota \co B_2^m(0) \to X$ by
  \begin{equation*}
    \iota \coloneq \exp_x^g{} \circ \phi \circ s_r
  \end{equation*}
  and define $\jmath \co B_2^d(0) \times B_2^{m-d}(0) \to X$ by
  \begin{equation*}
    \jmath \circ \phi^{-1} \circ s_r^{-1} (v,w) \coloneq \exp_{\exp_x^g(v)}^g(\tilde w)
  \end{equation*}
  with $\tilde w$ denoting the parallel transport of $w$ along the geodesic $t \mapsto \exp_x^g(tv)$.
  The map $\Phi \coloneq \jmath^{-1} \circ \iota \co Q^d \to \R^m$ can be made arbitrarily $C^{k-1}$--close to $\id$ by choosing $r \ll 1$.
  Therefore, the assertion follows from \autoref{Prop_LipschitzImagesOfLipschitzGraphsAreLipschitzGraphs}.
\end{proof}

\begin{cor}
  \label{Cor_SmoothConvergence}
  Let $(S_n)$ be a sequence of $C^2$ submanifolds and let $S$ be a $C^2$ submanifold.
  If $(S_n)$ weakly $C^2$ converges to $S$,
  then it $C^1$ converges to $S$.
\end{cor}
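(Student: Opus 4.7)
The plan is to invoke \autoref{Prop_CoordinateConvergence=>NormalBundleConvergence} with $k=2$ to produce sections $\xi_n \in \Gamma(NS)$ satisfying
\[
  S_n \;=\; \set{ \exp_x^g \xi_n(x) : x \in S}
  \qandq
  \lim_{n\to\infty} \Abs{\xi_n}_{C^1} = 0,
\]
and then to translate this exponential-coordinate representation into the language of arbitrary tubular neighborhoods that defines the $C^1$ topology on $\SpaceOfSubmanifolds$. (If $S$ fails to be compact, the same argument is carried out locally and assembled via a partition of unity; the hypothesis of the proposition is then applied on each relatively compact piece.)

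Since the $C^1$ topology is generated by sets of the form $\cU(S,\jmath,\epsilon)$, it suffices to fix any tubular neighborhood $\jmath \co U \to V$ of $S$ and any $\epsilon > 0$, and verify $S_n \in \cU(S,\jmath,\epsilon)$ for $n \gg 1$. The exponential map of $g$ furnishes its own (at least $C^2$) tubular neighborhood $\jmath_g \co U_g \to V_g$, so on a neighborhood of the zero section the composition $\Phi \coloneq \jmath^{-1} \circ \jmath_g$ is a $C^1$ diffeomorphism restricting to the identity on the zero section. I would then prove a $C^1$ analogue of \autoref{Prop_LipschitzImagesOfLipschitzGraphsAreLipschitzGraphs}, which converts any section $\xi \in \Gamma(U_g)$ of sufficiently small $C^1$--norm into a section $\tilde\xi \in \Gamma(U)$ with
\[
  \jmath_g(\graph \xi) \;=\; \jmath(\graph \tilde\xi)
  \qandq
  \Abs{\tilde\xi}_{C^1} \;\leq\; c \Abs{\xi}_{C^1}
\]
for a constant $c$ depending only on $\Phi$. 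Applied to $\xi = \xi_n$, this produces $\tilde\xi_n$ with $S_n = \jmath(\graph \tilde\xi_n)$ and $\Abs{\tilde\xi_n}_{C^1} \to 0$, so that $S_n \in \cU(S,\jmath,\epsilon)$ for $n \gg 1$, as required.

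The main technical subtlety is that the tubular neighborhood $\jmath$ is only required to be $C^1$, so the transition map $\Phi$ has no regularity margin beyond $C^1$. This is nonetheless workable precisely because $\Phi$ equals the identity along the zero section: writing $\Phi = (\Phi_1,\Phi_2)$ with $\Phi_1$ valued in $S$ and $\Phi_2$ in the fibers, one has $\Phi_1(x,0) = x$ and $\Phi_2(x,0) = 0$, so for small $\xi$ the map $x \mapsto \Phi_1(x,\xi(x))$ is a $C^1$--small perturbation of $\id_S$ and is inverted by the $C^1$ inverse function theorem, while $\tilde\xi \coloneq \Phi_2(\,\cdot\,,\xi(\,\cdot\,)) \circ \paren{\Phi_1(\,\cdot\,,\xi(\,\cdot\,))}^{-1}$ inherits its $C^1$--smallness from $\xi$ by the chain rule. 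This is the single step in the argument that requires care; everything else is a bookkeeping translation between two equivalent parametrizations of a tubular neighborhood of $S$.
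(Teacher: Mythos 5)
Your proposal is correct and is essentially the paper's (implicit) argument: the corollary is stated without proof precisely because it follows from \autoref{Prop_CoordinateConvergence=>NormalBundleConvergence} with $k=2$ together with the routine translation between the exponential-map graph representation and the tubular neighborhoods $\jmath$ defining the $C^1$ topology, which is exactly what you carry out. One cosmetic caveat: since $\Phi=\jmath^{-1}\circ\jmath_g$ is only $C^1$, the linear bound $\Abs{\tilde\xi}_{C^1}\leq c\Abs{\xi}_{C^1}$ is slightly too strong (it would need $\Phi\in C^{1,1}$), but your own reasoning --- $\Phi_2(\cdot,0)=0$ forces $D_1\Phi_2(\cdot,0)=0$, and uniform continuity of $D\Phi$ on a compact neighborhood of the zero section then gives $\Abs{\tilde\xi_n}_{C^1}\to 0$ --- delivers the convergence that is actually needed.
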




\subsection{Convergence of embedded pseudo-holomorphic curves}
\label{Sec_ConvergenceOfEmbeddedCurves}

\begin{prop}
  \label{Prop_RegularityScale_Density}
  Let $k \in 2 + \N_0$ and $m\in \N$. 
  For every $\Lambda > 0$ there are $\epsilon = \epsilon(m,k,\Lambda) > 0$ and $\delta = \delta(m,k,\Lambda) > 0$ such that the following holds.
  Let $X$ be a smooth manifold of dimension $2m$,
  let $(J,g)$ be a $C^{k+1}$ almost Hermitian structure,
  let $x \in X$,
  let $r \in (0,\inj_g(x))$, and
  let $C \subset B_r(x)$ be a $J$--holomorphic submanifold.
  If
  \begin{equation*}
    \Abs{\paren{{\exp^g} \circ s_r}^*J - J_x}_{C^{k+1}(B_1(0))}
    \leq
    \Lambda
    \qandq
    \Abs{r^{-2}\paren{{\exp^g} \circ s_r}^*g - g_x}_{C^{k+1}(B_1(0))}
    \leq
    \Lambda,
  \end{equation*}
  and for every $y \in C$ and every $0 < s < d(y,\del B_r(x))$
  \begin{equation*}
    \theta_C(y,s;g) \leq 1 + \epsilon,
  \end{equation*}
  then
  \begin{equation*}
    r_C^k(y;g) \geq \delta \cdot d(y,\del B_r(x)).
  \end{equation*}
\end{prop}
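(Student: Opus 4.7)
The plan is to couple Allard's regularity theorem---which delivers $C^{1,\alpha}$ control of a varifold from a density bound---with standard elliptic bootstrapping for the Cauchy--Riemann equation---which upgrades a $C^{1,\alpha}$ $J$-holomorphic graph to $C^k$---and to transfer scale information via a rescaling argument so that the regularity scale comes out proportional to $d(y,\partial B_r(x))$. Since the hypotheses and conclusion are covariant under $\exp_x^g \circ s_r$, one may first assume $r=1$ with ambient ball $B_1(0)$ and $\Abs{J-J_0}_{C^{k+1}} + \Abs{g-g_0}_{C^{k+1}} \lesssim \Lambda$. Given $y \in C$, set $\rho \coloneq d(y, \partial B_1(0))$ and perform a second rescaling by $\rho$ centered at $y$. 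This preserves the (dimensionless) density bound and can only improve the $C^{k+1}$ bounds on $(J,g)$, reducing the problem to producing a $C^k$ graph representation of $C$ with norm $\leq 1$ on a ball of definite radius $\delta$ around $y$ in the twice-rescaled picture.

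For the analytic core, \autoref{Thm_Gray} gives $\abs{H_C} \leq \abs{\nabla J} \lesssim \Lambda$, so that, together with $\theta_C(y,s) \leq 1+\epsilon$ at a single scale $s$ of order one, \autoref{Thm_AllardRegularity}---applied through a Nash isometric embedding as in the remark after it---produces a $C^{1,\alpha}$ graph of $C$ over $T_yC$ on a ball of definite radius $\gamma = \gamma(m, \Lambda)$. This local graph is equivalently parametrized by a $C^{1,\alpha}$ disk $u$ solving the nonlinear elliptic equation $\delbar_J u = 0$. Since $J \in C^{k+1}$, Schauder bootstrapping for this equation yields $u \in C^{k+1}$ with a bound depending only on $m$, $k$ and $\Lambda$; translating back to the graph function $f$ and rescaling by a sufficiently small $\delta = \delta(m,k,\Lambda)$ drops the $C^k$ norm of $f$ below $1$. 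By \autoref{Def_RegularityScale} this is precisely the desired conclusion, and undoing the two rescalings yields $r_C^k(y;g) \geq \delta \cdot d(y,\partial B_r(x))$.

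The main obstacle is pinning down the uniform dependence of all constants on $m$, $k$, $\Lambda$. Allard's theorem is stated in Euclidean space, and its transfer to the Riemannian setting requires bounds on the second fundamental form of a local isometric embedding in terms of the $C^{k+1}$ data of $g$---precisely the issue flagged in the remark between \autoref{Thm_AllardRegularity} and \autoref{Thm_Gray}. One also has to verify that the $C^{1,\alpha}$ graph produced by Allard is in a coordinate system aligned with $T_yC$ closely enough that the bootstrap runs with constants depending only on $\Lambda$. Once this analytic bookkeeping is carried out, the Schauder bootstrap itself is entirely standard, and no genuinely new geometric ideas are needed.
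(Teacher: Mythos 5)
Your strategy---quantitative Allard at a definite scale, then a Schauder bootstrap for the graph equation, then a final rescaling to push the $C^k$ norm below $1$---is a genuinely different route from the paper's proof, which is a blow-up/contradiction argument: assuming the conclusion fails, one rescales by the reciprocal of the regularity scale at a near-worst point, so that the rescaled curves have regularity scale exactly $1$ at the basepoint and at least $\tfrac14$ on larger and larger balls; \autoref{Prop_RegularityScale_Convergence} then extracts a weak $C_\loc^k$ limit, the density hypothesis together with \autoref{Cor_Monotonicity_Rigidity} forces that limit to be a complex line, and the Schauder estimate for the Cauchy--Riemann equation shows the graph functions tend to $0$ in $C^k$, contradicting the normalization. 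In particular the paper's proof of this proposition uses no version of Allard's theorem at all: since $C$ is by hypothesis already a submanifold, the only issue is uniformity, and the blow-up normalization supplies the initial definite-scale graphs that you instead want to extract from Allard.

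The gap in your route is exactly the point you flag but do not resolve. You need Allard's theorem in a Riemannian ambient space with an explicit $C^{1,\alpha}$ graph estimate whose constants depend only on $m$ and the $C^{k+1}$ bound $\Lambda$ on $(J,g)$, \emph{uniformly over the entire family} of such metrics. The version quoted in the paper (\autoref{Thm_AllardRegularity}) is Euclidean and purely qualitative (no norm bound on the graph is asserted), and the remark following it states explicitly that the authors could not locate in the literature either a direct proof of Allard for metrics $g = g_0 + O(r^2)$ or uniform second-fundamental-form bounds for Nash embeddings in terms of $g - g_0$. \autoref{Prop_AllardInSequencesOfAlmostHermitianManifolds} circumvents this only for a fixed convergent sequence of metrics, by packaging them into a single ambient metric with a single isometric embedding; that device does not produce the uniform constant $\delta(m,k,\Lambda)$ your argument needs. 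So ``once this analytic bookkeeping is carried out'' is precisely the step that is not available, and the compactness argument is the paper's way of avoiding it. A smaller, fixable point: to run the Schauder bootstrap as a perturbation of $\Delta$ you need $J - J_y$ to be \emph{small}, not merely bounded by $\Lambda$, so you must first pass to a scale $s_0(\Lambda) \ll 1$ before applying Allard and setting up the graph equation; this should be made explicit.
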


\edited{The $C^{k+1}$ estimates are needed to improve the $C^{k-1,\alpha}$ convergence of embedded pseudo-holomorphic curves to $C^k$ convergence.}

\begin{proof}[\edited{Proof of \autoref{Prop_RegularityScale_Density}}]
  It suffices to prove the statement with $X = B_1(0) \subset \C^m$, $x = 0$, $r = 1$, $J_x = i$, and $g$ satisfying $g_x = g_0$ and $\exp_x^g = \id$.
  Here $g_0$ and $i$ are the standard Euclidean metric and complex structure on $\C^m$. 
  
  If the statement fails to hold,
  then for every $n \in \N$ there are
  a $C^{k+1}$ almost Hermitian structure $(g_n,J_n)$ on $B_1(0)$ and
  a $J_n$--holomorphic submanifold $C_n \subset B_1(0)$ such that
  \begin{equation*}
    \Abs{J_n - i}_{C^{k+1}(B_1(0))}
    \leq
    \Lambda
    \qandq
    \Abs{g_n - g_0}_{C^{k+1}(B_1(0))}
    \leq
    \Lambda 
  \end{equation*}
  and for every $x \in C_n$ and every $0 < s < 1-\abs{x}$
  \begin{equation*}
    \theta_C(x,s;g_n) \leq 1 + \epsilon_n \qwithq \epsilon_n \coloneq 1/n,
  \end{equation*}
  but the sequence $(\delta_n)_{n \in \N}$ defined by    
  \begin{equation*}
    \delta_n \coloneq \inf_{x \in B_1(0)} \frac{r_{C_n}^k(x;g_n)}{1-\abs{x}}
  \end{equation*}
  converges to zero.
  Since $C_n$ is a submanifold, $\delta_n > 0$.

  For every $n \in \N$ choose $x_n \in B_1(0)$ such that
  \begin{equation*}
    \frac{r_{C_n}^k(x_n;g_n)}{1-\abs{x_n}} \leq 2\delta_n,
  \end{equation*}
  and rescale by declaring that
  \begin{equation*}
    R_n \coloneq 1/r_{C_n}^k(x_n;g_n), \quad
    \tilde J_n \coloneq s_{1/R_n}^*J_n, \quad
    \tilde g_n \coloneq R_n^2 \cdot s_{1/R_n}^*g_n, \qandq
    \tilde C_n \coloneq s_{1/R_n}^{-1}(C_n).
  \end{equation*}
  The following hold:
  \begin{enumerate}
  \item
    For every $n \in \N$ the submanifold $\tilde C_n$ is $\tilde J_n$--holomorphic.
  \item
    Since $(R_n)$ converges to $\infty$,
    $(\tilde J_n,\tilde g_n)$ converges to $(i,g_0)$ in the $C_\loc^{k+1}$ topology.
  \item
    For every $n \in \N$ and $x \in B_{R_n}(0)$
    \begin{equation*}
      r_{\tilde C_n}^k(x;\tilde g_n) = R_n \cdot r_{\tilde C_n}(s_{1/R_n}(x);g_n);
    \end{equation*}
    in particular:
    \begin{equation*}
      r_{\tilde C_n}^k(\tilde x_n;g_n) = 1
      \qwithq
      \tilde x_n \coloneq s_{1/R_n}^{-1}(x_n).
    \end{equation*}
  \item
    For every $n \in \N$, $x \in B_{R_n}(0)$, and $0 < s < R_n - \abs{x}$
    \begin{equation*}
      \theta_{\tilde C_n}(x,s;\tilde g_n) \leq 1 + \epsilon_n.
    \end{equation*}
  \item
    The sequence $(\tilde R_n)$ defined by
    \begin{equation*}
      \tilde R_n \coloneq \frac12\cdot\paren{R_n - \abs{\tilde x_n}}
      =
      \frac12 \cdot \frac{1 - \abs{x_n}}{r_{C_n}^k(x_n;g_n)}
    \end{equation*}
    converges to $\infty$.
  \item
    For every $n \in \N$ and $x \in B_{R_n}(0)$
    \begin{equation*}
      r_{\tilde C_n}^k(x;\tilde g_n)
      \geq 
      \delta_n \cdot \paren{R_n - \abs{x}}
      \geq 
      \frac12 \cdot \frac{R_n - \abs{x}}{R_n - \abs{\tilde x_n}}
      \geq
      \frac12 \cdot \paren*{1 - \frac{d(x,x_n)}{R_n - \abs{\tilde x_n}}};
    \end{equation*}
    in particular:
    \begin{equation*}
      \inf \set[\big]{ r_{\tilde C_n}^k(x;\tilde g_n) : x \in B_{\tilde R_n}(\tilde x_n) }\geq \frac14.
    \end{equation*}
  \end{enumerate}
  
  Translate by $-\tilde x_n$ in order to assume that $\tilde x_n = 0$.
  By \autoref{Prop_RegularityScale_Convergence},
  after passing to a subsequence (without relabeling),
  $(\tilde C_n)$ weakly $C_\loc^k$ converges to an $i$--holomorphic submanifold $C$.
  For every $x \in C$ and $s > 0$,
  \begin{equation*}
    \theta_C(x,s;g_0) = 1.
  \end{equation*}
  Therefore,
  by \autoref{Cor_Monotonicity_Rigidity},
  $C$ is a complex line.
  Without loss of generality, $C = \C \times \set{0}$.

  Since $(\tilde C_n)$ weakly $C^k$ converges to $C$,
  by \autoref{Prop_LipschitzImagesOfLipschitzGraphsAreLipschitzGraphs},
  there are $f_n \in C^k(B_4(0),\C^{m-1})$ for every $n\in\N$
  such that 
  $\limsup_{n\to\infty} \Abs{f_n}_{C^k} < \infty$,
  $\lim_{n\to\infty} \Abs{f_n}_{C^{k-1,\alpha}} = 0$ for every $\alpha \in (0,1)$,
  and for $n \gg 1$,
  \begin{equation*}
    \tilde C_n \cap (B_4(0) \times \C^{m-1}) = \graph{f_n}.
  \end{equation*}
  \edited{The upcoming argument based on Schauder estimates proves that the convergence can be improved to $C^k$, that is: $\lim_{n\to\infty} \Abs{f_n}_{C^k(B_2(0))} = 0$.
  This contradicts $r_{\tilde C_n}^k(0;g_n) = 1$, cf. \autoref{Def_RegularityScale}.}
  
  The map $F_n \in C^k(B_4(0), \C^m)$ defined by $F_n(z) \coloneq (z, f_n(z))$ satisfies
  \begin{equation}
    \label{Eq_FnPseudoHolomorphic}
    \paren{\tilde J_n \circ F_n} \cdot \rd F_n - \rd F_n\cdot j_n = 0
  \end{equation}
  with $j_n$ denoting the $C^{k-1}$ complex structure on $B_4(0)$ associated with $F_n^*\tilde g_n$.
  For every $\alpha\in(0,1)$
  \begin{equation*}
    \lim_{n\to\infty}\Abs{\tilde J_n \circ F_n -i}_{C^{k-1,\alpha}} = 0 \qandq \lim_{n\to\infty}\Abs{j_n-j}_{C^{k-2,\alpha}} = 0
  \end{equation*}
  with $i$ and $j$ denoting the standard complex structures on $\C^m$ and $\C$ respectively.  
  With $\delbar F = i \rd F - \dF j$ denoting the standard Cauchy--Riemann operator,
  \autoref{Eq_FnPseudoHolomorphic} is rewritten as
  \begin{equation*}
    \delbar F_n + (\tilde J_n \circ F_n - i) \cdot \rd F_n - \rd F_n \cdot (j_n-j) = 0.
  \end{equation*}
  Since $\delbar F_n = (0,\delbar f_n)$ and $\lim_{n\to\infty}\Abs{\nabla f_n}_{C^{k-1,\alpha}} = 0$,
  this implies a PDE of the form
  \begin{equation*}
    \Delta f_n + \fp(\tilde J_n,f_n,\nabla f_n)\nabla^2 f_n + \fq(\tilde J_n,f_n,\nabla f_n) = 0
  \end{equation*}
  with
  \begin{equation*}
    \lim_{n\to\infty}
    \Abs{\fp(\tilde J_n,f_n,\nabla f_n)}_{C^{k-2,\alpha}}
    =
    0
    \qandq
    \lim_{n\to\infty}
    \Abs{\fq(\tilde J_n,f_n,\nabla f_n)}_{C^{k-2,\alpha}}
    =
    0
  \end{equation*}
  for every $\alpha \in (0,1)$.
  Therefore, by interior Schauder estimates \cite[Theorem 6.6]{Gilbarg2001}, 
  \begin{equation*}
    \lim_{n\to\infty} \Abs{f_n}_{C^k(B_2(0))}
    \leq
    \lim_{n\to\infty} \Abs{f_n}_{C^{k,\alpha}(B_2(0))} = 0.
    \qedhere
  \end{equation*}
\end{proof}

\begin{proof}[Proof of \autoref{Thm_SpaceOfEmbeddedCurvesEmbedsIntoSpaceOfSubmanifolds}]
  The map $(\pr_\fH,\supp) \co \SpaceOfEmbeddedCurves \to \SpaceOfHermitianStructures \times \SpaceOfSubmanifolds$ is injective.
  To see that it is continuous,
  suppose that $(C_n) \in \paren{\SpaceOfEmbeddedCurves}^\N$ geometrically converges to $C \in \SpaceOfEmbeddedCurves$.
  Let $\epsilon > 0$ be as in \autoref{Prop_RegularityScale_Density}.
  Since $C$ is embedded,
  there is an $r \in (0,\inj_g)$ such that
  \begin{equation*}
    \max_{x \in \supp C} \theta_C(x,r;g)
    \leq
    1 + \epsilon/2.
  \end{equation*}  
  By \autoref{Prop_UpperBoundOnMassRatio} and \autoref{Lem_MonotonicityFormula},
  for $n \gg 1$ and $0 < s < r$
  \begin{equation*}
    \max_{x \in \supp C_n} \theta_{C_n}(x,s;g_n)
    \leq
    1 + \epsilon.
  \end{equation*}
  Therefore,
  by \autoref{Prop_RegularityScale_Density},
  \begin{equation*}
    \liminf_{n \to \infty} \inf_{x \in C_n} r_{C_n}^2(x;g) > 0.
  \end{equation*}
  Therefore,
  by \autoref{Prop_RegularityScale_Convergence} and \autoref{Cor_SmoothConvergence},
  $(C_n)$ $C^1$ converges to $C$.
  Evidently, if $(C_n)$ $C^1$ converges to $C$,
  then it geometrically converges to $C$.
\end{proof}

The same argument also proves the following.

\begin{definition}
  Let $(J,g;C) \in \SpaceOfCycles$.
  A point $x \in \supp C$ is \defined{smooth} if
  \begin{equation}
    \limsup_{r \downarrow 0} \theta_C(x,r;g) = 1.
    \qedhere
  \end{equation}  
\end{definition}

\begin{prop}
  \label{Prop_LocalSmoothConvergence}
  If $(J_n,g_n;C_n) \in \SpaceOfCycles^\N$ geometrically converges to $(J,g;C) \in \SpaceOfCycles$ and $x \in \supp C$ is smooth,
  then there is a neighborhood $U$ of $x \in X$,
  such that,
  for every $n \gg 1$,
  $C_n \cap U$ is embedded
  and $(C_n \cap U)$ $C^1$ converges to $C \cap U$.
  \qed
\end{prop}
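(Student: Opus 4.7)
The plan is to localize the proof of \autoref{Thm_EmbeddedCurvesAreOpenInCycles} and \autoref{Thm_SpaceOfEmbeddedCurvesEmbedsIntoSpaceOfSubmanifolds} to a neighborhood of the smooth point $x$. First I would fix $\epsilon>0$ to be the smaller of the constants supplied by \autoref{Prop_AllardInSequencesOfAlmostHermitianManifolds} and \autoref{Prop_RegularityScale_Density}. Smoothness at $x$ together with the monotonicity formula \autoref{Lem_MonotonicityFormula} allows one to choose $r_0\in(0,\inj_g(x))$ with $\theta_C(x,r_0;g)\leq 1+\epsilon/4$. Applying the monotonicity formula at nearby points, and using that $B_s(y)\subset B_{r_0}(x)$ whenever $s+d(y,x)<r_0$, this produces an open neighborhood $V\ni x$ and some $s_0>0$ such that $\theta_C(y,s;g)\leq 1+\epsilon/2$ for every $y\in\supp C\cap V$ and every $s\in(0,s_0)$.

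Next I would establish a local version of \autoref{Prop_UpperBoundOnMassRatio}: if $U$ is a relatively compact open neighborhood of $x$ with $\overline U\subset V$ and $s\in(0,s_0)$ is chosen so that $\overline{B_s(y)}\subset V$ for every $y\in\overline U$, then
\[
  \limsup_{n\to\infty}\max_{y\in\supp C_n\cap\overline U}\theta_{C_n}(y,s;g_n)
  \leq \max_{y\in\supp C\cap\overline U}\theta_C(y,s;g)
  \leq 1+\epsilon/2.
\]
The justification is the same contradiction argument as in the proof of \autoref{Prop_UpperBoundOnMassRatio}: any putative violating sequence $y_n\in\supp C_n\cap\overline U$ has, by Hausdorff convergence of supports, a limit point in $\supp C\cap\overline U$, and the continuity of $\delta$ built into geometric convergence then contradicts the strict increase in density. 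Consequently, for every $n\gg 1$ and every $y\in\supp C_n\cap\overline U$, one has $\theta_{C_n}(y,s;g_n)\leq 1+\epsilon$.

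With this uniform density bound in hand, \autoref{Prop_AllardInSequencesOfAlmostHermitianManifolds} applied locally to the restriction of $C_n$ to $U$ shows that $\supp C_n\cap U$ is a smooth submanifold for $n\gg 1$; since $1+\epsilon<2$, the density bound forces every multiplicity of $C_n$ over $U$ to equal one, so $C_n\cap U$ is embedded. Then \autoref{Prop_RegularityScale_Density} supplies a uniform positive lower bound on the $C^2$ regularity scale $r^2_{C_n}(\,\cdot\,;g_n)$ on $\supp C_n\cap U$, and \autoref{Prop_RegularityScale_Convergence} together with \autoref{Cor_SmoothConvergence} upgrades the Hausdorff convergence of $\supp C_n\cap U$ to $\supp C\cap U$ into $C^1$ convergence.

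The main obstacle is the localization of \autoref{Prop_UpperBoundOnMassRatio}: one must prevent mass from leaking out near the boundary of $U$, which is why I separate $\overline U\subset V$ from the larger neighborhood where the density control on $C$ itself was proved, and insist that all test balls of radius $s$ around points of $\overline U$ still lie inside $V$. A secondary issue worth flagging is that Allard's theorem, as packaged in \autoref{Prop_AllardInSequencesOfAlmostHermitianManifolds}, only yields regularity of the underlying set; promoting smoothness to embeddedness relies essentially on the quantitative closeness $\theta_{C_n}\leq 1+\epsilon<2$ to exclude multiplicities $\geq 2$.
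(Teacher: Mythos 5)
Your proposal is correct and follows essentially the same route as the paper, which proves this proposition by remarking that ``the same argument'' as in the proof of \autoref{Thm_SpaceOfEmbeddedCurvesEmbedsIntoSpaceOfSubmanifolds} applies: monotonicity plus smoothness at $x$ give a local density bound $\leq 1+\epsilon$, \autoref{Prop_AllardInSequencesOfAlmostHermitianManifolds} and the density bound $<2$ give local embeddedness, and \autoref{Prop_RegularityScale_Density}, \autoref{Prop_RegularityScale_Convergence}, and \autoref{Cor_SmoothConvergence} upgrade to $C^1$ convergence. Your careful separation of $\overline U\subset V$ to localize \autoref{Prop_UpperBoundOnMassRatio} is exactly the detail the paper leaves implicit.
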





\section{The proof of the Gopakumar--Vafa conjecture}
\label{Sec_GopakumarVafaConjecture}

Throughout this section,
assume the following.

\begin{situation}
  \label{Sit_SymplecticSixManifolds}
  Let $(X,\omega)$ be a closed symplectic $6$--manifold.
  Denote by $\SpaceOfAlmostComplexStructures \coloneq \SpaceOfAlmostComplexStructures_\tau(\omega)$ the space of smooth almost complex structures $J$,
  which are tamed by $\omega$,
  equipped with the $C^\infty$ topology;
  cf.~\autoref{Ex_SymplecticAlmostHermitianStructures}~\autoref{Ex_SymplecticAlmostHermitianStructures_Tamed}.
\end{situation}

This section carries forward the notation from \autoref{Sec_SpacesOfPseudoHolomorphicCurves} with $\SpaceOfHermitianStructures = \SpaceOfAlmostComplexStructures$.
In particular, $\ModuliSpaceOfNodalMaps$ denotes the universal moduli space over $\SpaceOfAlmostComplexStructures$ of \edited{stable pseudo-holomorphic maps};
moreover, $\ModuliSpaceOfSimpleMaps$ and $\ModuliSpaceOfEmbeddedMaps$ denote the subspaces consisting of the equivalence classes of simple maps and of embeddings.
For $A\in\rH_2(M,\Z)$ and $\genus\in\N_0$ denote by $\ModuliSpaceOfNodalMaps_{A,\genus}$ the subspace of nodal pseudo-holomorphic maps representing $A$ and of genus $\genus$.
For $J \in\SpaceOfAlmostComplexStructures$ and $S \subset \SpaceOfAlmostComplexStructures$ set
\begin{equation*}
  \ModuliSpaceOfNodalMaps(J) \coloneq \pr_\SpaceOfAlmostComplexStructures^{-1}(J)
  \qandq
  \ModuliSpaceOfNodalMaps(S) \coloneq \pr_\SpaceOfAlmostComplexStructures^{-1}(S)
\end{equation*}
with $\pr_\SpaceOfAlmostComplexStructures \co  \ModuliSpaceOfNodalMaps \to \SpaceOfAlmostComplexStructures$ denoting the projection map.
Analogous notation is used for the subspaces of $\ModuliSpaceOfNodalMaps$ introduced above.

The infinitesimal structure of the moduli space is controlled by the linearization of the Cauchy--Riemann operator.

\begin{definition}
  \label{Def_CauchyRiemann}
  Let $J\in\SpaceOfAlmostComplexStructures$.
  Let $u \co (\Sigma,j) \to (X,J)$ be a $J$--holomorphic map.  
  \begin{enumerate}
  \item
    \label{Def_CauchyRiemann_D}
    Let $\cS$ be an $\Aut(\Sigma,j)$--invariant slice of the Teichmüller space $\cT(\Sigma)$ through $j$.
    The linearization of the Cauchy--Riemann operator defines a linear map
    \begin{equation*}
      \fd_{u,J} \co T_j\cS \oplus \Gamma(u^*TX) \to \Omega^{0,1}(\Sigma,u^*TX).
    \end{equation*}
    If $u$ is the inclusion of a $J$--holomorphic curve $C$,
    then $\fd_{C,J} \coloneq \fd_{u,J}$.
  \item
    \label{Def_CauchyRiemann_Index}
    The \defined{index of $u$} is
    \begin{align*}
      \ind u
      &\coloneq
        \ind \fd_{u,J} - \dim \Aut(\Sigma,j) \\
      &=
        (\dim X - 6)(1-g) + 2c_1(A)
        =
        2c_1(A)
    \end{align*}
    with $A \coloneq u_*[\Sigma] \in \rH_2(X,\Z)$ and $c_1(A) \coloneq \Inner{c_1(X,\omega),A}$. 
    If $u$ is the inclusion of a $J$--holomorphic curve $C$,
    then the \defined{index of $C$} is $\ind u$.
  \item
    \label{Def_CauchyRiemann_Unobstructed}
    The map $u$ is \defined{unobstructed with respect to $J$} if $\coker \fd_{u,J} = 0$.
    If $u$ is the inclusion of a $J$--holomorphic curve $C$,
    then $C$ is \defined{unobstructed with respect to $J$} if $u$ is.
    \qedhere
  \end{enumerate}
\end{definition}

 
\subsection{Gromov--Witten invariants of symplectic $6$--manifolds}
\label{Sec_GromovWittenInvariants}

For every $J \in\SpaceOfAlmostComplexStructures$, $A \in \rH_2(X,\Z)$, and $\genus \in \N_0$
the moduli space $\ModuliSpaceOfNodalMaps_{A,\genus}(J)$ carries a \defined{virtual fundamental class (VFC)}
\begin{equation*}
  [\ModuliSpaceOfNodalMaps_{A,\genus}(J)]^{\vir} \in \check\rH^\vdim (\ModuliSpaceOfNodalMaps_{A,\genus}(J), \Q)^\vee.
\end{equation*}
Here $\check\rH^*(\cdot,\Q)$ denotes  \edited{\v{C}ech cohomology} with rational coefficients,
$(\cdot)^\vee \coloneq \Hom(\cdot,\Q)$ denotes the dual vector space, and $\vdim$ is the \defined{virtual dimension} of the moduli space
\begin{equation}
  \label{Eq_VirtualDimension}
  \vdim
  \coloneq
  (\dim X - 6)(1-g) + 2c_1(A)
  =
  2c_1(A);
\end{equation}
cf.~\autoref{Def_CauchyRiemann}.
The VFC is independent of $J$ in the following sense.
If  $\bJ = (J_t)_{t\in[0,1]}$ is a path in $\SpaceOfAlmostComplexStructures$,
then
\begin{equation}
  \label{Eq_VFCPaths}
  [\ModuliSpaceOfNodalMaps_{A,\genus}(J_0)]^\vir =  [\ModuliSpaceOfNodalMaps_{A,\genus}(J_1)]^\vir
  \quad\text{in}~
  \check\rH^\vdim (\ModuliSpaceOfNodalMaps_{A,\genus}(\bJ),\Q)^\vee
\end{equation}
The reader can find the details of this in \cite[Section 9.3]{Pardon2016}.

If $A \in \rH_2(X,\Z)$ is a \defined{Calabi--Yau class};
that is: $c_1(A)=0$, then 
\begin{equation*}
  \vdim = 0
  \qandq
  [\ModuliSpaceOfNodalMaps_{A,\genus}(J)]^{\vir} \in \check\rH^0(\ModuliSpaceOfNodalMaps_{A,\genus}(J), \Q)^\vee.
\end{equation*}
In that case, the \defined{Gromov--Witten invariant} is obtained by pairing the VFC with $1 \in \check\rH^0(\ModuliSpaceOfNodalMaps_{A,\genus}(J),\Q)$:
\begin{equation*}
  \GW_{A,\genus} = \GW_{A,\genus}(X,\omega) \coloneq \int_{[\ModuliSpaceOfNodalMaps_{A,\genus}(J)]^{\vir}} 1 \in \Q
\end{equation*}
for $J\in\SpaceOfAlmostComplexStructures$.
Since $\SpaceOfAlmostComplexStructures$ is path-connected and by \autoref{Eq_VFCPaths}, $\GW_{A,\genus}$ is independent of $\SpaceOfAlmostComplexStructures$.
It is convenient to package these into the \defined{Gromov--Witten series}:
\begin{equation*}
  \GW = \GW(X,\omega) \coloneq \sum_{A \in \Gamma}\sum_{g=0}^\infty \GW_{A,\genus}\cdot t^{2\genus-2} q^A
\end{equation*}
with
\begin{equation*}
  \Gamma
  \coloneq
  \set{
    A \in \rH_2(X,\Z)
    :
    A \neq 0,
    c_1(A) = 0
  }
\end{equation*}
denoting the set of non-zero Calabi--Yau classes.

For $A \in \Gamma$ and $\genus \in \N_0$
if
\begin{equation*}
  \ModuliSpaceOfNodalMaps_{A,\genus}(J) = \coprod_{i \in I} \cA_i
\end{equation*}
is a finite decomposition into open and closed subsets,
then the VFC decomposes accordingly
\begin{equation}
  \label{Eq_DecompositionOfVFC}
  [\ModuliSpaceOfNodalMaps_{A,\genus}(J)]^\vir = \sum_{i \in I} [\cA_i]^\vir;
\end{equation}
see \cite[Lemma 5.2.3]{Pardon2016}.
Therefore,
\begin{equation*}
  \GW_{A,\genus}
  =
  \sum_{i \in I} \GW_{A,\genus}(\cA_i)
  \qwithq
  \GW_{A,\genus}(\cA_i) \coloneq \int_{[\cA_i]^\vir} 1.
\end{equation*}
The number $\GW_{A,\genus}(\cA_i)$ is the \defined{Gromov--Witten contribution of $\cA_i$}.

For the purpose of this article it is convenient to truncate the Gromov--Witten series $\GW$ according to an upper bound $\Lambda$ on the mass, or energy, of pseudo-holomorphic maps. 
For $A \in \rH_2(X,\Z)$ set $\bM(A) \coloneq \Inner{[\omega],A}$.
The \defined{$\Lambda$--truncated Gromov--Witten series} is
\begin{equation*}
  \GW_\Lambda
  =
  \GW_\Lambda(X,\omega)
  \coloneq
  \sum_{A \in \Gamma_\Lambda} \sum_{g=0}^\infty
  \GW_{A,\genus} \cdot t^{2g-2} q^A
\end{equation*}
with
\begin{equation}
  \label{Eq_GammaLambda}  
  \Gamma_\Lambda
  \coloneq
  \set{
    A \in \Gamma
    :
    \bM(A) \leq \Lambda
  }
\end{equation}
denoting the set of non-zero Calabi--Yau classes of mass at most $\Lambda$.
Denote by
\begin{equation*}
  \ModuliSpaceOfNodalMaps_\Lambda
  \coloneq
  \coprod_{A \in \Gamma_\Lambda} \coprod_{\genus = 0}^\infty \ModuliSpaceOfNodalMaps_{A,\genus}
\end{equation*}
the universal moduli space of stable nodal pseudo-holomorphic maps of index zero and mass at most $\Lambda$;
this is an open and closed subset of $\ModuliSpaceOfNodalMaps$.
Moreover,
the subspaces $\ModuliSpaceOfNodalMaps_\Lambda(J)$, $\ModuliSpaceOfSimpleMaps_\Lambda$, $\ModuliSpaceOfEmbeddedMaps_\Lambda$, etc. are defined analogously.
By the preceding discussion,
if
\begin{equation*}
  \ModuliSpaceOfNodalMaps_\Lambda(J) = \coprod_{i \in I} \cA_i
\end{equation*}
is a finite decomposition into open and closed subsets,
then $\GW_\Lambda$ decomposes accordingly
\begin{equation*}
  \GW_\Lambda = \sum_{i \in I} \GW_\Lambda(\cA_i).
\end{equation*}

The upcoming topological lemma describes a method for decomposing $\ModuliSpaceOfNodalMaps_\Lambda(J)$ into open and closed subsets.
It is the foundation of the concept of $\Lambda$--cluster introduced in \autoref{Sec_Clusters} which lies at the heart of the proof of \autoref{Thm_GopakumarVafa}.

\begin{definition}
  Let $\Lambda > 0$, and let $\Gamma_\Lambda$ be as in \autoref{Eq_GammaLambda}. 
  Let $\SpaceOfConnectedCycles$ be the universal space of pseudo-holomorphic cycles with connected support over $\SpaceOfAlmostComplexStructures$,  as in \autoref{Def_PseudoHolomorphicCurve}~\autoref{Def_PseudoHolomorphicCurve_Connected}.
  \begin{enumerate}
  \item
    Set 
    \begin{equation*}
      \SpaceOfConnectedCycles_\Lambda
      \coloneq
      \set{
        (J,C) \in \SpaceOfConnectedCycles : [C] \in \Gamma_\Lambda
      }.
    \end{equation*}
  \item
    For $J \in \SpaceOfAlmostComplexStructures$ and $S \subset  \SpaceOfAlmostComplexStructures$ set
    \begin{equation*}
      \SpaceOfConnectedCycles_\Lambda(J) \coloneq \SpaceOfConnectedCycles_\Lambda \cap \pr_\SpaceOfAlmostComplexStructures^{-1}(J)
      \qandq
      \SpaceOfConnectedCycles_\Lambda(S) \coloneq \SpaceOfConnectedCycles_\Lambda \cap \pr_\SpaceOfAlmostComplexStructures^{-1}(S)
    \end{equation*}
  \item
    Let   $\SpaceOfCompactSubsets$  be the space of compact subsets of $X$, as in \autoref{Def_HausdorffMetric}, and let
    $\supp \co \SpaceOfConnectedCycles \to \SpaceOfCompactSubsets$ be the map from \autoref{Def_JHolomorphicCycles}~\autoref{Def_JHolomorphicCycles_Support}.
    For $J \in \SpaceOfAlmostComplexStructures$ and $S \subset \SpaceOfAlmostComplexStructures$, and $\cU \subset \SpaceOfCompactSubsets$ set
    \begin{equation*}
      \SpaceOfConnectedCycles_\Lambda(J,\cU)
      \coloneq
      \SpaceOfConnectedCycles_\Lambda(J) \cap \supp^{-1}(\cU)
      \qandq
      \SpaceOfConnectedCycles_\Lambda(S,\cU)
      \coloneq
      \SpaceOfConnectedCycles_\Lambda(S) \cap \supp^{-1}(\cU).
      \qedhere
    \end{equation*}
  \end{enumerate}
\end{definition} 

\begin{lemma}[Open-Closed Contribution]
  \label{Lemma_OpenClosedContribution}  
  Let $\Lambda > 0$, $S \subset\SpaceOfAlmostComplexStructures$, and $\cU \subset \SpaceOfCompactSubsets$.
  If $\SpaceOfConnectedCycles_\Lambda(S,\cU)$ is open and closed in $\SpaceOfConnectedCycles_\Lambda(S)$,
  then the following hold:
  \begin{enumerate}
  \item
    \label{Lemma_OpenClosedContribution_Defined}
    For every $J\in S$
    \begin{equation*}
      \ModuliSpaceOfNodalMaps_\Lambda(J;\cU)
      \coloneq
      \fz^{-1}(\SpaceOfConnectedCycles_\Lambda(J,\cU))
    \end{equation*}
    is open and closed in $\ModuliSpaceOfNodalMaps_\Lambda(J)$.
    Here $\fz\co \ModuliSpaceOfNodalMaps \to \SpaceOfConnectedCycles$ is as in \autoref{Def_MapToCycle}.
    In particular, for every $J \in S$, $\cU$ has a Gromov--Witten contribution
    \begin{equation*}
      \GW_\Lambda(\cU, J) \coloneq \GW_\Lambda(\ModuliSpaceOfNodalMaps_\Lambda(\cU;J)).
    \end{equation*}
  \item
    \label{Lemma_OpenClosedContribution_Invariant}
    The Gromov--Witten contribution $\GW_\Lambda(\cdot,\cU)$ of $\cU$ is constant in paths in $S$, that is:
    \begin{equation*}
      \GW_\Lambda(\cU, J_0) = \GW_\Lambda(\cU, J_1)
    \end{equation*}
    for every path $\bJ = (J_t)_{t\in[0,1]}$ in $S$. 
  \end{enumerate}
\end{lemma}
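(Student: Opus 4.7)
The plan is to deduce \autoref{Lemma_OpenClosedContribution} from two ingredients: the continuity of the cycle map $\fz \co \ModuliSpaceOfNodalMaps \to \SpaceOfConnectedCycles$ (which transports the open-and-closed hypothesis on cycles to the space of nodal maps), and the standard decomposition and path-invariance properties \autoref{Eq_DecompositionOfVFC} and \autoref{Eq_VFCPaths} of the virtual fundamental class in Pardon's formalism.

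First I would verify \autoref{Lemma_OpenClosedContribution_Defined}. Fix $J\in S$. Since $\SpaceOfConnectedCycles_\Lambda(J,\cU) = \SpaceOfConnectedCycles_\Lambda(S,\cU) \cap \pr_\SpaceOfAlmostComplexStructures^{-1}(J)$, the hypothesis that $\SpaceOfConnectedCycles_\Lambda(S,\cU)$ is open and closed in $\SpaceOfConnectedCycles_\Lambda(S)$ restricts to the same property on the fiber over $J$. The map $\fz$ is continuous in the geometric topology by \autoref{Def_MapToCycle}, so $\ModuliSpaceOfNodalMaps_\Lambda(J;\cU) = \fz^{-1}(\SpaceOfConnectedCycles_\Lambda(J,\cU))$ is open and closed in $\fz^{-1}(\SpaceOfConnectedCycles_\Lambda(J)) = \ModuliSpaceOfNodalMaps_\Lambda(J)$. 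Intersecting with each Gromov-compact stratum $\ModuliSpaceOfNodalMaps_{A,\genus}(J)$ yields a compact open-and-closed piece $\cA_{A,\genus} \coloneq \ModuliSpaceOfNodalMaps_\Lambda(J;\cU) \cap \ModuliSpaceOfNodalMaps_{A,\genus}(J)$; by \autoref{Eq_DecompositionOfVFC} it carries a virtual class $[\cA_{A,\genus}]^\vir$ with a well-defined contribution $\GW_{A,\genus}(\cA_{A,\genus})$, and assembling these gives the truncated series
\[
  \GW_\Lambda(\cU,J) \coloneq \sum_{A \in \Gamma_\Lambda}\sum_{\genus=0}^\infty \GW_{A,\genus}(\cA_{A,\genus}) \cdot t^{2\genus-2} q^A.
\]

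Next, for \autoref{Lemma_OpenClosedContribution_Invariant}, let $\bJ = (J_t)_{t\in[0,1]}$ be a path in $S$. Exactly the same argument with $\bJ$ in place of $J$ shows that
\[
  \cA_{A,\genus}(\bJ) \coloneq \fz^{-1}\bigl(\SpaceOfConnectedCycles_\Lambda(\bJ,\cU)\bigr) \cap \ModuliSpaceOfNodalMaps_{A,\genus}(\bJ)
\]
is an open-and-closed subset of the Gromov-compact family moduli space $\ModuliSpaceOfNodalMaps_{A,\genus}(\bJ)$, restricting over the endpoints to $\cA_{A,\genus}(J_0)$ and $\cA_{A,\genus}(J_1)$. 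The cobordism invariance \autoref{Eq_VFCPaths}, applied to the relative VFC of this open-and-closed piece rather than the full family moduli space (which is again a direct consequence of the decomposition formalism of \cite[§5.2, §9.3]{Pardon2016}), delivers $\GW_{A,\genus}(\cA_{A,\genus}(J_0)) = \GW_{A,\genus}(\cA_{A,\genus}(J_1))$. Summing over $(A,\genus) \in \Gamma_\Lambda \times \N_0$ yields $\GW_\Lambda(\cU,J_0) = \GW_\Lambda(\cU,J_1)$.

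The only delicate point I anticipate is checking that $\fz$ genuinely lands in $\SpaceOfConnectedCycles$ rather than merely in $\SpaceOfCycles$, so that the hypothesis on $\SpaceOfConnectedCycles_\Lambda(S,\cU)$ can be pulled back sensibly; this is immediate from the convention that the domain of a stable nodal map is connected, as recorded in the remark following \autoref{Def_PseudoHolomorphicCurve} where it is noted that $\SpaceOfConnectedCycles = \im \fz$. Beyond this, the entire argument is bookkeeping built on top of continuity of $\fz$ and the VFC package; no genuinely new analytic input is required at this stage, since the heavy lifting sits in \autoref{Thm_SpaceOfCycles_Proper} and Pardon's construction of the VFC.
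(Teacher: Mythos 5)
Your proof is correct and follows exactly the paper's route: restrict the open-and-closed hypothesis to $\set{J}$ and to $\bJ$, pull back along the continuous map $\fz$, and then invoke \autoref{Eq_DecompositionOfVFC} for the existence of the contribution and \autoref{Eq_VFCPaths} for its invariance along paths. The paper's own proof is a three-line version of the same argument, so no further comment is needed.
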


\begin{proof}
  Since $\fz$ is continuous,
  $\fz^{-1}(\SpaceOfConnectedCycles_\Lambda(S,\cU))\subset \ModuliSpaceOfNodalMaps_\Lambda(S)$ is open and closed.
  The same holds for $\set{J}$ and $\bJ$ instead of $S$.
  This implies \autoref{Lemma_OpenClosedContribution_Defined} and,
  together with \autoref{Eq_VFCPaths} and \autoref{Eq_DecompositionOfVFC},
  also \autoref{Lemma_OpenClosedContribution_Invariant}.
\end{proof}



\subsection{Cluster formalism}
\label{Sec_Clusters}

\edited{While the Gromov--Witten generating function $\GW_\Lambda$ is naturally decomposed by decomposing the moduli space $\ModuliSpaceOfNodalMaps_\Lambda(J)$ into open and closed subsets with respect to the Gromov topology, \autoref{Lemma_OpenClosedContribution} allows for a decomposition using open and closed subsets of $\SpaceOfConnectedCycles_\Lambda$, the space of {\em connected pseudo-holomorphic cycles} (of mass at most $\Lambda$). In this section, we construct such a decomposition using the cluster formalism from \cite{Ionel2018}, suitably adapted to framework of pseudo-holomorphic cycles.
The results of this section rely} on the results from \autoref{Sec_SpacesOfPseudoHolomorphicCurves}.
The reader might find it helpful to review the definitions and results from \autoref{Subsec_DefinifionsAndResults}.
In particular, the following facts will be used:
\begin{enumerate}
\item
  For every $\Lambda > 0$, $\SpaceOfConnectedCycles_\Lambda $ is open and closed in $\SpaceOfConnectedCycles$ and 
  the projection map $\pr_{\SpaceOfAlmostComplexStructures} \co\SpaceOfConnectedCycles_\Lambda \to\SpaceOfAlmostComplexStructures$ is proper and, therefore, also closed with respect to the geometric convergence topology;
  see \autoref{Thm_SpaceOfCycles_Proper}.
\item
  The map $\fz \co \ModuliSpaceOfEmbeddedMaps \to \SpaceOfConnectedEmbeddedCurves$ is a homeomorphism with respect to the Gromov topology and the geometric convergence topology respectively;
  see \autoref{Cor_ModuliSpaceOfEmbeddedMapsEmbedsIntoSpaceOfSubmanifolds}.
\item
  For every $A \in \rH_2(X,\Z)$ the map
  $(\pr_\SpaceOfAlmostComplexStructures,\supp) \co \SpaceOfConnectedCurves_{A} \to \SpaceOfAlmostComplexStructures \times \SpaceOfCompactSubsets$ is an embedding with respect to the geometric convergence topology and the topology induced by the Hausdorff metric respectively;
  see \autoref{Prop_SpaceOfCurvesEmbedsIntoSpaceOfCompactSubsets}. 
\end{enumerate} 

\begin{definition}
  \label{Def_Cluster}
  Let $\Lambda > 0$.
  A \defined{$\Lambda$--cluster} is a triple $\cO = (\cU,J,C)$ consisting of
  an open subset $\cU \subset \SpaceOfCompactSubsets$,
  an almost complex structure $J \in \SpaceOfAlmostComplexStructures$, and
  an irreducible, embedded $J$--holomorphic curve $C$,
  the \defined{core} of $\cO$,
  such that:
  \begin{enumerate}
  \item
    \label{Def_Cluster_Boundary}
    There is no $J$--holomorphic curve $C'$ with $\bM(C') \leq \Lambda$ and $\supp C' \in \del \cU \coloneq  \overline\cU\setminus\cU$.
  \item
    \label{Def_Cluster_Homology}
    There is a Calabi--Yau class $A \in \Gamma$ such that for every $J$--holomorphic curve $C'$  with $\bM(C') \leq \Lambda$ and $\supp C' \in \overline\cU$ there is a $k \in \N$ with $[C'] = kA$.
    
    (In particular, every such $C'$ is of index zero.)
  \item
    \label{Def_Cluster_Core}
    $C$ is the unique $J$--holomorphic curve with $\supp C \in \cU$ and $[C] = A$.
    \qedhere
  \end{enumerate} 
\end{definition}

\edited{
This definition should be contrasted with the definition of a cluster in \cite[Definition 22]{Ionel2018} which considers curves of bounded genus and imposes the extra condition that all curves in $\cU$ other than $C$ have strictly greater genus. 
}

\begin{prop}[Cluster Contribution]
  \label{Prop_Cluster_Contribution}
  Let $J \in \SpaceOfAlmostComplexStructures$ and $\Lambda > 0$.
  If an open set $\cU \subset \SpaceOfCompactSubsets$ satisfies \autoref{Def_Cluster}~\autoref{Def_Cluster_Boundary},
  then there is a connected open neighborhood $\cV$ of $J$ in $ \SpaceOfAlmostComplexStructures$ such that the subset $\SpaceOfConnectedCycles_\Lambda(\cV,\cU)$ is open and closed in $\SpaceOfConnectedCycles_\Lambda(\cV)$.  
  In particular, by \autoref{Lemma_OpenClosedContribution},
  for every $J'\in \cV$,
  $\cU$ has a Gromov--Witten contribution $\GW_\Lambda(\cU, J')$ satisfying
  \begin{equation*} 
    \GW_\Lambda(\cU, J') = \GW_\Lambda(\cU, J).
  \end{equation*}
\end{prop}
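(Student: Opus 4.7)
The plan is to combine continuity of the support map, the properness statement \autoref{Thm_SpaceOfCycles_Proper}, and the boundary hypothesis \autoref{Def_Cluster}~\autoref{Def_Cluster_Boundary}. Since $\supp^{-1}(\cU)\subset\SpaceOfConnectedCycles$ is open by definition of the geometric topology, $\SpaceOfConnectedCycles_\Lambda(\cV,\cU)$ is automatically open in $\SpaceOfConnectedCycles_\Lambda(\cV)$ for every neighborhood $\cV$ of $J$; the content lies in closedness. To obtain closedness it suffices to produce a $\cV$ with
\begin{equation*}
  \SpaceOfConnectedCycles_\Lambda(\cV)\cap\supp^{-1}(\del\cU)=\emptyset,
\end{equation*}
because then the complement of $\SpaceOfConnectedCycles_\Lambda(\cV,\cU)$ in $\SpaceOfConnectedCycles_\Lambda(\cV)$ coincides with the open set $\supp^{-1}(\SpaceOfCompactSubsets\setminus\overline{\cU})\cap\SpaceOfConnectedCycles_\Lambda(\cV)$.

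I would establish this by contradiction. If no such $\cV$ existed, then fixing a countable neighborhood basis $(\cV_n)$ of $J$, I could pick $(J_n,C_n)\in\SpaceOfConnectedCycles_\Lambda(\cV_n)$ with $\supp C_n\in\del\cU$ and $J_n\to J$. Because each $J_n$ is $\omega$--tamed, \autoref{Rmk_TamedAlmostHermitianStructures} yields $\bM(C_n)=\Inner{[\omega],[C_n]}\leq\Lambda$, so the masses are uniformly bounded; combining Blaschke's compactness of $(\SpaceOfCompactSubsets,d_H)$ with \autoref{Thm_SpaceOfCycles_Proper} produces a subsequence geometrically converging to some $(J,C)\in\SpaceOfCycles$. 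By the closedness of $\SpaceOfConnectedCycles$ in $\SpaceOfCycles$ recorded in \autoref{Subsec_DefinifionsAndResults}, in fact $(J,C)\in\SpaceOfConnectedCycles$. Continuity of $\supp$ and closedness of $\del\cU$ in $\SpaceOfCompactSubsets$ force $\supp C\in\del\cU$, so $C$ is non-trivial as a cycle (non-empty sets cannot Hausdorff-converge to $\emptyset$), and continuity of $\bM$ gives $\bM(C)\leq\Lambda$.

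Now I would convert $C$ into a genuine curve. Writing the irreducible decomposition $C=\sum_{i=1}^Im_iC_i$ with $I\geq 1$, the reduction $C_\red\coloneq\sum_{i=1}^IC_i$ is a $J$--holomorphic curve with connected support $\supp C_\red=\supp C\in\del\cU$ and mass
\begin{equation*}
  \bM(C_\red)=\sum_{i=1}^I\area_g(C_i)\leq\sum_{i=1}^Im_i\area_g(C_i)=\bM(C)\leq\Lambda,
\end{equation*}
which directly contradicts \autoref{Def_Cluster}~\autoref{Def_Cluster_Boundary}. This establishes the desired open-closed property on some neighborhood $\cV$ of $J$.

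For the invariance clause, I would shrink $\cV$ if necessary to a path-connected neighborhood of $J$ (possible because tameness is an open convex condition, so small neighborhoods can be chosen convex) and apply \autoref{Lemma_OpenClosedContribution}~\autoref{Lemma_OpenClosedContribution_Invariant} along paths from $J$ to any $J'\in\cV$. The main obstacle I anticipate is precisely the limit step: \autoref{Thm_SpaceOfCycles_Proper} produces a limit \emph{cycle} $C$ rather than a curve, so the cluster's boundary hypothesis---which is stated only for curves---does not apply to $C$ directly. The simple but essential observation that the reduction $C\mapsto C_\red$ preserves connected support and only decreases mass is what bridges this gap and closes the argument.
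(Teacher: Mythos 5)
Your proof is correct and follows essentially the same route as the paper's: both reduce the claim to showing that $\SpaceOfConnectedCycles_\Lambda(\cV,\del\cU)=\emptyset$ for a suitable neighborhood $\cV$ of $J$, using the properness of $\pr_\SpaceOfAlmostComplexStructures$ from \autoref{Thm_SpaceOfCycles_Proper} --- the paper phrases this as ``proper maps are closed'' and takes $\cV$ to be the complement of $\pr_\SpaceOfAlmostComplexStructures\paren{\SpaceOfConnectedCycles_\Lambda(\SpaceOfAlmostComplexStructures,\del\cU)}$, which is exactly your sequential argument in closed-map form (legitimate here since everything is metrizable). Your explicit observation that the limit is a priori only a \emph{cycle} and must be replaced by its reduction $C_\red$ (same connected support, mass only decreases) before the boundary hypothesis --- stated for curves --- can be invoked is a detail the paper leaves implicit, and it is indeed needed; only your aside that neighborhoods in $\SpaceOfAlmostComplexStructures$ can be chosen ``convex'' is off (the space of tamed $J$ is not convex), but local path-connectedness, or simply passing to the connected component of $J$ as the paper does, suffices.
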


\begin{notation}
  Let $\Lambda > 0$.
  The Gromov--Witten contribution of a $\Lambda$--cluster $\cO = (\cU,J,C)$ is
  \begin{equation*}
    \GW_\Lambda(\cO) \coloneq \GW_\Lambda(\cU, J).
    \qedhere
  \end{equation*}
\end{notation}

\begin{proof}[Proof of \autoref{Prop_Cluster_Contribution}]
  Since $\partial\cU$ is closed in $\SpaceOfCompactSubsets$, 
  $\SpaceOfConnectedCycles_\Lambda(\SpaceOfAlmostComplexStructures,\del \cU)$ is closed in $\SpaceOfConnectedCycles_\Lambda$ and thus in $\SpaceOfConnectedCycles$.
  Since \edited{$\pr_\SpaceOfAlmostComplexStructures \co \SpaceOfConnectedCycles_\Lambda \to \SpaceOfAlmostComplexStructures$ is proper} by \autoref{Thm_SpaceOfCycles_Proper}, and therefore closed, the set
  \begin{equation*}
    \cV \coloneq \SpaceOfAlmostComplexStructures \setminus \pr_\SpaceOfAlmostComplexStructures(\SpaceOfConnectedCycles_\Lambda(\SpaceOfAlmostComplexStructures,\del \cU))
  \end{equation*}
  is open;
  moreover, $J \in \cV$ because it satisfies \autoref{Def_Cluster}~\autoref{Def_Cluster_Boundary}.
  By construction,
  $\SpaceOfConnectedCycles_\Lambda(\cV,\del \cU) = \emptyset$.
  Therefore,
  \begin{equation}\label{contrib.constant} 
    \SpaceOfConnectedCycles_\Lambda(\cV,\cU)
    =
    \SpaceOfConnectedCycles_\Lambda(\cV,\overline{\cU})
  \end{equation}
  is open and closed.
  Finally, replace $\cV$ with its connected component containing $J$.
\end{proof}

The goal of the cluster formalism is to decompose the space of $J$--holomorphic cycles of mass at most $\Lambda$ into finitely many $\Lambda$--clusters with the aim of analysing the Gromov--Witten contribution of each cluster.
This can be done provided $J$ belongs to the following class of generic almost complex structures.

\begin{definition}
  \label{Def_JIsol}
  Denote by $\SpaceOfAlmostComplexStructuresDiscrete$ the subset of those $J \in \SpaceOfAlmostComplexStructures$ for which:
  \begin{enumerate}
  \item
    \label{Def_JIsol_NonNegative}
    Every simple $J$--holomorphic map has non-negative index.
  \item
    \label{Def_JIsol_Embedded}
    Every simple $J$--holomorphic map of index zero is an embedding.
  \item
    \label{Def_JIsol_Disjoint}
    Every pair of distinct simple $J$--holomorphic maps of index zero have disjoint images or are related by a reparametrization.
  \item
    \label{Def_JIsol_Discrete}
    The moduli space of simple  $J$--holomorphic maps of index zero is discrete with respect to the Gromov topology.
  \end{enumerate}
  Denote by $\SpaceOfAlmostComplexStructuresRegular$ the subset of those $J \in \SpaceOfAlmostComplexStructures$ satisfying \autoref{Def_JIsol_NonNegative}, \autoref{Def_JIsol_Embedded}, \autoref{Def_JIsol_Disjoint}, and---instead of \autoref{Def_JIsol_Discrete}---the stronger condition:
  \begin{enumerate}
  \item[(4$^+$)]
    \label{Def_J*}    
    Every simple $J$--holomorphic map of index zero is unobstructed;
    cf.~\autoref{Def_CauchyRiemann}.
    \qedhere
  \end{enumerate}
\end{definition}

\begin{prop}[{\cite[Lemma 1.2]{Ionel2018}}]
  \label{Prop_JReg_Comeager}
  The subset $\SpaceOfAlmostComplexStructuresRegular$ is comeager in $\SpaceOfAlmostComplexStructures$.
\end{prop}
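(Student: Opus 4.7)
The plan is to prove this via the standard Sard--Smale transversality argument applied to universal moduli spaces of simple maps, stratified by homology class, genus, and additional incidence constraints. The set $\SpaceOfAlmostComplexStructuresRegular$ will arise as the intersection, over a countable family of strata, of comeager subsets of $\SpaceOfAlmostComplexStructures$; since $\SpaceOfAlmostComplexStructures$ is Fréchet and hence Baire, such a countable intersection is itself comeager (and in particular dense).

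First I would construct, for each $(A,\genus) \in \rH_2(X,\Z) \times \N_0$, the universal moduli space $\ModuliSpaceOfSimpleMaps_{A,\genus}$ of pairs $(J,[u])$ where $u \co (\Sigma,j) \to (X,J)$ is a simple $J$--holomorphic map representing $A$ with $\genus(\Sigma) = \genus$. The classical argument---based on the existence of somewhere-injective points for simple maps together with the freedom to vary $J$ on an open set around such a point---shows that the universal linearization is surjective at every $(J,[u])$. Hence $\ModuliSpaceOfSimpleMaps_{A,\genus}$ is a smooth Banach manifold whose projection
\begin{equation*}
  \pi_{A,\genus} \co \ModuliSpaceOfSimpleMaps_{A,\genus} \to \SpaceOfAlmostComplexStructures
\end{equation*}
is Fredholm of index $\ind u = 2c_1(A)$ by \autoref{Def_CauchyRiemann}~\autoref{Def_CauchyRiemann_Index}. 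The Sard--Smale theorem then yields a comeager set $\cV_{A,\genus} \subset \SpaceOfAlmostComplexStructures$ of regular values. For $J \in \cV_{A,\genus}$, if $c_1(A) < 0$ the fiber is empty for dimensional reasons, so intersecting over all such $(A,\genus)$ gives \autoref{Def_JIsol}~\autoref{Def_JIsol_NonNegative}. If $c_1(A) = 0$, regularity at each preimage is precisely the surjectivity of $\fd_{u,J}$, i.e.\ unobstructedness, which is condition~(4$^+$).

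For \autoref{Def_JIsol_Embedded} and \autoref{Def_JIsol_Disjoint} I would augment the universal moduli spaces by incidence data, exploiting the fact that $\dim X = 6$ makes each incidence locus of negative Fredholm index over index-zero strata. To rule out self-intersections, consider the universal space of tuples $(J,[u],z_1,z_2)$ with $u$ simple, $z_1 \neq z_2 \in \Sigma$, and $u(z_1)=u(z_2)$: adding two marked points and imposing the codimension-$6$ coincidence condition shifts the Fredholm index over $\SpaceOfAlmostComplexStructures$ by $4-6 = -2$, giving $2c_1(A)-2$. To rule out critical points a single marked point $z$ with $\rd u(z)=0$ contributes $2-6=-4$. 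For $c_1(A) = 0$ both indices are strictly negative, so Sard--Smale yields a comeager set on which no index-zero simple map has self-intersections or critical points; hence every such map is an embedding. Condition \autoref{Def_JIsol_Disjoint} is analogous: the universal space of $(J,[u_1],[u_2],z_1,z_2)$ with $[u_1]\neq [u_2]$ both simple of index zero and $u_1(z_1) = u_2(z_2)$ has Fredholm index $0 + 0 + 4 - 6 = -2$, hence empty generic fibers, and distinct simple index-zero maps with intersecting images are excluded.

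Intersecting the resulting countable family of comeager subsets produces $\SpaceOfAlmostComplexStructuresRegular$. The main obstacle in carrying out this plan rigorously is verifying surjectivity of the universal linearization in each augmented stratum: one must show that the freedom to perturb $J$ near a somewhere-injective point of a simple map suffices to realize arbitrary values of $\fd_{u,J}\xi$ even after cutting down by the transverse evaluation constraints defining the incidence locus. This is a careful but standard adaptation of the classical transversality argument, and is the only point where more than routine bookkeeping is required.
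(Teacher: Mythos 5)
Your proposal is correct and follows the same route as the paper, which gives no proof of its own but simply cites \cite[Lemma 1.2]{Ionel2018}; that lemma is itself established by exactly this standard Sard--Smale argument applied to the countable family of universal moduli spaces of simple maps (using that $\ind = 2c_1(A)$ in a $6$--manifold) together with the evaluation-map transversality for critical points, self-intersections, and pairs of curves. The only technical points you would need to supply---universal transversality near somewhere-injective points after cutting down by the incidence constraints, and Taubes's trick to upgrade from dense to comeager in the $C^\infty$ topology---are the standard ones and are handled in the cited reference.
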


Recall that a subset of a topological space is \defined{comeager} if it contains a countable intersection of open dense subsets.
By the Baire category theorem, a comeager subset of a complete metric space is dense.
This applies, in particular, to $\SpaceOfAlmostComplexStructures$.

The significance of $\SpaceOfAlmostComplexStructuresDiscrete$ stems from the following results and the fact that it is path-connected---while the complement of $\SpaceOfAlmostComplexStructuresRegular$ in $\SpaceOfAlmostComplexStructures$ is of codimension one and, therefore, $\SpaceOfAlmostComplexStructuresRegular$ is not path-connected; cf.~\cite[Corollary 6.6]{Ionel2018}.

\begin{notation}
  For $A,B \in \rH_2(X,\Z)$ write
  \begin{equation*}
    B | A
  \end{equation*}
  if there is a $k \in \N$ with $A = kB$. 
\end{notation}

\edited{Recall that $\SpaceOfConnectedCycles$ is the space of connected pseudo-holomorphic cycles, with subspaces $\SpaceOfConnectedCurves$ of connected simple cycles and $\SpaceOfConnectedEmbeddedCurves$ of connected embedded cycles; see \autoref{Def_PseudoHolomorphicCurve}.}

\begin{lemma}[Clustering Behaviour]
  \label{Lem_Cluster}
  For every $J \in \SpaceOfAlmostComplexStructuresDiscrete$ and $A \in \Gamma$ the following hold:
  \begin{enumerate}
  \item
    \label{Lem_Cluster_Embedded}
    $\SpaceOfConnectedCurves_A(J) = \SpaceOfConnectedEmbeddedCurves_A(J)$.    
  \item
    \label{Lem_Cluster_Countable}
    $\SpaceOfConnectedEmbeddedCurves_A(J)$ is countable.
  \item
    \label{Lem_Cluster_Discrete}
    $\supp{\SpaceOfConnectedEmbeddedCurves_A(J)}$ is discrete.
  \item    
    \label{Lem_Cluster_Decomposition+Compact}
    $\supp{\SpaceOfConnectedCycles_A(J)}
    =
    \bigcup_{B | A} \supp \SpaceOfConnectedEmbeddedCurves_B(J)$;
    in particular, the latter is compact.
  \end{enumerate}
\end{lemma}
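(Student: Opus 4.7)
The plan is to establish the four claims in turn, using the defining properties of $\SpaceOfAlmostComplexStructuresDiscrete$ together with the compactness and embedding results of \autoref{Sec_SpacesOfPseudoHolomorphicCurves}. For \autoref{Lem_Cluster_Embedded}, I would take $(J,C) \in \SpaceOfConnectedCurves_A(J)$ with irreducible decomposition $C = \sum_{i=1}^I C_i$ parametrized by simple $J$--holomorphic maps $u_i$. Additivity of $c_1$ together with $c_1(A) = 0$ gives $\sum_{i=1}^I c_1([C_i]) = 0$, while \autoref{Def_JIsol}~\autoref{Def_JIsol_NonNegative} forces each summand to be non-negative, so every $u_i$ has index zero. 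Then \autoref{Def_JIsol}~\autoref{Def_JIsol_Embedded} makes each $u_i$ an embedding and \autoref{Def_JIsol}~\autoref{Def_JIsol_Disjoint} makes their images pairwise disjoint, whence $C$ is embedded.

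For \autoref{Lem_Cluster_Discrete} and \autoref{Lem_Cluster_Countable}, by part (1) and \autoref{Cor_ModuliSpaceOfEmbeddedMapsEmbedsIntoSpaceOfSubmanifolds} the fiber satisfies $\SpaceOfConnectedCurves_A(J) = \SpaceOfConnectedEmbeddedCurves_A(J) \iso \ModuliSpaceOfEmbeddedMaps_A(J)$ topologically. Since $A$ is Calabi--Yau, every element of $\ModuliSpaceOfEmbeddedMaps_A(J)$ is of index zero, so \autoref{Def_JIsol}~\autoref{Def_JIsol_Discrete} renders it discrete in the Gromov topology; transporting this through the above homeomorphism and then through the embedding of \autoref{Prop_SpaceOfCurvesEmbedsIntoSpaceOfCompactSubsets} (applicable by \autoref{Rmk_TamedAlmostHermitianStructures}) yields discreteness of $\supp \SpaceOfConnectedEmbeddedCurves_A(J)$ in $\SpaceOfCompactSubsets$. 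For countability, I would note that $\SpaceOfConnectedEmbeddedCurves_A(J)$ lies in a compact metrizable subspace of $\SpaceOfCycles$---the fiber of the proper map of \autoref{Thm_SpaceOfCycles_Proper} over $\set{J} \times \SpaceOfCompactSubsets \times [0,\bM(A)]$, with $\SpaceOfCompactSubsets$ compact by Blaschke's theorem since $X$ is closed---and any discrete subspace of a separable metric space is countable.

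For \autoref{Lem_Cluster_Decomposition+Compact}, the inclusion $\supseteq$ is immediate: if $C' \in \SpaceOfConnectedEmbeddedCurves_B(J)$ and $A = kB$, then $kC' \in \SpaceOfConnectedCycles_A(J)$ has the same support as $C'$. For $\subseteq$, the argument of part (1) applied to the reduced cycle underlying any $C = \sum_i m_i C_i \in \SpaceOfConnectedCycles_A(J)$ shows that the $C_i$ are disjoint embedded curves, so connectedness of $\supp C$ forces $I = 1$ and $C = m C_1$ with $[C_1] = A/m$ dividing $A$ and $\supp C = \supp C_1$. Compactness of $\supp \SpaceOfConnectedCycles_A(J)$ then follows as the continuous image of the compact fiber $\SpaceOfConnectedCycles_A(J)$.

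I do not expect any serious obstacle; every step is a direct application of the technology developed in \autoref{Sec_SpacesOfPseudoHolomorphicCurves}. The only point requiring care is threading cleanly between the Gromov topology on moduli of maps, the geometric topology on cycles, and the Hausdorff topology on compact subsets---precisely the translation that \autoref{Cor_ModuliSpaceOfEmbeddedMapsEmbedsIntoSpaceOfSubmanifolds} and \autoref{Prop_SpaceOfCurvesEmbedsIntoSpaceOfCompactSubsets} are designed to supply.
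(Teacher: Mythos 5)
Your proposal is correct and follows essentially the same route as the paper's proof: conditions \autoref{Def_JIsol_NonNegative}--\autoref{Def_JIsol_Disjoint} of \autoref{Def_JIsol} together with the index formula reduce every connected cycle in a Calabi--Yau class to $mC_1$ with $C_1$ embedded, and condition \autoref{Def_JIsol_Discrete} combined with \autoref{Cor_ModuliSpaceOfEmbeddedMapsEmbedsIntoSpaceOfSubmanifolds}, \autoref{Prop_SpaceOfCurvesEmbedsIntoSpaceOfCompactSubsets}, and \autoref{Thm_SpaceOfCycles_Proper} gives discreteness, countability, and compactness. Your explicit remark that a discrete subspace of a separable metric space is countable merely spells out what the paper leaves implicit.
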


\begin{proof}
  Let $(J, C)\in \SpaceOfConnectedCycles$ with $C= \sum_{i=1}^I m_iC_i$ and $[C]=A\in \Gamma$,
  that is, $[C]$ is a Calabi--Yau class.
  By \autoref{Def_JIsol}~\autoref{Def_JIsol_NonNegative} and \autoref{Eq_VirtualDimension}, $[C_1],\ldots,[C_I]$ must also be Calabi--Yau classes.
  By \autoref{Def_JIsol}~\autoref{Def_JIsol_Embedded} and \autoref{Def_JIsol_Disjoint}, $I=1$;
  that is: $C = m_1C_1$ and $C_1$ is embedded.
  This implies \autoref{Lem_Cluster_Embedded} and \autoref{Lem_Cluster_Decomposition+Compact}.

  By \autoref{Def_JIsol} \autoref{Def_JIsol_Discrete},
  $\ModuliSpaceOfEmbeddedMaps_A(J) = \ModuliSpaceOfSimpleMaps_A(J)$ is discrete and, therefore, countable.
  This implies \autoref{Lem_Cluster_Countable}.  
  By \autoref{Cor_ModuliSpaceOfEmbeddedMapsEmbedsIntoSpaceOfSubmanifolds} and \autoref{Prop_SpaceOfCurvesEmbedsIntoSpaceOfCompactSubsets},
  the map $\im \co \ModuliSpaceOfEmbeddedMaps_A(J) \to \supp \SpaceOfConnectedEmbeddedCurves_A(J)$ is a homeomorphism.
  This implies \autoref{Lem_Cluster_Discrete}.
\end{proof}

\edited{The following three propositions are adaptations of \cite[Lemma 2.3, Proposition 2.4, Corollary 2.5]{Ionel2018} to our setting.}

\begin{prop}[Cluster Existence]
  \label{Prop_Cluster_Existence}
  Let $J \in \SpaceOfAlmostComplexStructuresDiscrete$ and $\Lambda > 0$.
  Let $C$ be an irreducible, embedded $J$--holomorphic curve of index zero with $\bM(C) \leq \Lambda$.
  There is an $\epsilon_0 > 0$ such that the subset
  \begin{equation*}
    \set{
      \epsilon \in (0,\epsilon_0]
      :
      \cO = (B_\epsilon(C),J,C)
      ~\text{is a $\Lambda$--cluster}
    }
  \end{equation*}
  is open and dense in $(0,\epsilon_0]$.
  Here $B_\epsilon(C)$ denotes the ball of radius $\epsilon$ centered at $C$ in $\SpaceOfCompactSubsets$.
\end{prop}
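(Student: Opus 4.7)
Set $A \coloneq [C] \in \Gamma$. The strategy is to produce a single radius $\epsilon_0 > 0$ at which the homology and uniqueness conditions \autoref{Def_Cluster_Homology} and \autoref{Def_Cluster_Core} of \autoref{Def_Cluster} are automatic for every $\epsilon \in (0,\epsilon_0]$, and then to show that the boundary condition \autoref{Def_Cluster_Boundary} fails at only finitely many $\epsilon$ in that interval.

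The key tool is a tubular-neighborhood argument. Because $C$ is a smoothly embedded irreducible $J$-holomorphic curve, $\supp C$ is a closed oriented surface in the $6$-manifold $X$. Choose $r_1 > 0$ small enough that the open tube $T_{r_1}(\supp C) \coloneq \set{x \in X : d(x,\supp C) < r_1}$ deformation retracts onto $\supp C$; then $\rH_2(T_{r_1}(\supp C),\Z) = \Z \cdot [\supp C]$, and the inclusion into $X$ sends the generator to $A$. Any $J$-holomorphic cycle $C'$ with $\supp C' \subset T_{r_1}(\supp C)$ therefore satisfies $[C'] = kA$ for some $k \in \Z$, and the mass identity $\bM(C') = \Inner{[\omega], [C']} = k\bM(C)$ (\autoref{Rmk_TamedAlmostHermitianStructures}) together with $\bM(C') \geq 0$ forces $k \geq 0$, with $k \geq 1$ whenever $C'$ is nonempty. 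Since $\supp C' \in \overline{B_\epsilon(C)}$ implies $\supp C' \subset T_\epsilon(\supp C)$, this verifies condition \autoref{Def_Cluster_Homology} with class $A$ for every $\epsilon \leq r_1$.

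By \autoref{Lem_Cluster}\autoref{Lem_Cluster_Discrete} the set $\supp \SpaceOfConnectedEmbeddedCurves_A(J)$ is discrete, so I choose $r_2 > 0$ isolating $\supp C$ there and set $\epsilon_0 \coloneq \min(r_1, r_2)$. For any $C' \in \SpaceOfCurves(J)$ with $[C'] = A$ and $\supp C' \in B_{\epsilon_0}(C)$, the tube argument forces each irreducible component of $C'$ to carry a positive multiple of $A$, so these classes sum to $A$ only when $C'$ has a single component of class $A$. By \autoref{Lem_Cluster}\autoref{Lem_Cluster_Embedded} that component is embedded, and by the choice of $r_2$ it must equal $C$; this is condition \autoref{Def_Cluster_Core}.

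The main step is the density of the good $\epsilon$'s. Any $C' \in \SpaceOfCurves(J)$ with $\bM(C') \leq \Lambda$ and $\supp C' \in \overline{B_{\epsilon_0}(C)}$ is a multiplicity-one union of irreducible embedded $J$-holomorphic components, each of class $kA$ for some $1 \leq k \leq \lfloor \Lambda/\bM(C) \rfloor$. For each such $k$, the set $\supp \SpaceOfConnectedEmbeddedCurves_{kA}(J)$ is discrete by \autoref{Lem_Cluster}\autoref{Lem_Cluster_Discrete} and sits inside the compact set $\supp \SpaceOfConnectedCycles_{kA}(J)$ by \autoref{Lem_Cluster}\autoref{Lem_Cluster_Decomposition+Compact}; a discrete subspace of a compact metric space is finite. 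Consequently there are only finitely many candidate components, only finitely many possible supports $\supp C'$, and only finitely many values of $d_H(\supp C', \supp C)$. The complement of this finite set in $(0, \epsilon_0]$ is open and dense, and at every $\epsilon$ in it $(B_\epsilon(C),J,C)$ is a $\Lambda$-cluster. The main obstacle is precisely this upgrade from the countability in \autoref{Lem_Cluster}\autoref{Lem_Cluster_Countable} to genuine finiteness of candidate components, which is furnished exactly by the compactness assertion in \autoref{Lem_Cluster}\autoref{Lem_Cluster_Decomposition+Compact}.
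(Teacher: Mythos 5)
The tubular-neighborhood argument for condition \autoref{Def_Cluster_Homology} and the discreteness argument for condition \autoref{Def_Cluster_Core} are correct and essentially identical to the paper's. The gap is in your final step. You claim $\supp \SpaceOfConnectedEmbeddedCurves_{kA}(J)$ is finite because it is discrete and sits inside a compact set, invoking the principle that ``a discrete subspace of a compact metric space is finite.'' That principle is false: $\set{1/n : n \in \N}$ is a discrete subspace of $[0,1]$. Only a discrete \emph{closed} subspace of a compact space is finite, and $\supp \SpaceOfConnectedEmbeddedCurves_{kA}(J)$ need not be closed for $k \geq 2$: a sequence of connected embedded curves in class $kA$ can geometrically converge to the cycle $kC$, so their supports accumulate at $\supp C$ itself. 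This multiple-cover accumulation is exactly the phenomenon the cluster formalism exists to handle (compare the iterative construction in the proof of \autoref{Prop_Cluster_Decomposition}, which would be pointless if your finiteness claim were true), so the set of candidate curves $C'$ in $\overline{B_{\epsilon_0}(C)}$, and hence the set of bad radii, is in general infinite.

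The statement survives, but via weaker properties of the bad set
\begin{equation*}
  \Delta \coloneq \set{ d_H(\supp C, \supp C') : (J,C') \in \SpaceOfConnectedEmbeddedCurves_\Lambda(J) }.
\end{equation*}
By \autoref{Lem_Cluster}~\autoref{Lem_Cluster_Countable}, $\Delta$ is countable, hence has empty interior in $\R$, which gives density of its complement. Openness of the complement requires $\Delta$ to be closed, and this follows from \autoref{Thm_SpaceOfCycles_Proper} combined with \autoref{Lem_Cluster}~\autoref{Lem_Cluster_Embedded}: a geometric limit of connected embedded curves of mass at most $\Lambda$ has the form $mD$ with $D$ again a connected embedded curve of mass at most $\Lambda$, so $d_H(\supp C,\supp(mD)) = d_H(\supp C,\supp D)$ already lies in $\Delta$. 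This ``countable and compact'' argument is the one the paper uses; substituting it for your finiteness claim repairs the proof.
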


\begin{proof}
  By \autoref{Lem_Cluster}~\autoref{Lem_Cluster_Discrete},
  there is an $\epsilon_0 > 0$ with $\SpaceOfConnectedCycles_A(J,B_{\epsilon_0}(C)) =\set{(J, C)}$.
  After possibly decreasing $\epsilon_0$,
  $C$ is a deformation retract of $\set{ x \in X : d(x,C) \leq \epsilon_0 }$.
  Therefore if $C'$ is a $J$--holomorphic curve with \edited{ $d_H(\supp C,\supp C') \leq \epsilon_0$},
  then $[C'] = k[C]$ with $k \in \N$.

  By \autoref{Lem_Cluster}~\autoref{Lem_Cluster_Countable} and \autoref{Thm_SpaceOfCycles_Proper},
  \edited{
  \begin{equation*}
    \Delta
    \coloneq
    \set{
      d_H(\supp C,\supp C')
      :
      (J, C') \in \SpaceOfConnectedEmbeddedCurves_\Lambda(J)
    }
  \end{equation*}
  }
  is countable and compact.
  Consequently,
  $(0,\epsilon_0] \setminus \Delta$ is open and dense.
\end{proof}

\begin{prop}[Cluster Decomposition]
  \label{Prop_Cluster_Decomposition}
  Let $J \in \SpaceOfAlmostComplexStructuresDiscrete$ and $\Lambda > 0$.
  Let $\cU \subset \SpaceOfCompactSubsets$ be such that $\SpaceOfConnectedCycles_\Lambda(J,\cU)$ is open and closed in $\SpaceOfConnectedCycles_\Lambda(J)$.
  There is a finite set $\set{ \cO_i = (\cU_i,J,C_i) : i \in I}$ of $\Lambda$--clusters such that
  \begin{equation*}
    \SpaceOfConnectedCycles_\Lambda(J,\cU) = \coprod_{i \in I} \SpaceOfConnectedCycles_\Lambda(J,\cU_i);
  \end{equation*}
  in particular, 
  \begin{equation*}
    \GW_\Lambda(\cO)
    =
    \sum_{i \in I} \GW_\Lambda(\cO_i).
  \end{equation*} 
\end{prop}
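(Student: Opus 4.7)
The plan is to extract a finite list of irreducible embedded \emph{cores} among the cycles in $\cU$, encircle each core by a small Hausdorff-metric ball that is itself a $\Lambda$-cluster via \autoref{Prop_Cluster_Existence}, and then check that these finitely many clusters partition $\SpaceOfConnectedCycles_\Lambda(J,\cU)$ on the nose, after which the Gromov--Witten identity follows from additivity of the virtual fundamental class on open--closed decompositions.

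First I would produce the finite list of cores. The fiber $\SpaceOfConnectedCycles_\Lambda(J)$ is compact: by \autoref{Rmk_TamedAlmostHermitianStructures} the constraint $[C]\in\Gamma_\Lambda$ amounts to $\bM(C)\leq\Lambda$ together with the locally constant integer condition $c_1([C])=0$, so $\SpaceOfConnectedCycles_\Lambda(J)$ is closed inside the compact subspace $\pr_\SpaceOfAlmostComplexStructures^{-1}(J)\cap \bM^{-1}([0,\Lambda])$ provided by \autoref{Thm_SpaceOfCycles_Proper}. Hence $\SpaceOfConnectedCycles_\Lambda(J,\cU)$ is compact. Because the homology class $(J',C')\mapsto[C']$ is continuous and integer-valued, it is locally constant, and so takes only finitely many values $A_1,\dots,A_N$ on $\SpaceOfConnectedCycles_\Lambda(J,\cU)$. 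By \autoref{Lem_Cluster}, $\supp\SpaceOfConnectedCycles_{A_\ell}(J)=\bigcup_{B\mid A_\ell}\supp\SpaceOfConnectedEmbeddedCurves_B(J)$ is a compact finite union of discrete sets, hence finite. Thus $\supp(\SpaceOfConnectedCycles_\Lambda(J,\cU))$ is a finite set $\{S_1,\dots,S_n\}$; to each $S_i$ I attach the unique irreducible embedded $J$-holomorphic curve $C_i$ with $\supp C_i=S_i$, using \autoref{Def_JIsol}~\autoref{Def_JIsol_Disjoint}.

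Next I would construct the clusters and verify the decomposition. For each $i$, \autoref{Prop_Cluster_Existence} supplies a dense set of small radii $\epsilon$ for which $(B_\epsilon(S_i),J,C_i)$ is a $\Lambda$-cluster; I pick $\epsilon_i$ in that dense set subject additionally to the separation bound $\epsilon_i<\tfrac13\min_{j\neq i}d_H(S_i,S_j)$, and set $\cU_i\coloneq B_{\epsilon_i}(S_i)$ and $\cO_i\coloneq(\cU_i,J,C_i)$. The triangle inequality for $d_H$ then makes the $\cU_i$ pairwise disjoint. To verify $\SpaceOfConnectedCycles_\Lambda(J,\cU)=\coprod_{i}\SpaceOfConnectedCycles_\Lambda(J,\cU_i)$: any $(J,C')$ in the left-hand side has $\supp C'=S_i$ for a unique $i$ by the previous step, and $S_i\in\cU_i$; conversely, any $(J,C')\in\SpaceOfConnectedCycles_\Lambda(J,\cU_i)$ has $\supp C'\in\cU_i$, so \autoref{Def_Cluster}~\autoref{Def_Cluster_Homology}, \autoref{Def_Cluster}~\autoref{Def_Cluster_Core} and \autoref{Lem_Cluster} force $C'=mC_i$ for some $m\geq 1$, whose support $S_i$ lies in $\cU$. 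Disjointness of the $\cU_i$ promotes the union to a coproduct. Applying $\fz^{-1}$ transports this to a finite open--closed decomposition $\ModuliSpaceOfNodalMaps_\Lambda(J,\cU)=\coprod_{i}\ModuliSpaceOfNodalMaps_\Lambda(J,\cU_i)$ in the sense of \autoref{Lemma_OpenClosedContribution}, and additivity of the virtual fundamental class on finite open--closed partitions, namely \autoref{Eq_DecompositionOfVFC}, yields $\GW_\Lambda(\cO)=\sum_{i\in I}\GW_\Lambda(\cO_i)$.

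The crux is the finiteness assertion in the first step, where the compactness of $\SpaceOfConnectedCycles_\Lambda(J,\cU)$ must be combined with the discreteness properties of $J\in\SpaceOfAlmostComplexStructuresDiscrete$ packaged in \autoref{Lem_Cluster}; once finitely many cores are in hand, the remainder is routine organisation of \autoref{Prop_Cluster_Existence}, \autoref{Prop_Cluster_Contribution}, and the open--closed additivity of the VFC.
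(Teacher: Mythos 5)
There is a genuine gap at the very step you call the crux. You deduce that $\supp\SpaceOfConnectedCycles_{A_\ell}(J)$ is finite because it is ``a compact finite union of discrete sets''. That inference is invalid: a finite union of discrete sets need not be discrete, and a compact such union need not be finite (compare $\set{0}\cup\set{1/n : n \in \N}$). This is not a removable technicality but precisely the phenomenon the proposition has to cope with. For $J\in\SpaceOfAlmostComplexStructuresDiscrete$, \autoref{Def_JIsol}~\autoref{Def_JIsol_Discrete} only makes the moduli space of \emph{simple} index-zero maps discrete; it does not prevent infinitely many embedded curves in a class $2B$ from geometrically converging to a cycle $2C'$ with $[C']=B$. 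In that situation $\supp\SpaceOfConnectedEmbeddedCurves_{2B}(J)$ is discrete but not closed, its points accumulate at $\supp C'\in\supp\SpaceOfConnectedEmbeddedCurves_{B}(J)$, and the total set of supports in $\cU$ is infinite. Your construction, which assigns one cluster ball to each support point, would then require infinitely many clusters, so the strategy itself---not merely its justification---breaks down in this case.

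The paper's proof instead runs an induction on divisibility. For \emph{primitive} $A$, \autoref{Lem_Cluster}~\autoref{Lem_Cluster_Decomposition+Compact} gives $\supp\SpaceOfConnectedCycles_A(J)=\supp\SpaceOfConnectedEmbeddedCurves_A(J)$, which is compact \emph{and} discrete, hence finite; one places cluster balls inside $\cU$ (via \autoref{Prop_Cluster_Existence}) around these finitely many cores, and each such ball absorbs all but finitely many of the nearby higher-divisibility curves. The divisibility-two supports not already covered then form a compact discrete set---their only possible accumulation points lie among the primitive-class supports, which sit inside the removed open balls---hence a finite set, and one repeats; the process terminates because only finitely many classes in $\Gamma_\Lambda$ are represented. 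The remainder of your argument (attaching a unique core to each support via \autoref{Def_JIsol}~\autoref{Def_JIsol_Disjoint}, disjointness of the balls, and the passage to Gromov--Witten contributions via \autoref{Lemma_OpenClosedContribution}) is fine once the finiteness is obtained this way.
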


\begin{proof}
  For every $d \in \N$ set
  \begin{equation*}
    \cN_d
    \coloneq
    \bigcup \supp \SpaceOfConnectedEmbeddedCurves_A(J) \cap \cU
    \subseteq
    \supp \SpaceOfConnectedCycles_\Lambda(J, \cU).
  \end{equation*}
  with the union taken over those $A \in \Gamma_\Lambda$ with divisibility at most $d$.
  Since $\SpaceOfConnectedCycles_\Lambda(J)$ is compact by \autoref{Thm_SpaceOfCycles_Proper},
  only finitely many $A \in \Gamma_\Lambda$ are represented by $J$--holomorphic curves.
  Therefore, these unions are finite and the sequence $\cN_1 \subseteq \cN_2 \subseteq \ldots$ eventually becomes constant.
  
  By \autoref{Lem_Cluster}~\autoref{Lem_Cluster_Discrete} and \autoref{Lem_Cluster_Decomposition+Compact},
  $\cN_1$ is discrete and compact;
  hence: finite.
  Enumerate $\cN_1$ as $\set{ C_1,\ldots,C_{n_1} }$.  
  For every $i \in \set{ 1,\ldots, n_1 }$ choose $\epsilon_i > 0$---by means of \autoref{Prop_Cluster_Existence}---such that for $\cU_i \coloneq B_{\epsilon_i}(C_i) \subset \cU$ the triple $\cO_i \coloneq (\cU_i,J,C_i)$ is a $\Lambda$--cluster, and
  $\cU_i \cap \cU_j = \emptyset$ for $i \neq j \in \set{ 1,\ldots, n_1 } $.
  Set
  \begin{equation*}
    \hat \cN_2 \coloneq \cN_2  \setminus \coprod_{i=1}^{n_1} B_{\epsilon_i}(C_i).
  \end{equation*}  
  By \autoref{Lem_Cluster}~ \autoref{Lem_Cluster_Discrete} and \autoref{Lem_Cluster_Decomposition+Compact},
  $\hat\cN_2$ is discrete and compact;
  hence: finite.
  Enumerate $\hat \cN_2$ as $\set{ C_{n_1+1},\ldots,C_{n_2} }$.
  For $i \in \set{ n_1+1,\ldots,n_2 }$ choose $\epsilon_i > 0$ such that for $\cU_i \coloneq B_{\epsilon_i}(C_i)$ the triple $\cO_i \coloneq (\cU_i,J,C_i)$ is a $\Lambda$--cluster, and
  $\cU_i \cap \cU_j = \emptyset$ for $i \neq j \in \set{ 1,\ldots, n_2 }$.
  Continuing in this fashion constructs the desired decomposition.
\end{proof}

\begin{prop}[Cluster Refinement]
  \label{Prop_Cluster_Refinement}
  Let $J \in \SpaceOfAlmostComplexStructuresDiscrete$ and $\Lambda > 0$.
  The following hold:
  \begin{enumerate}
  \item
    \label{Prop_Cluster_Refinement_Common}
    If $\cO_0 = (\cU_0,J,C)$ and $\cO_0 = (\cU_1,J,C)$ are $\Lambda$--clusters with identical cores,
    then there exists a $\cU \subset \cU_0 \cap \cU_1$ such that $\cO = (\cU,J,C)$ is a $\Lambda$--cluster.
  \item    
    \label{Prop_Cluster_Refinement_Splitting}
    If $\cO_+ = (\cU_+,J,C)$ and $\cO_- = (\cU_-,J,C)$ are $\Lambda$--clusters with identical cores and $\cU_- \subset \cU_+$,
    then there is a finite set $\set{ \cO_i = (\cU_i,J,C_i) : i \in I }$ of $\Lambda$--clusters such that
    \begin{equation*}
      \SpaceOfConnectedCycles_\Lambda(J,\cU_+)
      =
      \SpaceOfConnectedCycles_\Lambda(J,\cU_-)
      \amalg
      \coprod_{i \in I} \SpaceOfConnectedCycles_\Lambda(J,\cU_i),
    \end{equation*}
    and, for every  $i \in I$, $[C_i] = d_i[C]$ with $d_i \geq 2$;
    in particular, 
    \begin{equation*} 
      \GW_\Lambda(\cO_+)
      =
      \GW_\Lambda(\cO_-) + \sum_{i \in I}\GW_\Lambda(\cO_i).
    \end{equation*} 
  \end{enumerate}
\end{prop}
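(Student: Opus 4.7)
For part (1), the plan is to invoke \autoref{Prop_Cluster_Existence} inside the intersection. Since $\cU_0$ and $\cU_1$ are both open in $\SpaceOfCompactSubsets$ and both contain $\supp C$, so is $\cU_0\cap\cU_1$, and it therefore contains some Hausdorff ball $B_\delta(\supp C)$. Applying \autoref{Prop_Cluster_Existence} with $\epsilon_0\leq\delta$ yields an $\epsilon\in(0,\delta)$ such that $\cU\coloneq B_\epsilon(\supp C)\subset\cU_0\cap\cU_1$ makes $(\cU,J,C)$ a $\Lambda$-cluster.

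For part (2), set $A\coloneq[C]$. The overall plan is to enumerate all irreducible embedded $J$-holomorphic curves whose support lies in $\cU_+\setminus\cU_-$ and whose mass is at most $\Lambda$, take these as the new cores, and place small cluster neighborhoods around them. Any such curve $C'$ must have $[C']=kA$ with $k\in\N$ by \autoref{Def_Cluster}~\autoref{Def_Cluster_Homology} applied to $\cO_+$; since $\supp C\in\cU_-$ while $\supp C'\notin\cU_-$, the uniqueness axiom \autoref{Def_Cluster}~\autoref{Def_Cluster_Core} of $\cO_+$ rules out $k=1$, so $k\geq 2$. Moreover, \autoref{Def_Cluster}~\autoref{Def_Cluster_Boundary} applied to $\cO_-$ together with $\supp C'\notin\cU_-$ gives $\supp C'\notin\overline{\cU_-}$. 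Finiteness of the enumeration follows because each $\SpaceOfConnectedEmbeddedCurves_{kA}(J)$ is discrete by \autoref{Lem_Cluster}~\autoref{Lem_Cluster_Discrete}, projects continuously into the compact set $\overline{\cU_+}\subset\SpaceOfCompactSubsets$ via \autoref{Thm_SpaceOfCycles_Proper}, hence meets $\overline{\cU_+}$ in a finite set, and only finitely many values of $k$ are admissible since $\bM(kA)=k\bM(A)\leq\Lambda$ with $\bM(A)>0$.

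List the enumeration as $C_1,\ldots,C_n$. For each $i$ choose a Hausdorff radius $\epsilon_i>0$ small enough that $\cU_i\coloneq B_{\epsilon_i}(\supp C_i)$ lies in $\cU_+$, is disjoint from $\overline{\cU_-}$ and from each $\cU_j$ with $j\neq i$, and---further using \autoref{Prop_Cluster_Existence}---makes $(\cU_i,J,C_i)$ into a $\Lambda$-cluster $\cO_i$; all these constraints hold simultaneously on a dense open subset of a sufficiently small interval $(0,\epsilon_{i,0}]$. The claimed decomposition
\begin{equation*}
  \SpaceOfConnectedCycles_\Lambda(J,\cU_+)
  =
  \SpaceOfConnectedCycles_\Lambda(J,\cU_-)
  \amalg
  \coprod_{i\in I}\SpaceOfConnectedCycles_\Lambda(J,\cU_i)
\end{equation*}
is then verified pointwise: given $(J,D)$ in the left-hand side, \autoref{Lem_Cluster}~\autoref{Lem_Cluster_Decomposition+Compact} produces an irreducible embedded $J$-holomorphic curve $D'$ with $\supp D=\supp D'$, $\bM(D')\leq\bM(D)\leq\Lambda$, and $[D']$ dividing $[D]$; by \autoref{Def_Cluster}~\autoref{Def_Cluster_Homology} applied to $\cO_+$, $[D']=kA$. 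If $k=1$ then $D'=C$ by \autoref{Def_Cluster}~\autoref{Def_Cluster_Core}, so $\supp D\in\cU_-$; otherwise either $\supp D'\in\cU_-$ (whence $\supp D\in\cU_-$) or $\supp D'\in\cU_+\setminus\cU_-$, in which case $D'=C_i$ for a unique $i$ and $\supp D\in\cU_i$ by the disjointness arranged above. Additivity of $\GW_\Lambda$ then follows from \autoref{Prop_Cluster_Contribution} and \autoref{Lemma_OpenClosedContribution} combined with the decomposition \autoref{Eq_DecompositionOfVFC} of the virtual fundamental class.

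The main obstacle is controlling the new cores in part (2): one must both verify that their enumeration is finite and arrange the new cluster neighborhoods to be pairwise disjoint and separated from $\overline{\cU_-}$. This rests squarely on the discreteness statement \autoref{Lem_Cluster}~\autoref{Lem_Cluster_Discrete}, the properness of \autoref{Thm_SpaceOfCycles_Proper}, and the boundary axiom \autoref{Def_Cluster}~\autoref{Def_Cluster_Boundary} of $\cO_-$ which prevents accumulation onto $\partial\cU_-$.
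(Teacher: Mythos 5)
Your part (1) is exactly the paper's argument: $\cU_0\cap\cU_1$ is an open neighborhood of $\supp C$ in $\SpaceOfCompactSubsets$, so \autoref{Prop_Cluster_Existence} applies. For part (2) the intended structure is also the paper's, but your finiteness step contains a genuine gap. You enumerate \emph{all} irreducible embedded $J$--holomorphic curves of mass at most $\Lambda$ with support in $\cU_+\setminus\cU_-$, and you argue the list is finite because each stratum $\supp\SpaceOfConnectedEmbeddedCurves_{kA}(J)$ is discrete and lands in the compact set $\overline{\cU_+}$. A discrete subset of a compact set is finite only if it is also \emph{closed}, and these strata need not be closed: by \autoref{Thm_SpaceOfCycles_Proper}, a sequence of embedded curves in class $kA$ can geometrically converge to a cycle $mC'$ with $m\geq 2$ and $m[C']=kA$, so the class-$kA$ supports may accumulate onto the support of a lower-divisibility curve. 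For $k=2$ the only such limit is $2C$, and your argument survives because the approximating supports eventually enter the open set $\cU_-$; but for $k=4$, say, infinitely many embedded class-$4A$ curves may accumulate onto $\supp C'$ for a class-$2A$ curve $C'$ with $\supp C'\in\cU_+\setminus\cU_-$, and all of them then belong to your enumeration. Nothing in \autoref{Def_JIsol} rules this out---the genus of the approximating curves is unbounded, so Gromov compactness and \autoref{Def_JIsol}~\autoref{Def_JIsol_Discrete} do not apply; this is precisely the degeneration the cluster formalism is designed to absorb. Consequently your list of cores can be infinite, and the simultaneous choice of pairwise disjoint neighborhoods $\cU_i$ breaks down as well.

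The paper sidesteps this by applying \autoref{Prop_Cluster_Decomposition} to the open and closed subset $\SpaceOfConnectedCycles_\Lambda(J,\cU_+)\setminus\SpaceOfConnectedCycles_\Lambda(J,\cU_-)$. The proof of that proposition is an \emph{iterative}, divisibility-stratified version of your construction: one first surrounds the divisibility-$d$ cores with cluster neighborhoods and only then enumerates the divisibility-$(d{+}1)$ cores lying \emph{outside} those neighborhoods, so that the remaining set is closed, discrete and compact, hence finite. Your argument becomes correct if you either invoke \autoref{Prop_Cluster_Decomposition} directly or reorganize your enumeration in this inductive way. The remaining steps---$[C_i]=d_i[C]$ with $d_i\geq 2$ via \autoref{Def_Cluster}~\autoref{Def_Cluster_Core}, the pointwise verification of the decomposition, and the additivity of $\GW_\Lambda$ via \autoref{Lemma_OpenClosedContribution} and \autoref{Eq_DecompositionOfVFC}---are fine.
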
 

\begin{proof}
  Since $\cU_0,\cU_1$ are open neighborhoods of $C$,
  \autoref{Prop_Cluster_Existence} implies \autoref{Prop_Cluster_Refinement_Common}.

  To prove \autoref{Prop_Cluster_Refinement_Splitting},
  observe the following.
  Since $\SpaceOfConnectedCycles_\Lambda(J,\cU_+) \setminus \SpaceOfConnectedCycles_\Lambda(J,\cU_-)$ is open and closed in $\SpaceOfConnectedCycles_\Lambda(J)$, 
  \autoref{Prop_Cluster_Decomposition} constructs $\set{ \cO_i = (\cU_i,J,C_i): i \in I }$.
  Since $\cO_+$ is a $\Lambda$--cluster,
  the cores $C_i$ must satisfy $[C_i] = d_i[C]$ with $d_i \geq 2$.
\end{proof}

\begin{prop}[Cluster Stability]
  \label{Prop_Cluster_Stability}
  Let  $\Lambda > 0$ and $J\in \SpaceOfAlmostComplexStructures$.
  Let $\cO = (\cU,J,C)$ be a $\Lambda$--cluster.
  Set $A \coloneq [C]$ and $\genus \coloneq \genus(C)$.
  For every $0 < \epsilon \ll 1$
  there is an open neighborhood $\cV$ of $J$ in $\SpaceOfAlmostComplexStructures$ such that
  for every $J' \in \cV$ \autoref{Def_Cluster}~\autoref{Def_Cluster_Boundary} and \autoref{Def_Cluster_Homology} hold; that is:
  \begin{enumerate}
  \item
    \label{Prop_Cluster_Stability_Boundary}
    There is no $J'$--holomorphic curve $C'$ with $\bM(C') \leq \Lambda$ and $\supp{C'} \in \del\cU$.
  \item
    \label{Prop_Cluster_Stability_Homology}
    For every $J'$--holomorphic curve $C'$ with $\bM(C') \leq \Lambda$ and $\supp{C'} \in \overline\cU$ there is a $k \in \N$ with $[C'] = kA$.
  \end{enumerate}
  Moreover:
  \begin{enumerate}[resume]
  \item
    \label{Prop_Cluster_Stability_Model}
    \edited{Let $B_\epsilon(C)$ be the ball of radius $\epsilon$ centered at $C$ in $\SpaceOfCompactSubsets$.}
    The map $\fz \co \ModuliSpaceOfEmbeddedMaps_{A,\genus}(\cV,B_\epsilon(C)) \to \SpaceOfConnectedCycles_A(\cV,\cU)$ is a homeomorphism;
    in particular:
    $\SpaceOfConnectedCycles_A(\cV,\cU) = \SpaceOfConnectedEmbeddedCurves_A(\cV,\cU)$.
  \end{enumerate}
\end{prop}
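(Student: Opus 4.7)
The plan is to produce $\cV$ as the intersection of three open neighborhoods of $J$, one for each conclusion, leveraging properness of pseudo-holomorphic cycles (\autoref{Thm_SpaceOfCycles_Proper}), the cluster axioms of \autoref{Def_Cluster}, and the topological fact that a thin tubular neighborhood of the core $C$ has $\rH_2 = \Z A$. For the first conclusion, I would argue: the subset $\SpaceOfConnectedCycles_\Lambda \cap \supp^{-1}(\del \cU) \subset \SpaceOfConnectedCycles_\Lambda$ is closed, since $\del \cU$ is closed in $\SpaceOfCompactSubsets$ and $\supp$ is continuous. By \autoref{Thm_SpaceOfCycles_Proper} the projection $\pr_\SpaceOfAlmostComplexStructures$ is proper and therefore closed, so the image of this subset in $\SpaceOfAlmostComplexStructures$ is closed; by \autoref{Def_Cluster_Boundary} of \autoref{Def_Cluster}, $J$ lies in the open complement, yielding a neighborhood $\cV_1$ realizing (1).

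For (2) I argue by contradiction: a failing $\cV$ would yield a sequence $(J_n, C_n')$ with $J_n \to J$, $C_n'$ a $J_n$-holomorphic curve, $\supp C_n' \in \overline \cU$, $\bM(C_n') \leq \Lambda$, and $[C_n'] \notin \N A$. By \autoref{Rmk_TamedAlmostHermitianStructures} mass controls $\Inner{[\omega], \cdot}$, so only finitely many classes of bounded mass arise in $\rH_2(X,\Z)$ and a subsequence has constant class $B \notin \N A$; \autoref{Thm_SpaceOfCycles_Proper} then extracts a further subsequence converging to $(J, D) \in \SpaceOfConnectedCycles$ with $[D] = B$ and $\supp D \in \overline \cU$. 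Decomposing $D = \sum_i m_i D_i$ and applying \autoref{Def_Cluster_Homology} to the reduced curve $D_\red = \sum_i D_i$---a $J$-holomorphic curve with $\supp D_\red = \supp D \in \overline \cU$ and $\bM(D_\red) \leq \Lambda$---yields $\sum_i [D_i] \in \N A$, but this does not immediately give $[D] = \sum_i m_i [D_i] \in \N A$ when some $m_i \geq 2$. \textbf{This is the main obstacle.} I resolve it topologically: since $\supp C$ is an embedded closed oriented surface, a sufficiently small tubular neighborhood $N_\delta \supset \supp C$ in $X$ deformation-retracts onto $\supp C$ and hence has $\rH_2(N_\delta, \Z) = \Z A$. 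Passing by \autoref{Prop_Cluster_Refinement} to a subcluster contained in $B_\eta(C)$ with $\eta$ so small that $\supp D \in \overline \cU$ forces $\supp D \subset N_\delta$, every component support satisfies $\supp D_i \subset N_\delta$, so $[D_i] \in \Z A$; $[\omega]$-positivity on pseudo-holomorphic curves refines this to $[D_i] \in \N A$, whence $[D] \in \N A$---the required contradiction. This produces $\cV_2$.

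For (3), the cluster definition forces $C$ to be embedded, so $(J, C) \in \SpaceOfConnectedEmbeddedCurves$; by \autoref{Thm_EmbeddedCurvesAreOpenInCycles} there is an open neighborhood $\cO \subset \SpaceOfCycles$ of $(J, C)$ consisting of connected embedded curves, and by \autoref{Cor_ModuliSpaceOfEmbeddedMapsEmbedsIntoSpaceOfSubmanifolds} the map $\fz$ identifies it with an open subset of $\ModuliSpaceOfEmbeddedMaps$. A third compactness argument parallels the second: if some $(J_n, D_n) \in \SpaceOfConnectedCycles_A$ with $J_n \to J$ and $\supp D_n \in \cU$ failed to lie in $\cO$ or to have $\supp D_n \in B_\epsilon(C)$, then a subsequence would converge to $(J, D_\infty)$ with $[D_\infty] = A$ and $\supp D_\infty \in \overline \cU$; the argument of the previous paragraph shows every component of $D_\infty$ has class in $\N A$, so $\sum_i m_i [D_{\infty, i}] = A$ makes $D_\infty$ irreducible and reduced, and \autoref{Def_Cluster_Core} of \autoref{Def_Cluster} then identifies $D_\infty$ with $C$---a contradiction. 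Nearby embedded curves have genus $\genus$ because Gromov convergence of embedded maps to an embedded limit is $C^1$ convergence by \autoref{Thm_SpaceOfEmbeddedCurvesEmbedsIntoSpaceOfSubmanifolds}, which preserves the domain genus. Setting $\cV \coloneq \cV_1 \cap \cV_2 \cap \cV_3$ and invoking \autoref{Cor_ModuliSpaceOfEmbeddedMapsEmbedsIntoSpaceOfSubmanifolds} once more to identify cycles with embedded maps gives the claimed homeomorphism.
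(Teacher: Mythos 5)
Your overall strategy is the paper's: parts (1) and (2) come from the closedness of a bad locus in $\SpaceOfConnectedCycles_\Lambda$ together with the properness (hence closedness) of $\pr_\SpaceOfAlmostComplexStructures$ from \autoref{Thm_SpaceOfCycles_Proper}, and part (3) comes from a compactness/surjectivity argument combined with \autoref{Cor_ModuliSpaceOfEmbeddedMapsEmbedsIntoSpaceOfSubmanifolds}. The paper treats (1) and (2) in one stroke, taking $\Delta$ to be the set of $(J',C')\in\SpaceOfConnectedCycles_\Lambda$ with $\supp C'\in\del\cU$, or with $\supp C'\in\overline\cU$ and $[C']\notin\N\cdot A$, and setting $\cV\coloneq\SpaceOfAlmostComplexStructures\setminus\pr_\SpaceOfAlmostComplexStructures(\Delta)$; your sequential versions are equivalent by \autoref{Prop_SpaceOfCycles_Metrizable}.

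The gap is in your resolution of the multiplicity obstacle in part (2). Conclusion (2) quantifies over all $J'$--holomorphic curves $C'$ with $\supp C'\in\overline\cU$ for the \emph{given} $\cU$, and such a curve need not have support Hausdorff-close to $C$. Passing via \autoref{Prop_Cluster_Refinement} to a subcluster with $\cU'=B_\eta(C)\subset\cU$ therefore proves the statement for $\overline{\cU'}$ rather than for $\overline\cU$: an offending curve with $\supp C'\in\overline\cU\setminus\overline{\cU'}$ is simply not seen by the shrunken cluster. The obstacle you flag is genuine --- the paper's assertion that $J\in\cV$ amounts to the claim that every connected $J$--holomorphic cycle $D=\sum_i m_iD_i$ with $[D]\in\Gamma_\Lambda$ and $\supp D\in\overline\cU$ has $[D]\in\N\cdot A$, while \autoref{Def_Cluster}~\autoref{Def_Cluster_Homology} applied to the reduced curve only yields $\sum_i[D_i]=kA$ --- but your fix changes the statement. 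Two remarks on closing it correctly. First, in the case actually needed for part (3), namely $[D]=A$, no topology is required: $\bM(D_{\mathrm{red}})\leq\bM(D)=\bM(A)$ forces $k=1$, then \autoref{Def_Cluster}~\autoref{Def_Cluster_Core} gives $D_{\mathrm{red}}=C$, and irreducibility of $C$ gives $D=mC$ with $m=1$; so part (3) does not need to be routed through your tubular-neighborhood argument. Second, your $\rH_2(N_\delta,\Z)=\Z A$ argument is precisely the mechanism by which \autoref{Prop_Cluster_Existence} arranges \autoref{Def_Cluster}~\autoref{Def_Cluster_Homology} in the first place; it applies verbatim to the given $\cU$ whenever every element of $\overline\cU$ is contained in a neighborhood of $C$ deformation retracting onto $C$ (true for all clusters the paper constructs), but for an arbitrary cluster you must either deduce $[D]\in\N\cdot A$ from the axioms alone or refrain from shrinking $\cU$. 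A minor further point: ``only finitely many classes of bounded mass arise in $\rH_2(X,\Z)$'' is not true as stated; what you want is that $[\cdot]$ is continuous on $\SpaceOfConnectedCycles$ with values in the discrete group $\rH_2(X,\Z)$, so that $[C_n']=[D]$ for $n\gg1$.
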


\begin{proof}
  The subset
  \begin{equation*}
    \Delta
    \coloneq
    \set{
      (J',C') \in \SpaceOfConnectedCycles_\Lambda
      :
      \supp{C'} \in \del\cU
      ~\text{or}~
      (\supp{C'} \in \overline\cU
      ~\text{and}~
      [C'] \notin \N\cdot[C])
    }
  \end{equation*}
  is closed because the map $(\supp,[\cdot]) \co \SpaceOfConnectedCycles_\Lambda \to \SpaceOfCompactSubsets \times \rH_2(X,\Z)$ is continuous.
  Since $\pr_\SpaceOfAlmostComplexStructures \co \SpaceOfConnectedCycles_\Lambda \to \SpaceOfAlmostComplexStructures$ is closed by  \autoref{Thm_SpaceOfCycles_Proper}, 
  $\pr_\SpaceOfAlmostComplexStructures(\Delta)$ is closed.
  Set
  \begin{equation*}
    \cV
    \coloneq
    \SpaceOfAlmostComplexStructures \setminus \pr_\SpaceOfAlmostComplexStructures(\Delta).
  \end{equation*}
  By construction,
  $J \in \cV$,
  $\cV$ is open,
  and 
  $\Delta \cap \pr_\SpaceOfAlmostComplexStructures^{-1}(\cV) = \emptyset$.
  This proves \autoref{Prop_Cluster_Stability_Boundary} and \autoref{Prop_Cluster_Stability_Homology}.

  In light of \autoref{Cor_ModuliSpaceOfEmbeddedMapsEmbedsIntoSpaceOfSubmanifolds},
  it suffices to show that $\fz \co \ModuliSpaceOfEmbeddedMaps_{A,\genus}(\cV,B_\epsilon(C)) \to \SpaceOfConnectedCycles_A(\cV,\cU)$ is surjective to prove \autoref{Prop_Cluster_Stability_Model}.  
  Consider $(J_n,C_n) \in \paren{\SpaceOfConnectedCycles_A(\SpaceOfAlmostComplexStructures,\cU)}^\N$ with $(J_n)$ converging to $J$.  
  By \autoref{Thm_SpaceOfCycles_Proper}, $(C_n)$ geometrically converges to a $J$--holomorphic cycle $C'$ with $(J, C') \in \SpaceOfConnectedCycles_A$ and $\supp{C'} \in \overline\cU$.
  Since $\cO = (\cU,J,C)$ is a $\Lambda$--cluster,
  $C' = C$.
  Therefore, $\fz$ is surjective---possibly after shrinking $\cV$.
\end{proof}

\begin{prop}[Cluster Perturbation]
  \label{Prop_Cluster_Perturbation}
  Let $\Lambda > 0$ and $J\in \SpaceOfAlmostComplexStructures$.  
  Let $\cO = (\cU,J,C)$ be a $\Lambda$--cluster.
  There is a connected open neighborhood $\cV$ of $J$ in $\SpaceOfAlmostComplexStructures$ such that the subset $\SpaceOfConnectedCycles_\Lambda(\cV,\cU)$ is open and closed in $\SpaceOfConnectedCycles_\Lambda(\cV)$ and the following hold:
  \begin{enumerate}
  \item
    \label{Prop_Cluster_Perturbation_Unobstructed}
    If $C$ is unobstructed,
    then for every $J' \in \cV$ there is a unique $J'$--holomorphic curve $C'$ such that $\cO' = (\cU,J',C')$ is a $\Lambda$--cluster.
  \item
    \label{Prop_Cluster_Perturbation_Splitting}
    For every $J' \in \SpaceOfAlmostComplexStructuresRegular \cap \cV$ (which is non-empty by \autoref{Prop_JReg_Comeager}) there is a finite set $\set{ \cO_i = (\cU_i,J',C_i) : i \in I}$ of $\Lambda$--clusters such that
    \begin{equation*}
      \SpaceOfConnectedCycles_\Lambda(J',\cU)
      =
      \coprod_{i \in I}
      \SpaceOfConnectedCycles_\Lambda(J',\cU_i),
    \end{equation*}
    and, for every $i \in I$, $[C_i] = d_i [C]$ with $d_i \geq 1$ and
    $C_i$ is unobstructed with respect to $J'$;
    in particular,
    \begin{equation*}
      \GW_\Lambda(\cO)
      =\sum_{i\in I} \GW_\Lambda(\cO_i).
    \end{equation*}
  \end{enumerate}
\end{prop}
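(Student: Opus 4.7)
The plan is to chain \autoref{Prop_Cluster_Contribution} and \autoref{Prop_Cluster_Stability} together to produce $\cV$, then deduce part \autoref{Prop_Cluster_Perturbation_Unobstructed} from the implicit function theorem for the Cauchy--Riemann operator, and part \autoref{Prop_Cluster_Perturbation_Splitting} from \autoref{Prop_Cluster_Decomposition} applied at a regular perturbation. Concretely, I would first invoke \autoref{Prop_Cluster_Contribution} to obtain a connected open neighborhood $\cV_0$ of $J$ making $\SpaceOfConnectedCycles_\Lambda(\cV_0,\cU)$ open and closed in $\SpaceOfConnectedCycles_\Lambda(\cV_0)$, and then shrink $\cV_0$ to $\cV_1$ using \autoref{Prop_Cluster_Stability}, so that for every $J' \in \cV_1$ the boundary and homology conditions \autoref{Def_Cluster}~\autoref{Def_Cluster_Boundary} and \autoref{Def_Cluster_Homology} hold with respect to $\cU$ and $A \coloneq [C]$, and moreover the map $\fz \co \ModuliSpaceOfEmbeddedMaps_{A,\genus}(\cV_1,B_\epsilon(C)) \to \SpaceOfConnectedCycles_A(\cV_1,\cU)$ is a homeomorphism, with $\genus \coloneq \genus(C)$.

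For part \autoref{Prop_Cluster_Perturbation_Unobstructed}, unobstructedness of $C$ combined with the vanishing of the virtual dimension $\vdim = 2c_1(A) = 0$ implies, via the implicit function theorem, that the universal moduli space $\ModuliSpaceOfEmbeddedMaps_{A,\genus}$ is locally near $(J,[u])$ a smooth manifold on which the projection to $\SpaceOfAlmostComplexStructures$ is a local homeomorphism. Shrinking $\cV_1$ further to a connected neighborhood $\cV$, this gives for every $J' \in \cV$ a unique nearby $J'$--holomorphic embedded curve $C'$ of class $A$ and genus $\genus$; the homeomorphism $\fz$ then promotes $C'$ to the unique $J'$--holomorphic curve in $\cU$ of class $A$, which is axiom \autoref{Def_Cluster_Core}. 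Combined with \autoref{Prop_Cluster_Stability_Boundary} and \autoref{Prop_Cluster_Stability_Homology}, this exhibits $(\cU,J',C')$ as a $\Lambda$--cluster.

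For part \autoref{Prop_Cluster_Perturbation_Splitting}, given $J' \in \SpaceOfAlmostComplexStructuresRegular \cap \cV$ (non-empty by \autoref{Prop_JReg_Comeager} and the Baire category theorem), I would apply \autoref{Prop_Cluster_Decomposition} to $\SpaceOfConnectedCycles_\Lambda(J',\cU)$, which is open and closed in $\SpaceOfConnectedCycles_\Lambda(J')$ by construction of $\cV$ and the inclusion $\SpaceOfAlmostComplexStructuresRegular \subset \SpaceOfAlmostComplexStructuresDiscrete$. This produces the finite collection of $\Lambda$--clusters $\set{\cO_i = (\cU_i,J',C_i) : i \in I}$ with the claimed decomposition. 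The homology constraint $[C_i] = d_i A$ comes from \autoref{Prop_Cluster_Stability_Homology} applied at $J'$; unobstructedness of each $C_i$ is immediate from $J' \in \SpaceOfAlmostComplexStructuresRegular$, since $C_i$ is parametrized by a simple $J'$--holomorphic map of index zero. Finally, the additivity $\GW_\Lambda(\cO) = \sum_i \GW_\Lambda(\cO_i)$ follows from $\GW_\Lambda(\cO) = \GW_\Lambda(\cU,J) = \GW_\Lambda(\cU,J')$ (\autoref{Prop_Cluster_Contribution}) together with \autoref{Lemma_OpenClosedContribution}.

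The main obstacle is purely administrative: threading a single neighborhood $\cV$ through \autoref{Prop_Cluster_Stability}, the implicit function theorem, and the subsequent decomposition argument so that all conclusions hold simultaneously. No new geometric input is required beyond the tools already assembled in this subsection.
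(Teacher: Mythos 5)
Your proposal is correct and follows essentially the same route as the paper: the neighborhood $\cV$ is produced by combining \autoref{Prop_Cluster_Stability} with the argument of \autoref{Prop_Cluster_Contribution}, part \autoref{Prop_Cluster_Perturbation_Unobstructed} follows from the deformation theory of unobstructed index-zero embeddings together with \autoref{Prop_Cluster_Stability}~\autoref{Prop_Cluster_Stability_Model}, and part \autoref{Prop_Cluster_Perturbation_Splitting} is \autoref{Prop_Cluster_Decomposition} applied at $J'$. Your write-up merely makes explicit a few details (unobstructedness of the $C_i$ from the definition of $\SpaceOfAlmostComplexStructuresRegular$, and the constancy of $\GW_\Lambda(\cU,\cdot)$ on $\cV$) that the paper leaves implicit.
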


\begin{proof}
  Let $\cV$ be the connected component of $J$ of the open subset constructed in \autoref{Prop_Cluster_Stability}.
  By \autoref{Prop_Cluster_Stability}~\autoref{Prop_Cluster_Stability_Boundary} and the argument in the proof of \autoref{Prop_Cluster_Contribution},
  $\SpaceOfConnectedCycles_\Lambda(\cV,\cU)$ is open and closed in $\SpaceOfConnectedCycles_\Lambda(\cV)$.

  By the deformation theory of pseudo-holomorphic maps,
  if $C$ is unobstructed,
  then for $0 < \epsilon \ll 1$ and after possibly shrinking $\cV$
  the map
  $\pr_\SpaceOfAlmostComplexStructures \co \ModuliSpaceOfEmbeddedMaps_{[C],\genus}(\cV,B_\epsilon(C)) \to \cV$
  is a diffeomorphism. 
  Therefore,
  by \autoref{Prop_Cluster_Stability}~\autoref{Prop_Cluster_Stability_Model},
  for every $J' \in \cV$ there is a unique $J'$--holomorphic curve $C'$ with $\supp C' \in \cU$ and $[C'] = [C]$.
  This proves \autoref{Prop_Cluster_Perturbation_Unobstructed}.

  \autoref{Prop_Cluster_Perturbation_Splitting} is a consequence of \autoref{Prop_Cluster_Decomposition}.
\end{proof}

 In applications, it is convenient to choose the open sets $\cU$ and $\cV$ in \autoref{Prop_Cluster_Stability} and \autoref{Prop_Cluster_Perturbation} to be arbitrarily small.  
 
\begin{prop}
  \label{Cor_LocalBasisInCurveSpace}
  Let $J\in\SpaceOfAlmostComplexStructuresDiscrete$ and $\Lambda > 0$.
  Let $C$ be an irreducible, embedded $J$--holomorphic curve of index zero with $\bM(C) \leq \Lambda$.
  Set $A \coloneq [C]$ and $\genus \coloneq \genus(C)$.
  In this situation,
  there exists a basis of open neighborhoods of $(J,C)$ in $\SpaceOfConnectedCycles_A$ consisting of subsets of the form $\SpaceOfConnectedCycles_A(\cV, \cU)$ such that:
  \begin{enumerate}
  \item
    $\cV$ is open in $\SpaceOfAlmostComplexStructures$ and $\cU$ is open in  $\SpaceOfCompactSubsets$
   (we can take $\cU = B_\epsilon(C)$).
  \item
    $\cO = (\cU, J,C)$ is a $\Lambda$--cluster,
    and 
    \begin{equation}
      \GW_\Lambda(\cU,J') = \GW_\Lambda(\cO)
    \end{equation}
    for every $J'\in \cV$.
  \item
    The maps
    \begin{equation} \label{c.is.embedding}
      \ModuliSpaceOfEmbeddedMaps_{A, \genus}(\cV,\cU) \to \SpaceOfConnectedCycles_A(\cV,\cU) \qandq \SpaceOfConnectedEmbeddedCurves_A(\cV,\cU) \to \supp \SpaceOfConnectedEmbeddedCurves_A(\cV,\cU)
    \end{equation} 
    are homeomorphisms.
  \end{enumerate}
\end{prop}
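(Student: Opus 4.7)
The plan is to assemble the cluster existence, stability, and contribution results of this subsection and then verify the basis property by comparing the geometric topology near $(J,C)$ with the topology induced by $(\pr_\SpaceOfAlmostComplexStructures, \supp)$.

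First, invoke \autoref{Prop_Cluster_Existence} to obtain $\epsilon_0 > 0$ such that $\cO_\epsilon \coloneq (B_\epsilon(C), J, C)$ is a $\Lambda$-cluster for $\epsilon$ in an open dense subset of $(0, \epsilon_0]$. Fix such an $\epsilon$. Applying \autoref{Prop_Cluster_Stability} to $\cO_\epsilon$ produces an open neighborhood of $J$ on which \autoref{Def_Cluster}~\autoref{Def_Cluster_Boundary}--\autoref{Def_Cluster_Homology} continue to hold and on which $\SpaceOfConnectedCycles_A(\cV, B_\epsilon(C)) = \SpaceOfConnectedEmbeddedCurves_A(\cV, B_\epsilon(C))$, with $\fz$ restricting to a homeomorphism onto this set from $\ModuliSpaceOfEmbeddedMaps_{A, \genus}(\cV, B_\epsilon(C))$. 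Intersecting with the connected open neighborhood produced by \autoref{Prop_Cluster_Contribution} yields a connected $\cV = \cV_\epsilon$ on which the Gromov--Witten contribution of $\cU = B_\epsilon(C)$ is constantly equal to $\GW_\Lambda(\cO_\epsilon)$, giving (2) and the first map in \eqref{c.is.embedding}. The second map in \eqref{c.is.embedding} is then immediate from \autoref{Prop_SpaceOfCurvesEmbedsIntoSpaceOfCompactSubsets} applied to $\SpaceOfConnectedEmbeddedCurves_A(\cV, B_\epsilon(C)) \subseteq \SpaceOfCurves_{A, \Lambda}$.

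To verify the basis property, let $\cW$ be an arbitrary open neighborhood of $(J, C)$ in $\SpaceOfConnectedCycles_A$. By \autoref{Prop_CurvesAreOpenInCycles}, $\SpaceOfCurves_A$ is open in $\SpaceOfCycles$ and contains $(J, C)$; combined with \autoref{Prop_SpaceOfCurvesEmbedsIntoSpaceOfCompactSubsets}, this shows that near $(J, C)$ the geometric topology agrees with the subspace topology inherited from $\SpaceOfAlmostComplexStructures \times \SpaceOfCompactSubsets$ via $(\pr_\SpaceOfAlmostComplexStructures, \supp)$. Hence there exist an open $\cV' \ni J$ and $\delta > 0$ with $\SpaceOfConnectedEmbeddedCurves_A(\cV', B_\delta(C)) \subseteq \cW$. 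Choosing $\epsilon < \delta$ from the dense set above and replacing $\cV_\epsilon$ by $\cV_\epsilon \cap \cV'$ yields a cluster neighborhood contained in $\cW$, because cluster stability identifies $\SpaceOfConnectedCycles_A(\cV_\epsilon \cap \cV', B_\epsilon(C))$ with $\SpaceOfConnectedEmbeddedCurves_A(\cV_\epsilon \cap \cV', B_\epsilon(C))$, which sits inside $\SpaceOfConnectedEmbeddedCurves_A(\cV', B_\delta(C))$.

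The main technical input will be \autoref{Prop_Cluster_Stability}: it is precisely what prevents the escape of cycles through $\partial \cU$ as $J$ varies and forbids the appearance of higher-multiplicity components under perturbation, both of which would otherwise obstruct (1), (2) and the homeomorphism in \eqref{c.is.embedding}. Once that proposition is in hand---together with \autoref{Prop_SpaceOfCurvesEmbedsIntoSpaceOfCompactSubsets} to identify the geometric topology near an embedded curve with the $(J, \supp)$-product topology---the remaining argument is essentially bookkeeping.
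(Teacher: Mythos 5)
Your proposal is correct and follows essentially the same route as the paper: the paper likewise reduces the basis claim to the identification of the geometric topology near an embedded curve with the $(J,\supp)$--topology (packaged there as \autoref{Cor_ModuliSpaceOfEmbeddedMapsEmbedsIntoSets}, which you unpack into \autoref{Prop_CurvesAreOpenInCycles} and \autoref{Prop_SpaceOfCurvesEmbedsIntoSpaceOfCompactSubsets}), and then cites \autoref{Prop_Cluster_Existence}, \autoref{Prop_Cluster_Stability}, and \autoref{Prop_Cluster_Contribution} for properties (1)--(3). The only cosmetic point is that after intersecting neighborhoods you should pass to the connected component of $J$ so that \autoref{Prop_Cluster_Contribution} applies verbatim.
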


\begin{proof}
  Since $C$ is embedded, by \autoref{Cor_ModuliSpaceOfEmbeddedMapsEmbedsIntoSets} 
  the basis of the topology on $\SpaceOfConnectedCycles_A$ at $(J,C)$ consists of subsets of the form
  \begin{equation*}
    \set{
      (J',C') \in \SpaceOfConnectedCycles_A
      :
      J' \in \cV ~\textnormal{and}~
      \supp C' \in B_\epsilon(C)
    }
  \end{equation*}
  with $\epsilon > 0$ and $\cV$ from a basis of open, connected neighborhoods of $J$ in $\SpaceOfAlmostComplexStructures$.  
  Therefore, the corollary follows from \autoref{Prop_Cluster_Stability} and \autoref{Prop_Cluster_Contribution}. Note that \autoref{Cor_ModuliSpaceOfEmbeddedMapsEmbedsIntoSpaceOfSubmanifolds} and \autoref{Prop_SpaceOfCurvesEmbedsIntoSpaceOfCompactSubsets}  imply that 
  $(\pr_\SpaceOfAlmostComplexStructures,\im)\co\ModuliSpaceOfEmbeddedMaps_A(\cV,\cU) \to \cV \times \cU$ is an embedding whose image  is $\supp \SpaceOfConnectedEmbeddedCurves_A(V,\cU)$ and whose domain is equal to $\ModuliSpaceOfEmbeddedMaps_{A,\genus}(\cV,\cU)$ by \autoref{Prop_Cluster_Stability}-\autoref{Prop_Cluster_Stability_Homology}. 
\end{proof}

The crucial result for the proof of \autoref{Thm_GopakumarVafa} is the following. 

\begin{theorem}[Cluster Isotopy]
  \label{Thm_Cluster_Isotopy}
  Let $\Lambda > 0$.
  Let $\cO_0 = (\cU_0, J_0, C)$ and $\cO_1 = (\cU_1,J_1,C)$ be $\Lambda$--clusters with identical cores.
  If $J_0, J_1 \in \SpaceOfAlmostComplexStructuresDiscrete$ and $C$ is unobstructed with respect to $J_0$ and $J_1$, 
  then there is a finite set $\set{ \cO_i = (\cU_i,J_i,C_i) : i \in I}$ of $\Lambda$--clusters such that 
  \begin{equation*}
    \GW_\Lambda(\cO_1)
    =
    \pm\GW_\Lambda(\cO_0) + \sum_{i\in I} \pm \GW_\Lambda(\cO_i);
  \end{equation*}
  moreover, for every $i \in I$, $J_i \in \SpaceOfAlmostComplexStructuresDiscrete$, $C_i$ is unobstructed with respect to $J_i$, and $[C_i] = d_i [C]$ with $d_i \geq 2$.
\end{theorem}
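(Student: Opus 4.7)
The plan is to interpolate between $\cO_0$ and $\cO_1$ by a finite chain of $\Lambda$--clusters based along a path $\bJ = (J_t)_{t \in [0,1]}$ in $\SpaceOfAlmostComplexStructures$ from $J_0$ to $J_1$. Invariance of cluster contributions along each link of the chain will follow from Cluster Perturbation (\autoref{Prop_Cluster_Perturbation}) together with the Open--Closed Contribution Lemma (\autoref{Lemma_OpenClosedContribution}), while the transitions between links will spawn the higher-divisibility cluster contributions appearing in the conclusion via Cluster Refinement (\autoref{Prop_Cluster_Refinement}). First I would reduce to a common neighborhood of the core: by Cluster Existence (\autoref{Prop_Cluster_Existence}), for all sufficiently small $\epsilon > 0$ the ball $\cU \coloneq B_\epsilon(C) \subset \SpaceOfCompactSubsets$ lies in $\cU_0 \cap \cU_1$ and makes both $\cO_k' \coloneq (\cU, J_k, C)$ into $\Lambda$--clusters for $k \in \set{0,1}$. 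The splitting part of Cluster Refinement (\autoref{Prop_Cluster_Refinement}~\autoref{Prop_Cluster_Refinement_Splitting}) then yields
\begin{equation*}
  \GW_\Lambda(\cO_k) = \GW_\Lambda(\cO_k') + \sum_i \pm \GW_\Lambda(\cO_{k,i}), \quad k \in \set{0,1},
\end{equation*}
with each $\cO_{k,i}$ a $\Lambda$--cluster at $J_k$ whose core has divisibility $\geq 2$ relative to $[C]$. It thus suffices to relate $\GW_\Lambda(\cO_0')$ and $\GW_\Lambda(\cO_1')$ up to higher-divisibility corrections.

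Next I would partition the path. A small generic perturbation of $\bJ$ rel endpoints, combined with the compactness of $[0,1]$ and the properness of $\pr_\SpaceOfAlmostComplexStructures$ on $\SpaceOfConnectedCycles_\Lambda$ (\autoref{Thm_SpaceOfCycles_Proper}), produces a partition $0 = s_0 < s_1 < \cdots < s_n = 1$ with each $J_{s_j} \in \SpaceOfAlmostComplexStructuresDiscrete$, a finite $\Lambda$--cluster decomposition of $\SpaceOfConnectedCycles_\Lambda(J_{s_j})$ at each $J_{s_j}$ via Cluster Decomposition (\autoref{Prop_Cluster_Decomposition}), and open neighborhoods $\cV_j$ of $\bJ([s_j,s_{j+1}])$ in $\SpaceOfAlmostComplexStructures$ on which Cluster Stability (\autoref{Prop_Cluster_Stability}) applies uniformly to the $\Lambda$--clusters of that decomposition. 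Within each $\cV_j$, Cluster Perturbation (\autoref{Prop_Cluster_Perturbation}) then makes the contribution of the $\Lambda$--cluster containing $C$ locally constant in $\cV_j$.

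At each junction $s_j$ with $1 \leq j \leq n - 1$, the $\Lambda$--cluster tracking $C$ from the left and the one tracking $C$ from the right both center on $J_{s_j} \in \SpaceOfAlmostComplexStructuresDiscrete$ and share a common core by the uniqueness clause \autoref{Def_Cluster}~\autoref{Def_Cluster_Core} (after a further shrinkage via \autoref{Prop_Cluster_Existence}). Cluster Refinement (\autoref{Prop_Cluster_Refinement}~\autoref{Prop_Cluster_Refinement_Common}) produces a common refinement, and \autoref{Prop_Cluster_Refinement}~\autoref{Prop_Cluster_Refinement_Splitting} absorbs the discrepancy into a signed sum of $\Lambda$--cluster contributions at $J_{s_j}$ whose cores have divisibility $\geq 2$ relative to $[C]$. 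Telescoping in $j$ identifies $\GW_\Lambda(\cO_0') - \GW_\Lambda(\cO_1')$ with a signed sum of such higher-divisibility contributions. A final application of Cluster Perturbation (\autoref{Prop_Cluster_Perturbation}~\autoref{Prop_Cluster_Perturbation_Splitting}), using that $\SpaceOfAlmostComplexStructuresRegular$ is comeager in $\SpaceOfAlmostComplexStructures$ (\autoref{Prop_JReg_Comeager}), decomposes each higher-divisibility cluster further into $\Lambda$--clusters at some $J' \in \SpaceOfAlmostComplexStructuresRegular \subset \SpaceOfAlmostComplexStructuresDiscrete$ with unobstructed cores of divisibility $\geq 2$.

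The hardest step will be the construction of the partition: one must simultaneously control \emph{all} pseudo-holomorphic cycles of mass $\leq \Lambda$ along the path, not merely those close to $C$, and ensure that the abutting cluster data at each $s_j$ satisfy the hypotheses of Cluster Refinement. This rests on the interplay between properness (\autoref{Thm_SpaceOfCycles_Proper}), the discreteness of embedded cycles at $J_{s_j} \in \SpaceOfAlmostComplexStructuresDiscrete$ supplied by Clustering Behaviour (\autoref{Lem_Cluster}), Cluster Existence and Decomposition (\autoref{Prop_Cluster_Existence}, \autoref{Prop_Cluster_Decomposition}), and a transversality argument ensuring that the partition points can be taken in $\SpaceOfAlmostComplexStructuresDiscrete$ (exploiting the codimension-one nature of its complement).
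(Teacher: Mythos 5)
There is a genuine gap: your argument never confronts the wall of obstructed curves, and as a consequence it cannot produce the sign $\pm$ in front of $\GW_\Lambda(\cO_0)$ in the conclusion. The locus $\SpaceOfAlmostComplexStructuresRegular$ of almost complex structures for which all index-zero simple curves are unobstructed has a codimension-one complement and is \emph{not} path-connected (see the discussion after \autoref{Prop_JReg_Comeager}), so a generic path from $J_0$ to $J_1$ must cross the wall $\cW^1$ transversely at finitely many points. At such a crossing the curve in class $A$ near $C$ undergoes a birth--death bifurcation: on one side of the wall the neighborhood $\cU$ contains two embedded curves $C_t^{\pm}$ in class $A$, on the other side none, and the correct relation is $\GW_\Lambda(\cO_t^+) \approx -\GW_\Lambda(\cO_t^-)$ (\autoref{Prop_BirthDeathCrossing}). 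Your step ``Cluster Perturbation makes the contribution of the $\Lambda$--cluster containing $C$ locally constant in $\cV_j$'' invokes \autoref{Prop_Cluster_Perturbation}~\autoref{Prop_Cluster_Perturbation_Unobstructed}, which requires unobstructedness and therefore fails on any subinterval containing a wall crossing; likewise, at a junction $s_j$ sitting on the far side of a crossing there need not be \emph{any} curve in class $A$ near $C$, so ``the cluster tracking $C$ from the left'' and ``from the right'' need not exist, let alone share a core. Cluster Refinement only compares clusters with \emph{identical} cores and cannot absorb this discrepancy into higher-divisibility terms: the two curves $C_t^{\pm}$ that annihilate both represent the primitive class $A$ itself.

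The paper's proof is organized around exactly this difficulty. The path is chosen inside $\SpaceOfAlmostComplexStructures_C$, the space of structures for which the \emph{fixed} surface $C$ stays holomorphic, so that the core survives literally unchanged and the only question is how its contribution jumps; the path is then arranged (via the Kuranishi models of \cite{Ionel2018}) to meet $\cW^1\setminus\cA$ transversely at finitely many points, and the three wall-crossing/isotopy lemmas (\autoref{Prop_SimpleIsotopy}, \autoref{Prop_BirthDeathCrossing}, \autoref{Prop_WallCrossingInMC}) show the contribution is constant up to higher-divisibility terms between crossings and flips sign at each crossing. Your telescoping skeleton is fine for the wall-free segments (it is essentially \autoref{Prop_SimpleIsotopy}), but without an analogue of \autoref{Prop_WallCrossingInMC} --- which requires the two-parameter thickening $\bbJ$ and the explicit birth--death local model, not just the formal cluster calculus --- the argument does not go through.
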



\subsection{Proof of the cluster isotopy theorem}
\label{Sec_ClusterIsotopyTheorem}

This section provides the proof of \autoref{Thm_Cluster_Isotopy}.
\edited{The results of \autoref{Sec_Clusters} are sufficient to carry out the argument from \cite[Section 7]{Ionel2018} with minor changes in notation. 
The main steps in the proof are \autoref{Prop_SimpleIsotopy}, \autoref{Prop_BirthDeathCrossing}, and \autoref{Prop_WallCrossingInMC} which are analogous to \cite[Lemma 7.2, 7.3, 7.4]{Ionel2018}. 
For completeness, we include their proofs with an emphasis on where the results of \autoref{Sec_Clusters} are used.}
In this section, $\Lambda>0$ is fixed and $A \in \rH_2(X,\Z)$ is a Calabi--Yau class satisfying $\bM(A) \leq \Lambda$. 

\begin{notation}
  Following  \cite[Section 7]{Ionel2018},
  given two $\Lambda$--clusters $\cO_0=(\cU_0,J_0,C_0)$ and $\cO_1=(\cU_1,J_1,C_1)$ with $[C_0]=[C_1]$ write
  \begin{equation}
    \label{Eq_EquivalenceOfClusters}
    \GW_\Lambda(\cO_0) \approx \GW_\Lambda(\cO_1)
  \end{equation}
  if there is a finite set $\set{ \cO_i = (\cU_i, J_i, C_i) : i \in I}$ of $\Lambda$--clusters such that
  \begin{equation*}
    \GW_\Lambda(\cO_1) =\GW_\Lambda(\cO_0) + \sum_{i\in I} \pm \GW_\Lambda(\cO_i),
  \end{equation*}
  and, for every $i \in I$, $[C_i] = d_i [C_0]$ with $d_i \geq 2$.
  Similarly, we will write  $\GW_\Lambda(\cO_0) \approx -\GW_\Lambda(\cO_1)$,  $\GW_\Lambda(\cO_0) \approx 0$, and so on, when the equality holds modulo finitely many contributions of $\Lambda$--clusters with cores representing homology classes $d[C_0]$ with $d\geq 2$. 
\end{notation}

\begin{remark}
  While the notation might suggest otherwise,  it is worth pointing out that \autoref{Eq_EquivalenceOfClusters} is actually an equivalence relation of $\Lambda$--clusters rather than of power series, as the homology class of the core plays an important role in the definition of \autoref{Eq_EquivalenceOfClusters}.
\end{remark}

\begin{notation}
  Following \cite[Section 5]{Ionel2018}, we consider the following subspaces of $\ModuliSpaceOfSimpleMaps$:
  \begin{align*}
    \cW &\coloneq \set{ (J,[u]) \in \ModuliSpaceOfSimpleMaps :  \dim\ker\fd_{J,u} > 0 } = \bigcup_{k\in\N}\cW^k, \\
    \text{with } \cW^k &\coloneq \set{ (J,[u]) \in \ModuliSpaceOfSimpleMaps : \dim\ker\fd_{J,u} = k},
  \end{align*}
  with $\fd_{J,u}$ being the linearization of the Cauchy--Riemann operator; see \autoref{Def_CauchyRiemann}.
  
  Denote by $\cA$ the set of points in $\cW^1$ where the projection map $\pi \co \cW^1 \to \SpaceOfAlmostComplexStructures$ fails to be an immersion, cf. \cite[Section 5.4]{Ionel2018}.
\end{notation}

By \cite[Proposition 5.3]{Ionel2018}, $\cW^1\cap\ModuliSpaceOfSimpleMaps_\Lambda$ is a codimension one submanifold of  $\ModuliSpaceOfSimpleMaps_\Lambda$.
By \cite[Lemma 5.6]{Ionel2018}, $\cA\cap\ModuliSpaceOfEmbeddedMaps_\Lambda$ is a codimension one submanifold of $\cW^1$. 
(Recall that, by definition, all pseudo-holomorphic maps in $\ModuliSpaceOfSimpleMaps_\Lambda$ have index zero.)

\begin{prop}[Simple Isotopy]
  \label{Prop_SimpleIsotopy}
  Let $(J_t,[u_t])_{t\in[0,1]}$ be a path in $\ModuliSpaceOfEmbeddedMaps_{A, \genus}$ disjoint from $\cW$ and such that $J_t \in \SpaceOfAlmostComplexStructuresDiscrete$ for all $t\in[0,1]$. 
  Let $C_i$ be the image of $u_i$ for $i=0,1$. 
  If $\cO_i = (\cU_i,J_i,C_i)$ is a $\Lambda$--cluster for $i=0,1$, 
  then 
  \begin{equation*}
    \GW_\Lambda(\cO_0) \approx \GW_\Lambda(\cO_1).
  \end{equation*} 
\end{prop}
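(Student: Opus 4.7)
The plan is to cover the parameter interval $[0,1]$ by finitely many overlapping open sub-intervals on each of which the Gromov--Witten contribution is constant by \autoref{Prop_Cluster_Perturbation}, and then to match contributions at the overlaps by means of \autoref{Prop_Cluster_Refinement}.

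First I would set $C_t \coloneq \im u_t$. Since the path avoids $\cW$, the kernel of the linearized Cauchy--Riemann operator $\fd_{J_t,u_t}$ is trivial, so combined with the vanishing virtual dimension coming from $A \in \Gamma$ each curve $C_t$ is unobstructed with respect to $J_t$. For each $t$ apply \autoref{Prop_Cluster_Existence} to produce an $\epsilon_t > 0$ such that $\tilde\cO_t \coloneq (B_{\epsilon_t}(C_t),J_t,C_t)$ is a $\Lambda$-cluster, and then apply the unobstructed case \autoref{Prop_Cluster_Perturbation_Unobstructed} to obtain a connected open neighborhood $\cV_t \ni J_t$ in $\SpaceOfAlmostComplexStructures$ such that for every $J' \in \cV_t$ there is a unique $J'$-holomorphic curve $C'$ making $(B_{\epsilon_t}(C_t),J',C')$ a $\Lambda$-cluster, with Gromov--Witten contribution equal to $\GW_\Lambda(\tilde\cO_t)$. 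Using that the given path $(J_t,u_t)$ is continuous in the Gromov topology on $\ModuliSpaceOfEmbeddedMaps_{A,\genus}$---which by \autoref{Cor_ModuliSpaceOfEmbeddedMapsEmbedsIntoSpaceOfSubmanifolds} and \autoref{Cor_ModuliSpaceOfEmbeddedMapsEmbedsIntoSets} coincides with the geometric and the Hausdorff topologies on images---I would then choose an open sub-interval $I_t \subset [0,1]$ containing $t$ on which $J_{t'} \in \cV_t$ and $\supp C_{t'} \in B_{\epsilon_t}(C_t)$. The uniqueness clause of \autoref{Prop_Cluster_Perturbation_Unobstructed} forces $C_{t'}$ to be the perturbed core, so $\GW_\Lambda\bigl(B_{\epsilon_t}(C_t),J_{t'},C_{t'}\bigr) = \GW_\Lambda(\tilde\cO_t)$ for every $t' \in I_t$.

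By compactness of $[0,1]$ I would extract a finite sub-cover $I_{t_0},\ldots,I_{t_n}$ with $0 \in I_{t_0}$, $1 \in I_{t_n}$, and $I_{t_k} \cap I_{t_{k+1}} \neq \emptyset$ for each $k$. Picking $s_k$ in each overlap, the two $\Lambda$-clusters $(B_{\epsilon_{t_k}}(C_{t_k}),J_{s_k},C_{s_k})$ and $(B_{\epsilon_{t_{k+1}}}(C_{t_{k+1}}),J_{s_k},C_{s_k})$ share the same core $C_{s_k}$. Then \autoref{Prop_Cluster_Refinement_Common} produces a common sub-cluster and two applications of \autoref{Prop_Cluster_Refinement_Splitting} show that the two enclosing clusters---and hence $\tilde\cO_{t_k}$ and $\tilde\cO_{t_{k+1}}$---differ by finitely many Gromov--Witten contributions of $\Lambda$-clusters whose cores represent $dA$ with $d \geq 2$. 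Chaining these identities along the cover yields $\GW_\Lambda(\tilde\cO_0) \approx \GW_\Lambda(\tilde\cO_1)$, and the same refinement argument compares the given clusters $\cO_0$ and $\cO_1$ with $\tilde\cO_0$ and $\tilde\cO_1$, producing the desired $\GW_\Lambda(\cO_0) \approx \GW_\Lambda(\cO_1)$.

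The principal obstacle is ensuring in the perturbation step that $C_{t'}$ actually coincides with the unique curve produced by the Cluster Perturbation Proposition; this is precisely what the coincidence of the Gromov and Hausdorff topologies on images of embedded maps (\autoref{Cor_ModuliSpaceOfEmbeddedMapsEmbedsIntoSets}) buys. A secondary bookkeeping task is verifying that the leading sign in the $\approx$-relation is $+1$ throughout the chain: this holds because perturbation of an unobstructed core preserves its contribution to the virtual fundamental class (no wall is crossed since the path avoids $\cW$), so no extra sign is introduced in the leading term, while the higher-multiplicity contributions carry the $\pm$ signs allowed by the definition of $\approx$.
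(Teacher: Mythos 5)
Your proposal is correct and follows essentially the same route as the paper: cover $[0,1]$ by finitely many intervals on which the cluster contribution is locally constant, identify the curve $C_{t'}$ from the path with the unique perturbed core (the paper does this via \autoref{Cor_LocalBasisInCurveSpace} and the deformation-theoretic neighborhood $\cQ$, you via \autoref{Prop_Cluster_Existence} and \autoref{Prop_Cluster_Perturbation}, which rest on the same local model), and compare clusters with a common core at the overlaps using \autoref{Prop_Cluster_Refinement}. The only cosmetic difference is the packaging of the local uniqueness statement, and your sign discussion matches the paper's implicit use of the fact that the leading coefficient in $\approx$ stays $+1$ when no wall is crossed.
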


\begin{proof}
  The proof is identical to that of \cite[Lemma 7.2]{Ionel2018}, except that we use a different definition of an $\Lambda$--cluster and invoke the results of \autoref{Sec_Clusters} to control sequences of curves without an a priori genus bound.
  
  Assume that $\cO_i = (\cU_i,J_i,C_i)$ is a $\Lambda$--cluster for $i=0,1$. 
  Let $C_t = \im u_t$; this is a family of curves of genus $\genus$ representing $A$; in particular, of index zero. 
  Since the path $(J_t,[u_t])_{t\in[0,1]}$ is disjoint from $\cW$, it follows from the standard deformation theory for pseudo-holomorphic maps that there is an open neighborhood $\cQ$ of the path $(J_t,[u_t])_{t\in[0,1]}$ in $\ModuliSpaceOfEmbeddedMaps_{A, g}$ such that for every $t\in[0,1]$, 
  \begin{equation}
    \label{Eq_SimpleIsotopyLocalModel}
    \ModuliSpaceOfEmbeddedMaps_{A,\genus}(J_t) \cap \cQ = \set{ (J_t, [u_t])};
  \end{equation}
  see, for example, \cite[Proposition 5.3]{Ionel2018}. 
  Since $\bJ$ is contained in $\SpaceOfAlmostComplexStructuresDiscrete$, for every $t\in[0,1]$ we can apply  \autoref{Cor_LocalBasisInCurveSpace}  to $J_t$ and $C_t$ to produce open subsets $\cU_t \subset \SpaceOfCompactSubsets$ and $\cV_t \subset \SpaceOfAlmostComplexStructures$ with all the properties listed in   \autoref{Cor_LocalBasisInCurveSpace}.
  We may, moreover, choose them in such a way that 
  \begin{equation}
    \label{Eq_SimpleIsotopyNeighborhood}
    \ModuliSpaceOfEmbeddedMaps_{A, \genus}(\cV_t,\cU_t) \subset \cQ.
  \end{equation}
  In particular, for every $t\in[0,1]$:
  \begin{enumerate}
  \item The triple $(\cU_t,J_t,C_t)$ is a $\Lambda$--cluster.
  \item \label{ItSimpleIsotopyHomeomorphism} The map
    \begin{equation*}
      \ModuliSpaceOfEmbeddedMaps_{A, \genus}(\cV_t,\cU_t)\to \SpaceOfConnectedCycles_A(\cV_t,\cU_t)
    \end{equation*}
    is a homeomorphism.
  \item Since $\bJ\subset \SpaceOfAlmostComplexStructuresDiscrete$, by \autoref{Eq_SimpleIsotopyLocalModel}, \autoref{Eq_SimpleIsotopyNeighborhood} and property \autoref{ItSimpleIsotopyHomeomorphism} above, we have
    \begin{equation*}
      \SpaceOfConnectedCycles_A(J_s,\cU_t) = \SpaceOfConnectedEmbeddedCurves_A(J_s,\cU_t) = \set{ (J_s,C_s)}
    \end{equation*}
    for every $s \in [0,1]$ such that $J_s \in \cV_t$. 
  \item Moreover, for every such $s \in [0,1]$,
    \begin{equation*}
      \GW_\Lambda(\cU_t,J_s) = \GW_\Lambda(\cU_t,J_t).
    \end{equation*}
  \end{enumerate} 
  
  Since $[0,1]$ is compact, there are 
  \begin{equation*}
    0 = t_0 < t_1 < \ldots < t_m = 1
  \end{equation*} 
  and $\delta_{t_0},  \ldots, \delta_{t_m} > 0$ such that the intervals $I_i = \set{s \in [0,1] : \abs{s-t_i} < \delta_{t_i} }$ cover $[0,1]$ and for $s \in I_i$ we have $J_s \in \cV_{t_i}$.
  Let $s \in I_i \cap I_{i+1}$.
  It follows from the preceding discussion and \autoref{Prop_Cluster_Refinement} that
  \begin{equation*}
    \GW_\Lambda(\cU_{t_i}, J_{t_i}) = \GW_\Lambda(\cU_{t_i}, J_s) \approx \GW_\Lambda(\cU_{t_{i+1}}, J_s) = \GW_\Lambda(\cU_{t_{i+1}}, J_{t_{i+1}}). 
  \end{equation*}
  We conclude that $ \GW_\Lambda(\cO_0) \approx  \GW_\Lambda(\cU_{t_0},J_{t_0}) \approx \cdots \approx  \GW_\Lambda(\cU_{t_m},J_{t_m}) \approx \GW(\cO_1)$. 
\end{proof}

\begin{prop}[Wall-crossing in $\SpaceOfAlmostComplexStructures$] 
  \label{Prop_BirthDeathCrossing}
  Let $\bJ = (J_t)_{t\in[-1,1]}$ be a $C^1$ path in $\SpaceOfAlmostComplexStructures$, contained in $\SpaceOfAlmostComplexStructuresDiscrete$. 
  Suppose that $\pi\co \ModuliSpaceOfEmbeddedMaps_{A, \genus} \to \SpaceOfAlmostComplexStructures$ is transverse to the path $\bJ$ at a point $p_0=(J_0,[u_0]) \in \cW^1\setminus \cA$. Then:
  \begin{enumerate}
  \item 
    \label{It_BirthDeathCrossingLocalModel}
    There exist $\delta>0$, $\sigma = \pm$, and an open neighborhood  $\cQ$ of $p_0$ in 
    $\ModuliSpaceOfEmbeddedMaps_{A, \genus}$ such that for all $t\ne 0$, 
    \begin{equation}\label{moduli.simple.wall}
      \ModuliSpaceOfEmbeddedMaps_{A, \genus}(J_t)\cap\cQ =
      \begin{cases}
        \set{ p_t^+, p_t^- }  & \text{for } 0 < \abs{t} < \delta, \ \sign t = \sigma, \\
        \emptyset & \text{for } 0 < \abs{t} <\delta, \ \sign t = - \sigma,
      \end{cases}
    \end{equation}
    where $p_t^\pm=(J_t, [u_t^\pm]) \in \ModuliSpaceOfEmbeddedMaps_{A, \genus}\setminus \cW$  and $\lim_{t \to 0^\sigma} p_t^\pm = p_0$. 
  \item
    \label{It_BirthDeathCrossingGWContribution}
    Let $t$ be such that $0 < \abs{t} < \delta$ and $\sign t = \sigma$, and set $C^{\pm}_t = \im u^{\pm}_t$. 
    If  $\cO^{\pm}_t= (\cU_t^\pm, J_t, C_t^\pm)$ are $\Lambda$--clusters, then 
    \begin{equation*}
      \GW_\Lambda(\cO^+_t) \approx - \GW_\Lambda(\cO^-_t). 
    \end{equation*} 
    
    (Note that such $\cU_t^+$ and $\cU_t^-$ exist by \autoref{Prop_Cluster_Existence}.)
  \end{enumerate}
\end{prop}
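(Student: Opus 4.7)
The strategy splits naturally into the two parts of the statement: I would deduce part~\autoref{It_BirthDeathCrossingLocalModel} from the standard Kuranishi model at a fold point of $\cW^1\setminus\cA$, and derive the wall-crossing identity~\autoref{It_BirthDeathCrossingGWContribution} by exhibiting a common neighborhood $\cU^0\subset\SpaceOfCompactSubsets$ of $\supp C_0$ whose Gromov--Witten contribution is constant along $\bJ$ (via \autoref{Lemma_OpenClosedContribution}) and which admits essentially different $\Lambda$--cluster decompositions on the two sides of the wall (via \autoref{Prop_Cluster_Decomposition}).

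For part~\autoref{It_BirthDeathCrossingLocalModel}: the hypothesis $p_0\in\cW^1\setminus\cA$ means that $\dim\ker\fd_{J_0,u_0}=1$ and that the projection $\pi\co\cW^1\to\SpaceOfAlmostComplexStructures$ is an immersion at $p_0$, so $\pi(\cW^1)$ is locally a hypersurface. The transversality of the $C^1$ path $\bJ$ to this hypersurface combined with $p_0\notin\cA$ means that the finite-dimensional Kuranishi reduction of the $J_t$--pseudo-holomorphic map equation at $p_0$ takes the fold form $\kappa(x,t)=ax^2+bt+O(|(x,t)|^3)$ with $a,b\neq 0$; this is the content of \cite[Proposition 5.3 and Section 7]{Ionel2018}. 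The implicit function theorem then produces the two-branch structure~\eqref{moduli.simple.wall} with $\sigma\coloneq-\sign(ab)$, and the resulting solutions correspond under the Kuranishi parametrization to embedded $J_t$--holomorphic maps $u_t^{\pm}$ since $\ModuliSpaceOfEmbeddedMaps$ is open in $\ModuliSpaceOfSimpleMaps$.

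For part~\autoref{It_BirthDeathCrossingGWContribution}: since $J_0\in\SpaceOfAlmostComplexStructuresDiscrete$, \autoref{Lem_Cluster} tells us that the $J_0$--holomorphic cycles of mass $\leq\Lambda$ form a countable and compact family, so I can choose an open neighborhood $\cU^0\subset\SpaceOfCompactSubsets$ of $\supp C_0$ whose boundary contains the support of no $J_0$--holomorphic cycle of mass $\leq\Lambda$ and whose interior contains the support of no class--$A$ $J_0$--holomorphic cycle other than $C_0$. By the properness of $\pr_\SpaceOfAlmostComplexStructures\co\SpaceOfConnectedCycles_\Lambda\to\SpaceOfAlmostComplexStructures$ from \autoref{Thm_SpaceOfCycles_Proper}, the argument of \autoref{Prop_Cluster_Contribution} lets me shrink $\delta>0$ so that no $J_t$--holomorphic cycle of mass $\leq\Lambda$ has support in $\partial\cU^0$ for any $t\in(-\delta,\delta)$. \autoref{Lemma_OpenClosedContribution} then yields a well-defined, $t$--constant contribution $\GW_\Lambda(\cU^0,J_t)$ for $t\in(-\delta,\delta)$. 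On the $\sigma$--side, \autoref{Prop_Cluster_Decomposition} decomposes this contribution as $\GW_\Lambda(\hat\cO_t^{+})+\GW_\Lambda(\hat\cO_t^{-})$ plus finitely many contributions of $\Lambda$--clusters with cores representing $d_iA$ for $d_i\geq 2$, where $\hat\cO_t^{\pm}$ are $\Lambda$--clusters around $C_t^{\pm}$ produced by \autoref{Prop_Cluster_Existence}; on the $-\sigma$--side, \eqref{moduli.simple.wall} together with \autoref{Lem_Cluster}~\autoref{Lem_Cluster_Embedded} rules out any $J_t$--holomorphic cycle of class $A$ in $\cU^0$, so its decomposition contains only such higher-divisibility clusters. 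Equating the two expressions and applying \autoref{Prop_Cluster_Refinement}~\autoref{Prop_Cluster_Refinement_Common} to identify $\hat\cO_t^{\pm}$ with the given $\cO_t^{\pm}$ (at the cost of further higher-divisibility contributions) yields $\GW_\Lambda(\cO_t^{+})\approx-\GW_\Lambda(\cO_t^{-})$.

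The main obstacle I anticipate is producing the neighborhood $\cU^0$ that satisfies the boundary-crossing condition uniformly in $t\in(-\delta,\delta)$, including at the wall point $t=0$. Unlike in the proof of \autoref{Prop_Cluster_Contribution}, the reference parameter $J_0$ is precisely where the core $C_0$ is obstructed, so $\cU^0$ is not a $\Lambda$--cluster at $J_0$. However, the cluster structure at $J_0$ is not needed: what is required is only that $\partial\cU^0$ avoid the image of the proper projection from $\SpaceOfConnectedCycles_\Lambda(\bJ,\partial\cU^0)$, which is achievable by choosing $\cU^0$ generically---exploiting the countability of class--$A$ $J_0$--holomorphic cycles---and then shrinking $\delta$, crucially using that the entire path $\bJ$ lies in $\SpaceOfAlmostComplexStructuresDiscrete$ so that \autoref{Lem_Cluster} applies at every parameter.
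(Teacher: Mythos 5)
Your proposal is correct and follows essentially the same route as the paper: part one is quoted from the birth--death local model of \cite[Theorem 6.2, Corollary 6.3]{Ionel2018}, and part two compares the two cluster decompositions of a fixed neighborhood $\cU$ of $C_0$ whose contribution $\GW_\Lambda(\cU,J_t)$ is constant across the wall, absorbing the higher-multiplicity clusters into the relation $\approx$. The only remark worth making is that your anticipated obstacle is not one: \autoref{Def_Cluster} does not require the core to be unobstructed, so since $J_0\in\SpaceOfAlmostComplexStructuresDiscrete$ the paper simply invokes \autoref{Cor_LocalBasisInCurveSpace} to make $(\cU,J_0,C_0)$ itself a $\Lambda$--cluster (which also supplies, via \autoref{Prop_Cluster_Stability}~\autoref{Prop_Cluster_Stability_Model}, the identification of \emph{all} class--$A$ cycles in $\cU$ with points of $\cQ$ that your appeal to \autoref{Lem_Cluster}~\autoref{Lem_Cluster_Embedded} alone leaves slightly implicit).
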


\begin{proof}
  The proof is identical to that of \cite[Lemma 7.3]{Ionel2018}.
  Part \autoref{It_BirthDeathCrossingLocalModel} is a consequence of the standard local model for the birth-death bifurcation for simple pseudo-holomorphic maps; see, for example, \cite[Theorem 6.2, Corollary 6.3]{Ionel2018}. 

  It remains to prove part \autoref{It_BirthDeathCrossingGWContribution}.
  Suppose without loss of generality that $\sigma = -$ and set 
  \begin{equation*}
    C_0 = \im u_0, \quad C_t^\pm =\im u_t^\pm.
  \end{equation*}  
  Since $\bJ$ is contained in $\SpaceOfAlmostComplexStructuresDiscrete$, we can apply  \autoref{Cor_LocalBasisInCurveSpace}  to $J_0$ and $C_0$ to produce open neighborhoods $\cU \subset \SpaceOfCompactSubsets$ and $\cV \subset \SpaceOfAlmostComplexStructures$ with all the properties listed in \autoref{Cor_LocalBasisInCurveSpace}, and such that  
  \begin{equation}
    \label{Eq_SimpleWallNeighborhood}
    \ModuliSpaceOfEmbeddedMaps_{A, \genus}(\cV,\cU) \subset \cQ.
  \end{equation}
  where $\cQ$ is the open neighborhood in \autoref{moduli.simple.wall}.
  In particular, 
  \begin{enumerate}
  \item The triple $(\cU_0,J_0,C_0)$ is a $\Lambda$--cluster.
  \item The map
    \begin{equation*}
      \fz: \ModuliSpaceOfEmbeddedMaps_{A, \genus}(\cV,\cU) \to \SpaceOfConnectedCycles_A(\cV,\cU)
    \end{equation*}
    is a homeomorphism.
  \item Since $J_t\in \SpaceOfAlmostComplexStructuresDiscrete$, then \autoref{moduli.simple.wall} and \autoref{Eq_SimpleWallNeighborhood} imply
    \begin{equation*}
      \SpaceOfConnectedCycles_A(J_t,  \cU )=   \SpaceOfConnectedEmbeddedCurves_A(J_t, \cU)= 
      \begin{cases}
        \set{C^+_t,\;  C^-_t} & \text{for } -\delta < t < 0, \\
        \emptyset & \text{for } 0 < t < \delta.
      \end{cases} 
    \end{equation*}
    for some $\delta>0$ sufficiently small so that the path $(J_t)_{t\in[-\delta,\delta]}$ is contained in $\cV$. 
  \end{enumerate} 
  
  For $-\delta < t < 0$, let $\cU_t^\pm$ be an open neighborhood of $C_t^\pm$ in $\SpaceOfCompactSubsets$ such that the triples $\cO_t^\pm = (\cU_t^\pm, J_t, C_t^\pm)$ are $\Lambda$--clusters and
  \begin{equation*}
    \cU_t^+ \cap \cU_t^- = \emptyset, \quad \cU_t^\pm \subset \cU.
  \end{equation*}
  Such open neighborhoods exist by \autoref{Prop_Cluster_Existence}. 
  By \autoref{Prop_Cluster_Contribution}, $\SpaceOfConnectedCycles_\Lambda(J_t,\cU_t^\pm)$ is open and closed in $\SpaceOfConnectedCycles_\Lambda(J_t)$. 
  Since  $J_t \in \SpaceOfAlmostComplexStructuresDiscrete$,  \autoref{Prop_Cluster_Decomposition} implies that the set 
  \begin{equation*}
    \SpaceOfConnectedCycles_\Lambda(J_t) \setminus\( \SpaceOfConnectedCycles_\Lambda(J_t,\cU_t^+) \sqcup \SpaceOfConnectedCycles_\Lambda(J_t,\cU_t^-) \)
  \end{equation*}
  has a finite decomposition into $\Lambda$--clusters.
  The preceding discussion shows that the cores of the clusters appearing in this decomposition represent homology classes of the form $d A$ for $d \geq 2$.
  Therefore, for $-\delta < t < 0$,
  \begin{equation*}
    \GW_\Lambda(\cU,J_t) \approx \GW_\Lambda(\cO_t^+) + \GW_\Lambda(\cO_t^-).
  \end{equation*}
  On the other hand, for $0 < t < \delta$, we similarly prove that
  \begin{equation*}
    \GW_\Lambda(\cU,J_t) \approx 0.
  \end{equation*}
  By \autoref{Prop_Cluster_Contribution}, the contribution $\GW_\Lambda(\cU,J_t)$ does not depend on $t\in(-\delta,\delta)$.
  We conclude that
  \begin{equation*}
    \GW_\Lambda(\cO^+_t) \approx - \GW_\Lambda(\cO^-_t).
    \qedhere
  \end{equation*}
\end{proof}

\begin{definition}
  Given an embedded, oriented, closed surface $C \subset M$, denote by $\SpaceOfAlmostComplexStructures_C \subset\SpaceOfAlmostComplexStructures$ the subset consisting of all $J$ for which $C$ is $J$--holomorphic.
\end{definition}

\begin{prop}[{Wall-crossing in $\SpaceOfAlmostComplexStructures_C$}]
  \label{Prop_WallCrossingInMC}
  Let $C \subset M$ be an embedded, oriented, connected, closed surface;
  denote by $\iota \co C \to M$ the inclusion map.
  Let $\bJ= (J_{t})_{t\in[-1,1]}$ be a $C^1$ path in $\SpaceOfAlmostComplexStructures$, contained in $\SpaceOfAlmostComplexStructures_C\cap\SpaceOfAlmostComplexStructuresDiscrete$, and such that the path $(J_t,[\iota])_{t\in[-1,1]}$ in $\ModuliSpaceOfEmbeddedMaps_{A, \genus}$ is transverse to $\cW^1$ at the point $p_0=(J_0,[\iota])\in \cW^1\setminus \cA$. Then there exist a $\delta>0$ such that if $\cO_{\pm} = (\cU_\pm, J_{\pm\delta}, C)$ are  $\Lambda$--clusters, then 
  \begin{equation*}
    \GW_\Lambda(\cO_+) \approx - \GW_\Lambda(\cO_-).
  \end{equation*} 
\end{prop}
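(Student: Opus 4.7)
The plan is to adapt the argument of \autoref{Prop_BirthDeathCrossing} to the constrained setting in which the path $\bJ$ remains in $\SpaceOfAlmostComplexStructures_C$, so the curve $C$ is $J_t$-holomorphic for every $t\in[-1,1]$. In this setting the ``birth-death'' no longer produces a pair of nearby simple curves; instead it manifests as a sign-flip in the virtual contribution of the persistent curve $[\iota]$ and of its branched multi-covers. The work therefore splits into (i) setting up a big cluster centered at $(J_0,C)$ to which the constancy of \autoref{Prop_Cluster_Contribution} applies, and (ii) a local wall-crossing computation at $p_0$.

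First apply \autoref{Cor_LocalBasisInCurveSpace} at $(J_0,C)$---permissible because $J_0\in\SpaceOfAlmostComplexStructuresDiscrete$ and $C$ is an embedded, index-zero $J_0$-holomorphic curve with $\bM(C)\leq\Lambda$---to produce open sets $\cU^*\subset\SpaceOfCompactSubsets$ containing $\supp C$ and $\cV^*\subset\SpaceOfAlmostComplexStructures$ containing $J_0$ such that $\cO^*=(\cU^*,J_0,C)$ is a $\Lambda$-cluster, the contribution $\GW_\Lambda(\cU^*,J')$ is constant for $J'\in\cV^*$, and the map $\fz\co\ModuliSpaceOfEmbeddedMaps_{A,\genus}(\cV^*,\cU^*)\to\SpaceOfConnectedCycles_A(\cV^*,\cU^*)$ is a homeomorphism. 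Shrink $\cV^*$ so that the local Kuranishi model for the moduli space near $p_0\in\cW^1\setminus\cA$ developed in \cite[Section 6]{Ionel2018} is valid uniformly over $\cV^*$, and choose $\delta>0$ small enough that $(J_t)_{t\in[-\delta,\delta]}\subset\cV^*$. By \autoref{Prop_Cluster_Stability} both $(\cU^*,J_{\pm\delta},C)$ are $\Lambda$-clusters, and \autoref{Prop_Cluster_Refinement} yields
\begin{equation*}
    \GW_\Lambda(\cO_\pm) \approx \GW_\Lambda(\cU^*,J_{\pm\delta}),
\end{equation*}
so the problem reduces to showing $\GW_\Lambda(\cU^*,J_{+\delta}) \approx -\GW_\Lambda(\cU^*,J_{-\delta})$.

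The crux is the local wall-crossing analysis at $p_0$. The transversality of $(J_t,[\iota])$ to $\cW^1$ at $p_0$ together with $p_0\notin\cA$ means that, in the Kuranishi chart, the restricted obstruction section acquires a simple zero at $t=0$. Following the birth-death analysis of \cite[Section 6--7]{Ionel2018}, this forces the orientation of the virtual contribution of $[\iota]$ to $\GW_\Lambda(\cU^*,J_t)$ to flip across $t=0$; via the multi-cover formula (see also \autoref{Sec_SuperRigidContributions}) the same sign-flip is inherited by the contributions of all branched multi-covers of $[\iota]$ of mass at most $\Lambda$. Since $C$ is the unique $J_t$-holomorphic curve of class $A$ in $\cU^*$ for every $J_t\in\cV^*$ by \autoref{Prop_Cluster_Stability}, every other pseudo-holomorphic cycle contributing to $\GW_\Lambda(\cU^*,J_t)$ has class $dA$ with $d\geq 2$, so the sum $\GW_\Lambda(\cU^*,J_{+\delta})+\GW_\Lambda(\cU^*,J_{-\delta})$ reduces to a finite signed sum of $\Lambda$-cluster contributions with cores of class $dA$, $d\geq 2$. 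Combined with the refinement above, this yields $\GW_\Lambda(\cO_+)\approx-\GW_\Lambda(\cO_-)$.

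The main obstacle is the local sign computation at $p_0$: one must verify that the virtual orientation of $[\iota]$ (and of each of its branched multi-covers) flips as the constrained path $\bJ\subset\SpaceOfAlmostComplexStructures_C$ crosses $\cW^1\setminus\cA$ transversally. The unconstrained version of this statement is at the heart of the wall-crossing analysis in \cite[Section 6--7]{Ionel2018}, so the work amounts to verifying that the restricted transversality hypothesis triggers the same local sign-flipping mechanism in the Kuranishi model---a bookkeeping-heavy but routine adaptation.
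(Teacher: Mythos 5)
There is a genuine gap, and it sits exactly where your argument claims the work is ``routine.'' The reduction step $\GW_\Lambda(\cO_\pm)\approx\GW_\Lambda(\cU^*,J_{\pm\delta})$ is false. Because $p_0=(J_0,[\iota])$ lies on $\cW^1\setminus\cA$ and the path is transverse to $\cW^1$ there, the Kuranishi model of \cite[Corollary 6.3]{Ionel2018} for a generic thickening $\bbJ=(J_{s,t})$ identifies a neighborhood of $p_0$ in $\ModuliSpaceOfEmbeddedMaps_{A,\genus}(\bbJ)$ with $\set{s=x(x-t)}$; restricting to the constrained path $s=0$ gives \emph{two} branches, $x=0$ (the persistent curve $C$) and $x=t$ (a second family $C_t'$ of embedded $J_t$--holomorphic curves with $C_0'=C$ and $C_t'\neq C$ for $t\neq 0$). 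Thus $\SpaceOfConnectedCycles_A(J_{\pm\delta},\cU^*)$ contains two curves of class $A$, and the contribution of the cluster around $C_{\pm\delta}'$ cannot be discarded under $\approx$, since that relation only mods out clusters with cores in classes $dA$, $d\geq 2$. Your appeal to \autoref{Prop_Cluster_Stability} for uniqueness of the class-$A$ curve in $\cU^*$ over nearby $J'$ is exactly what fails at a wall point. The error is also visible internally: you correctly note that $\GW_\Lambda(\cU^*,J')$ is constant on $\cV^*$, so your reduction target $\GW_\Lambda(\cU^*,J_{+\delta})\approx-\GW_\Lambda(\cU^*,J_{-\delta})$ would force $2\,\GW_\Lambda(\cU^*,J_\delta)\approx 0$, and combined with your (incorrect) refinement it would give $\GW_\Lambda(\cO_+)\approx\GW_\Lambda(\cO_-)$ --- the opposite of the claim.

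The second problem is the proposed mechanism. Along a path in $\SpaceOfAlmostComplexStructures_C$ the curve $C$ never disappears, so there is no birth--death for $[\iota]$; and the assertion that a sign flip of the embedding's contribution is ``inherited by all branched multi-covers'' is precisely the multiple-cover wall-crossing statement that is \emph{not} understood directly (cf.\ the remark following \autoref{Thm_SuperRigidContribution}); the entire cluster formalism exists to avoid it. The paper's proof instead keeps both branches: writing $\cO_\pm'$ for clusters around $C_{\pm\delta}'$, constancy of $\GW_\Lambda(\cU,\cdot)$ gives $\GW_\Lambda(\cO_-)+\GW_\Lambda(\cO_-')\approx\GW_\Lambda(\cO_+)+\GW_\Lambda(\cO_+')$, and then one perturbs to the nearby generic paths $\bJ_s$ in the thickening: for $s>0$ the branch $x>0$ misses $\cW$ and \autoref{Prop_SimpleIsotopy} gives $\GW_\Lambda(\cO_-)\approx\GW_\Lambda(\cO_+')$, while for $s<0$ it crosses $\cW^1\setminus\cA$ transversally once and \autoref{Prop_BirthDeathCrossing} gives $\GW_\Lambda(\cO_+')\approx-\GW_\Lambda(\cO_-')$. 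Combining these three relations yields $\GW_\Lambda(\cO_+)\approx-\GW_\Lambda(\cO_-)$. Without the second family $C_t'$ and the deformation to off-axis paths, the conclusion cannot be reached.
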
 

\begin{proof} 
  The proof is identical to that of \cite[Lemma 7.4]{Ionel2018}.  
  Together with the path $\bJ$ we will consider its thickenings $\bbJ$, which can be seen either as a $2$--parameter family $\bbJ = (J_{s,t})_{s,t}$ in $\SpaceOfAlmostComplexStructures$, or as a $1$--parameter family $\bbJ = (\bJ_s)_s$ of paths $\bJ_s$ in $\SpaceOfAlmostComplexStructures$ such that $\bJ_0 = \bJ$. 
  
  The results of  \cite[Section 6, in particular Corollary 6.3]{Ionel2018} provide an explicit Kuranishi model for the family of moduli spaces $\ModuliSpaceOfEmbeddedMaps_{A,\genus}(\bJ_0)$ and $\ModuliSpaceOfEmbeddedMaps_{A,\genus}(\bbJ)$ in a neighborhood of the point $p_0=(J_{00},[\iota_C])$ for a generic thickening $ \bbJ$.
  In this situation, and after reparametrizing the path $\bJ$, the following hold for a generic thickening $\bbJ$:
  \begin{enumerate}
  \item 
    \label{It_KuranishiModel}
    There is an open neighborhood $\cQ$ of $p_0=(J_{00}, [\iota_C])$ in $\ModuliSpaceOfEmbeddedMaps_{A,\genus}$ such that $\cQ \cap \ModuliSpaceOfEmbeddedMaps_{A,\genus}(\bbJ)$ is diffeomorphic to a neighborhood of the point $(0,0,0)$ in the surface
    \begin{equation}
      \label{Eq_EquationForS}
      S = \set{ (s,t,x) \in [-1,1]^3 : s=x(x\pm t) }.
    \end{equation}  
    (Without loss of generality, assume that the above sign is negative).

  \item Under this diffeomorphism,   the projection $\ModuliSpaceOfEmbeddedMaps_{A,\genus}(\bbJ) \to \SpaceOfAlmostComplexStructures$ agrees with 
    \begin{equation*}
      (s,t,x) \mapsto (s,t),
    \end{equation*}
    with the path $\bJ = (J_{0,t})_{t\in[-1,1]}$ corresponding to $\set{ s=0, t\in[-1,1]}$.
  \item Under this diffeomorphism, the set
    \begin{equation*}
      \cW\cap\ModuliSpaceOfEmbeddedMaps_{A,\genus}(\bbJ)  = (\cW^1\setminus\cA)\cap\ModuliSpaceOfEmbeddedMaps_{A,\genus}(\bbJ)
    \end{equation*}
    is identified with the curve 
    \begin{equation*}
      S \cap \set{ (s,t,x) : 2x - t = 0 }
    \end{equation*}
    and its tangent space is identified with $TS \cap \ker( 2\rd x - \rd t )$.
  \end{enumerate}

  In the proof, we will need two additional properties of the generic thickening $\bbJ$.
  \begin{enumerate}
    \setcounter{enumi}{3}
  \item There exists a countable set $\Delta\subset [-1,1]$ such that $\bJ_s$ is a path in $\SpaceOfAlmostComplexStructuresDiscrete$ for all 
    $s \in [-1,1]\setminus \Delta$.
  \end{enumerate}
  
  This can be achieved as in \cite[Lemma 6.5]{Ionel2018}, by perturbing $\bbJ$ as a $1$--parameter family of paths $\bbJ = (\bJ_s)_s$ with $\bJ_0 = \bJ$ fixed.  
  First, as in the proof of \cite[Lemmas 6.4 and 6.5]{Ionel2018} the Sard--Smale Theorem implies that the restriction $(\bJ_{s})_{s\ne 0}$ of a generic thickening $\bbJ$ is transverse to the projection $\pi: \ModuliSpaceOfSimpleMaps_\Lambda\rightarrow \SpaceOfAlmostComplexStructures$ as well as to the restriction of  $\pi$ to all the strata of
  \begin{itemize}
  \item $\ModuliSpaceOfSimpleMaps_\Lambda \setminus \ModuliSpaceOfEmbeddedMaps$,
  \item $\cW^k$ for $k\geq 1$,
  \item $\cA$.
  \end{itemize}
  Therefore, for a generic thickening $\bbJ$ and generic $s\neq 0$, $\ModuliSpaceOfSimpleMaps_\Lambda(\bJ_s)$ is a 1-dimensional manifold, transverse to $\cW^1$ and disjoint from  $\ModuliSpaceOfSimpleMaps_\Lambda\setminus\ModuliSpaceOfEmbeddedMaps$, $\cW^k$ for $k\geq 2$ and $\cA$, which all have codimension at least $2$.
  The local Kuranishi models for $\ModuliSpaceOfSimpleMaps_\Lambda(\bJ_s)$ then imply that the path  $\bJ_s$ is contained in $\SpaceOfAlmostComplexStructuresDiscrete$.
  
  In addition, by \autoref{Prop_Cluster_Existence} and \autoref{Cor_LocalBasisInCurveSpace}, we can guarantee the following.
  
  \begin{enumerate}
    \setcounter{enumi}{4}
  \item 
    \label{It_KuranishiModelForCurveSpace}
    There are open neighborhoods $\cU$ of $C$ in $\SpaceOfCompactSubsets$ and $\cV$ of $J_{00}$ in $\SpaceOfAlmostComplexStructures$ with all the properties listed in \autoref{Cor_LocalBasisInCurveSpace} and such that
    \begin{equation}
      \label{Eq_KuranishiNeighborhood}
      \ModuliSpaceOfEmbeddedMaps_{A, \genus}(\cV,\cU) \subset \cQ,
    \end{equation}
    where $\cQ$ is the open neighborhood of $p_0 = (J_{00},[\iota_C])$ from property \autoref{It_KuranishiModel}.   In particular,  \begin{equation*}
      \fz: \ModuliSpaceOfEmbeddedMaps_{A, \genus}(\cV,\cU) \to \SpaceOfConnectedCycles_A(\cV,\cU)
    \end{equation*}
    is a homeomorphism.
    Therefore, a neighborhood of the point $(J_{00},C)$ in $\SpaceOfConnectedCycles_A(\bbJ,\cU)$ is homeomorphic to a neighborhood of $(0,0,0)$ in the surface $S$ from \autoref{Eq_EquationForS}.
    Without loss of generality we will assume that the entire family $\bbJ$ is contained in $\cV$.
  \end{enumerate}
  
  Since $\bJ \subset \SpaceOfAlmostComplexStructuresDiscrete$, it follows from  \autoref{It_KuranishiModelForCurveSpace}  and  \eqref{Eq_EquationForS} that $\SpaceOfConnectedCycles_A(\bJ,\cU)$ is homeomorphic to a neighborhood of the point $(0,0)$ in 
  \begin{equation*}
    \set{ (t,x) \in [-1,1]^2 : 0=x(x-t) }.
  \end{equation*}
  The curve $\set{x=0}$ corresponds to the path $(J_t, C)_{t\in[-1,1]}$, while the curve $\set{x = t}$ corresponds to another $1$--parameter family $C_t'$ of irreducible, embedded $J_t$--holomorphic curves representing the class $A$. 
  Note that $C_t'\ne C$ for $t\ne 0$, and $C_0'=C$. 
  It follows from \autoref{It_KuranishiModelForCurveSpace} and the Kuranishi model \eqref{Eq_EquationForS}  that for  sufficiently small $\delta>0$ and $s\neq 0$,  
  \begin{equation*}
    \SpaceOfConnectedCycles_A(J_{s, \pm \delta}, \cU)= \set{p_{s,\pm}, \; p'_{s,\pm}} 
  \end{equation*}
  consists of two points, which, as $s \to 0$, converge to 
  \begin{equation*} 
    p_\pm= (J_{0,\pm \delta}, C) \qandq p_\pm'= (J_{0,\pm \delta}, C_{\pm \delta}'). 
  \end{equation*} 
  Since the path $\bJ=\bJ_0$ is contained in $\SpaceOfAlmostComplexStructuresDiscrete$ and $C_t' \neq C$ for all $t\neq 0$,  by \autoref{Prop_Cluster_Existence} there are open neighborhoods $\cU_\pm$ of $C$ and $\cU_\pm'$ of $C_{\pm\delta}'$  in $\SpaceOfCompactSubsets$ such that the triples
  \begin{equation*}
    \cO_{\pm}= (\cU_\pm,J_{0,\pm\delta}, C) \qandq \cO_\pm' = (\cU_\pm' ,J_{0,\pm\delta}, C'_{\pm\delta})
  \end{equation*}
  are $\Lambda$--clusters and
  \begin{equation*}
    \cU_\pm \subset \cU, \quad \cU_\pm' \subset \cU, \qandq \cU_\pm \cap \cU_\pm' = \emptyset.
  \end{equation*}
  Combining  the local description of the cycle space $\SpaceOfConnectedEmbeddedCurves_A(\bbJ)$, given by properties \autoref{It_KuranishiModel} and \autoref{It_KuranishiModelForCurveSpace}, with \autoref{Prop_Cluster_Refinement} and \autoref{Prop_Cluster_Stability}, we obtain 
  \begin{equation}
    \label{Eq_TotalContribution}
    \GW_\Lambda(\cO_-) + \GW_\Lambda(\cO_-') \approx \GW_\Lambda(\cU, J_{0,-\delta}) = \GW_\Lambda(\cU,J_{0,\delta}) \approx \GW_\Lambda(\cO_+) + \GW_\Lambda(\cO_+').
  \end{equation} 
  We will show that 
  \begin{equation}
    \label{Eq_ClusterComparison}
    \GW_\Lambda(\cO_-) \approx \GW_\Lambda(\cO_+')  \approx - \GW_\Lambda(\cO_-'),
  \end{equation}
  which, in conjunction with \autoref{Eq_TotalContribution},  will complete the proof.
  
  We  prove \autoref{Eq_ClusterComparison} by considering the restrictions of the local  Kuranishi model from property \autoref{It_KuranishiModel} over the paths $\bJ_s$.
  It follows from \autoref{Eq_EquationForS} that for $s\neq 0$, $\ModuliSpaceOfEmbeddedMaps_A(\bJ_s,\cU)$ is a $1$--dimensional manifold such that: 
  \begin{enumerate}[i.] 
  \item For $s\ne 0$, $\ModuliSpaceOfEmbeddedMaps_A(\bJ_s,\cU)$ has two connected components, one corresponding to $x>0$ and the other to $x<0$ in the description provided by property \autoref{It_KuranishiModel}.
  \item
    \label{It_PositivePath}
    For $s>0$, the component with $x>0$ is a path in $\ModuliSpaceOfEmbeddedMaps_A$ disjoint from the wall $\cW,$ and the projection to $\SpaceOfAlmostComplexStructures$ is a injective when restricted to this path. For $s$ small, this path intersects the fiber over $J_{s, - \delta}$ in $p_{s,-}$ and the fiber over $J_{s, - \delta}$ in $p_{s,+}'$.
  \item 
    \label{It_NegativePath}
    For $s<0$, the component with $x>0$ is a path in $\ModuliSpaceOfEmbeddedMaps_A$ which intersects the wall transversally in precisely one point $q_s$ with 
    $t=x/2=\sqrt{-s}$, and $q_s\in \cW^1\setminus \cA$. For $s<0$ small, the intersection of this path with the fiber over $J_{s, \delta}$ consists of the points $p_{s,+}$ and $p_{s,+}'$. 
  \end{enumerate} 
  For small $s\not\in \Delta$, $\bJ_s$ is a path in $\SpaceOfAlmostComplexStructuresDiscrete$, as was the path $\bJ$.
  Let $\cU_\pm$ and $\cU_\pm'$ be four open sets in $\SpaceOfCompactSubsets$ appearing in the definition of the $\Lambda$--clusters $\cO_\pm$ and $\cO_\pm'$ above.  
  It follows from the preceding discussion that for $s$ sufficiently small,
  \begin{equation*}
    \SpaceOfConnectedCycles_A(J_{s, \pm \delta},\cU_\pm) = p_{s, \pm}, \qandq   \SpaceOfConnectedCycles_A(J_{s, \pm \delta},\cU_\pm') = p'_{s, \pm}. 
  \end{equation*} 
  Therefore, by part \autoref{Prop_Cluster_Stability_Boundary} of \autoref{Prop_Cluster_Stability}, the triples
  \begin{equation*}
    \cP_{s,\pm} = (\cU_\pm, J_{s,\pm\delta}, p_{s,\pm}) \qandq \cP_{s,\pm}' = (\cU_\pm', J_{s,\pm\delta}, p_{s,\pm}')
  \end{equation*}
  are $\Lambda$--clusters. 
  (We engage here in a slight abuse of notation by identifying the points $ p_{s,\pm}$ and $ p_{s,\pm}'$ in $\ModuliSpaceOfEmbeddedMaps_\Lambda$ with the corresponding pseudo-holomorphic curves in $\SpaceOfConnectedEmbeddedCurves$.) 
  Moreover,
  \begin{equation*}
    \GW_\Lambda(\cO_\pm) = \GW_\Lambda(\cP_{s,\pm})
    \qandq 
    \GW_\Lambda(\cO_\pm') = \GW_\Lambda(\cP'_{s,\pm}). 
  \end{equation*} 
  
  On the other hand, for $s>0$ small and $s\not\in \Delta$, we get a path of type \autoref{It_PositivePath} which misses  the wall $\cW$, thus  
  \autoref{Prop_SimpleIsotopy} implies that
  \begin{equation*}
    \GW_\Lambda(\cP_{s,+}) \approx \GW_\Lambda(\cP_{s,+}'). 
  \end{equation*}
  For $s<0$ small and $s\not\in \Delta$, we get a path of type \autoref{It_NegativePath}  which intersects the wall $\cW$ transversally at precisely one point $q_s$ in 
  $\cW^1\setminus \cA$, thus  \autoref{Prop_BirthDeathCrossing} implies 
  \begin{equation*}
    \GW_\Lambda(\cP_{s,-}) \approx - \GW_\Lambda(\cP_{s,+}'). 
  \end{equation*}
  Combining the last four displayed equations implies 
  \begin{equation*}
    \GW_\Lambda(\cO_-) \approx \GW_\Lambda(\cO_+')  \approx - \GW_\Lambda(\cO_-'),
  \end{equation*}
  and therefore completes the proof. 
\end{proof}

\begin{proof}[Proof of \autoref{Thm_Cluster_Isotopy}] 
  Set $A \coloneq [C] \in \Gamma$ and $\genus \coloneq \genus(C)$.
  Denote by $\iota \co C \hookrightarrow X$ the inclusion map.
  Since $\SpaceOfAlmostComplexStructures_C$ is path connected  it follows from \cite[proof of Lemma 6.7]{Ionel2018} that there exists a path $\bJ=(J_t)_{t\in[0,1]}$ in $\SpaceOfAlmostComplexStructures_C$ connecting $J_0$ and $J_1$ with the following properties. 
  \begin{itemize} 
  \item The path $(J_t,[\iota])_{t\in[0,1]}$ in $\ModuliSpaceOfEmbeddedMaps_{A, \genus}$ intersects $\cW^1\setminus\cA$ transversely at finitely many points and is otherwise disjoint from $\cW$.
  \item Away from the points of intersection with $\cW^1\setminus\cA$,  for all $d\ge 1$, the subset 
    $\ModuliSpaceOfSimpleMaps_{\Lambda}(\bJ)$ of the moduli space of index $0$ simple maps  is a 1-dimensional manifold, consisting of embeddings, and intersecting the wall transversely at points in $\cW^1\setminus \cA$. 
  \end{itemize} 
  In particular, as in \cite[proof of Lemma 6.7]{Ionel2018}, the local Kuranishi models for $\ModuliSpaceOfSimpleMaps_\Lambda(\bJ)$ imply that the path $\bJ$ is contained in $\SpaceOfAlmostComplexStructuresDiscrete$.
  Therefore the theorem follows by  \autoref{Prop_Cluster_Perturbation}~\autoref{Prop_Cluster_Perturbation_Splitting}, combined with \autoref{Prop_SimpleIsotopy} and \autoref{Prop_WallCrossingInMC} after dividing the path $(J_t, [\iota])_{t\in[0,1]}$  into finitely many paths, each either disjoint from $\cW$ or intersecting $\cW^1\setminus\cA$ transversally at one point.
\end{proof}


\subsection{Contributions of super-rigid curves}
\label{Sec_SuperRigidContributions}

\begin{definition}
  \label{Def_SuperRigid}
  Let $J \in \SpaceOfAlmostComplexStructures$.
  Let $C$ be an irreducible, embedded $J$--holomorphic curve.
  Set $j \coloneq J|_{TC}$.
  \begin{enumerate}
  \item
    The operator $\fd_{C,J}$ \edited{descends} to the \defined{normal Cauchy--Riemann operator}
    \begin{equation*}
      \fd_{C,J}^N \co \Gamma(NC) \to \Omega^{0,1}(C,NC).
    \end{equation*}
  \item
    If $\pi \co (\tilde C,\tilde j) \to (C,j)$ is a nodal $j$--holomorphic map,
    then $\fd_{C,J}^N$ induces
    \begin{equation*}
      \pi^*\fd_{C,J}^N \co \Gamma(\pi^*NC) \to \Omega^{0,1}(\tilde C,\pi^*NC)
    \end{equation*}
    by pulling back; cf.~\cites[§2.2]{Zinger2011}[Definition 1.2.1]{Doan2018}.
  \item
    $C$ is \defined{super-rigid with respect to $J$} if $\ker \pi^*\fd_{C,J}^N = 0$ for every $j$--holomorphic map $\pi$.
    \qedhere
  \end{enumerate}
\end{definition}

Assume the situation of \autoref{Def_SuperRigid}.
Denote by $\HurwitzSpace{d}{\genus}(C)$ the moduli space of stable degree $d$ genus $\genus$ nodal $j$--holomorphic maps to $C$.
The space  $\HurwitzSpace{d}{\genus}(C)$ is an orbispace and parametrizes the family of Fredholm operators
\begin{equation*}
  {\underline\fd}_{C,J}^N
  =
  \paren*{
    \pi^*\fd_{C,J}^N
  }_{[\pi]\in \HurwitzSpace{d}{\genus}}.
\end{equation*}
If $C$ is super-rigid,
then following hold:
\begin{enumerate}
\item
  The cokernels of $\pi^*\fd_{C,J}^N$ form an orbibundle $\coker \underline\fd_{C,J}^N$ over $\HurwitzSpace{d}{\genus}(C)$.
\item
  By \cite[Theorem 1.6]{Doan2018a},
  $\SpaceOfConnectedCycles_\Lambda(J,\set{C})$ is open and closed in $\SpaceOfConnectedCycles_\Lambda(J)$ for every $\Lambda \geq \bM(C)$.
  In particular,
  $\ModuliSpaceOfNodalMaps(J,\set{C})$ is open and closed in $\ModuliSpaceOfNodalMaps(J)$.
\item
  $\ModuliSpaceOfNodalMaps(J,\set{C})$ agrees with $\coprod_{d=1}^\infty \coprod_{\genus=0}^\infty \HurwitzSpace{d}{\genus}(C)$.
\item
  According to \citet[Theorem 1.2]{Zinger2011},
  the Gromov--Witten contribution of $C$ is
  \begin{equation}
    \label{Eq_SuperRigidContribution}
    \GW(C,J)
    \coloneq
    \sum_{\genus=0}^\infty \sum_{d=1}^\infty
    \int_{\sqparen[\big]{\HurwitzSpace{d}{\genus}(C)}^\vir} e(\coker \underline\fd_{C,J}^N)
    \cdot
     t^{2\genus-2} q^{d[C]}.
  \end{equation}
  Here $e(\cdot)$ denotes the Euler class.
\end{enumerate}

\begin{cor}
  \label{Cor_SuperRigidCluster}
  Let $J \in \SpaceOfAlmostComplexStructures$.
  Let $C$ be an irreducible, embedded $J$--holomorphic curve of index zero.
  Let $\Lambda \geq \bM(C)$
  If $C$ is super-rigid,
  then there is an $\epsilon_0 > 0$ such that for every $\epsilon \in (0,\epsilon_0)$ the triple $\cO = (B_\epsilon(C),J,C)$ is a $\Lambda$--cluster and $\GW_\Lambda(\cO) = \GW_\Lambda(C,J)$.
\end{cor}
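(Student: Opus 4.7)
The plan is to combine the super-rigidity hypothesis---via \cite[Theorem 1.6]{Doan2018a}, recalled just above, which asserts that $\SpaceOfConnectedCycles_\Lambda(J,\set{C})$ is open and closed in $\SpaceOfConnectedCycles_\Lambda(J)$---with the properness recorded in \autoref{Thm_SpaceOfCycles_Proper} in order to convert this ``cycle-space openness'' into a concrete geometric isolation radius for $C$ inside $X$.

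The key intermediate claim I would establish is the following: there exists $\epsilon_1 > 0$ such that every $(J,D) \in \SpaceOfConnectedCycles_\Lambda(J)$ whose support is contained in the open $\epsilon_1$-neighborhood of $C$ in $X$ already satisfies $\supp D = C$. The proof is by contradiction. A sequence $(J,D_n)$ with $\supp D_n$ inside the $(1/n)$-neighborhood of $C$ but $\supp D_n \neq C$ subconverges geometrically, by \autoref{Thm_SpaceOfCycles_Proper}, to some $(J,D_\infty) \in \SpaceOfConnectedCycles_\Lambda(J)$. Hausdorff convergence of the supports inside the nested shrinking neighborhoods forces $\supp D_\infty \subset C$; decomposing $D_\infty = \sum_i m_i E_i$ into irreducible components, each two-dimensional irreducible $J$-holomorphic curve $E_i \subset C$ must equal the two-dimensional irreducible curve $C$, so $\supp D_\infty = C$ and $(J,D_\infty) \in \SpaceOfConnectedCycles_\Lambda(J,\set{C})$. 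Openness of the latter set then forces $\supp D_n = C$ for $n \gg 1$, a contradiction.

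With the claim in hand, fix $\epsilon_0 \in (0,\epsilon_1)$ small enough that every $K \in \overline{B_{\epsilon_0}(C)} \subset \SpaceOfCompactSubsets$ lies in the open $\epsilon_1$-neighborhood of $C$ in $X$, and set $A \coloneq [C]$. For any $\epsilon \in (0,\epsilon_0)$ and any $J$-holomorphic curve $C' = \sum_i C'_i$ (multiplicities one) with $\bM(C') \leq \Lambda$ and $\supp C' \in \overline{B_\epsilon(C)}$, each irreducible component gives $(J,C'_i) \in \SpaceOfConnectedCycles_\Lambda(J)$ with support inside the $\epsilon_1$-neighborhood of $C$; the claim thus forces $C'_i = C$, and distinctness of the summands yields $C' = C$. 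This verifies the three clauses of \autoref{Def_Cluster} for $\cO = (B_\epsilon(C),J,C)$: no curve has support in $\del B_\epsilon(C)$ (because $d_H(C,C) = 0 < \epsilon$ places $C$ in the interior), every qualifying curve represents $1 \cdot A$, and $C$ is the unique curve in $B_\epsilon(C)$ representing $A$.

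The same application of the claim to connected cycles gives $\SpaceOfConnectedCycles_\Lambda(J,B_\epsilon(C)) = \SpaceOfConnectedCycles_\Lambda(J,\set{C})$, so taking $\fz$-preimages shows $\ModuliSpaceOfNodalMaps_\Lambda(J;B_\epsilon(C)) = \coprod_{d\bM(C) \leq \Lambda} \coprod_{\genus \geq 0} \HurwitzSpace{d}{\genus}(C)$; the virtual count on this open-closed subset is exactly $\GW_\Lambda(C,J)$ by \autoref{Eq_SuperRigidContribution} truncated to $d\bM(C) \leq \Lambda$. The main obstacle is the intermediate claim: while super-rigidity is phrased as openness of $\SpaceOfConnectedCycles_\Lambda(J,\set{C})$ in the geometric topology (a condition on whole cycles that are geometrically close to a multiple of $C$), the cluster conditions force one to handle arbitrary, possibly disconnected, curves whose components may be small or spread out in $X$, and the upgrade from one statement to the other rests squarely on properness of $\SpaceOfConnectedCycles_\Lambda(J)$ together with the irreducibility of $C$.
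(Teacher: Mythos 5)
Your overall strategy is the intended one: the paper gives no separate proof of this corollary, and it is meant to follow from the openness of $\SpaceOfConnectedCycles_\Lambda(J,\set{C})$ recorded in the bullet list just above (via \cite[Theorem 1.6]{Doan2018a}) combined with the properness in \autoref{Thm_SpaceOfCycles_Proper}; your intermediate isolation claim and the identification $\ModuliSpaceOfNodalMaps_\Lambda(J;B_\epsilon(C)) = \coprod_{d\bM(C)\leq\Lambda}\coprod_{\genus}\HurwitzSpace{d}{\genus}(C)$, whose contribution is the $\Lambda$--truncation of \autoref{Eq_SuperRigidContribution}, are exactly how this is meant to be unpacked.

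There is, however, one step that does not follow as written. When verifying \autoref{Def_Cluster}~\autoref{Def_Cluster_Homology} you apply the intermediate claim to each irreducible component $C'_i$ of an arbitrary curve $C'$ with $\bM(C')\leq\Lambda$ and $\supp C'\in\overline{B_\epsilon(C)}$, asserting that $(J,C'_i)\in\SpaceOfConnectedCycles_\Lambda(J)$. By definition $\SpaceOfConnectedCycles_\Lambda$ contains only cycles whose homology class lies in $\Gamma_\Lambda$, i.e.\ is a \emph{non-zero Calabi--Yau} class; nothing in your argument guarantees $c_1([C'_i])=0$, so the claim does not apply, and the verification of condition \autoref{Def_Cluster_Homology} --- which is precisely the assertion that $[C']$ is a multiple of $[C]$ --- becomes circular, since your route to it passes through $C'=C$. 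The repair is the topological input used in the proof of \autoref{Prop_Cluster_Existence}: after shrinking $\epsilon_0$ so that $\set{x\in X : d(x,C)\leq\epsilon_0}$ deformation retracts onto $C$, every curve (and every irreducible component of it) supported there represents $k[C]$ with $k\in\N$, hence a non-zero Calabi--Yau class of mass at most $\Lambda$. This yields condition \autoref{Def_Cluster_Homology} directly, and it legitimizes the application of your intermediate claim to the components, giving $C'_i=C$ and hence $C'=C$. (For completeness one should also note in the contradiction argument that the limit $D_\infty$ is non-zero, which follows from the positive lower bound on the mass of non-trivial cycles; this is standard.) With these insertions the rest of your proof, including the computation of $\GW_\Lambda(\cO)$, is correct.
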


Computing the contribution $\GW(C,J)$ in \autoref{Eq_SuperRigidContribution} is a formidable problem.
Fortunately, it has been studied extensively by \citet{Bryan2008}.

\begin{definition}
  For $h \in \N_0$ set
  \begin{equation*}
    G_h(q,t)
    \coloneq
    \log{
      \paren[\Bigg]{
        1
        +
        \sum_{d=1}^\infty \sum_{\mu \vdash d}
        \prod_{\square\in \mu}
        \paren*{2\sin(h(\square)\cdot t/2)}^{2h-2} q^d
      }}.
  \end{equation*}
  Here $\mu \vdash d$ indicates that the sum is taken over all partitions $\mu$ of $d$,
  $\square \in \mu$ indicates that $\square$ is a box in the Young diagram of $\mu$, and
  $h(\square)$ denotes the hook length of $\square$. 
\end{definition}

\begin{prop}[{\cites[§2]{Lee2009a}[Propositions 3.2 and 3.3]{Ionel2018}[Corollary 7.3]{Bryan2008}}]
  \label{Prop_J_elementary_exists}
  Let $J \in \SpaceOfAlmostComplexStructures$.
  Let $C$ be an irreducible, embedded $J$--holomorphic curve of index zero.
  There is a $J_L \in \SpaceOfAlmostComplexStructures_C$ with respect to which $C$ is super-rigid and
  \begin{equation*}
    \GW(C,J_L)
    =
    G_h(q^{[C]},t)
    \qwithq
    h
    \coloneq
    \genus(C).
    \pushQED{\qed}
    \qedhere
    \popQED
  \end{equation*}
\end{prop}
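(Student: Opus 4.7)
The plan is to construct $J_L$ by linearizing the almost complex structure in a tubular neighborhood of $C$, verify super-rigidity there, and then invoke the Bryan--Pandharipande computation for local Calabi--Yau curves. Since this essentially repackages results already in the literature (as the citations indicate), the task is to assemble them coherently rather than rederive them.

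First, I would fix a tubular neighborhood diffeomorphism from a neighborhood $U$ of the zero section in the normal bundle $NC \to C$ onto a neighborhood of $C$ in $X$. Along $C$ itself, the ambient complex structure splits as $J|_C = j \oplus J^N$ with $j = J|_{TC}$ and $J^N$ a complex structure on $NC$. Since $C$ has index zero, $c_1(NC) = -c_1(TC)$, so $\det(NC) \cong K_C$ as complex line bundles. For generic choices, $NC$ admits a holomorphic splitting $NC = L_1 \oplus L_2$ with $L_1 \otimes L_2 \cong K_C$ and with $L_1, L_2$ of generic degrees compatible with this constraint.

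Second, using this data, I would define $J_L$ on $U$ as the linear almost complex structure induced by $(j, L_1 \oplus L_2)$: on horizontal vectors determined by the holomorphic connection on $L_1 \oplus L_2$ it acts by $j$, and on vertical vectors it acts by the fiberwise complex structure. Cutting off and interpolating with $J$ outside a smaller neighborhood (and, if necessary, rescaling the normal directions) yields a globally defined almost complex structure in $\SpaceOfAlmostComplexStructures_C$ tamed by $\omega$; the latter can be arranged because being tamed is an open condition and the linear model agrees with $J$ to first order along $C$.

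Third, I would verify super-rigidity. For any nodal $j$--holomorphic map $\pi \co (\tilde C, \tilde j) \to (C,j)$, the pulled-back normal Cauchy--Riemann operator $\pi^* \fd^N_{C,J_L}$ splits as the direct sum of the honest holomorphic Cauchy--Riemann operators on $\pi^* L_1$ and $\pi^* L_2$. For a generic choice of the line bundle $L_1$ (with $L_2$ determined by the Calabi--Yau constraint), $H^0(\tilde C, \pi^* L_i) = 0$ for both $i$ and every such $\pi$ with domain of bounded genus; the details of how to achieve this generic choice while keeping $J_L$ tamed are given in \cite[§2]{Lee2009a} and \cite[Propositions 3.2 and 3.3]{Ionel2018}.

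Finally, with super-rigidity in hand, \autoref{Cor_SuperRigidCluster} and \autoref{Eq_SuperRigidContribution} identify $\GW(C,J_L)$ with a sum over $d$ and $\genus$ of integrals of Euler classes of $\coker \underline\fd_{C,J_L}^N$ against $[\HurwitzSpace{d}{\genus}(C)]^\vir$. For the linearized model this is precisely the local Calabi--Yau curve contribution computed by \citet[Corollary 7.3]{Bryan2008}, which evaluates to $G_h(q^{[C]},t)$ with $h = \genus(C)$. The main obstacle—and the reason the statement is attributed to the cited works rather than proved from scratch here—is this last step: matching the cokernel Euler-class integrals on the Hurwitz spaces with the hook-length generating series requires the full strength of the topological vertex/TQFT framework of Bryan--Pandharipande, which the present article uses as a black box.
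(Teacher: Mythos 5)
The paper does not actually prove this proposition; it is stated with a \qed and attributed entirely to \cite{Lee2009a}, \cite{Ionel2018}, and \cite{Bryan2008}, so there is no internal proof to compare against. Your reconstruction has the right architecture (localize to a neighborhood of $C$, build a ``linear'' model on $NC$, verify super-rigidity, then quote the Bryan--Pandharipande local computation), and your reduction of $\GW(C,J_L)$ to the Euler-class integrals over $\HurwitzSpace{d}{\genus}(C)$ via \autoref{Cor_SuperRigidCluster} and \autoref{Eq_SuperRigidContribution} is the intended route.

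However, your super-rigidity step has a genuine gap, and it is precisely the hard point of the cited works. You claim that for a \emph{generic holomorphic} splitting $NC = L_1 \oplus L_2$ with $L_1 \otimes L_2 \cong K_C$ one has $H^0(\tilde C, \pi^* L_i) = 0$ for every cover $\pi$. This does not follow from genericity of $L_i$ in $\Pic(C)$: for each branched cover $\pi \co \tilde C \to C$ one has $H^0(\tilde C,\pi^*L_i) = H^0(C, L_i \otimes \pi_*\cO_{\tilde C})$, so you must avoid a \emph{countable} family of loci in $\Pic(C)$, each of which must separately be shown to be proper --- and for pullbacks (which are far from generic points of $\Pic(\tilde C)$) this is exactly what needs proving, not what can be assumed. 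The constructions in \cite[§2]{Lee2009a} and \cite[Propositions 3.2 and 3.3]{Ionel2018} do not use a generic holomorphic splitting; they deform the normal operator by an anti-complex-linear zeroth-order term (so that $\fd_{C,J_L}^N$ is \emph{not} a direct sum of holomorphic $\delbar$--operators), and the vanishing of $\ker \pi^*\fd_{C,J_L}^N$ is then obtained by a pointwise/integration-by-parts argument that is conformally natural and hence survives pullback under \emph{arbitrary} branched covers and nodal domains. Relatedly, your restriction to ``$\pi$ with domain of bounded genus'' is a second, independent gap: super-rigidity as defined in \autoref{Def_SuperRigid} requires vanishing for all $j$--holomorphic $\pi$ with no genus bound, and this unconditional vanishing is what makes $\SpaceOfConnectedCycles_\Lambda(J_L,\set{C})$ open and closed and the all-genus sum in \autoref{Eq_SuperRigidContribution} legitimate --- removing genus truncations is the whole point of this paper, so a bounded-genus statement cannot be substituted here.
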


The following combinatorial result verifies the Gopakumar--Vafa conjecture for $G_h$.

\begin{prop}[{\cite[Proposition 3.4]{Ionel2018}}]
  \label{Prop_J_elementary_contribution}
  For every $h \in \N_0$ the coefficients $\BPS_{d,\genus}(h)$ defined by
  \begin{equation*}
    G_h(q,t)
    =
    \sum_{d=1}^\infty
    \sum_{\genus = 0}^\infty
    \BPS_{d,\genus}(h)
    \cdot    
    \sum_{k=1}^\infty
    \frac1k
    \paren*{2\sin\paren*{kt/2}}^{2\genus-2}
    q^{kd}
  \end{equation*}
  satisfy:
  \begin{description}[labelwidth=\widthof{\bfseries (integrality)}]
  \item[(integrality)]
    $\BPS_{d,\genus}(h) \in \Z$, and
  \item[(finiteness)]
    $\BPS_{d,\genus}(h) = 0$ for $\genus \gg 1$.
    \pushQED{\qed}
    \qedhere
    \popQED
  \end{description}
\end{prop}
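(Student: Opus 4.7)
The plan is to pass to a Möbius inversion and then reduce to explicit combinatorics via a closed-form evaluation of the hook-length partition sum.

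First, I would rewrite the defining identity as a Dirichlet convolution in the $q$ variable. Setting
\begin{equation*}
  g_d(t) \coloneq [q^d]\, G_h(q,t)
  \qandq
  b_d(x) \coloneq \sum_{\genus = 0}^\infty \BPS_{d,\genus}(h)\, x^{2\genus-2},
\end{equation*}
the formula defining the $\BPS_{d,\genus}(h)$ reads $g_d(t) = \sum_{k \mid d} k^{-1}\, b_{d/k}\paren*{2\sin(kt/2)}$. Standard Möbius inversion then yields the closed-form expression
\begin{equation*}
  b_d\paren*{2\sin(t/2)} = \sum_{k \mid d} \frac{\mu(k)}{k}\, g_{d/k}(kt).
\end{equation*}
Both assertions of the proposition thus reduce to verifying appropriate properties of the right-hand side, viewed as an expansion in the variable $\paren*{2\sin(t/2)}^2$ after extracting the overall factor $\paren*{2\sin(t/2)}^{-2}$.

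Second, I would pass to the multiplicative variable $u \coloneq e^{it}$, under which $\paren*{2\sin(kt/2)}^2 = -(u^k - 1)^2/u^k$. Each hook-length factor $\paren*{2\sin(h(\square)kt/2)}^{2h-2}$ is then a Laurent polynomial in $u$ for $h \geq 1$ and a rational function of $u$ with poles only at roots of unity for $h = 0$. The expansion $b_d\paren*{2\sin(t/2)} = \paren*{2\sin(t/2)}^{-2}\sum_{\genus}\BPS_{d,\genus}(h)\paren*{2\sin(t/2)}^{2\genus}$ corresponds, in the variable $z \coloneq -(u-1)^2/u$, to a power-series expansion about $z = 0$. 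Integrality asks that the coefficients of this expansion be integers, and finiteness asks that the expansion be a \emph{polynomial} in $z$ (so that only finitely many $\genus$ contribute).

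To establish both properties, I would invoke the closed-form evaluation of $Z_h(q,t) \coloneq \exp G_h(q,t)$ due to Bryan--Pandharipande: an infinite-product formula with integer exponents, such as the MacMahon--Nekrasov product for $h = 0$ and Euler's product $\prod_{n \geq 1}(1-q^n)^{-1}$ for $h = 1$, with analogous formulas for $h \geq 2$. Taking logarithms plethystically expresses each $g_d(t)$ as a \emph{finite} integer-coefficient combination of terms $\log\paren*{1 - u^m q^n}$; substituting into the Möbius inversion produces $b_d$ as a finite Laurent polynomial in $u$ with integer coefficients, from which integrality and finiteness are read off directly.

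The main obstacle is the finiteness statement. Möbius inversion alone gives no reason for the power-series expansion in $z$ to truncate, and the cancellations that force it to do so are precisely the content of Bryan--Pandharipande's product formula; establishing those cancellations (either by invoking their identity or by proving it afresh via symmetric-function manipulations) is the hard part. Integrality, once the Laurent-polynomial-in-$u$ structure has been secured, follows by Adams-operation--style arguments of the type used in the classical proofs of Gopakumar--Vafa integrality, and is therefore relatively routine.
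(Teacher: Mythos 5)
The paper does not prove this proposition; it cites \cite[Proposition 3.4]{Ionel2018}, whose argument is a direct combinatorial one. Your first two steps match the standard opening and are correct: the Möbius inversion $b_d\paren*{2\sin(t/2)} = \sum_{k \mid d} \tfrac{\mu(k)}{k}\, g_{d/k}(kt)$ checks out, as does the passage to $u = e^{it}$. The problem is that the load-bearing third step is a genuine gap, and the difficulty is misdiagnosed. For $h \geq 1$, finiteness is not the obstacle: since $2h-2 \geq 0$, each factor $\paren*{2\sin(h(\square)kt/2)}^{2h-2} = (-1)^{h-1}(x^{h(\square)} - x^{-h(\square)})^{2h-2}$ with $x = e^{ikt/2}$ is a symmetric integer Laurent polynomial in $e^{ikt}$, hence an integer polynomial in $z_k \coloneq \paren*{2\sin(kt/2)}^2 = 2 - u^k - u^{-k}$, which is itself an integer polynomial in $z = z_1$. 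The $q^d$--coefficient of $G_h$ is therefore a polynomial in $z$ of bounded degree with \emph{rational} coefficients, and Möbius inversion preserves this; finiteness follows with no product formula at all. What is not immediate is integrality: the logarithm contributes denominators $1/n$ and the inversion contributes $1/k$, and cancelling them via the Adams-operation/necklace congruence $f(u)^p \equiv f(u^p) \pmod{p}$ is precisely the content for $h \geq 1$ --- the step you declare ``relatively routine'' and defer is the whole proof in the generic case.

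Conversely, for $h = 0$ the exponents are negative, the $q^d$--coefficients are genuinely not Laurent polynomials in $u$, and your conclusion that $b_d$ comes out as a \emph{finite} integer Laurent polynomial in $u$ is false there: $b_1 = \paren*{2\sin(t/2)}^{-2} = -u/(1-u)^2$. This is the one case where a closed-form product evaluation (the MacMahon-type identity, i.e.\ the principal specialization of the hook-length formula, yielding $\BPS_{d,\genus}(0) = \pm\delta_{d,1}\delta_{\genus,0}$) is indispensable, and there the product has infinitely many factors in each $q$--degree. Most seriously, the ``analogous formulas for $h \geq 2$'' you invoke --- products $\prod(1-u^mq^n)^{a_{m,n}}$ with integer exponents, finitely many per $q$--degree --- are not available off the shelf; the Möbius inversion shows $b_d(v) = -\sum_m a_{m,d}u^m$, so the existence of such a product is \emph{equivalent} to the integrality-plus-finiteness statement being proved. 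As written, the proposal reduces the proposition to an unproved identity rather than proving it. The fix is to drop the product formula entirely: prove finiteness for $h \geq 1$ by the polynomiality observation above, prove integrality by carrying out the congruence argument, and handle $h = 0$ separately via the explicit conifold/MacMahon evaluation.
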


The following structure result for the contribution of a super-rigid $J$--holomorphic curve is a byproduct of the proof of \autoref{Thm_GopakumarVafa} in \autoref{Sec_ProofOfGopakumarVafaConjecture}.

\begin{prop}[Super-rigid Contributions]
  \label{Thm_SuperRigidContribution}
  Let $J \in \SpaceOfAlmostComplexStructures$ and $C$ be a irreducible, embedded $J$--holomorphic curve of index zero and genus $\genus$.
  If $C$ is super-rigid with respect to $J$,
  then
  \begin{equation*}
    \GW(C,J)
    =
    \sign(C,J)\cdot G_\genus (q^{[C]}, t)
    +
    \sum_{d=2}^\infty \sum_{h=\genus}^\infty e_{d,h}(C,J) \cdot G_h(q^{d[C]},t) 
  \end{equation*}
  with
  \begin{description}[labelwidth=\widthof{\bfseries (integrality)}]
  \item[(integrality)]
    $e_{d,h}(C,J) \in \Z$, and
  \item[(finiteness)]
    $e_{d,h}(C,J) = 0$ for $\genus \gg 1$.
  \end{description}
\end{prop}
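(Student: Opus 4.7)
The plan is to reduce the super-rigid contribution at $J$ to the model super-rigid contribution of Proposition~\ref{Prop_J_elementary_exists} by iterated application of the Cluster Isotopy Theorem~\ref{Thm_Cluster_Isotopy}, and to keep careful bookkeeping of the homology classes and genera of the auxiliary clusters produced along the way.

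Fix $\Lambda \geq \bM(C)$. By \autoref{Cor_SuperRigidCluster}, the triple $\cO = (B_\epsilon(C), J, C)$ is a $\Lambda$--cluster for small $\epsilon > 0$, and $\GW_\Lambda(\cO)$ equals the $\Lambda$--truncation of the right-hand side of \autoref{Eq_SuperRigidContribution}. Proposition~\ref{Prop_J_elementary_exists} produces $J_L \in \SpaceOfAlmostComplexStructures_C$ such that $C$ is super-rigid with respect to $J_L$ and the corresponding super-rigid cluster $\cO_L = (B_\epsilon(C), J_L, C)$ has contribution equal to the $\Lambda$--truncation of $G_\genus(q^{[C]},t)$. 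After small perturbations inside $\SpaceOfAlmostComplexStructures_C$ (using \autoref{Prop_JReg_Comeager} and the fact that super-rigidity is an open condition on $\SpaceOfAlmostComplexStructures_C$; see the argument of \autoref{Prop_Cluster_Perturbation}), both $J$ and $J_L$ can be assumed to lie in $\SpaceOfAlmostComplexStructuresDiscrete$ while keeping $C$ super-rigid at each endpoint. The Cluster Isotopy Theorem then yields a finite collection of $\Lambda$--clusters $\set{\cO_i = (\cU_i, J_i, C_i) : i \in I_1}$ with $J_i \in \SpaceOfAlmostComplexStructuresDiscrete$, each $C_i$ unobstructed, and $[C_i] = d_i [C]$ with $d_i \geq 2$, satisfying
\begin{equation*}
  \GW_\Lambda(\cO) = \varepsilon_0 \cdot \GW_\Lambda(\cO_L) + \sum_{i \in I_1} \varepsilon_i \cdot \GW_\Lambda(\cO_i), \qquad \varepsilon_0, \varepsilon_i \in \set{\pm 1}.
\end{equation*}
Define $\sign(C,J) \coloneq \varepsilon_0$.

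Next, I would apply the same reduction inductively to each $\cO_i$. Each core $C_i$ has mass $d_i \bM(C) \geq 2 \bM(C)$, so only finitely many iterations are needed before the mass of every remaining auxiliary cluster exceeds $\Lambda$, at which point it drops out of the $\Lambda$--truncation. In each step, after perturbing the relevant almost complex structure to super-rigidify $C_i$, Proposition~\ref{Prop_J_elementary_exists} replaces $\GW_\Lambda(\cO_i)$ by $\pm G_{h_i}(q^{d_i[C]}, t)$ (truncated) up to finitely many higher-divisibility cluster contributions, where $h_i = \genus(C_i)$. Collecting terms by divisibility and letting $\Lambda \to \infty$ yields a formula of the claimed shape, and integrality of the coefficients $e_{d,h}(C,J)$ is immediate because each coefficient is an integer linear combination of the signs $\pm 1$ picked up along the finitely many isotopies.

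The two nontrivial points are the genus lower bound $h \geq \genus$ and the $h$--finiteness for each fixed $d$. For the genus bound, I would observe that any cluster core $C_i$ produced in the isotopy argument arises as a geometric limit of curves representing $d_i[C]$ that lie arbitrarily close (in Hausdorff distance) to $\supp(d_i C)$; by \autoref{Prop_LocalSmoothConvergence} such a limit is locally a $d_i$--fold cover of $C$ away from a finite set of singularities, and Riemann--Hurwitz then gives $\genus(C_i) \geq d_i(\genus-1)+1 \geq \genus$ for $\genus \geq 1$, while in the $\genus = 0$ case the bound $h \geq 0$ is vacuous. For the $h$--finiteness for each $d$: the reduction produces, at divisibility $d$, only finitely many cluster contributions, each of which is the super-rigid contribution of a single embedded curve of a single genus; hence only finitely many $h$ contribute, as claimed. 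The main obstacle will be making the inductive super-rigidification and bookkeeping precise without running into infinite regress; this is controlled by the mass bound $\Lambda$, which is why the argument is carried out at truncated level before passing to the limit.
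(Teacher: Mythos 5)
Your overall strategy---reduce to the model contribution of \autoref{Prop_J_elementary_exists} via \autoref{Thm_Cluster_Isotopy} and recurse on divisibility---is sound, but it essentially reproduces the paper's proof of \autoref{Prop_GWStructure_Cluster}, which is already available. The paper's proof of the present proposition is instead a short deduction from \autoref{Prop_GWStructure_Cluster} together with the explicit formula \autoref{Eq_SuperRigidContribution}: the contribution of the inclusion $[\iota]$ is $\sign(C,J)\cdot t^{2\genus-2}q^{[C]}$, and all remaining contributions come from the Hurwitz spaces $\HurwitzSpace{d}{h}(C)$, which contribute nothing unless $h \geq \genus$. This is exactly where your proposal has a genuine gap: your justification of the bound $h \geq \genus$ does not work as written. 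The auxiliary cluster cores $C_i$ produced by \autoref{Thm_Cluster_Isotopy} are not geometric limits of curves converging to $d_iC$; they are $J_i$--holomorphic curves, for various $J_i$ along the isotopy, whose supports merely lie within a fixed Hausdorff distance of a curve isotopic to $C$. Moreover \autoref{Prop_LocalSmoothConvergence} is inapplicable in any case, since it concerns points of density one while every point of $\supp(d_iC)$ has density $d_i \geq 2$; and Riemann--Hurwitz requires an actual branched holomorphic cover, which a curve that is only Hausdorff-close to $C$ is not. The bound you need is true, but the correct tool is topological: the cluster neighborhoods are chosen so that the nearby embedded genus-$\genus$ curve is a deformation retract, the retraction restricts to a degree-$d_i$ continuous map from $C_i$ onto it, and Kneser's inequality for maps of nonzero degree between closed oriented surfaces gives $\genus(C_i) \geq d_i(\genus-1)+1 \geq \genus$ (trivially for $\genus = 0$). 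One must also check that this bound propagates through every level of the recursion, which it does since $h_i \geq \genus$ implies $d(h_i - 1) + 1 \geq h_i \geq \genus$.

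A second, smaller defect: you \emph{define} $\sign(C,J)$ to be the sign $\varepsilon_0$ produced by the isotopy, which empties that part of the statement of its content. Here $\sign(C,J)$ is the orientation sign of the unobstructed index-zero curve $C$, and it must be identified with the coefficient of $t^{2\genus-2}q^{[C]}$---either as in the paper, by isolating the contribution of $[\iota]$ to $\GW(C,J)$, or in your setup by observing that $\GW(C,J_L) = G_\genus$ has leading coefficient $+1$ and that the wall-crossing signs accumulated along the isotopy compute precisely the change of this orientation sign. With these two repairs your argument goes through, and it has the modest virtue of deriving the genus constraint without appealing to the Hurwitz-space formula beyond the $d=1$ term.
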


\begin{remark}
  \label{Rem_SuperRigidity}
  \citet[Theorem A]{Wendl2016} has recently proved that for a generic $J \in \SpaceOfAlmostComplexStructures$ every $J$--holomorphic curve of index zero in a symplectic $6$--manifold is super-rigid.
  Therefore,
  it is interesting to ask whether \autoref{Thm_SuperRigidContribution} can be proved directly.
  An obstacle to this appears to be the lack of understanding of the wall-crossing/bifurcation phenomena related to the failure of super-rigidity along a generic path $\bJ = (J_t)_{t\in[0,1]}$ in $\SpaceOfAlmostComplexStructures$; cf.~\cites[§2.4]{Wendl2016}[§2.7]{Doan2018}. 
  \edited{Since the first version of this article appeared, \citet{Bai2021} made progress in this direction by analyzing the bifurcation phenomena caused by double covers of bounded genus. 
  It is an interesting problem to extend their work to covers of higher degree and unbounded genus.}
\end{remark}


\subsection{Conclusion of the proof of the Gopakumar--Vafa conjecture}
\label{Sec_ProofOfGopakumarVafaConjecture}

\autoref{Thm_GopakumarVafa} is an immediate consequence of \autoref{Prop_J_elementary_contribution} and the following structure theorem. 

\begin{theorem}
  \label{Thm_GWStructure}
  There are unique coefficients $e_{A,\genus} = e_{A,\genus}(X,\omega)$ such that
  \begin{equation}
    \label{Eq_ElementaryExpansion}
    \GW
    =
    \sum_{A \in \Gamma} \sum_{\genus = 0}^\infty e_{A,\genus} \cdot G_\genus(q^A,t);
  \end{equation}
  moreover, they satisfy:
  \begin{description}[labelwidth=\widthof{\bfseries (integrality)}]
  \item[(integrality)]
    $e_{A,\genus} \in \Z$, and
  \item[(finiteness)]
    $e_{A,\genus} = 0$ for $\genus \gg 0$.
  \end{description}
\end{theorem}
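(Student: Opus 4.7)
The plan is to combine the cluster formalism of \autoref{Sec_Clusters}, the Cluster Isotopy Theorem \autoref{Thm_Cluster_Isotopy}, and the super-rigid model \autoref{Prop_J_elementary_exists}, organized as an induction on the finite set $\Gamma_\Lambda$ under the divisibility order (for truncations by mass $\Lambda$). I would first settle uniqueness by a leading-term analysis: since
\begin{equation*}
  G_\genus(q^A, t) = (2\sin(t/2))^{2\genus - 2} q^A + O(q^{2A})
\end{equation*}
and the Laurent series $\set{(2\sin(t/2))^{2\genus-2}}_{\genus \in \N_0}$ are linearly independent over $\Q$, an induction on $A \in \Gamma$ in the divisibility order reconstructs $e_{A, \genus}$ from the $q^A$-coefficient of $\GW$ after subtracting the known $G_\genus(q^B, t)$--contributions for proper divisors $B | A$. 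In particular the $e_{A,\genus}$ are intrinsic to $\GW$ and independent of any choice made below.

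For existence, integrality, and finiteness, I would work with the truncated series $\GW_\Lambda$ and build the expansion inductively on the finite divisibility poset $\Gamma_\Lambda$. Choose $J \in \SpaceOfAlmostComplexStructuresDiscrete$ via \autoref{Prop_JReg_Comeager}, and apply \autoref{Prop_Cluster_Decomposition} to write $\SpaceOfConnectedCycles_\Lambda(J)$ as a finite disjoint union of $\Lambda$--clusters $\cO_i = (\cU_i, J, C_i)$, so that $\GW_\Lambda = \sum_i \GW_\Lambda(\cO_i)$. Each $\cO_i$ is then replaced, via \autoref{Prop_Cluster_Perturbation}~\autoref{Prop_Cluster_Perturbation_Splitting}, by finitely many $\Lambda$--clusters with unobstructed cores of classes $d[C_i]$ for various $d \geq 1$. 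For each unobstructed cluster $\cO$ with core $C$, \autoref{Prop_J_elementary_exists} furnishes $J_L \in \SpaceOfAlmostComplexStructures_C$ with respect to which $C$ is super-rigid; a further small perturbation (using openness of super-rigidity and density of $\SpaceOfAlmostComplexStructuresRegular$) ensures $J_L \in \SpaceOfAlmostComplexStructuresDiscrete$ without destroying super-rigidity of $C$. Applying \autoref{Thm_Cluster_Isotopy} between $\cO$ and the super-rigid model cluster $\cO^L = (B_\epsilon(C), J_L, C)$ then gives
\begin{equation*}
  \GW_\Lambda(\cO) = \pm \GW_\Lambda(\cO^L) + \sum_k \pm \GW_\Lambda(\cO^{L,k}),
\end{equation*}
where by \autoref{Cor_SuperRigidCluster} and \autoref{Prop_J_elementary_exists} the leading term equals $\pm G_{\genus(C)}(q^{[C]}, t)$ (truncated to mass $\leq \Lambda$), and each correction cluster $\cO^{L,k}$ has core of class $d_k[C]$ with $d_k \geq 2$.

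Since every correction cluster has a core lying strictly higher in the divisibility order on the finite poset $\Gamma_\Lambda$, the recursion terminates after finitely many steps, yielding
\begin{equation*}
  \GW_\Lambda = \sum_{A \in \Gamma_\Lambda} \sum_{\genus \in F_A} n_{A, \genus}\cdot G_\genus(q^A, t)|_{\bM \leq \Lambda}
\end{equation*}
with $n_{A, \genus} \in \Z$ and each $F_A \subset \N_0$ finite. Uniqueness then identifies $e_{A, \genus} = n_{A, \genus}$ independently of all choices, so integrality and the finiteness $e_{A, \genus} = 0$ for $\genus \notin F_A$ are immediate, and compatibility of these identifications as $\Lambda \uparrow \infty$ produces the full series identity \eqref{Eq_ElementaryExpansion}.

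The main obstacle is the perturbation step that places $J_L$ inside $\SpaceOfAlmostComplexStructuresDiscrete$ so that \autoref{Thm_Cluster_Isotopy} can be invoked: one must simultaneously preserve super-rigidity of $C$ (an open condition) and arrange the genericity conditions of \autoref{Def_JIsol}, then verify that the cluster contributions at the perturbed and unperturbed $J_L$ agree via \autoref{Prop_Cluster_Contribution}. A secondary bookkeeping issue is termination of the recursive decomposition, which rests on the finiteness of $\Gamma_\Lambda$ (a consequence of the $\Lambda$-bound on mass combined with $c_1 = 0$) together with the finiteness of the cluster decomposition at each level; once these points are in hand the $\pm 1$ sign structure of Cluster Isotopy delivers integrality and the finiteness of $F_A$ at each induction step delivers finiteness in $\genus$.
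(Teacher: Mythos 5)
Your proposal is correct and follows essentially the same route as the paper: uniqueness from the leading term of $G_\genus(q^A,t)$, then $\Lambda$--truncation, cluster decomposition at a generic $J$, reduction of each unobstructed cluster to the super-rigid model of \autoref{Prop_J_elementary_exists} via \autoref{Thm_Cluster_Isotopy}, and induction on divisibility within the finite set $\Gamma_\Lambda$ (the paper packages the per-cluster step as \autoref{Prop_GWStructure_Cluster}). The one obstacle you flag---moving the super-rigid $J_L$ into the generic class while keeping $C$ as the core---is handled in the paper not by preserving super-rigidity but by the relative genericity statement \emph{inside} $\SpaceOfAlmostComplexStructures_C$ from the proof of \cite[Lemma 6.7]{Ionel2018} together with \autoref{Prop_Cluster_Stability}, which keeps $C$ unobstructed and the contribution unchanged; note that the density you invoke must be density within $\SpaceOfAlmostComplexStructures_C$, not in all of $\SpaceOfAlmostComplexStructures$, since a generic unconstrained perturbation would destroy $J$--holomorphicity of $C$.
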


\begin{remark}
  There is a version of the question raised in \autoref{Rmk_BPSCastelnuovo} with $\BPS_{A,\genus}$ replaced by $e_{A,\genus}$.
\end{remark}

The proof relies on the following result.

\begin{notation}
  \label{Not_Truncation}
  Consider a formal power series
  \begin{equation*}
    S = \sum_{A \in \Gamma} c_A \cdot q^A.
  \end{equation*}
  For every $\Lambda>0$ the \defined{$\Lambda$--truncation} of $S$ is the formal power series
  \begin{equation*}
    S_\Lambda \coloneq \sum_{A \in \Gamma_\Lambda} c_A \cdot q^A
  \end{equation*}
  with $\Gamma_\Lambda$ as in \autoref{Eq_GammaLambda}.
\end{notation}

\begin{prop}
  \label{Prop_GWStructure_Cluster}
  Let $\Lambda > 0$.
  Let $\cO = (\cU,J,C)$ be a $\Lambda$--cluster with $J\in  \SpaceOfAlmostComplexStructuresDiscrete$ and $C$ unobstructed with respect to $J$.
  Set $d^* \coloneq \floor{\bM(C)/\Lambda}$.
  There are unique coefficients $e_{d,\genus}(\cO)$ such that
  \begin{equation}
    \GW_\Lambda(\cO)=
    \sum_{d = 1}^{d^*} \sum_{\genus = 0}^\infty e_{d,\genus}(\cO) \cdot G_\genus(q^{d[C]}, t)_\Lambda;
  \end{equation}
  moreover, they satisfy:
  \begin{description}[labelwidth=\widthof{\bfseries (integrality)}]
  \item[(integrality)]
    $e_{d,\genus}(\cO) \in \Z$, and
  \item[(finiteness)]
    $e_{d,\genus}(\cO) = 0$ for $\genus \gg 0$.
  \end{description}
\end{prop}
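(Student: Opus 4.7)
The plan is to proceed by strong induction on $d^\star(\cO)$ (which I read as $\floor{\Lambda/\bM(C)}$, the largest $d$ for which the class $d[C]$ stays beneath the truncation). The central move is to compare $\cO$ against a \emph{model} cluster $\cO_L$ in which $C$ is super-rigid and whose $\Lambda$-truncated contribution is exactly $G_h(q^{[C]},t)_\Lambda$ with $h := \genus(C)$. Feeding $\cO$ and $\cO_L$ into the Cluster Isotopy Theorem then expresses $\GW_\Lambda(\cO)$ as $\pm\,\GW_\Lambda(\cO_L)$ plus contributions of $\Lambda$-clusters whose cores lie in classes $d_i[C]$ with $d_i \geq 2$, so $\bM(C_i) \geq 2\bM(C)$ and $d^\star(\cO_i) < d^\star(\cO)$; this is what drives the recursion.

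For the base case $d^\star(\cO) = 0$, condition \autoref{Def_Cluster}~\autoref{Def_Cluster_Homology} forces every cycle in $\overline\cU$ of mass $\leq \Lambda$ to have class $k[C]$, which is impossible since $k\bM(C) > \Lambda$ for every $k \geq 1$; hence $\GW_\Lambda(\cO) = 0$ and the zero coefficients serve. For the inductive step with $d^\star(\cO) \geq 1$, I invoke \autoref{Prop_J_elementary_exists} to produce $J_L \in \SpaceOfAlmostComplexStructures_C$ with $C$ super-rigid and $\GW(C,J_L) = G_h(q^{[C]},t)$. Because super-rigidity is an open condition, and because the analogue of \autoref{Prop_JReg_Comeager} (used already inside the proof of \autoref{Thm_Cluster_Isotopy}) places $\SpaceOfAlmostComplexStructuresDiscrete \cap \SpaceOfAlmostComplexStructures_C$ densely in $\SpaceOfAlmostComplexStructures_C$, a small perturbation arranges $J_L \in \SpaceOfAlmostComplexStructuresDiscrete$ without disturbing super-rigidity. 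Since \autoref{Eq_SuperRigidContribution} depends only on the Euler class of the cokernel orbibundle over $\HurwitzSpace{d}{\genus}(C)$, this class is invariant under the perturbation and the identity $\GW(C,J_L) = G_h(q^{[C]},t)$ persists. Then \autoref{Cor_SuperRigidCluster} yields a $\Lambda$-cluster $\cO_L = (B_\epsilon(C),J_L,C)$ with $\GW_\Lambda(\cO_L) = G_h(q^{[C]},t)_\Lambda$, and $\cO_L$ meets the hypotheses of \autoref{Thm_Cluster_Isotopy}.

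Applying the Cluster Isotopy Theorem to $\cO$ and $\cO_L$ produces a finite set of $\Lambda$-clusters $\cO_i = (\cU_i,J_i,C_i)$ with $J_i \in \SpaceOfAlmostComplexStructuresDiscrete$, $C_i$ unobstructed, $[C_i] = d_i[C]$, $d_i \geq 2$, and
\[
\GW_\Lambda(\cO) = \pm\,\GW_\Lambda(\cO_L) + \sum_{i \in I} \pm\,\GW_\Lambda(\cO_i).
\]
By the inductive hypothesis applied at the strictly smaller stage $d^\star(\cO_i) < d^\star(\cO)$, each $\GW_\Lambda(\cO_i)$ expands as a $\Z$-linear combination---finite in $\genus$---of $G_\genus(q^{e[C_i]},t)_\Lambda = G_\genus(q^{(ed_i)[C]},t)_\Lambda$ with $e \leq d^\star(\cO_i)$. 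Re-indexing by $d := ed_i$, and noting $d \leq d^\star(\cO)$, assembles the desired expansion; integrality and finiteness in $\genus$ pass through because the super-rigid term contributes $\pm 1 \in \Z$ concentrated at $(d,\genus) = (1,h)$.

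Uniqueness of the coefficients reduces to linear independence of the truncated family $\{G_\genus(q^{d[C]},t)_\Lambda\}$: the coefficient of $q^{d[C]}$ in $G_{\genus'}(q^{d'[C]},t)$ vanishes unless $d' \mid d$, and the leading $q$-contribution of $G_\genus$ is $(2\sin(t/2))^{2\genus-2}$, which is linearly independent across $\genus$; ascending induction on $d$ then recovers each coefficient. The step I expect to be the main obstacle is the perturbation of $J_L$ into $\SpaceOfAlmostComplexStructures_C \cap \SpaceOfAlmostComplexStructuresDiscrete$: establishing the required density statement (the analogue of \autoref{Prop_JReg_Comeager} inside $\SpaceOfAlmostComplexStructures_C$) together with the constancy of \autoref{Eq_SuperRigidContribution} under deformation through super-rigid pairs $(J,C)$ is the only point where the deformation theory of the cokernel orbibundles on the Hurwitz spaces is seriously used; everything else is a direct invocation of the cluster formalism in \autoref{Sec_Clusters}.
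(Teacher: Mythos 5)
Your overall architecture --- induction on $d^\star$, producing a model cluster via \autoref{Prop_J_elementary_exists} and \autoref{Cor_SuperRigidCluster} whose truncated contribution is $G_h(q^{[C]},t)_\Lambda$, feeding it together with $\cO$ into \autoref{Thm_Cluster_Isotopy}, and recovering uniqueness from the leading terms of $G_\genus$ --- is exactly the paper's argument. The one place you deviate is the step you yourself flag as the main obstacle, and as written it is a genuine gap. You want to perturb $J_L$ into $\SpaceOfAlmostComplexStructuresDiscrete\cap\SpaceOfAlmostComplexStructures_C$ \emph{while preserving super-rigidity}, and then carry the identity $\GW(C,J_L)=G_h(q^{[C]},t)$ across the perturbation by deformation-invariance of the Euler classes in \autoref{Eq_SuperRigidContribution}. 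Neither half is available: super-rigidity is the simultaneous vanishing of $\ker \pi^*\fd_{C,J}^N$ over \emph{all} degrees and genera, i.e.\ a countable intersection of open conditions, so "super-rigidity is an open condition" is not justified; and the constancy of \autoref{Eq_SuperRigidContribution} along families is exactly the kind of bifurcation statement that the remark following \autoref{Thm_SuperRigidContribution} singles out as not currently understood. Building the proof on these claims imports a problem at least as hard as the one being solved.

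The repair is that the perturbed structure does not need to be super-rigid at all. \autoref{Thm_Cluster_Isotopy} only requires the endpoint $J''$ to lie in $\SpaceOfAlmostComplexStructuresDiscrete$ with $C$ unobstructed, and the value $\GW_\Lambda(B_\epsilon(C),\cdot)$ is \emph{locally constant in $J$} by \autoref{Prop_Cluster_Stability} and \autoref{Prop_Cluster_Contribution}, irrespective of whether super-rigidity survives. So: take $J'=J_L$ from \autoref{Prop_J_elementary_exists}; observe that super-rigidity implies $C$ is unobstructed with respect to $J'$; use the relative transversality argument of \cite[proof of Lemma 6.7]{Ionel2018} to perturb within $\SpaceOfAlmostComplexStructures_C$ to some $J''\in\SpaceOfAlmostComplexStructures_C\cap\SpaceOfAlmostComplexStructuresRegular$. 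Then $(B_\epsilon(C),J'',C)$ is still a $\Lambda$--cluster with the same contribution $G_h(q^{[C]},t)_\Lambda$, and $C$ is unobstructed with respect to $J''$ because $J''\in\SpaceOfAlmostComplexStructuresRegular$. With this substitution the rest of your write-up (base case, re-indexing $d=ed_i$, and the uniqueness argument) goes through unchanged.
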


\begin{proof}
  The uniqueness of the coefficients is a consequence of the fact that $G_\genus(q,t) = t^{2\genus-2}q + \textnormal{higher order terms}$.

 Since the core $C$ of $\cO = (\cU,J,C)$ is of index zero, by \autoref{Cor_SuperRigidCluster} and \autoref{Prop_J_elementary_exists} there are $J' \in \SpaceOfAlmostComplexStructures_C$ and an $\epsilon > 0$ such that
  $C$ is super-rigid with respect to $J'$,
  $\cO' \coloneq (B_\epsilon(C),J',C)$ is a $\Lambda$-cluster,
  and
  \begin{equation*}
    \GW_\Lambda(\cO') = G_\genus(q^{[C]}, t)_\Lambda.
  \end{equation*}
  Since $C$ is unobstructed with respect to $J'$,
  by \cite[Proof of Lemma 6.7]{Ionel2018} and \autoref{Prop_Cluster_Stability},
  there is a $J'' \in \SpaceOfAlmostComplexStructures_C\cap \SpaceOfAlmostComplexStructuresRegular$ such that $\cO'' \coloneq (B_\epsilon(C),J'',C)$ is a $\Lambda$-cluster,
  and
  \begin{equation*}
    \GW_\Lambda(\cO'') = \GW_\Lambda(\cO').
  \end{equation*}
  
  By \autoref{Thm_Cluster_Isotopy} with $\cO_0 = \cO$ and $\cO_1 = \cO''$,
  there are $e_{1,\genus}(\cO) \in \set{\pm 1}$ and a finite set $\set{ \cO_i = (\cU_i,J_i,C_i) : i \in I}$ of $\Lambda$--clusters such that 
  \begin{equation*}
    \GW_\Lambda(\cO)
    =
    e_{1,\genus}(\cO) \cdot G_\genus(q^{[C]}, t)_\Lambda
    +
    \sum_{i\in I} \pm \GW_\Lambda(\cO_i);
  \end{equation*}
  moreover, for every $i \in I$, $J_i \in  \SpaceOfAlmostComplexStructuresDiscrete$, $C_i$ is unobstructed with respect to $J_i$, and $[C_i] = d_i [C]$ with $d_i \geq 2$.

  This finishes the proof if $d^* = 1$.
  If $d^* \geq 2$,
  then $d_i^* \coloneq \floor{\bM(C_i)/\Lambda} \leq d^*-1$ and the assertion follows by induction.
\end{proof}

\begin{proof}[Proof of \autoref{Thm_GWStructure}]
  The uniqueness of the coefficients follows as in the proof of \autoref{Prop_GWStructure_Cluster} because 
  $G_\genus(q^A,t) = t^{2\genus-2}q^A + \textnormal{higher order terms}$.
    
  Let $A \in \Gamma$ be a non-zero Calabi-Yau class. 
  Set $\Lambda \coloneq \Inner{[\omega],A}$.
  Let $J \in \SpaceOfAlmostComplexStructuresRegular$.
  By \autoref{Prop_Cluster_Decomposition},
  there is a finite set $\set{ \cO_i = (\cU_i,J,C_i) : i \in I}$ of $\Lambda$--clusters such that
  \begin{equation*}
    \GW_\Lambda = \sum_{i \in I} \GW_\Lambda(\cO_i).
  \end{equation*}
  By \autoref{Prop_GWStructure_Cluster}, 
  \begin{equation*}
    \GW_\Lambda =\sum_{i\in I} \GW_\Lambda(\cO_i)= \sum_{i\in I}\sum_{d=1}^\infty \sum_{\genus=0}^\infty e_{d, \genus}(\cO_i) \cdot G_\genus(q^{d[C_i]}, t)_\Lambda.
  \end{equation*}
  Denote by $I^*$ the subset of those $i \in I$ for which there is a $d_i \in \N$ with $A = d_i[C_i]$.
  By uniqueness of coefficients in \eqref{Eq_ElementaryExpansion}, 
  \begin{equation*}
    e_{A,\genus} = \sum_{i \in I^*} e_{d_i,\genus}(\cO).
  \end{equation*}
  By \autoref{Prop_GWStructure_Cluster},
  these satisfy integrality and finiteness.
\end{proof}

\begin{proof}[Proof of \autoref{Thm_SuperRigidContribution}]
  Denote by $\iota \co C \to M$ the inclusion. Since $C$ is of index zero and is unobstructed with respect to $J$, the contribution of $[\iota] \in \ModuliSpaceOfNodalMaps_{A,\genus}$ to $\GW(C,J)$ is precisely $\sign(C,J) \cdot t^{2\genus-2}q^{[C]}$.  
  The remaining contributions to $\GW(C,J)$ arise from $\HurwitzSpace{d}{h}(C)$ and vanish unless $h \geq \genus$.
  Therefore, the assertion is a consequence of \autoref{Prop_GWStructure_Cluster}.
\end{proof}



\appendix


\section{The Gopakumar–Vafa conjecture for Fano classes} 
\label{Sec_GopakumarVafaFano}

There is an analogue of the Gopakumar--Vafa conjecture for \defined{Fano classes};
that is: $A \in \rH_2(X,\Z)$ with $c_1(A) > 0$.
(\edited{In complex dimension three}, Gromov--Witten theory is trivial for $A \in \rH_2(X,\Z)$ with $c_1(A) < 0$.)
Let $A \in \rH_2(X,Z)$ be a Fano class, $\genus \in \N_0$, and $k \in \N_0$.
Denote by $\ModuliSpaceOfNodalMaps_{A,\genus,k}$ the universal moduli space over $\SpaceOfAlmostComplexStructures$ of stable nodal pseudo-holomorphic maps representing $A$, of genus $\genus$, and with $k$ marked points.
Evaluation at the marked points defines a map
\begin{equation*}
  \ev\co \ModuliSpaceOfNodalMaps_{A,\genus,k} \to X^k. 
\end{equation*} 
As in \autoref{Sec_GromovWittenInvariants},
the fibers of $\ModuliSpaceOfNodalMaps_{A,\genus,k}$ carry a VFC of degree $2c_1(A)+2k$ and these are consistent along paths in $\SpaceOfAlmostComplexStructures$.
If $\gamma_1,\ldots,\gamma_k \in \rH^*(X,\Z)$ satisfy
\begin{equation}
  \label{eq.dim.gamma}
  c_1(A) - \sum_{i=1}^k (\deg \gamma_i-2) = 0,
\end{equation}
then the \defined{Gromov--Witten invariant} is defined by
\begin{equation}
  \label{GW.gamma}
  \GW_{A,\genus}(\gamma_1,\ldots,\gamma_k)
  \coloneq
  \int_{[\ModuliSpaceOfNodalMaps_{A,\genus,k}(J)]^\vir}  \ev^*(\gamma_1\times  \dots \times \gamma_k).
\end{equation}
These can be packaged into a linear map
\begin{equation}
  \label{Eq_GWFano}
  \GW_{A,\genus} = \GW_{A,\genus}(X,\omega) \co \Sym^* \rH^*(X,\Z) \to \Q.
\end{equation}
Here $\Sym^*\rH^*(X,\Z)$ denotes the \emph{graded} symmetric algebra on the graded abelian group $\rH^*(X,\Z)$.
This map satisfies, in particular, the following axioms; cf.~\cite[§7.5]{McDuff2012}:
\begin{description}[labelwidth=\widthof{\bfseries (vanishing)}]
\item[(grading)]
  $\GW_{A,\genus}(\gamma_1 \cdots \gamma_k)=0$ unless $\sum_{i=1}^k \deg \gamma_i = 2c_1(A) + 2k$.
\item[(vanishing)]
  For every $h \in \rH^i(X,\Z)$ with $i \in \set{0,1}$
  \begin{equation*}
    \GW_{A,\genus} (h \cdot \gamma)=0.
  \end{equation*}
\item[(divisor)]
  For every $h \in \rH^2(X,\Z)$
  \begin{equation*}
    \GW_{A,\genus}(h \cdot \gamma)= \Inner{h,A} \GW_{A,\genus}(\gamma).
  \end{equation*}
\end{description}

The \defined{Gopakumar--Vafa BPS invariant} $\BPS_{A,\genus} = \BPS_{A,\genus}(X,\omega) \co \Sym^* \rH^*(X,\Z) \to \Q$ is defined by
\begin{equation}
  \label{Eq_GopakumarVafaBPS_Fano}
  \sum_{\genus=0}^\infty \GW_{A,\genus}(\gamma) \cdot t^{2g-2}
  = 
  \sum_{\genus=0}^\infty \BPS_{A,\genus}(\gamma) \cdot \paren*{2\sin\paren*{t/2}}^{2\genus-2+2c_1(A)}.
\end{equation}
Evidently, it satisfies the same axioms as $\GW_{A,\genus}$.

\begin{theorem}[{\citet[Theorem 1.5]{Zinger2011} and \citet[Corollary 1.18]{Doan2019}}]
  \label{Thm_GopakumarVafa_Fano}
  Let $(X,\omega)$ be a closed symplectic $6$--manifold and let $A \in \rH_2(X,\Z)$ be a Fano class.
  The invariants $\BPS_{A,\genus}=\BPS_{A,\genus}(X,\omega)$ defined by \autoref{Eq_GopakumarVafaBPS_Fano}
  satisfy:  
  \begin{description}[labelwidth=\widthof{\bfseries (integrality)}]
  \item[(integrality)]
    $\BPS_{A,\genus}(\gamma)\in \Z$ for every $\gamma\in \Sym^*\rH^*(X,\Z)$.
  \item[(finiteness)]
    There exists $\genus_A \in \N_0$ such that
    $\BPS_{A,\genus}(\gamma)=0$ for every $\genus \geq \genus_A$ and $\gamma \in \Sym^*\rH^*(X,\Z)$.
  \end{description}
\end{theorem}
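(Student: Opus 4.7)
The plan is to combine three ingredients: Wendl's super-rigidity theorem for Fano classes, Zinger's explicit local computation of super-rigid contributions with insertions, and a Fano analogue of the structure theorem in \autoref{Sec_ProofOfGopakumarVafaConjecture}. The key simplifying feature compared to the Calabi--Yau case is that the multiple cover hierarchy is bounded: if $A = dB$ with $d \geq 2$, then $c_1(B) > 0$ and $d \leq c_1(A)$, so only finitely many base classes $B$ contribute to $A$ via covers. Crucially, this means any recursion on multiplicity terminates after at most $c_1(A)$ steps.

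First, I would set up a Fano version of the cluster formalism from \autoref{Sec_Clusters}. Define $\Lambda$--clusters with insertions by pairing an open set $\cU \subset \SpaceOfCompactSubsets$ and an almost complex structure $J$ as in \autoref{Def_Cluster}, and then combine the Open-Closed Contribution Lemma (\autoref{Lemma_OpenClosedContribution}) with the evaluation map $\ev\co \ModuliSpaceOfNodalMaps_{A,\genus,k}(\cU,J) \to X^k$ to define a contribution paired against $\ev^*(\gamma_1 \times \cdots \times \gamma_k)$. The existence, decomposition, refinement, stability, perturbation, and isotopy results (\autoref{Prop_Cluster_Existence}--\autoref{Thm_Cluster_Isotopy}) carry over unchanged because the insertion data is pulled back continuously along evaluation and does not interact with the cycle-space compactness arguments from \autoref{Sec_SpacesOfPseudoHolomorphicCurves}.

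Second, I would reduce to a super-rigid $J$ by invoking \cite[Theorem A]{Wendl2016}: for generic $J \in \SpaceOfAlmostComplexStructures$, every simple $J$--holomorphic curve is super-rigid. The moduli space $\ModuliSpaceOfEmbeddedMaps_A(J)$ is then a smooth manifold of dimension $2c_1(A)$, and for each embedded super-rigid curve $C$, Zinger's formula \cite[Theorem 1.5]{Zinger2011} computes the full contribution $\GW(C,J)$ with insertions as a sum over the Hurwitz spaces $\HurwitzSpace{d}{h}(C)$ of integrals of $\ev^*(\gamma_1 \times \cdots \times \gamma_k)$ against $e(\coker \underline\fd_{C,J}^N)$. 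A Fano analogue of \autoref{Prop_GWStructure_Cluster}, proved by running the recursion from \autoref{Thm_Cluster_Isotopy} on multiplicity $d$ (which terminates because $d \leq c_1(A)$), then reduces the full series $\GW_{A,\genus}(\gamma)$ to a finite integer-linear combination of local super-rigid contributions. Integrality of $\BPS_{A,\genus}(\gamma)$ follows by inverting \autoref{Eq_GopakumarVafaBPS_Fano} and verifying---as in \cite[§2]{Lee2009a} and the argument of \autoref{Prop_J_elementary_contribution}---that each super-rigid contribution, after the $\sin(t/2)$-expansion, produces integers.

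For finiteness, I would apply \cite[Corollary 1.18]{Doan2019}, which provides a Castelnuovo-type genus bound for simple embedded pseudo-holomorphic curves representing a fixed Fano class $A$. The inputs are already present here: by \autoref{Thm_SpaceOfCycles_Proper} the cycle space $\SpaceOfConnectedCycles_A(J)$ is compact, and by \autoref{Prop_LocalSmoothConvergence} convergence at smooth points is $C^1$, so for generic $J$ only finitely many diffeomorphism types of simple embedded curves represent $A$. Combined with the bounded multiple-cover depth $d \leq c_1(A)$, this bounds the genus range in which $\BPS_{A,\genus}(\gamma)$ can be non-zero uniformly in $\gamma \in \Sym^*\rH^*(X,\Z)$. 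The main obstacle is the super-rigid local computation with insertions: whereas in the Calabi--Yau case the series $G_h(q^{[C]},t)$ of \autoref{Prop_J_elementary_exists} is closed-form and insertion-free, the Fano case requires tracking how the pulled-back classes $\ev^*\gamma_i$ distribute over the Hurwitz strata, with the divisor and vanishing axioms playing the key role in reducing integrals over covers of $C$ to evaluations on the base curve.
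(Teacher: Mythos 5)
Your proposal takes a much longer route than the paper and, more importantly, two of its pillars do not stand for Fano classes. The paper's actual argument is essentially a citation: integrality \emph{is} \cite[Theorem 1.5]{Zinger2011}, and finiteness follows from \cite[Corollary 1.18]{Doan2019} once one observes---via the grading, vanishing, and divisor axioms---that only insertions $\gamma_1\cdots\gamma_k$ with $\deg\gamma_i \geq 3$, and hence $k \leq 2c_1(A)$, need to be considered, so that the relevant insertions range over the finitely generated group $\bigoplus_{k=0}^{2c_1(A)}\Sym^k\rH^*(X,\Z)$ and a single $\genus_A$ works for all $\gamma$. This reduction is the only nontrivial step in the paper's proof, and your proposal skips it: you assert that the genus bound is uniform in $\gamma \in \Sym^*\rH^*(X,\Z)$ without explaining why a bound valid for each fixed insertion yields one valid for all insertions simultaneously.

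The genuine gaps in your reconstruction are these. First, the cluster formalism of \autoref{Sec_Clusters} does not carry over ``unchanged'': \autoref{Def_Cluster} requires the core to be the \emph{unique} curve in $\cU$ representing $A$ and stipulates that every curve in $\overline\cU$ has index zero, and \autoref{Lem_Cluster} rests on \autoref{Def_JIsol}~\autoref{Def_JIsol_Discrete}, that is, discreteness of the index-zero moduli space. For a Fano class the simple curves move in families of dimension $2c_1(A) > 0$, so none of the Existence/Decomposition/Isotopy results apply as stated; one must first cut down by the constraints $\ev^*\gamma_i$, which is a substantive modification and is precisely what Zinger and Doan--Walpuski carry out. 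Second, super-rigidity in the sense of \autoref{Def_SuperRigid} is impossible for a curve of positive index: taking $\pi = \id$, the condition $\ker\fd_{C,J}^N = 0$ forces the index of $\fd_{C,J}^N$ to be non-positive, whereas here it equals $2c_1(A) > 0$; correspondingly \cite[Theorem A]{Wendl2016} concerns index-zero curves only. The correct mechanism in the Fano case is in fact the opposite of the one you describe: for generic $J$ and generic constraints multiple covers are \emph{avoided} altogether (cf.\ the remark following \autoref{Conj_GopakumarVafa}), rather than organized into a terminating recursion on the multiplicity $d$.
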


\begin{proof}
  The integrality statement was proved by \citet[Theorem 1.5]{Zinger2011}.

  By the vanishing and divisor axioms,
  it suffices to prove that there is a $\genus_A \in \N_0$ such that
  $\BPS_{A,\genus}(\gamma_1\cdots \gamma_k) = 0$
  whenever $\genus \geq \genus_A$ and $\deg \gamma_i \geq 3$.
  The latter implies $k \leq 2c_1(A)$.
  Since 
  \begin{equation*}
    \bigoplus_{k=0}^{2c_1(A)} \Sym^k \rH^*(X,\Z)
  \end{equation*}
  is a finitely generated abelian group,
  the finiteness statement follows from \cite[Corollary 1.18]{Doan2019}.
\end{proof}

\begin{remark}
  \label{Rmk_NoGhosts}
  The proof of \cite[Corollary 1.18]{Doan2019} relies on \cite[Theorems 1.1]{Doan2019}.
  To prove the latter, \citeauthor{Doan2019} carried out a somewhat delicate analysis of the Kuranishi model at nodal pseudo-holomorphic maps with ghost components---%
  following ideas of \citet{Ionel1998} and \citet[Theorem 1.2]{Zinger2009}.
  \cite[Theorems 1.1]{Doan2019}, however, also is an immediate consequence of \autoref{Prop_LocalSmoothConvergence}.
\end{remark}

\section{Castelnuovo's bound for primitive Calabi--Yau classes}
\label{Sec_CastelnuovoOneParameterFamilies}

Let $(X,\omega)$ be a closed symplectic $6$--manifold.
Denote by $\SpaceOfAlmostComplexStructures$ the space of almost complex structures tamed by (or compatible with) $\omega$;
cf.~\autoref{Ex_SymplecticAlmostHermitianStructures}.

\begin{definition}
  For $A \in \rH_2(X,\Z)$ and $J \in \SpaceOfAlmostComplexStructures$ the \defined{Castelnuovo number} $\gamma_A(X,J)$ is
  \begin{equation*}
    \gamma_A(X,J)
    \coloneq
    \sup\set{
      \genus(C) : C ~\text{is an irreducible $J$--holomorphic curve}
    }.
    \qedhere
  \end{equation*}
\end{definition}

\cite[Theorem 1.6]{Doan2018a} established that $\gamma_A(X,J) < \infty$ provided $J \in \SpaceOfAlmostComplexStructures$ is $k$--rigid and $A$ has divisibility at most $k$ and $c_1(A) = 0$.
The subset of these $J$ is comeager \cites[Theorem 1.2]{Eftekhary2016}[Theorem A]{Wendl2016}, but fails to be path-connected---even for $k=1$.
Therefore, \cite[Theorem 1.6]{Doan2018a} does not establish Castelnuovo bounds in generic $1$--parameter families.
The results of \autoref{Sec_SpacesOfPseudoHolomorphicCurves},
however, immediately yield such bounds for primitive Calabi--Yau classes $A \in \Gamma$.

\begin{definition}
  \label{Def_JEmb}  
  Denote by $\SpaceOfAlmostComplexStructures_\emb$ the subset of those $J \in \SpaceOfAlmostComplexStructures$ satisfying
  \autoref{Def_JIsol}~
  \autoref{Def_JIsol_NonNegative}, \autoref{Def_JIsol_Embedded}, and \autoref{Def_JIsol_Disjoint}.
\end{definition}

\begin{theorem}[{\cites[Theorem 1.1]{Oh2009}[Proposition A.4]{Ionel2018}}]
\edited{
  $\SpaceOfAlmostComplexStructures\setminus\SpaceOfAlmostComplexStructures_\emb$ has codimension two in $\SpaceOfAlmostComplexStructures$;
  in particular: $\SpaceOfAlmostComplexStructures_\emb$ is comeager and path-connected.
  }
\end{theorem}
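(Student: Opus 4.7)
The plan is a standard transversality/Sard--Smale argument: exhibit the complement $\SpaceOfAlmostComplexStructures \setminus \SpaceOfAlmostComplexStructures_\emb$ as a countable union of (locally closed, Banach) submanifolds of codimension at least two, from which the two assertions follow formally. The three defining conditions of \autoref{Def_JEmb} (namely \autoref{Def_JIsol}~\autoref{Def_JIsol_NonNegative}, \autoref{Def_JIsol_Embedded}, \autoref{Def_JIsol_Disjoint}) fail, respectively, on three types of wall: (W1) there exists a simple $J$--holomorphic map $u$ with $\ind u \leq -2$ (note by \autoref{Def_CauchyRiemann}~\autoref{Def_CauchyRiemann_Index} the index $2c_1(A)$ is always even, so the negative index stratum jumps from $0$ to $-2$); (W2) there exists a simple $J$--holomorphic map $u$ of index zero that fails to be an embedding, either through having a double point or a critical point; (W3) there exist two simple $J$--holomorphic maps $u,v$ of index zero, not related by a reparametrization, whose images intersect.

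For each $(A,\genus) \in \rH_2(X,\Z) \times \N_0$ and each number $k \in \set{0,1,2}$ of marked points, I would work with the universal moduli space $\widetilde\ModuliSpaceOfSimpleMaps_{A,\genus,k}$ of simple nodal $J$--holomorphic maps of a fixed Sobolev regularity parameterized over the Banach manifold $\SpaceOfAlmostComplexStructures^k$ of $C^k$ almost complex structures. Standard transversality (cf.~\cite[Chapter 3]{McDuff2012}) shows that $\widetilde\ModuliSpaceOfSimpleMaps_{A,\genus,k}$ is a Banach manifold and the projection to $\SpaceOfAlmostComplexStructures^k$ is Fredholm of index $2c_1(A) + 2k$. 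For (W1), the projection of $\widetilde\ModuliSpaceOfSimpleMaps_{A,\genus,0}$ with $c_1(A) \leq -1$ has Fredholm index $\leq -2$, so its image is a countable union of submanifolds of codimension at least two by Sard--Smale. For (W2), the incidence $\set{u(z_1)=u(z_2),\ z_1 \neq z_2}$ cuts out a codimension $6$ subset of $\widetilde\ModuliSpaceOfSimpleMaps_{A,\genus,2}$, which becomes codimension $2$ after projecting out the two marked points; similarly the critical locus $\set{du(z_1)=0}$ has codimension $2$ after projection, and these project into $\SpaceOfAlmostComplexStructures^k$ via a Fredholm-index-$0$ map, whose image therefore has codimension at least $2$. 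For (W3), the fiber product $\widetilde\ModuliSpaceOfSimpleMaps_{A,\genus,1} \times_{\SpaceOfAlmostComplexStructures^k} \widetilde\ModuliSpaceOfSimpleMaps_{A',\genus',1}$ with the constraint $\ev(u)=\ev(v)$ imposed cuts out a subspace of the same Fredholm index as the sum of indices ($0$), minus $6$ for the incidence, plus $2$ for the marked points on each side; after removing the diagonal stratum of reparametrization-equivalent pairs (which is the only part of $\ev^{-1}(\diag)$ of full dimension), the remaining locus projects to a codimension $\geq 2$ subset of $\SpaceOfAlmostComplexStructures^k$. Summing over the countably many choices of $(A,\genus),(A',\genus'),k$ exhibits $\SpaceOfAlmostComplexStructures^k \setminus \SpaceOfAlmostComplexStructures_\emb^k$ as a countable union of codimension $\geq 2$ submanifolds. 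Taubes' standard trick passes the conclusion from $\SpaceOfAlmostComplexStructures^k$ (some large $k$) to $\SpaceOfAlmostComplexStructures = \SpaceOfAlmostComplexStructures^\infty$.

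From here the two conclusions follow at once. Comeagerness: a countable union of closed, nowhere-dense sets is meager, and each codimension-$\geq 2$ submanifold of $\SpaceOfAlmostComplexStructures^k$ has nowhere-dense closure. Path-connectedness: given $J_0, J_1 \in \SpaceOfAlmostComplexStructures_\emb$, connect them by any smooth path $\bJ_0$ in $\SpaceOfAlmostComplexStructures$ (which is contractible). The space of paths $\bJ$ with $\bJ(0)=J_0$, $\bJ(1)=J_1$ is itself a separable Banach manifold; applying a parametric Sard--Smale argument to the walls (W1)--(W3) shows that a generic such path $\bJ$ is transverse to each codimension-$\geq 2$ wall, hence misses all of them (since a transverse intersection of a $1$-dimensional path with a codimension-$\geq 2$ submanifold is empty), so $\bJ \subset \SpaceOfAlmostComplexStructures_\emb$.

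The main obstacle is carrying out the transversality argument cleanly for the two ``non-generic'' walls (W2) and (W3): one must choose the right parametric setup (universal moduli spaces with marked points and fiber products) so that the walls are honest Banach submanifolds, and must handle separately the critical-point contribution to (W2) and the reparametrization-diagonal in (W3). Apart from this bookkeeping, everything reduces to the Sard--Smale theorem and the observation that $\ind u = 2c_1(A)$ is even, which rules out any codimension-one negative-index wall.
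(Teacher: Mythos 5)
The paper does not prove this statement: it is imported from the two cited references (Oh--Zhu, Theorem 1.1, and Ionel--Parker, Proposition A.4), whose proofs are exactly the Sard--Smale wall analysis you outline---negative-index strata, non-embedded index-zero maps detected by marked-point evaluation and the critical locus, and intersecting pairs of simple maps via fiber products---together with the parity observation $\ind u = 2c_1(A) \in 2\Z$ and the generic-path argument for path-connectedness. Your sketch is correct and matches that argument; the one point to phrase carefully is that the walls are images of Fredholm maps of index $\leq -2$ rather than genuine Banach submanifolds of $\cJ$, which is the form in which both the comeagerness and the path-avoidance conclusions are actually extracted.
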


\begin{theorem}
  \label{Thm_Castelnuovo}
  If $K \subset \SpaceOfAlmostComplexStructures_\emb$ is compact,
  then for every primitive Calabi--Yau class $A \in \Gamma$
  \begin{equation*}
    \sup_{J \in K} \gamma_A(X,J) < \infty.
  \end{equation*}
\end{theorem}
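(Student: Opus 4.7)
The plan is to argue by contradiction, using the properness of the cycle space to extract a geometric limit, and then to identify this limit as a single embedded curve of finite genus by invoking primitivity of $A$ together with the defining properties of $\SpaceOfAlmostComplexStructures_\emb$.

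Suppose, for contradiction, that there is a sequence of pairs $(J_n, C_n)$ with $J_n \in K$, each $C_n$ an irreducible $J_n$--holomorphic curve representing $A$, and $\genus(C_n) \to \infty$. Compactness of $K$ lets me pass to a subsequence with $J_n \to J \in K$. Taming by $\omega$ gives $\bM(C_n) = \inner{[\omega]}{A}$ constant. Moreover, since $J_n \in \SpaceOfAlmostComplexStructures_\emb$ and the parametrising simple map of $C_n$ has index $2c_1(A) = 0$, condition \autoref{Def_JIsol}~\autoref{Def_JIsol_Embedded} forces each $C_n$ to be embedded, so $(J_n, C_n) \in \SpaceOfConnectedEmbeddedCurves_A$. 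A further application of \autoref{Thm_SpaceOfCycles_Proper} yields, after extracting a subsequence, a geometric limit $(J, C) \in \SpaceOfCycles$; continuity of $[\,\cdot\,] \co \SpaceOfCycles \to \rH_2(X,\Z)$ (as used in the proof of \autoref{Prop_Cluster_Stability}) gives $[C] = A$, while the fact that connected compact subsets form a closed subset of $\SpaceOfCompactSubsets$ gives $\supp C$ connected.

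Next I would decompose $C = \sum_{i=1}^I m_i C_i$ into irreducible components and run the argument from the proof of \autoref{Lem_Cluster}~\autoref{Lem_Cluster_Embedded}, which uses only \autoref{Def_JIsol}~\autoref{Def_JIsol_NonNegative}, \autoref{Def_JIsol_Embedded}, \autoref{Def_JIsol_Disjoint}---that is, exactly the properties defining $\SpaceOfAlmostComplexStructures_\emb$. Index non-negativity combined with $c_1(A) = 0$ forces each $[C_i]$ to be a Calabi--Yau class; the remaining two properties then force each $C_i$ embedded and distinct $C_i$ disjoint. Connectedness of $\supp C$ yields $I = 1$, so $C = m_1 C_1$ with $C_1$ irreducible embedded, and primitivity of $A$ forces $m_1 = 1$. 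Hence $(J, C) \in \SpaceOfConnectedEmbeddedCurves_A$, and \autoref{Thm_SpaceOfEmbeddedCurvesEmbedsIntoSpaceOfSubmanifolds} upgrades the geometric convergence to $C^1$ convergence of submanifolds, so $\supp C_n$ is diffeomorphic to $\supp C$ for $n \gg 1$; consequently $\genus(C_n) = \genus(C) < \infty$ for $n \gg 1$, contradicting $\genus(C_n) \to \infty$.

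The main substance of the argument lies in identifying the limit cycle as a single embedded curve with multiplicity one. Primitivity of $A$ is exactly what rules out the limit being a multiple cover of a lower-degree Calabi--Yau curve; membership in $\SpaceOfAlmostComplexStructures_\emb$---which, crucially, is both closed in $\SpaceOfAlmostComplexStructures$ and path-connected, unlike the super-rigidity condition underlying \cite[Theorem 1.6]{Doan2018a}---rules out the remaining failure modes of non-embedded components and multi-component limits with connected support. This is what yields a bound uniform over the compact set $K$, as opposed to a bound depending on a choice of super-rigid $J$.
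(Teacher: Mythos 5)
Your argument is correct and is essentially the paper's own proof: the paper packages the same steps as compactness of $\SpaceOfConnectedCycles_A(K)$ (via \autoref{Thm_SpaceOfCycles_Proper}), the identification $\SpaceOfConnectedCycles_A(K)=\SpaceOfConnectedEmbeddedCurves_A(K)$ via the \autoref{Lem_Cluster} argument plus primitivity, and continuity of the genus function from \autoref{Cor_ModuliSpaceOfEmbeddedMapsEmbedsIntoSpaceOfSubmanifolds}, whereas you run the equivalent sequential contradiction, which is legitimate by metrizability (\autoref{Prop_SpaceOfCycles_Metrizable}). One minor inaccuracy in your closing commentary: $\SpaceOfAlmostComplexStructures_\emb$ is \emph{not} closed in $\SpaceOfAlmostComplexStructures$ (its complement has codimension two, so it is comeager but not closed); your argument does not actually need this, since the limit $J$ lies in the compact set $K\subset\SpaceOfAlmostComplexStructures_\emb$.
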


\begin{proof}
  By \autoref{Thm_SpaceOfCycles_Proper},
  $\SpaceOfConnectedCycles_A(K)$ is compact.

  Let $J \in K$ and $C = \sum_{i = 1}^I m_i C_i$ with $(J, C) \in \SpaceOfConnectedCycles_A(K)$.
  As in the proof of \autoref{Lem_Cluster},
  $I = 1$;
  that is:
  $C = m_1C_1$ and $C_1$ is embedded.
  Since $A$ is primitive, $m_1 = 1$.
  Therefore,
  \begin{equation*}
    \SpaceOfConnectedCycles_A(K)
    =
    \SpaceOfConnectedCurves_A(K)
    =
    \SpaceOfConnectedEmbeddedCurves_A(K).
  \end{equation*}

  By \autoref{Cor_ModuliSpaceOfEmbeddedMapsEmbedsIntoSpaceOfSubmanifolds},
  the map $\genus \co \SpaceOfConnectedEmbeddedCurves_A \to \N_0$ assigning to $(J,C)$ the genus of $C$ is continuous.
  Since $\SpaceOfConnectedEmbeddedCurves_A(K)$ is compact,
  this implies the assertion.  
\end{proof}


\printbibliography

\end{document}
